\tikzset{->-/.style={decoration={
  markings,
  mark=at position .5 with {\arrow{>}}},postaction={decorate}}}
\tikzset{-<-/.style={decoration={
  markings,
  mark=at position .5 with {\arrow{<}}},postaction={decorate}}}
\def\drawsemicircle(#1,#2)(#3,#4)(#5){\pgfmathsetmacro\tA{#3-#1}
\pgfmathsetmacro\tL{#4-#2}
\pgfmathsetmacro\tH{sqrt(\tA*\tA + \tL*\tL)}
\pgfmathsetmacro\sA{asin(\tL/\tH)}
\draw[#5] (#3,#4) arc
[ start angle=\sA, end angle=\sA + 180,
  radius = \tH/2
]}
\def\drawHsemicircle(#1,#2)(#3)(#4)(#5){\draw[#3] (#2,0) arc
[ start angle = 0, end angle = 180, radius = (#2-#1)/2]
node[#4]{#5}
}
\providecommand\@dotsep{5}
\DeclareMathOperator{\Hom}{Hom}
\DeclareMathOperator{\HN}{HN}
\DeclareMathOperator{\Id}{Id}
\DeclareMathOperator{\Ob}{Ob}
\DeclareMathOperator{\PSL}{PSL}
\DeclareMathOperator{\Stab}{Stab}
\DeclareMathOperator{\id}{Id}
\DeclareMathOperator{\occ}{occ}
\DeclareMathOperator{\std}{std}
\def\1{\mathbbm{1}}
\def\a{\alpha}
\def\b{\beta}
\def\C{\mathcal{C}}
\def\CC{\mathbb{C}}
\def\dd{\partial}
\def\D{\mathcal{D}}
\def\DGM{{\: \mathsf{DGM}}}
\def\e{\epsilon}
\def\f{\phi}
\def\g{\gamma}
\def\G{\mathcal{G}}
\def\H{\mathcal{H}}
\def\HH{\mathbb{H}}
\def\hrt{\heartsuit}
\def\heart{\heartsuit}
\def\I{\mathcal{I}}
\def\k{\Bbbk}
\def\La{\Lambda}
\def\N{\mathbb{N}}
\def\o{\omega}
\def\ol{\overline}
\def\OO{\mathbb{O}}
\def\p{\pi}
\def\P{\mathbb{P}}
\def\cP{\mathcal{P}}
\def\Q{\mathbb{Q}}
\def\R{\mathcal{R}}
\def\RR{\mathbb{R}}
\def\s{\sigma}
\def\sm{\setminus}
\def\S {\mathcal{S}}
\def\bS{\mathbb{S}}
\def\t{\tau}
\def\T{\mathcal{T}}
\def\v{\vert}
\def\w{\omega}
\def\wh{\widehat}
\def\Z{\mathbb{Z}}
\def\cZ{Z}
\theoremstyle{plain}
\newtheorem{theorem}{Theorem}[section]
\newtheorem{prop}[theorem]{Proposition}
\crefname{prop}{Proposition}{Propositions}
\newtheorem{lemma}[theorem]{Lemma}
\newtheorem{cor}[theorem]{Corollary}
\newtheorem{conjecture}[theorem]{Conjecture}
\theoremstyle{definition}
\newtheorem{example}[theorem]{Example}
\newtheorem{mydef}[theorem]{Definition}
\newtheorem{definition}[theorem]{Definition}
\newtheorem{remark}[theorem]{Remark}
\let\@wraptoccontribs\wraptoccontribs
\title{$q$-deformed rational numbers and the 2-Calabi--Yau category of type $A_{2}$ }
\author{Asilata Bapat}
\address[Asilata Bapat]{Mathematical Sciences Institute, Australian National University}
\email{asilata.bapat@anu.edu.au}
\author{Louis Becker}
\address[Louis Becker]{Australian National University}
\email{louis.becker@anu.edu.au}
\author{Anthony M. Licata}
\address[Anthony M. Licata]{Mathematical Sciences Institute, Australian National University}
\email{anthony.licata@anu.edu.au}
\begin{document}
\maketitle

We describe a family of compactifications of the space of Bridgeland stability conditions of any triangulated category following earlier work by Bapat, Deopurkar, and Licata.
We particularly consider the case of the 2-Calabi--Yau category of the $A_2$ quiver.
The compactification is the closure of an embedding (depending on $q$) of the stability space into an infinite-dimensional projective space.

In the $A_2$ case, the three-strand braid group $B_3$ acts on this closure.
We describe two distinguished braid group orbits in the boundary, points of which can be identified with certain rational functions in $q$.
Points in one of the orbits are exactly the $q$-deformed rational numbers recently introduced by Morier-Genoud and Ovsienko, while the other orbit gives a new $q$-deformation of the rational numbers.
Specialising $q$ to a positive real number, we obtain a complete description of the boundary of the compactification.

\section{Introduction}

\subsection{Deformed rational numbers}
The theory of $q$-deformations of rational numbers has an extremely recent history.
Deformed rational numbers were originally introduced in~\cite{mor.ovs:20} and developed in~\cite{mor.ovs:19*1}.
The definition considered in these papers is via deformations of continued fraction expansions of rational numbers.
The resulting $q$-rationals enjoy several pleasant properties, and the authors discuss connections to a wide variety of classical topics including the Farey triangulation, cluster algebras, and the Jones polynomial.
\par
In this paper we present an enhancement of the notion of $q$-deformed rationals, from which the $q$-rationals of~\cite{mor.ovs:20} can be easily read off.
For any fixed positive $q$ and a rational number $r/s$, we describe a pair of real numbers, denoted $[r/s]^{\flat}$ and $[r/s]^{\sharp}$.
The number $[r/s]^{\sharp}$ is precisely the $q$-deformation of $r/s$ considered in~\cite{mor.ovs:20}.
We argue that $[r/s]^{\flat}$ can also reasonably be thought of as a $q$-deformation of $r/s$.

For example, the number $[1/0]^{\sharp}$, which is the $q$-deformation of $\infty$ in~\cite{mor.ovs:20}, is just $\infty$ itself.
On the other hand, $[1/0]^{\flat} = 1/(1-q)$.
For other rationals $r/s$, the two $q$-deformations can both be described via the action of a group $\PSL_{2,q}(\Z) \subset \PSL_2(\RR)$, which acts on $\RR \cup \{\infty\}$ by fractional linear transformations.
We define the groups $\PSL_{2,q}(\Z)$ and give the construction of the two $q$-deformations in parallel in~\cref{sec:deformation}.

\subsection{A compactification of the space of stability conditions}
The motivation for our two constructions of $q$-deformed rational numbers comes from homological algebra.
To explain this, in ~\cref{sec:compactification}, we associate to each positive real $q$ a compactification of the space of Bridgeland stability conditions on $\C_{2}$, the 2-Calabi--Yau (2-CY) triangulated category for the $A_2$ quiver.
This compactification is a generalisation of a construction of~\cite{bap.deo.lic:20}, which corresponds to the case $q=1$, and proceeds by embedding the space of stability conditions into an infinite dimensional projective space, and taking the closure there.
We set $M_q$ to be the image of the space of stability conditions under this embedding,  $\overline{M_q}$ to be its closure, and $\partial \overline{M_q}$ to be $\overline{M_q} \backslash M_q$.
In the construction, the ambient projective space is independent of $q$, but the embedding itself depends on $q$.

When $q=1$, the spaces involved can be identified with basic objects of hyperbolic geometry: the space of stability conditions $\Stab(\C_{2})/\CC$ can be identified with the upper half plane $\mathbb{H}^2$ (homeomorphic to the open disk), and the compactification is then the closed disk $\mathbb{H}^2 \cup \mathbb{R} \cup \{\infty\}$.
The points of the boundary don't correspond to stability conditions, but they can be regarded as translation-invariant metrics on the category $\C_2$.
Moreover, there is a canonical bijection between the spherical objects of $\C_2$ up to shift and the rationals $\mathbb{Q}\cup \{\infty\} \subset \RR\cup \{\infty\}$.
It is a theorem of Rouquier--Zimmermann \cite{rou.zim:03} that this bijection intertwines the braid group action on the spherical objects with the $\PSL_2(\mathbb{Z})$ action on $\mathbb{Q}\cup \{\infty\}$.
Details about this construction may be found in \cite{bap.deo.lic:20} and references therein.

One of the main goals of this paper is to establish analogues of this result for the case $q\ne 1$.
When we vary $q$, the space $M_q$ remains an open disk, but something interesting happens at the boundary: each spherical object in the category corresponds not to a rational point, but rather to an entire interval in $\dd\ol{M}_{q}$.
We elaborate on this statement in~\cref{subsec:dynamics}, by associating two projectivised linear functionals to each spherical object $X$.
These functionals, called $\ol{\hom}_{q}(X,\cdotp)$ and $\occ_{q}(X,\cdotp)$, each map the set of spherical objects to $\RR$.
The former sends $Y$ to the Laurent polynomial whose $q^{k}$-coefficient is the number of degree-$k$ morphisms from $X$ to $Y$. The definition of $\occ_{q}(X, \cdotp)$ is a little more involved, but is also intrinsically well-motivated.
At $q=1$, these two functionals agree and lie on the boundary, coinciding with the rational point in $\RR \cup \{\infty\}$ corresponding to the spherical object $X$.
When $q \neq 1$, they are distinct functionals, and the entire closed interval of convex combinations of the two functionals lies on the boundary.
We call this interval $I_{q,X}$.
We summarise these statements in the following theorem.
\begin{theorem}\label{introthm1}\
  \begin{enumerate}
  \item For all $q\in(0,\infty)$, the space of stability conditions maps homeomorphically onto ${M}_{q}$, which is in turn homeomorphic to an open disk.
  \item The boundary of $\overline{M_q}$ is $\dd\ol{M}_{q}$, and is homeomorphic to $\mathbb{R} \cup \{\infty\}$.
  \item The union of the intervals $I_{q,X}$, as $X$ ranges over the spherical objects of $\C_2$, is a dense subset of the boundary $\dd\ol{M}_q$.
  \end{enumerate}
\end{theorem}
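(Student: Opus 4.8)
We outline a plan, treating the three parts in turn.

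\textbf{Part (1).} The idea is to reduce to the known structure of $\Stab(\C_{2})$ together with the explicit $q$-deformed mass coordinates: the embedding sends a stability condition $\s$ to the projective class of the vector $(m^{q}_{\s}(Y))_{Y}$, with $Y$ ranging over spherical objects up to shift, and it is $B_{3}$-equivariant for the braid action on both sides. Rescaling the central charge rescales every $q$-mass by a common positive factor, so the embedding descends to a map $\f_{q}\colon \Stab(\C_{2})/\CC \to \P^{\infty}$, which is continuous because masses are. By the facts recalled above (and \cite{bap.deo.lic:20}), $\Stab(\C_{2})/\CC$ is homeomorphic to an open disk and is recovered continuously from its central charge up to scalar, so it suffices to show that the $q$-masses of a suitable finite collection of spherical objects --- the two simples of the standard heart, their extension, and a few of their twists --- determine the central charge up to a common scalar by a continuous formula. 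This is the $q=1$ linear algebra of \cite{bap.deo.lic:20}, now $q$-twisted but still nondegenerate for every $q\in(0,\infty)$. It yields both injectivity of $\f_{q}$ and a continuous inverse on $M_{q}=\f_{q}(\Stab(\C_{2})/\CC)$, so $\f_{q}$ is a homeomorphism onto $M_{q}$ and $M_{q}$ is an open disk.

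\textbf{Part (2).} First, $\ol{M}_{q}$ is compact: the ``triangle inequalities'' for spherical objects --- the $q$-mass of a spherical object obtained by a twist from two others is dominated by a fixed linear combination of their $q$-masses --- confine the normalised mass vectors to a compact subset of the ambient projective space (this needs care since it is infinite-dimensional). Fix the $B_{3}$-equivariant homeomorphism $\Stab(\C_{2})/\CC\cong\HH$ from the $q=1$ picture, so $\dd\HH\cong\RR\cup\{\infty\}$ and, by Rouquier--Zimmermann, the spherical objects up to shift are indexed by the dense countable set $\Q\cup\{\infty\}$, compatibly with the $\PSL_{2}(\Z)$-action. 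A point of $\dd\ol{M}_{q}$ is a limit $\lim_{n}\f_{q}(\s_{n})$ with $\s_{n}$ leaving every compact subset of $\Stab(\C_{2})/\CC$; passing to a subsequence, $\s_{n}\to\th\in\dd\HH$. The core is then a dichotomy, obtained by inspecting the leading $q$-asymptotics of the mass coordinates as $\s_{n}\to\th$: if $\th$ is irrational the limit is unique and depends only on $\th$; if $\th$ corresponds to the spherical object $X$, the set of subsequential limits is exactly $I_{q,X}$, with endpoints $\ol{\hom}_{q}(X,\cdotp)$ and $\occ_{q}(X,\cdotp)$ attained from the two sides of approach (both collapsing to the rational point at $q=1$, recovering the closed disk of \cite{bap.deo.lic:20}). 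Hence $\dd\ol{M}_{q}$ is the circle $\dd\HH$ with each point of $\Q\cup\{\infty\}$ replaced by a closed interval, possibly degenerate. Finally one checks that $\dd\ol{M}_{q}$ is a compact connected metrizable $1$-manifold without boundary --- the lack of boundary at an endpoint of an inserted interval comes from it being a two-sided limit in $\dd\ol{M}_{q}$, which uses that the inserted intervals shrink to a point as their index approaches any irrational --- and hence $\dd\ol{M}_{q}\cong\RR\cup\{\infty\}$.

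\textbf{Part (3).} This is then immediate: under the identification $\dd\ol{M}_{q}\cong\RR\cup\{\infty\}$ of Part (2), each $I_{q,X}$ is the interval inserted at the rational point indexing $X$, and $\Q\cup\{\infty\}$ is dense in $\dd\HH$, so $\bigcup_{X}I_{q,X}$ is dense in $\dd\ol{M}_{q}$; this remains valid when the $I_{q,X}$ degenerate to points, as for $q=1$. \par The main obstacle is the dichotomy in Part (2): showing that approaching an irrational $\th$ forces a single limit, that approaching a spherical $\th$ sweeps out precisely $I_{q,X}$ and nothing larger, and that the resulting interval-insertions fit together continuously --- in particular that the intervals shrink along any sequence of spherical objects converging to an irrational --- so that the blown-up circle really is $\RR\cup\{\infty\}$. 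The compactness and metrizability of $\ol{M}_{q}$ needed for the concluding manifold-classification and continuous-bijection steps are a secondary point that also requires some care in the infinite-dimensional setting.
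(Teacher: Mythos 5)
Your high-level strategy coincides with the paper's: embed $\Stab(\C_2)/\CC$ into $\P(\RR^{\bS})$ via $q$-masses, sort boundary points according to whether the underlying $q=1$ limit on the circle is rational or irrational, identify the intervals inserted at rational points with the $I_{q,X}$, and deduce density from density of the rationals. But the plan leaves the genuinely hard steps as assertions, and two of them hide missing ideas. In Part (1), recovering the central charge from the $q$-masses is not ``the $q=1$ linear algebra, $q$-twisted'': the $q$-mass of a semistable object is $q^{\phi}|Z|$, and recovering $Z$ up to rotation and common scale from such quantities is a nonlinear congruence statement --- a $q$-deformed SSS theorem. The paper has to prove this by showing a certain transcendental function $T_{q,c}(t)$ built from the law of cosines is strictly monotone; without an argument of this kind injectivity of $m_q$ is not established.

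In Part (2) you tacitly use both that every boundary point is a limit of $m_q(\s_n)$ along a sequence escaping compacta, and that the limits so obtained --- the points $t\occ_{q}(X)+(1-t)\ol{\hom}_{q}(X)$ --- do not already lie in $M_q$. The latter is not automatic and is not addressed: a priori such a convex combination could equal $m_q(\t)$ for an honest stability condition $\t$, and the paper rules this out by a separate degeneracy analysis of the $q$-deformed triangle inequalities (the two disjointness lemmas). Likewise, ``inspecting the leading $q$-asymptotics'' is where essentially all the work lives: to see that the subsequential limits at a spherical $\th$ are exactly $I_{q,X}$ and nothing larger, the paper writes every standard mass vector as a nonnegative combination of $\occ_{q}(P_1),\occ_{q}(P_2),\occ_{q}(P_{21})$ ($q$-Gromov coordinates and $q$-linearity) and controls how these functionals transform under braids using an HN automaton, via the $q$-deformed Rouquier--Zimmermann theorems identifying ratios of $\occ_q$ and $\ol{\hom}_q$ with right and left $q$-rationals; none of this machinery appears in your outline, and without some substitute the ``dichotomy'' is only a statement of the desired conclusion. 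Your final appeal to the classification of compact connected $1$-manifolds is a legitimately different ending from the paper's explicit order-preserving homeomorphism onto $\RR\cup\{\infty\}$ (built from the ordering of $q$-rationals and a continued-fraction surjectivity argument), but it requires verifying local Euclideanness at irrational points --- which is exactly the ``intervals shrink to a point'' statement you defer.
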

Further, we conjecture that $\ol{M}_{q}$ is homeomorphic to a closed disk.
We prove this theorem in~\cref{sec:compactification}.
The main tool used in the proof is a finite state automaton known as a Harder--Narasimhan automaton or HN automaton, introduced in ~\cite{bap.deo.lic:20}.
This automaton controls how the HN filtration of an object in the $A_2$ category transforms when acted on by a braid.
(This automaton is also of use when $q=1$, where is allows for a simpler proof of Rouquier--Zimmermann's original result.)

We expect the role of HN automata in the study of triangulated autoequivalence groups will be somewhat analogous to the role of Thurston's train track automata in the study of mapping class groups. For 2-CY catgories, such automata have been constructed in type $A_2$ and type $\widehat{A}_1$ in ~\cite{bap.deo.lic:20}, and in the PhD thesis of Edmund Heng in all other rank 2 Coxeter types. Constructing HN automata in higher rank examples and exploiting them to study stability conditions and autoequivalence groups is an important open problem. We give the definition of HN automata in \cref{app:stability-and-automata}.

\subsection{Relationship to the $q$-rational numbers}
Recall that at $q = 1$, the correspondence of spherical objects to rational numbers intertwines the braid group action on the spherical objects with the $\PSL_2(\mathbb{Z})$ action on $\mathbb{R}\cup\{\infty\}$.
The bulk of~\cref{sec:homological} is devoted to the $q$-analogue of this correspondence, where the group $\PSL_2(\mathbb{Z})$ is replaced by the group $\PSL_{2,q}(\mathbb{Z})$.
We prove the following result (see~\cref{qrz1,qrz2} for the precise statements).
\begin{theorem}
  Consider assignments
  \[X \mapsto (-q)^{-\epsilon}\frac{\occ_q(P_2,X)}{\occ_q(P_1,X)} \quad\text{and}\quad X \mapsto q^{-1}(-q)^{\epsilon}\frac{\ol{\hom}_q(X,P_2)}{\ol{\hom}_q(X,P_1)},\]
  where $\epsilon$ is either $0$ or $1$ depending on $X$.
  Each of these assignments intertwines the $B_3$ action on the spherical objects of $\C_2$ with the $\PSL_{2,q}(\mathbb{Z})$ action on $\mathbb{R}\cup \{\infty\}$.
\end{theorem}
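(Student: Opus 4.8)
The plan is to prove the intertwining property by reducing it to a statement about generators. The three-strand braid group $B_3$ is generated by the two spherical twists $\sigma_1, \sigma_2$ associated to the objects $P_1, P_2$, and $\PSL_{2,q}(\Z)$ is correspondingly generated by two explicit matrices (whose fractional linear transformations on $\RR \cup \{\infty\}$ we may read off from the definitions in \cref{sec:deformation}). So it suffices to check: (i) that each assignment, call it $\Phi$, is a well-defined bijection from spherical objects of $\C_2$ (up to shift) onto $\Q_q \cup \{\infty\}$, the orbit of the relevant base points; and (ii) that for each generator $\sigma$ of $B_3$ and each spherical object $X$, one has $\Phi(\sigma \cdot X) = \bar{\sigma} \cdot \Phi(X)$, where $\bar{\sigma}$ is the image of $\sigma$ in $\PSL_{2,q}(\Z)$. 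Because both group actions are determined by their generators and the relations are respected on the nose once (ii) holds, the full intertwining follows formally.

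The first reduction step is to compute the effect of $\sigma_1^{\pm 1}$ and $\sigma_2^{\pm 1}$ on the two numerical invariants $\occ_q(P_i, X)$ and $\ol{\hom}_q(X, P_i)$. Here I would use the HN automaton of \cref{app:stability-and-automata}: applying a braid generator to $X$ transforms its HN filtration by a prescribed finite-state rule, and since both $\occ_q$ and $\ol{\hom}_q$ are (projectivised) linear functionals, their values on $X$ are linear combinations of their values on the HN factors with coefficients that are monomials in $-q$. Concretely, one expects relations of the shape
\begin{align*}
  \occ_q(P_i, \sigma_j X) &= \pm q^{a}\occ_q(P_i, X) + (\text{coefficient})\cdot q^{b}\occ_q(P_k, X),
\end{align*}
and similarly for $\ol{\hom}_q$, matching exactly the two rows of the $2\times 2$ matrix representing $\bar\sigma_j \in \PSL_{2,q}(\Z)$ acting on the column vector $(\occ_q(P_2, X), \occ_q(P_1, X))^{\mathsf T}$. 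The sign $(-q)^{\pm\epsilon}$ bookkeeping in the statement is precisely what is needed to absorb the homological shifts so that everything descends to $\PSL_{2,q}(\Z)$ rather than $\GL_2$; tracking this parity $\epsilon$ carefully through each generator is the fussiest part of the argument. For the $\ol{\hom}_q$ assignment one additionally uses the symmetry coming from the 2-Calabi--Yau structure (Serre duality gives $\Ext^k(X,Y) \cong \Ext^{2-k}(Y,X)^{*}$), which lets one pass between left and right actions of the twists.

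The base case anchors the induction: one checks directly that $\Phi(P_1)$ and $\Phi(P_2)$ (and perhaps $\Phi(P_1[1])$ or the simple objects) equal the designated rational base points, using the explicit Hom-spaces and the occurrence counts in the $A_2$ category. Combined with the generator computation and the fact (from the $q=1$ theory, or from a transitivity statement for the $B_3$-action on spherical objects) that every spherical object is obtained from $P_1$ by a braid, this yields both well-definedness and surjectivity of $\Phi$; injectivity follows because the $\PSL_{2,q}(\Z)$-action on its orbit is free on the relevant set, mirroring the faithfulness of the $B_3$-action on spherical objects. I expect the main obstacle to be the generator computation of step two — verifying that the automaton-driven transformation of $\occ_q$ and $\ol{\hom}_q$ produces \emph{exactly} the matrix entries of the generators of $\PSL_{2,q}(\Z)$, including the correct powers of $q$ and signs, rather than merely something projectively proportional. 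Everything else is bookkeeping or an appeal to results already established earlier in the paper.
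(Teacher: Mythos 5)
Your overall architecture is close to the paper's: the paper also uses the HN automaton of \cref{theautomaton} to track how $\occ_q(P_i,-)$ transforms, anchors an induction at the identity braid, and bootstraps the $\ol{\hom}_q$ statement from the $\occ_q$ statement (via the hom-values identity of \cref{homvalues} and a matrix inversion, rather than via Serre duality, but in the same spirit). However, there is a genuine gap in your central step. You propose to verify the intertwining relation generator by generator, for \emph{every} generator $\sigma_j^{\pm 1}$ and \emph{every} spherical object $X$, by reading off a linear transformation rule for the HN data from the automaton. But the automaton is partial: each of the four states in \cref{fig:b3-automaton} carries outgoing edges for only three of the four elementary braids (for instance, the state $[P_1,P_{12}]$ has edges labelled $\sigma_1$, $\sigma_2$, $\sigma_2^{-1}$ but no $\sigma_1^{-1}$). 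For the missing generator--state pairs the HN multiplicity vector of $\sigma_j^{\pm1}X$ is \emph{not} a fixed linear function of that of $X$, so the "prescribed finite-state rule" you invoke simply does not exist there, and your induction cannot proceed uniformly.

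The device the paper uses to close exactly this gap is the continued normal form for braids (\cref{continued} and \cref{continuedcor}): every spherical object is $\beta P_1$ for a braid $\beta$ in strict continued form, and such a word, read right to left, traces an actual path in the automaton. The induction is then on the length of $\beta$ in this normal form, proving the stronger $2\times 2$ matrix identity of \cref{rzprop} (with a separate variant, \cref{rzmess}, carrying different signs for braids \emph{not} expressible in strict continued form --- a case distinction your uniform per-generator relation would also miss). Equivariance of the assignment is then deduced at the end from the observation that the stabiliser of the base point is generated by $\sigma_1$ and the centre, not from a pointwise generator check. Your proposal would become correct if you replaced "check each generator on each object" by "induct along automaton paths for braids in a normal form modelled on continued fractions," but as written the key idea that makes the induction legitimate is missing.
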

In fact, the first of the two assignments above recovers the $q$-rational numbers of \cite{mor.ovs:20,mor.ovs:19*1}.
\begin{theorem}\label{introtemp}
  Let $X$ be a spherical object of $\C_2$ corresponding to the rational number $r/s$.
  The ratio
  \[(-q)^{-\epsilon}\frac{\occ_q(P_2,X)}{\occ_q(P_1,X)}\]
  is the $q$-deformation $[r/s]_{q}^{\sharp}$ in the sense of Morier-Genoud--Ovsienko \cite{mor.ovs:20,mor.ovs:19*1}.
\end{theorem}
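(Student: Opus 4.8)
The plan is to deduce \cref{introtemp} from the $\PSL_{2,q}(\Z)$-equivariance statement proved just above it (see \cref{qrz1,qrz2}), together with a uniqueness characterisation of the Morier-Genoud--Ovsienko $q$-rationals. Recall from \cref{sec:deformation} that $r/s\mapsto[r/s]_{q}^{\sharp}$ is the unique function $\Q\cup\{\infty\}\to\RR\cup\{\infty\}$ which takes prescribed values on the initial triple $0/1\mapsto 0$, $1/0\mapsto\infty$, $1/1\mapsto 1$ and which intertwines the $\PSL_2(\Z)$-action on $\Q\cup\{\infty\}$ with the $\PSL_{2,q}(\Z)$-action on $\RR\cup\{\infty\}$; equivalently, it is the unique solution of the $q$-deformed Farey recursion of \cite{mor.ovs:20} with those initial conditions. (The $q$-deformation of the parabolic subgroup fixing a cusp still fixes that cusp, so the initial conditions are compatible with equivariance and pin the map down.)

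First I would observe that, under the Rouquier--Zimmermann bijection $r/s\mapsto X_{r/s}$ between $\Q\cup\{\infty\}$ and the shift-classes of spherical objects of $\C_2$ --- which intertwines the $\PSL_2(\Z)$-action with the $B_3$-action --- the equivariance theorem says precisely that
\[
 r/s\;\longmapsto\;(-q)^{-\e}\,\frac{\occ_q(P_2,X_{r/s})}{\occ_q(P_1,X_{r/s})}
\]
is a well-defined $\PSL_{2,q}(\Z)$-equivariant map $\Q\cup\{\infty\}\to\RR\cup\{\infty\}$ for the $\PSL_2(\Z)$-action on the source. By the uniqueness just recalled it then suffices to identify the rationals attached to $P_1$, $P_2$ and to one further object completing the initial triple, and to evaluate the occurrence functionals there. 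I expect $P_1$ and $P_2$ to correspond to a Farey-adjacent pair such as $0/1$ and $1/0$, with a cone like $\Cone(P_1\to P_2)$ realising $1/1$; for these objects the Harder--Narasimhan filtrations with respect to the standard stability condition are explicit, so the values $\occ_q(P_i,\cdotp)$ can be computed directly, and one checks that the three ratios come out to $0$, $\infty$, $1$, with the shift $\e$ attached to each object as dictated by the equivariance theorem.

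The main obstacle is the bookkeeping at the base case together with the matching of conventions. One must (a) reconcile the presentation of $\PSL_{2,q}(\Z)$ used in \cref{sec:deformation} with the one appearing in the equivariance theorem, verifying that the $q$-deformed generators agree exactly, including all signs and normalisations; and (b) keep careful track of the shift ambiguity, since $B_3$ acts on spherical objects only up to shift and it is the factor $(-q)^{-\e}$ that absorbs this --- an error here would replace $[r/s]_{q}^{\sharp}$ by the result of applying a fixed element of $\PSL_{2,q}(\Z)$ to it. Once the base-case values and the conventions are checked to agree, \cref{introtemp} follows from uniqueness of the equivariant extension. Alternatively, one can bypass uniqueness and argue by induction along the Farey/Stern--Brocot tree: each mediant step is implemented by a braid generator acting on $X_{r/s}$, and one shows that the induced relation on the $\occ_q$-ratios is precisely the $q$-deformed mediant rule of \cite{mor.ovs:20}, the base of the induction being $P_1$ and $P_2$.
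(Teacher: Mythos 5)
Your reduction has the right shape at the top level, but it assumes away the entire content of the theorem. The equivariance you invoke (``the $\PSL_{2,q}(\Z)$-equivariance statement proved just above it'') is not an independent prior result: in the body of the paper it is the \emph{same} theorem (\cref{qrz1} contains both the identification with $[r/s]^{\sharp}_q$ and the $B_3$-equivariance), and both clauses are deduced simultaneously from the matrix identity of \cref{rzprop}, which expresses the entries of $\ol{\b}\in\PSL_{2,q}(\Z)$ as the values $\occ_q(P_i,\b P_j)$ up to signs and powers of $q$. That identity is where all the work is: the transformation law of the $\occ_q$ functionals under a single generator $\s_j^{\pm 1}$ is \emph{not} uniformly given by the matrix $\s_{j,q}^{\pm 1}$ --- it depends on which of the four classes $[P_1,P_{12}]$, $[P_{12},P_2]$, $[P_2,P_{21}]$, $[P_{21},P_1]$ the object lies in, which is exactly what the HN automaton of \cref{fig:b3-automaton,fig:b3-automaton-rep} encodes, and why the paper needs the continued normal form (\cref{continued}) and the two-case analysis \cref{rzprop}/\cref{rzmess}. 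Your ``alternative'' Stern--Brocot induction is in fact the paper's actual proof, but the step you describe as showing that ``the induced relation on the $\occ_q$-ratios is precisely the $q$-deformed mediant rule'' is the nontrivial automaton computation, not a formality. Once \cref{rzprop} is in hand, the conclusion is read off by comparing with \cref{qmatrices} (whose first column says $\b_{\mathbf{a},q}(\infty)=[r/s]^{\sharp}_q$), which is essentially your uniqueness argument in concrete form; so granting the equivariance, your finish is fine, but as a proof of \cref{introtemp} it is circular relative to the paper's organisation.

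Two smaller points. First, the uniqueness characterisation needs only one base point, not three: $\PSL_2(\Z)$ acts transitively on $\Q\cup\{\infty\}$, the map $B_3\to\PSL_{2,q}(\Z)$ kills the centre, and $\s_{1,q}$ fixes $\infty$, so an equivariant map is pinned down by $f(1/0)=\infty$ alone (and your triple would then need a consistency check rather than being freely prescribable). Second, the factor $(-q)^{-\e}$ does not absorb the shift ambiguity --- a shift $X\mapsto X[n]$ rescales both $\occ_q(P_1,X)$ and $\occ_q(P_2,X)$ by $q^{n}$, which cancels in the ratio. The $\e$ in the exponent tracks the dichotomy $X\geq 0$ versus $X\leq 0$, i.e.\ the sign of the corresponding rational, and this is precisely the case split between \cref{rzprop} and \cref{rzmess} that your sketch does not carry out.
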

This result suggests that the corresponding ratio of $\ol{\hom}_q$ functionals
\[q^{-1}(-q)^{\epsilon}\frac{\ol{\hom}_{q}(X,P_{2})}{\ol{\hom}_{q}(X,P_{1})}\]
should be thought of as another $q$-deformation of the corresponding rational number; it is in fact the alternative $q$-deformation $[r/s]_{q}^{\flat}$ mentioned above.
In~\cref{sec:deformation} we give an elementary parallel account of both of these $q$-deformations, and~\cref{qrz1,qrz2} give precise relationships between the $q$-deformations and homological algebra.

\subsection{The Jones polynomial of rational knots}
In \cite[Appendix A]{mor.ovs:20}, Morier-Genoud and Ovsienko relate the $q$-deformed rational numbers to the Jones polynomial of rational (two-bridge) knots. In particular, they prove that the normalised Jones polynomial is equal to $q\R(q)+(1-q)\S(q)$, where $[r/s]^{\sharp}_{q} = \R(q)/\S(q)$.
In our \cref{appendixa}, we prove an analogous result involving our new $q$-deformed rational number $[r/s]^{\flat}_{q}$: the numerator of $[r/s]^{\flat}_{q}$ is equal to the Jones polynomial of the corresponding rational knot (up to the signs of the coefficients).
\section{$q$-deformed rational numbers}\label{sec:deformation}
In this section we present two different $q$-deformations of the set $\Q\cup \{\infty\}$, one of which is exactly the $q$-deformed rational numbers of \cite{mor.ovs:20}.
We begin by defining the two $q$-deformations via continued fraction expansions.
Although the definitions may appear unmotivated at first, the remainder of the section provides several viewpoints towards the two deformations that also serve as motivation.
Throughout this paper, a rational number is considered to be any element of $\Q\cup \{\infty\}$.
Also throughout this paper, we treat the case $0<q \leq 1$. The case $1\leq q< \infty$ is symmetric, and, up to sign, all of our results are preserved under this symmetry.
\subsection{Left and right $q$-deformed rational numbers}\label{subsec:qrationals-def}
Recall that every non-zero rational number $r/s$ has a unique even continued fraction expression $[a_{1}, ...,a_{2n}]$, where
$$\frac{r}{s}=a_{1}+\cfrac{1}{a_{2}+\cfrac{1}{a_{3}+\cfrac{1}{\cfrac{\ddots}{a_{2n-1}+\cfrac{1}{a_{2n}}}}}},$$
and either $a_{1} \in \N$ and $a_{2},..,a_{2n} \in \N\sm \{0\}$, or $-a_{1} \in \N$ and $-a_{2},...,-a_{2n} \in \N\sm \{0\}$.
The continued fraction expansion of 0 is $[-1,1]$. The continued fraction expansion of $\infty$ is the empty expansion $[\,]$.

\begin{remark}
  The convention for continued fractions that we have chosen here is slightly nonstandard.
  However, we have the following pleasing dichotomy under this convention:
  \begin{enumerate}
  \item strictly positive rational numbers only contain non-negative numbers in their continued fraction expansions;
  \item strictly negative rational numbers only contain non-positive numbers in their continued fraction expansions.
  \end{enumerate}
The points 0 and $\infty$ are both negative and positive, and thus exceptional. The choice of which corresponds to the empty expansion is equivalent to the choice between odd and even continued fraction expansions. The odd continued fraction expansion of 0 is $[\,]$, and that of $\infty$ is $[0,1,-1]$. For even continued fractions, $\infty$ is the `basepoint' in \cref{continuedmatrices}; for odd continued fractions, it is 0.
\end{remark}

The continued fraction expansion is related to the action on $\mathbb{Q}\cup \{\infty\}$ by the modular group $\PSL_{2}(\Z)$.
Recall that $\PSL_2(\RR)$ is the quotient of the group of $2\times 2$ real invertible matrices by scalar matrices.
The group $\PSL_2(\Z)$ is the subgroup of $\PSL_2(\RR)$ generated by
\begin{align*} \s_{1}:=\begin{bmatrix} 1& -1 \\ 0 & 1\end{bmatrix}, \quad \s_{2}:= \begin{bmatrix} 1 & 0 \\ 1 & 1 \end{bmatrix}. \end{align*}
The group $\PSL_2(\RR)$ acts on each of the sets
\[\RR \cup \{\infty\} \subset \CC \cup \{\infty\}\]
by fractional linear transformations:
\begin{equation}\label{frlitr}\begin{bmatrix} 
a & b \\ c & d
\end{bmatrix} \cdot z=\frac{az+b}{cz+d}.
\end{equation}
The modular group $\PSL_2(\Z)$ also preserves, and is transitive on, $\Q \cup \{\infty\}$.
The following is easy to check.
\begin{prop}\label{continuedmatrices}
  Let \(r/s\) be a rational number.
  Suppose that \(r/s\) has an even continued fraction expansion \([a_1, \ldots, a_{2n}]\).
  Then
  \[\frac{r}{s} = \sigma_{1}^{-a_1}\sigma_{2}^{a_2}\sigma_{1}^{-a_3}\cdots \sigma_{1}^{-a_{2n-1}}\sigma_{2}^{a_{2n}}(\infty),\]
  under the usual action of \(\PSL_2(\mathbb{Z})\) on \(\mathbb{Q} \cup\{\infty\}\).
\end{prop}
In order to obtain $q$-deformations of the rationals, we first define a family of discrete subgroups of $\PSL_2(\mathbb{R})$ indexed by $q$, which deform the modular group $\PSL_2(\Z)$.
\begin{definition}
  Set $\PSL_{2,q}(\mathbb{Z})$ to be the subgroup of $\PSL_2(\mathbb{R})$ generated by
\[ \s_{1,q}:=
  \begin{bmatrix} q^{-1} & - q^{-1} \\ 0 & 1 \end{bmatrix},
  \quad \s_{2,q}:=\begin{bmatrix} 1 & 0 \\ 1 & q^{-1} \end{bmatrix}.\]
\end{definition}
At $q = 1$, the group $\PSL_{2,q}(\Z)$ is simply $\PSL_2(\Z)$.
Recall the three-strand Artin braid group:
$$B_{3}:=\langle \s_{1}, \s_{2} \mid \s_{1}\s_{2}\s_{1}=\s_{2}\s_{1}\s_{2} \rangle.$$
The following is easy to check.
\begin{prop}
  The assignment
  \[\sigma_1 \mapsto \sigma_{1,q}, \quad \sigma_2 \mapsto \sigma_{2,q}\]
  gives a homomorphism from $B_3$ to $\PSL_{2,q}(\mathbb{Z})$.
\end{prop}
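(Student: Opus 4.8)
The claim is that $\sigma_1 \mapsto \sigma_{1,q}$, $\sigma_2 \mapsto \sigma_{2,q}$ extends to a group homomorphism $B_3 \to \PSL_{2,q}(\Z)$. Since $B_3$ has the presentation $\langle \sigma_1,\sigma_2 \mid \sigma_1\sigma_2\sigma_1 = \sigma_2\sigma_1\sigma_2\rangle$, by the universal property of groups given by generators and relations it suffices to verify that the images satisfy the single braid relation, i.e.\ that
\[\sigma_{1,q}\,\sigma_{2,q}\,\sigma_{1,q} = \sigma_{2,q}\,\sigma_{1,q}\,\sigma_{2,q}\]
as elements of $\PSL_{2,q}(\Z) \subset \PSL_2(\RR)$, that is, as $2\times 2$ matrices up to an overall nonzero scalar.

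The plan is simply to compute both sides explicitly. First I would multiply $\sigma_{1,q}\sigma_{2,q}$, then multiply the result on the right by $\sigma_{1,q}$, obtaining a $2\times 2$ matrix with entries that are Laurent polynomials in $q$. Symmetrically I would compute $\sigma_{2,q}\sigma_{1,q}\sigma_{2,q}$. One then checks that the two resulting matrices are equal, either literally or (if a discrepancy by an overall scalar appears, which it should not here since both products have determinant $q^{-3}$) up to a scalar, so that they coincide in $\PSL_2(\RR)$. It is worth noting for orientation that at $q=1$ this is the familiar identity $\sigma_1\sigma_2\sigma_1 = \sigma_2\sigma_1\sigma_2 = \begin{bmatrix} 0 & -1 \\ 1 & 0\end{bmatrix}$ in $\PSL_2(\Z)$, so the $q$-deformed identity is a one-parameter refinement of this.

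There is essentially no conceptual obstacle: the only ``hard part'' is bookkeeping with the signs and the powers of $q^{-1}$ in the matrix entries, and making sure the comparison is done in $\PSL_2$ rather than $\GL_2$ — but since $\det\sigma_{1,q} = \det\sigma_{2,q} = q^{-1}$, both triple products have determinant $q^{-3}$, so if the projective classes agree the matrices themselves agree on the nose and no scalar ambiguity arises. This is exactly the kind of routine verification the paper flags with ``the following is easy to check,'' and I would present it as a short direct computation.
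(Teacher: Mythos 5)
Your proposal is correct and is exactly the verification the paper leaves implicit behind ``easy to check'': one confirms the single braid relation on the generators, and the direct computation gives $\s_{1,q}\s_{2,q}\s_{1,q}=\s_{2,q}\s_{1,q}\s_{2,q}=\big[\begin{smallmatrix}0 & -q^{-2}\\ q^{-1} & 0\end{smallmatrix}\big]$, an equality on the nose, so no projective scalar ambiguity even arises. Your determinant remark (both sides have determinant $q^{-3}$) is a nice sanity check but is not needed once the matrices are seen to coincide literally.
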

Our definitions of the $q$-deformed rational numbers are motivated by~\cref{continuedmatrices}, and use the homomorphism from $B_3 \to \PSL_{2,q}(\Z)$ described above.
We first set up the following notation.
\begin{definition}
  Given an even continued fraction expansion
  \[\mathbf{a} = [a_1, \ldots, a_{2n}]\]
  as above,
  define the braid corresponding to \(\mathbf{a}\) as the following element of \(B_3\):
  \[\beta_{\mathbf{a}} = \sigma_1^{-a_1}\sigma_2^{a_2}\sigma_1^{-a_3}\cdots \sigma_1^{-a_{2n-1}} \sigma_2^{a_{2n}}.\]
  For any \(q \in \mathbb{R}\), denote by \(\beta_{\mathbf{a},q}\) the image of \(\beta_{\mathbf{a}}\) in \(\PSL_{2,q}(\mathbb{Z})\).
\end{definition}
Now we are in a position to define the $q$-deformed rational numbers.
\begin{definition}[Left and right $q$-deformed rationals]
  Let $r/s \in \Q\cup \{\infty\}$ with continued fraction expansion $\mathbf{a}$.
  \begin{enumerate}
  \item The \emph{left} $q$-deformed rational number corresponding to $r/s$ is
    \[\left[\frac{r}{s}\right]^{\flat}_q = \beta_{\mathbf{a},q}\left(\frac{1}{1-q}\right).\]
  \item The \emph{right} $q$-deformed rational number corresponding to $r/s$ is
    \[\left[\frac{r}{s}\right]^{\sharp}_q = \beta_{\mathbf{a},q}\left(\infty\right).\]
  \end{enumerate}
\end{definition}
Note that at $q = 1$, the left and right deformed rationals coincide, and by~\cref{continuedmatrices}, are both equal to $r/s$.
By the results of~\cite[\textsection 2.6]{mor.ovs:20}, it is clear that our right $q$-deformed rational numbers are the same as the $q$-rational numbers considered in~\cite{mor.ovs:20}.
In the next section we motivate our definition of left $q$-deformed rational numbers.
\subsection{Motivation via the $q$-Farey tessellation}\label{motivation}
In this section we recall a $q$-deformation of the classical Farey tessellation of the hyperbolic plane, originally considered by Morier-Genoud--Ovsienko in \cite{mor.ovs:20}.
We first recall the classical Farey tessellation.
The hyperbolic plane has an upper half-plane model:
$$\HH:=\{z \in \CC \mid \Im(z)>0\},$$
which embeds naturally into $\CC \cup \{\infty\}$.
The group $\PSL_2(\RR)$, which acts on $\CC \cup \{\infty\}$ by fractional linear transformations, preserves the hyperbolic plane $\HH$, and acts on it by hyperbolic isometries.

The hyperbolic plane admits a tessellation by ideal triangles according to the following iterative construction. We begin with an `initial geodesic' $E$, namely the straight vertical line from $0$ to $\infty$. Starting with $E$,
we complete every geodesic in the tessellation to a triangle according to the following rule, called the \emph{Farey rule}. By induction, every geodesic in the tessellation has rational vertices $r/s$ and $r'/s'$ (the integers $r,s,r',s'$ are uniquely determined by construction). We add a geodesic from $r/s$ to $(r+r')/(s+s')$ and a geodesic from $(r+r')/(s+s')$ to $r'/s'$ (see \cref{fig:stfatr}).
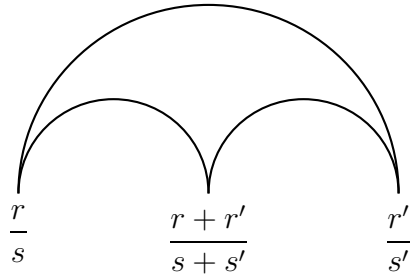
\begin{figure}[h]
\tikzset{every picture/.style={line width=0.75pt}} \begin{tikzpicture}
    \node(A) at (0, 0) [below] {$\displaystyle\frac{r}{s}$};
    \node(B) at (2.5, 0) [below] {$\displaystyle\frac{r+r'}{s+s'}$};
    \node(C) at (5, 0) [below] {$\displaystyle\frac{r'}{s'}$};        
    \drawsemicircle(0,0)(5,0)(thick);
    \drawsemicircle(0,0)(2.5,0)(thick);
    \drawsemicircle(2.5,0)(5.0,0)(thick);
\end{tikzpicture}
\caption{A standard Farey triangle.}\label{fig:stfatr}
\end{figure}
The `positive half' of the hyperbolic plane, those $z \in \HH$ such that $0\leq \Re(z) < \infty$, is covered by applying the rule to the initial geodesic $E$ with its vertices viewed as the fractions $0/1$ and $1/0$. The `negative half' is obtained by applying the rule to $E$ with its vertices viewed as the fractions $0/1$ and $-1/0$.
We call the resulting tessellation the Farey tessellation of the hyperbolic plane. The set of vertices of the Farey tessellation is exactly $\Q \cup \{\infty\}$. Moreover, the Farey tessellation is preserved by the $\PSL_{2}(\Z)$-action described above (that is, the $\PSL_{2}(\Z)$-action sends triangles to triangles), and the $\PSL_{2}(\Z)$-orbit of any Farey triangle is the entire hyperbolic plane.
\par
Morier-Genoud and Ovsienko introduced a \emph{$q$-deformed Farey rule}, which generates a tessellation of a subset of $\HH$.
As with the regular Farey rule, we begin with a geodesic $E$ between $0$ and $\infty$, but it is now assigned a label $q^{-1}$. We then extend every geodesic in the tessellation, with vertices $\R(q)/\S(q)$ and $\R'(q)/\S'(q)$ (the vertices are now rational functions evaluated at $q$), to a triangle according to \cref{qfatr} (see~\cite[Definition 2.9]{mor.ovs:20}).
\begin{figure}[h]

\tikzset{every picture/.style={line width=0.75pt}} 

\begin{tikzpicture}[x=0.75pt,y=0.75pt,yscale=-1,xscale=1]

\draw    (219,131) .. controls (247.5,7) and (399.5,5) .. (424,131) ;
\draw    (219,131) .. controls (241.7,86.65) and (277.7,90.65) .. (290.5,128) ;
\draw    (290.5,128) .. controls (313.7,55.65) and (391.7,43.65) .. (424,131) ;

\draw (213,133) node [anchor=north west][inner sep=0.75pt]   [align=left] {$\displaystyle \frac{\mathcal{R}( q)}{\mathcal{S}( q)}$};
\draw (263.5,132) node [anchor=north west][inner sep=0.75pt]   [align=left] {$\displaystyle \frac{\mathcal{R}( q) +q^{k}\mathcal{R} '( q)}{\mathcal{S}( q) +q^{k}\mathcal{S} '( q)}$};
\draw (314,13) node [anchor=north west][inner sep=0.75pt]   [align=left] {$\displaystyle q^{k-1}$};
\draw (253,80) node [anchor=north west][inner sep=0.75pt]   [align=left] {$\displaystyle 1$};
\draw (344,47) node [anchor=north west][inner sep=0.75pt]   [align=left] {$\displaystyle q^{k}$};
\draw (408,135) node [anchor=north west][inner sep=0.75pt]   [align=left] {$\displaystyle \frac{\mathcal{R} '( q)}{\mathcal{S} '( q)}$};

\end{tikzpicture}

\caption{A $q$-deformed Farey triangle.}\label{qfatr}
\end{figure}
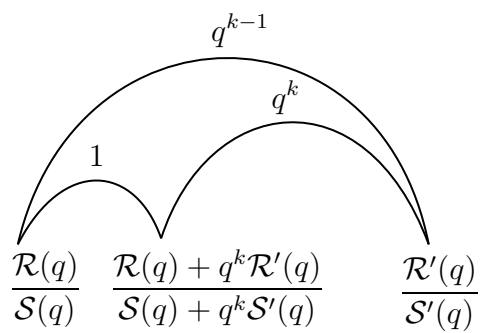
For the positive half, view $E$ as being the top edge in \cref{qfatr} connecting the fractions $0/1$ and $1/0$, and for the negative half, as connecting the fractions $0/q$ and $-1/0$.
The bottom left edge of any triangle is always labelled $1$, and we thus iteratively deduce labels for all other edges.
The set of vertices of the $q$-deformed Farey tessellations is exactly the set of \emph{right} $q$-deformed rational numbers $[r/s]^{\sharp}_q$.
\cref{fig:tessellations-at-various-q} depicts what the $q$-deformed Farey tessellations look like at two different values of $q$.
\begin{figure}[h]
  \centering
  \begin{subfigure}[h]{0.4\linewidth}
\centering
\begin{tikzpicture}
\drawHsemicircle(0.0,5.0)(line width = 0.4pt)()();
\drawHsemicircle(0.0,1.1538461538461537)(line width = 0.3pt)()();
\drawHsemicircle(1.1538461538461537,5.0)(line width = 0.3pt)()();
\drawHsemicircle(0.0,0.32374100719424453)(line width = 0.2pt)()();
\drawHsemicircle(0.32374100719424453,1.1538461538461537)(line width = 0.2pt)()();
\drawHsemicircle(1.1538461538461537,1.4028776978417266)(line width = 0.2pt)()();
\drawHsemicircle(1.4028776978417266,5.0)(line width = 0.2pt)()();
\drawHsemicircle(0.0,0.09527170077628794)(line width = 0.1pt)()();
\drawHsemicircle(0.09527170077628794,0.32374100719424453)(line width = 0.1pt)()();
\drawHsemicircle(0.32374100719424453,0.3881884538818845)(line width = 0.1pt)()();
\drawHsemicircle(0.3881884538818845,1.1538461538461537)(line width = 0.1pt)()();
\drawHsemicircle(1.1538461538461537,1.2143273150844496)(line width = 0.1pt)()();
\drawHsemicircle(1.2143273150844496,1.4028776978417266)(line width = 0.1pt)()();
\drawHsemicircle(1.4028776978417266,1.4714184897671136)(line width = 0.1pt)()();
\drawHsemicircle(1.4714184897671136,5.0)(line width = 0.1pt)()();
\end{tikzpicture}
\caption{$q = 0.3$}
  \end{subfigure}
  \begin{subfigure}[h]{0.4\linewidth}
\centering
    \begin{tikzpicture}
      \drawHsemicircle(0.0,5.0)(line width = 0.4pt)()();
\drawHsemicircle(0.0,2.058823529411765)(line width = 0.3pt)()();
\drawHsemicircle(2.058823529411765,5.0)(line width = 0.3pt)()();
\drawHsemicircle(0.0,1.1187214611872145)(line width = 0.2pt)()();
\drawHsemicircle(1.1187214611872145,2.058823529411765)(line width = 0.2pt)()();
\drawHsemicircle(2.058823529411765,2.7168949771689497)(line width = 0.2pt)()();
\drawHsemicircle(2.7168949771689497,5.0)(line width = 0.2pt)()();
\drawHsemicircle(0.0,0.6770627714172917)(line width = 0.1pt)()();
\drawHsemicircle(0.6770627714172917,1.1187214611872145)(line width = 0.1pt)()();
\drawHsemicircle(1.1187214611872145,1.3777704267284154)(line width = 0.1pt)()();
\drawHsemicircle(1.3777704267284154,2.058823529411765)(line width = 0.1pt)()();
\drawHsemicircle(2.058823529411765,2.370862975564491)(line width = 0.1pt)()();
\drawHsemicircle(2.370862975564491,2.7168949771689497)(line width = 0.1pt)()();
\drawHsemicircle(2.7168949771689497,3.0260560600078956)(line width = 0.1pt)()();
\drawHsemicircle(3.0260560600078956,5.0)(line width = 0.1pt)()();
\end{tikzpicture}
\caption{$q = 0.7$}
  \end{subfigure}
  \caption{The deformed Farey tessellations at two values of $q$.}
  \label{fig:tessellations-at-various-q}
\end{figure}
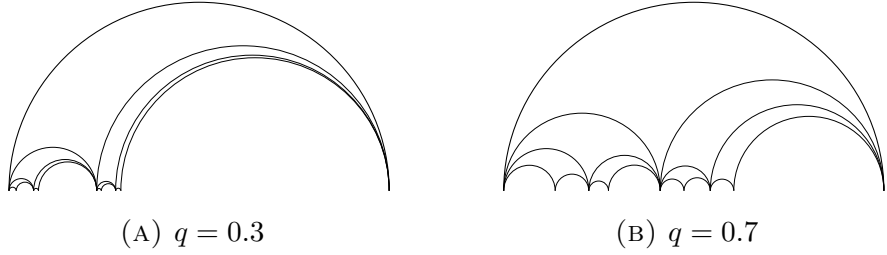

The action of $\PSL_{2,q}(\Z)$ on $\HH$ preserves $q$-deformed Farey triangles (since $\PSL_{2,q}(\Z)$ is a subgroup of $\PSL_{2}(\RR)$, which preserves geodesics). Moreover, $\PSL_{2,q}(\Z)$ acts on the $q$-deformed Farey tessellation exactly as $\PSL_{2}(\Z)$ acts on the classical Farey tessellation. More formally, let $\mathbb{T}$ denote the set of triangles in the Farey tessellation, and let $\mathbb{T}_{q}$ denote the set of $q$-deformed triangles.
Since the $\PSL_{2}(\Z)$-action preserves Farey triangles, there is an induced $\PSL_{2}(\Z)$-action on $\mathbb{T}$, and similarly there is an induced $\PSL_{2,q}(\Z)$-action on $\mathbb{T}_{q}$.
Let $\psi \colon \PSL_{2,q}(\Z) \to \PSL_2(\Z)$ be the group homomorphism given by setting $q = 1$, and let $\f \colon \mathbb{T}_q \to \mathbb{T}$ be induced from the map $\R(q)/\S(q) \mapsto \R(1)/\S(1)$ on the vertices.
Then for any $\b \in \PSL_{2, q}(\Z)$, we have the following commutative square, as can be proven by comparing the actions on the vertices of the respective tessellations.
\[
\begin{tikzcd}
\mathbb{T}_{q} \arrow[d, "\beta"] \arrow[r, "\phi"] & \mathbb{T} \arrow[d, "\psi(\beta)"] \\
\mathbb{T}_{q} \arrow[r, "\phi"]                    & \mathbb{T}                         
\end{tikzcd}
\]

We can now describe how left $q$-rationals arise in the $q$-deformed Farey tessellation. Consider any rational $r/s$. In the standard Farey tessellation, $r/s$ is the middle vertex of a unique triangle, and the left (resp. right) vertex of an infinite sequence of triangles. In the notation of \cref{fig:stdfarey}, $r/s$ is the right vertex of $T_{0},T_{1},...$, and the left vertex of $T_{0}^{'},T_{1}^{'},...$ .

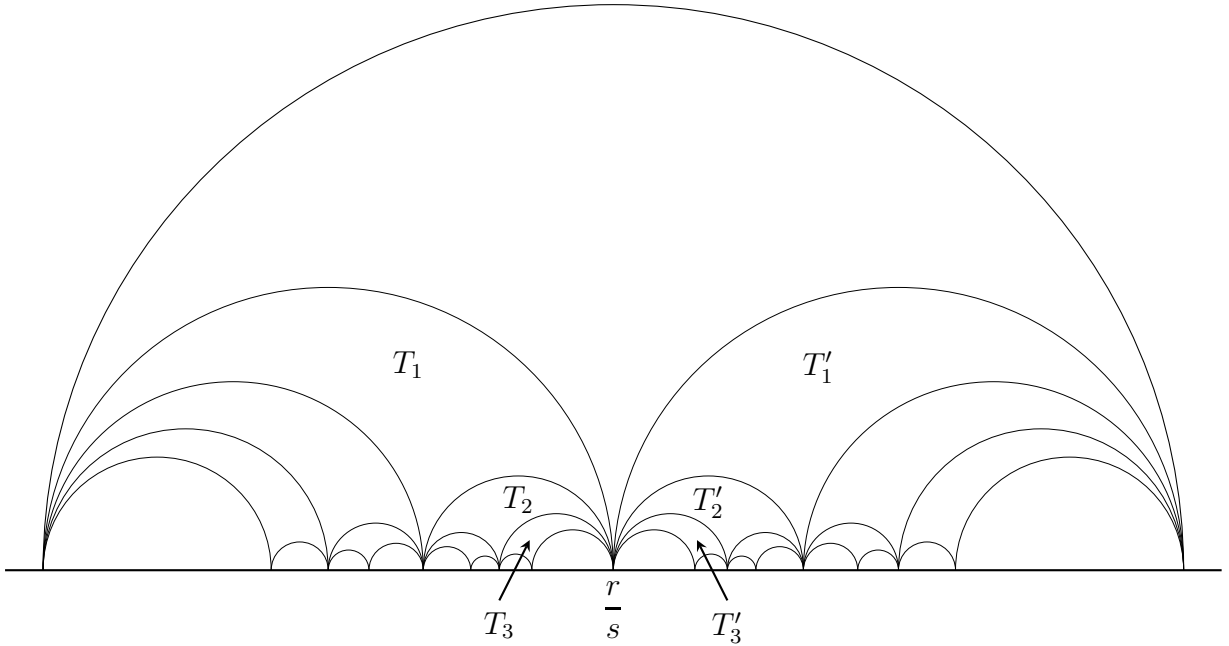
\begin{figure}[h]
  \centering
  \begin{tikzpicture}

\drawHsemicircle(0.0,15.0)(line width = 0.4pt)()();
\drawHsemicircle(0.0,7.5)(line width = 0.32pt)(midway, below right=1cm)($T_1$);
\drawHsemicircle(7.5,15.0)(line width = 0.32pt)(midway, below left=1cm)($T_1'$);
\drawHsemicircle(0.0,5.0)(line width = 0.24pt)()();
\drawHsemicircle(5.0,7.5)(line width = 0.24pt)(midway, below)($T_2$);
\drawHsemicircle(7.5,10.0)(line width = 0.24pt)(midway, below)($T_2'$);
\drawHsemicircle(10.0,15.0)(line width = 0.24pt)()();
\drawHsemicircle(0.0,3.75)(line width = 0.16pt)()();
\drawHsemicircle(3.75,5.0)(line width = 0.16pt)()();
\drawHsemicircle(5.0,6.0)(line width = 0.16pt)()();
\drawHsemicircle(6.0,7.5)(line width = 0.16pt)()();
\draw[thick, ->, >=stealth] (6,-0.4) node [below]{$T_3$} -- (6.4,0.4);
\drawHsemicircle(7.5,9.0)(line width = 0.16pt)()();
\draw[thick, ->, >=stealth] (9,-0.4) node [below]{$T_3'$} -- (8.6,0.4);
\drawHsemicircle(9.0,10.0)(line width = 0.16pt)()();
\drawHsemicircle(10.0,11.25)(line width = 0.16pt)()();
\drawHsemicircle(11.25,15.0)(line width = 0.16pt)()();
\drawHsemicircle(0.0,3.0)(line width = 0.08pt)()();
\drawHsemicircle(3.0,3.75)(line width = 0.08pt)()();
\drawHsemicircle(3.75,4.285714285714286)(line width = 0.08pt)()();
\drawHsemicircle(4.285714285714286,5.0)(line width = 0.08pt)()();
\drawHsemicircle(5.0,5.625)(line width = 0.08pt)()();
\drawHsemicircle(5.625,6.0)(line width = 0.08pt)()();
\drawHsemicircle(6.0,6.428571428571429)(line width = 0.08pt)()();
\drawHsemicircle(6.428571428571429,7.5)(line width = 0.08pt)()();
\drawHsemicircle(7.5,8.571428571428571)(line width = 0.08pt)()();
\drawHsemicircle(8.571428571428571,9.0)(line width = 0.08pt)()();
\drawHsemicircle(9.0,9.375)(line width = 0.08pt)()();
\drawHsemicircle(9.375,10.0)(line width = 0.08pt)()();
\drawHsemicircle(10.0,10.714285714285714)(line width = 0.08pt)()();
\drawHsemicircle(10.714285714285714,11.25)(line width = 0.08pt)()();
\drawHsemicircle(11.25,12.0)(line width = 0.08pt)()();
\drawHsemicircle(12.0,15.0)(line width = 0.08pt)()();
\draw[thick] (-0.5,0) -- (15.5,0);
\node at (7.5,0) [below] {$\displaystyle \frac{r}{s}$};
  \end{tikzpicture}
  \caption{A fraction in the standard Farey tessellation.}\label{fig:stdfarey}
  \label{fig:stdfarey2}
\end{figure}

Assume for convenience that $0<r/s<\infty$, and let $T_{A}$ denote the triangle with vertices $0,1,\infty$.
If the continued fraction expansion of $r/s$ is $\mathbf{a}$, then 
\[T_{0}=\b_{\mathbf{a}}T_{A}, \quad T_{n}=(\b_{\mathbf{a}}\s_{1}^{-1}\b_{\mathbf{a}}^{-1})^{n}T_{0}.\]
Similarly, letting $\b_{\mathbf{a}}^{'}=\b_{\mathbf{a}}\s_{2}^{-1}\s_{1}^{-1}$, we have
\[T_{0}^{'}=\b_{\mathbf{a}}^{'}T_{A}, \quad T_{n}^{'}=(\b_{\mathbf{a}}^{'}\s_{2}(\b_{\mathbf{a}}^{'})^{-1})^{n}T_{0}.\]

As suggested by \cref{fig:stdfarey}, for any point $x \in T_{0}$,
$$\lim_{n \to \infty}(\b_{\mathbf{a}}\s_{1}^{-1}\b_{\mathbf{a}}^{-1})^{n}x=\frac{r}{s},$$
and for any point $x' \in T_{0}^{'}$,
$$\lim_{n \to \infty}(\b_{\mathbf{a}}^{'}\s_{2}(\b_{\mathbf{a}}^{'})^{-1})^{n}x'=\frac{r}{s}.$$
So we can view the rational point $r/s$ in the boundary as the limit of the sequences of Farey triangles $(T_{n})_{n \in \N}, (T_{n}^{'})_{n \in \N}$.

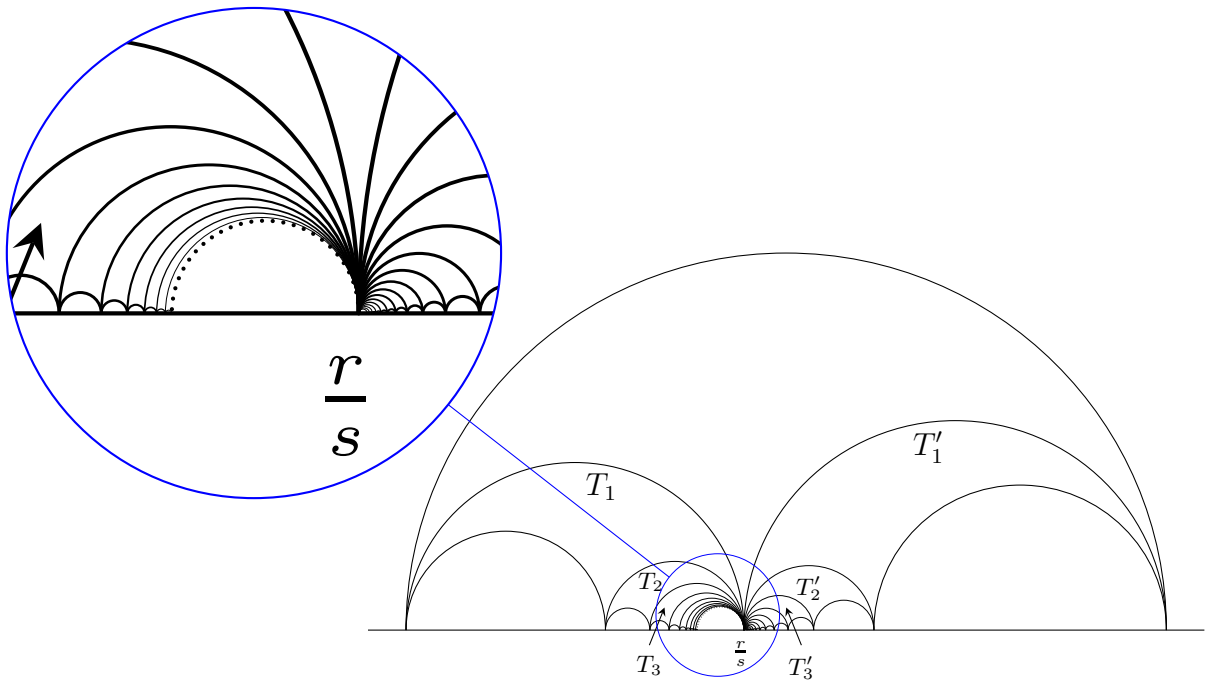
\begin{figure}[h]
\begin{center}
  \begin{tikzpicture}[spy using outlines]
    \drawHsemicircle(0,10)()()();
\drawHsemicircle(0.0,4.444444444444445)(line width = 0.4pt)(midway, below right)($T_1$);
\drawHsemicircle(0.0,2.6229508196721314)(line width = 0.4pt)()();
\drawHsemicircle(2.6229508196721314,4.444444444444445)(line width = 0.36pt)(font=\scriptsize, midway, below left)($T_2$);
\drawHsemicircle(2.6229508196721314,3.2071269487750556)(line width = 0.36pt)()();
\drawHsemicircle(3.2071269487750556,4.444444444444445)(line width = 0.32pt)()();
\draw[->, >=stealth] (3.2, -0.2) node[below, font=\scriptsize]{$T_3$}  -- (3.4, 0.3);
\drawHsemicircle(3.2071269487750556,3.459766040411202)(line width = 0.32pt)()();
\drawHsemicircle(3.459766040411202,4.444444444444445)(line width = 0.28pt)()();
\drawHsemicircle(3.459766040411202,3.5980254738253405)(line width = 0.28pt)()();
\drawHsemicircle(3.5980254738253405,4.444444444444445)(line width = 0.24pt)()();
\drawHsemicircle(3.5980254738253405,3.6835011669826105)(line width = 0.24pt)()();
\drawHsemicircle(3.6835011669826105,4.444444444444445)(line width = 0.2pt)()();
\drawHsemicircle(3.6835011669826105,3.7403810864024307)(line width = 0.2pt)()();
\drawHsemicircle(3.7403810864024307,4.444444444444445)(line width = 0.16pt)()();
\drawHsemicircle(3.7403810864024307,3.780107993683086)(line width = 0.16pt)()();
\drawHsemicircle(3.780107993683086,4.444444444444445)(line width = 0.12pt)()();
\drawHsemicircle(3.780107993683086,3.8088010328397925)(line width = 0.12pt)()();
\drawHsemicircle(3.8088010328397925,4.444444444444445)(line width = 0.08pt)()();
\drawHsemicircle(3.8088010328397925,3.8300305201200535)(line width = 0.08pt)()();
\drawHsemicircle(3.8300305201200535,4.444444444444445)(line width = 0.4pt, line cap = round, dash pattern=on 0pt off 1pt)()();

\drawHsemicircle(4.444444444444445,10.0)(line width = 0.4pt)(midway, below left)($T_1'$);
\drawHsemicircle(6.153846153846153,10.0)(line width = 0.4pt)()();
\drawHsemicircle(4.444444444444445,6.153846153846153)(line width = 0.37pt)(font=\scriptsize, midway, below)($T_2'$);
\drawHsemicircle(5.360824742268042,6.153846153846153)(line width = 0.37pt)()();
\draw[->, >=stealth] (5.2,-0.2) node[below, font=\scriptsize]{$T_3'$} -- (5,0.3);
\drawHsemicircle(4.444444444444445,5.360824742268042)(line width = 0.35pt)()();
\drawHsemicircle(5.02446982055465,5.360824742268042)(line width = 0.35pt)()();
\drawHsemicircle(4.444444444444445,5.02446982055465)(line width = 0.32pt)()();
\drawHsemicircle(4.842046407604138,5.02446982055465)(line width = 0.32pt)()();
\drawHsemicircle(4.444444444444445,4.842046407604138)(line width = 0.29pt)()();
\drawHsemicircle(4.729844980685296,4.842046407604138)(line width = 0.29pt)()();
\drawHsemicircle(4.444444444444445,4.729844980685296)(line width = 0.27pt)()();
\drawHsemicircle(4.655423384666736,4.729844980685296)(line width = 0.27pt)()();
\drawHsemicircle(4.444444444444445,4.655423384666736)(line width = 0.24pt)()();
\drawHsemicircle(4.603559460399648,4.655423384666736)(line width = 0.24pt)()();
\drawHsemicircle(4.444444444444445,4.603559460399648)(line width = 0.21pt)()();
\drawHsemicircle(4.566158878120386,4.603559460399648)(line width = 0.21pt)()();
\drawHsemicircle(4.444444444444445,4.566158878120386)(line width = 0.19pt)()();
\drawHsemicircle(4.538518347651663,4.566158878120386)(line width = 0.19pt)()();
\drawHsemicircle(4.444444444444445,4.538518347651663)(line width = 0.16pt)()();
\drawHsemicircle(4.517718336630378,4.538518347651663)(line width = 0.16pt)()();
\drawHsemicircle(4.444444444444445,4.517718336630378)(line width = 0.13pt)()();
\drawHsemicircle(4.50185209145278,4.517718336630378)(line width = 0.13pt)()();
\drawHsemicircle(4.444444444444445,4.50185209145278)(line width = 0.11pt)()();
\drawHsemicircle(4.489623596945505,4.50185209145278)(line width = 0.11pt)()();
\drawHsemicircle(4.444444444444445,4.489623596945505)(line width = 0.08pt)()();
\drawHsemicircle(4.480123524670582,4.489623596945505)(line width = 0.08pt)()();
\drawHsemicircle(4.444444444444445,4.480123524670582)(line width = 0.05pt)()();
\drawHsemicircle(4.472697393327169,4.480123524670582)(line width = 0.05pt)()();
\drawHsemicircle(4.444444444444445,4.472697393327169)(line width = 0.03pt)()();
\drawHsemicircle(4.466864375947158,4.472697393327169)(line width = 0.03pt)()();
\draw[thin] (-0.5,0) -- (10.5,0);
\node at (4.4,0) [below, font=\scriptsize] {$\frac{r}{s}$};

\spy[blue, draw, circle, width=6.5cm, magnification = 4, connect spies] on (4.1, 0.2) in node at (-2,5);
\end{tikzpicture}
\end{center}
\caption{The sequence of triangles $T_n$ converging to the $q$-deformed rational curve of $r/s$, shown as a dotted line.}\label{fig:qfarey}
\end{figure}

When we $q$-deform the Farey tessellation, we obtain corresponding sequences of triangles $(T_{q,n})_{n \in \N}, (T_{q,n}^{'})_{n\in \N}$.
As we will partially prove in \cref{subsec:limits-q-rationals}, based on arguments from~\cite{mor.ovs:19*1}, the triangles $T_{q,n}^{'}$ once again limit to a point, but the triangles $T_{q,n}$ limit to a geodesic, with endpoints on the boundary (see \cref{fig:qfarey}).
We call this geodesic the \emph{$q$-deformed rational curve} corresponding to $r/s$.
The left $q$-rational corresponding to $r/s$ is the left endpoint of a $q$-deformed rational curve, and the right $q$-rational corresponding to $r/s$ is the right endpoint of the same curve.
We denote by $\Q_q$ the union of all of the $q$-deformed rational curves, together with their endpoints.
This motivates our definition of left and right $q$-rationals.
Namely, if $T_{n} \xrightarrow{n \to \infty} r/s$, then the corresponding $q$-deformed sequence of triangles $T_{n,q}$ limits to the geodesic with endpoints $[r/s]_{q}^{\flat}$ and $[r/s]_{q}^{\sharp}$.

\subsection{Alternate descriptions of the deformed rational numbers}
The left and right $q$-deformed rational numbers admit several elegant intrinsic definitions.
Recall the classical $q$-deformation of the integers, which we call the \emph{right} $q$-deformation:
\[[n]^{\sharp}_{q}:=\frac{1-q^{n}}{1-q} = 1 + q + \cdots + q^{n-1}.\]
We introduce a corresponding \emph{left} $q$-deformation of $n$:
\[[n]_{q}^{\flat}:=\frac{1-q^{n-1}+q^{n}-q^{n+1}}{1-q} = 1 + q + \cdots + q^{n-2} + q^n.\]

The following proposition gives equivalent definitions of the two $q$-deformations of the rational numbers, and is easy to check.
In fact, this is how the (right) $q$-deformed rational numbers were originally defined (see~\cite[Definition 1.1]{mor.ovs:20}).
\begin{prop}\label{qratdef}
  Let $r/s$ be a finite rational number with even continued fraction expansion $[a_{1},...,a_{2n}]$.
  \begin{enumerate}
  \item  The corresponding right $q$-deformed rational has the following formula.
    \[\Big[\cfrac{r}{s}\Big]^{\sharp}_{q} =
      [a_{1}]^{\sharp}_{q}+\cfrac{q^{a_{1}}}{[a_{2}]^{\sharp}_{q^{-1}}+\cfrac{q^{-a_{2}}}{[a_{3}]^{\sharp}_{q}+\cfrac{q^{a_{3}}}{[a_{4}]^{\sharp}_{q^{-1}}+\cfrac{q^{-a_{4}}}{\cfrac{\ddots}{[a_{2n-1}]^{\sharp}_{q}+\cfrac{q^{a_{2n-1}}}{[a_{2n}]^{\sharp}_{q^{-1}}}}}}}}\]

  \item The corresponding left $q$-deformed rational has the following formula.
    \[\Big[\cfrac{r}{s}\Big]_{q}^{\flat}
      =[a_{1}]^{\sharp}_{q}+\cfrac{q^{a_{1}}}{[a_{2}]^{\sharp}_{q^{-1}}+\cfrac{q^{-a_{2}}}{\cfrac{\ddots}{[a_{2n-1}]^{\sharp}_{q}+\cfrac{q^{a_{2n-1}}}{[a_{2n}]_{q^{-1}}^{\flat}}}}}\]
  \end{enumerate}
\end{prop}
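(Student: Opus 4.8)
The plan is to unwind the two definitions $[r/s]^{\sharp}_q = \beta_{\mathbf{a},q}(\infty)$ and $[r/s]^{\flat}_q = \beta_{\mathbf{a},q}\bigl(1/(1-q)\bigr)$ by computing, inside $\PSL_2(\RR)$, the fractional linear transformations attached to the powers $\sigma_{1,q}^{-a}$ and $\sigma_{2,q}^{a}$ and then composing them in the order dictated by $\beta_{\mathbf{a}}$. The whole argument is bookkeeping once the right normalisations are fixed.

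First I would establish the matrix identities
\[
\sigma_{1,q}^{-a} = \begin{bmatrix} q^a & [a]^{\sharp}_q \\ 0 & 1 \end{bmatrix},
\qquad
\sigma_{2,q}^{a} = \begin{bmatrix} 1 & 0 \\ [a]^{\sharp}_{q^{-1}} & q^{-a} \end{bmatrix}
\]
in $\PSL_2(\RR)$, valid for every $a \in \Z$ (so that continued fractions with negative entries are covered as well). Both follow by a one-line induction on $|a|$ using $[a]^{\sharp}_q = 1 + q\,[a-1]^{\sharp}_q$, after normalising $\sigma_{1,q}^{-1}$ to $\begin{bmatrix} q & 1 \\ 0 & 1\end{bmatrix}$. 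Via \cref{frlitr} these become, as maps on $\RR \cup \{\infty\}$,
\[
\sigma_{1,q}^{-a}(w) = [a]^{\sharp}_q + q^a w,
\qquad
\sigma_{2,q}^{a}(w) = \cfrac{1}{[a]^{\sharp}_{q^{-1}} + \cfrac{q^{-a}}{w}}.
\]

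Next I would evaluate $\beta_{\mathbf{a},q} = \sigma_{1,q}^{-a_1}\sigma_{2,q}^{a_2}\cdots \sigma_{1,q}^{-a_{2n-1}}\sigma_{2,q}^{a_{2n}}$ on a point $z$ from the innermost block outwards. The affine maps $w \mapsto [a]^{\sharp}_q + q^{a} w$ from the odd blocks, alternating with the reciprocal-type maps from the even blocks, assemble term by term into exactly the nested fraction in the statement, with the innermost level equal to $\sigma_{2,q}^{a_{2n}}(z)$. Specialising $z = \infty$ gives $\sigma_{2,q}^{a_{2n}}(\infty) = 1/[a_{2n}]^{\sharp}_{q^{-1}}$, which yields part (1). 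For part (2) the only block that sees a different input is the innermost one, and $\sigma_{2,q}^{a_{2n}}\bigl(1/(1-q)\bigr) = \cfrac{1}{[a_{2n}]^{\sharp}_{q^{-1}} + q^{-a_{2n}}(1-q)}$; it then suffices to recognise $[a]^{\sharp}_{q^{-1}} + q^{-a}(1-q) = [a]^{\flat}_{q^{-1}}$, equivalently $[a]^{\flat}_q = [a]^{\sharp}_q - q^{a-1}(1-q)$, which is immediate from the closed forms $[a]^{\sharp}_q = \tfrac{1-q^a}{1-q}$ and $[a]^{\flat}_q = \tfrac{1-q^{a-1}+q^a-q^{a+1}}{1-q}$. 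Since every outer block acts exactly as in the $\sharp$ case, the nested fraction is unchanged except at its last level, giving part (2); the boundary case $r/s = 0$, with $\mathbf{a} = [-1,1]$, is checked directly.

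There is no real obstacle here — as the statement says, this is a routine verification. The only points needing care are the bookkeeping: that the $\sigma_1$-blocks contribute factors $[a]^{\sharp}_q$ while the $\sigma_2$-blocks contribute $[a]^{\sharp}_{q^{-1}}$, that each $\sigma_2$-block introduces a reciprocal so that the two types of continued-fraction step genuinely alternate, and that one works in $\PSL_2$ (not $\GL_2$) so the matrix-power identities hold on the nose; together with the short polynomial identity relating $[a]^{\flat}$ to $[a]^{\sharp}$ that makes the $1/(1-q)$ specialisation produce $[a_{2n}]^{\flat}_{q^{-1}}$ at the bottom of the fraction.
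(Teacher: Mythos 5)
Your proposal is correct, and it fills in exactly the verification that the paper leaves to the reader (the paper states the proposition "is easy to check" and gives no proof). The matrix formulas for $\sigma_{1,q}^{-a}$ and $\sigma_{2,q}^{a}$, the resulting alternation of affine and reciprocal steps, and the key identity $[a]^{\sharp}_{q^{-1}} + q^{-a}(1-q) = [a]^{\flat}_{q^{-1}}$ all check out, so this is the intended argument.
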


\begin{mydef}\label{qrat}
  Following \cite{mor.ovs:20}, we fix the following notation for the rational functions associated to the left and right $q$-deformed rational numbers:
  \[\left[\frac{r}{s}\right]_{q}^{\flat}=\frac{\R^{\flat}(q)}{\S^{\flat}(q)},\quad
    \left[\frac{r}{s}\right]_{q}^{\sharp}=\frac{\R^{\sharp}(q)}{\S^{\sharp}(q)},\]
  where the polynomials in the numerator and denominator are normalised so that if $r/s>0$ (resp. $r/s<0$), the denominator has lowest (resp. highest) degree term equal to 1.
  If $r/s=0$, we set
  \[\R^{\flat}(q) = 1-q^{-1},\quad \S^{\flat}(q) = 1, \quad
    \R^{\sharp}(q)=0,\quad \S^{\sharp}(q)=1.\]
  If $r/s=\infty$, we set
  \[\R^{\flat}(q)=1,\quad \S^{\flat}(q)=1-q, \quad
    \R^{\sharp}(q)=1, \quad \S^{\sharp}(q)=0.\]
\end{mydef}

\begin{remark}
Note that $\R^{\flat}(q)$, $\S^{\flat}(q)$, $\R^{\sharp}(q)$, and $\S^{\sharp}(q)$ depend on \emph{both} $r$ and $s$.
\end{remark}

The continued fraction expansion of every $q$-deformed rational number induces an element of $\PSL_{2,q}(\Z)$ analogous to that in \cref{continuedmatrices}.
Right-multiplying this element by $\infty$ (for right $q$-rationals) or $1/(1-q)$ (for left $q$-rationals) returns the original $q$-rational.
The following proposition expresses this element of $\PSL_{2,q}(\Z)$ in terms of the numerators and denominators of the corresponding $q$-rationals.
The second part of this proposition appears as~\cite[Proposition 4.3]{mor.ovs:20}.
\begin{prop}[Matrix formula for $q$-rationals]\label{qmatrices}
  Consider any rational number $r/s$ with continued fraction expansion $\mathbf{a}=[a_{1},...,a_{2n}]$.
  Let $r'/s'$ be the rational number with continued fraction expansion $[a_{1},...,a_{2n-1}]$. 
  \begin{enumerate}
  \item We have the following for left $q$-rationals.
    \[q^{a_{2}+a_{4}+...+a_{2n-2}}\b_{\mathbf{a},q}
      \begin{bmatrix}1 & 1-q^{-1} \\ 1-q&1  \end{bmatrix}=
      \begin{cases}
        \begin{bmatrix}q^{-a_{2n}}\R^{\flat}(q) & \R^{\flat\prime}(q) \\ q^{-a_{2n}}\S^{\flat}(q) & \S^{\flat\prime}(q) \end{bmatrix}, & \displaystyle 0\leq \frac{r}{s}<\infty,\\[3ex]
        \begin{bmatrix}
          q^{-a_{2n}+1}\R^{\flat}(q) & \R^{\flat\prime}(q) \\ q^{-a_{2n}+1}\S^{\flat}(q) & \S^{\flat\prime}(q)
        \end{bmatrix}, & \displaystyle \frac{r}{s}<0\text{ or } \frac{r}{s}=\infty.
\end{cases}\]
\item We have the following for right $q$-rationals. \[q^{a_{2}+a_{4}+...+a_{2n}}\b_{\mathbf{a},q}=
    \begin{cases}
      \begin{bmatrix}q\R^{\sharp}(q) & \R^{\sharp\prime}(q) \\ q\S^{\sharp}(q) & \S^{\sharp\prime}(q) \end{bmatrix},& \displaystyle 0\leq \frac{r}{s}<\infty,\\[3ex]
      \begin{bmatrix}\R^{\sharp}(q) & \R^{\sharp\prime}(q) \\ \S^{\sharp}(q) & \S^{\sharp\prime}(q) \end{bmatrix}, & \displaystyle\frac{r}{s}<0 \text{ or } \frac{r}{s}=\infty.
    \end{cases}\]
\end{enumerate}
\end{prop}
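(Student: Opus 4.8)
The plan is to prove both parts by induction on $n$, the half-length of the continued fraction expansion $\mathbf{a} = [a_1,\ldots,a_{2n}]$, using the recursive description of the numerator/denominator polynomials implicit in \cref{qratdef,qratdef}. The base cases ($n=0$, i.e.\ $r/s = \infty$, and the degenerate case $r/s = 0$) are handled directly from the normalisation conventions in \cref{qrat}; here one checks by hand that $\b_{[\,],q}$ is the identity and that the prescribed matrices match the values $\R^{\sharp}(q)=1$, $\S^{\sharp}(q)=0$, etc. For the inductive step, I would pass from $\mathbf{a}=[a_1,\ldots,a_{2n}]$ to the truncations $\mathbf{a}'' = [a_1,\ldots,a_{2n-2}]$ (even length $2n-2$) and note that $\b_{\mathbf{a},q} = \b_{\mathbf{a}'',q}\,\s_{1,q}^{-a_{2n-1}}\,\s_{2,q}^{a_{2n}}$. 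Both $\s_{1,q}^{-a}$ and $\s_{2,q}^{a}$ have explicit closed forms as $2\times 2$ matrices whose entries involve $[a]^{\sharp}_{q}$ or $[a]^{\sharp}_{q^{-1}}$ (this is exactly the point where the $q$-integers enter), and one just multiplies out.

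Concretely, for part (2) I would show that if $q^{a_2+\cdots+a_{2n-2}}\b_{\mathbf{a}'',q}$ equals the displayed matrix built from the pair $(r''/s'', (r'')'/(s'')')$ for the shorter expansion, then right-multiplying by the appropriate power of $\s_{1,q}$ and $\s_{2,q}$ and by the scalar $q^{a_{2n}}$ reproduces the matrix built from the pair $(r/s, r'/s')$. The key combinatorial identities to verify are the two-term recursions satisfied by $\R^{\sharp}, \S^{\sharp}$ (and their truncated counterparts $\R^{\sharp\prime}, \S^{\sharp\prime}$) under appending $\s_1^{-a_{2n-1}}\s_2^{a_{2n}}$ — these are precisely the $q$-analogues of the classical convergent recursion $p_k = a_k p_{k-1} + p_{k-2}$, suitably twisted by powers of $q$ and by the $q\leftrightarrow q^{-1}$ alternation dictated by the even/odd position. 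One has to be careful that the normalisation in \cref{qrat} (lowest- or highest-degree coefficient of the denominator equal to $1$, depending on the sign of $r/s$) is compatible with the scalar prefactors $q^{a_2+a_4+\cdots}$; tracking this is where the case split between $0\le r/s<\infty$ and $r/s<0$ (or $=\infty$) comes from, since appending a block can flip or preserve the sign.

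For part (1), the argument is structurally identical, but the basepoint is $1/(1-q)$ rather than $\infty$, which is why the extra column involves the matrix $\begin{bmatrix}1 & 1-q^{-1}\\ 1-q & 1\end{bmatrix}$ and why only the \emph{last} $q$-integer is replaced by the left-deformed $[a_{2n}]^{\flat}_{q^{-1}}$, per \cref{qratdef}(2). The inductive step here actually feeds off part (2): the truncation $[a_1,\ldots,a_{2n-2}]$ still carries a right $q$-rational in its last slot, so I would prove (1) and (2) together in a single induction, using the already-established sharp-case matrix identity for the length-$(2n-2)$ prefix and only modifying the final $\s_2^{a_{2n}}$ factor to account for $[a_{2n}]^{\flat}$ instead of $[a_{2n}]^{\sharp}$. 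The relation $[a]^{\flat}_q = [a]^{\sharp}_q - q^{a-1} + q^{a}\cdot 0$... more precisely $[a]^{\flat}_q - [a]^{\sharp}_q = q^{a} - q^{a-1}$, lets one write the left-case matrix as the right-case matrix plus a rank-one correction, which should make the bookkeeping manageable.

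The main obstacle I anticipate is not any single identity but the combined bookkeeping: simultaneously tracking (i) the alternation $q \leftrightarrow q^{-1}$ between even and odd continued-fraction positions, (ii) the accumulating scalar prefactor $q^{a_2+a_4+\cdots}$ and making sure it lands exactly as stated, and (iii) the sign-dependent normalisation of \cref{qrat} together with the resulting two-branch case split — all at once, and showing they are mutually consistent under appending one length-two block. Since the proposition is asserted to be "easy to check" and the sharp case is \cite[Proposition 4.3]{mor.ovs:20}, the real content is organising the induction so that the left (flat) case rides on the right (sharp) case with minimal extra work; once the right case is set up cleanly, the flat case should follow by the rank-one-correction observation above.
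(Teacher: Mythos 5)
Your proposal is correct and is essentially the argument the paper intends: the paper omits the proof, deferring part (2) to \cite[Proposition 4.3]{mor.ovs:20}, which is established by exactly this block-by-block induction via the $q$-deformed continuant recursion, and your observation that $\sigma_{2,q}^{a}$ carries the basepoint vector $(1,\,1-q)^{T}$ to $(1,\,[a]^{\flat}_{q^{-1}})^{T}$ (equivalently, $[a]^{\flat}_{q^{-1}}-[a]^{\sharp}_{q^{-1}}=q^{-a}-q^{-a+1}$) is the right mechanism for riding part (1) on part (2). One minor reassurance: since all entries of an even continued fraction expansion share a sign under the paper's convention, the two branches of the case split never mix during the induction, so the bookkeeping you flag as the main obstacle is lighter than you anticipate.
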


\subsection{Limits of $q$-deformed rational numbers}\label{subsec:limits-q-rationals}
The following theorem partially formalises the description of $q$-deformed rational numbers depicted in \cref{fig:stdfarey} and \cref{fig:qfarey}. The vertices of the $q$-Farey tessellation are the right $q$-deformed rationals, while the left $q$-deformed rational numbers arise as limits of sequences of right $q$-deformed rationals. This theorem is worth comparing to Morier-Genoud and Ovsienko's study of limiting behaviour of $q$-deformed rational numbers, in terms of Taylor series, in \cite{mor.ovs:19*1}, especially Theorem 1 and Remark 3.2. The proof owes its technique to the proofs in that paper.
\begin{theorem}[Limits of $q$-rationals]\label{qlimits}
Suppose $q<1$. Consider any convergent sequence of rational numbers $(r_{m}/s_{m})_{m \in \N} \in \Q^{\N}$.
\begin{enumerate}
\item If $(r_{m}/s_{m})_{m \in \N}$ converges to an irrational number $t \in \RR \setminus \Q$, then the sequence of right $q$-deformed rational numbers $([r_{m}/s_{m}]_{q}^{\sharp})_{m \in \N} \in \RR^{\N}$ converges to a limit, uniquely determined by $t$.
\item  If $(r_{m}/s_{m})_{m \in \N}$ converges to a rational number $r/s$ from the right, then the sequence of right $q$-deformed rational numbers $([r_{m}/s_{m}]_{q}^{\sharp})_{m \in \N} \in \RR^{\N}$ converges to $[r/s]_{q}^{\sharp}$.
\item If $(r_{m}/s_{m})_{m \in \N}$ converges to a rational number $r/s$ from the left, then the sequence of right $q$-deformed rational numbers $([r_{m}/s_{m}]_{q}^{\sharp})_{m \in \N} \in \RR^{\N}$ converges to $[r/s]_{q}^{\flat}$.
\end{enumerate}
\end{theorem}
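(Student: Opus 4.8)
\emph{The plan.} I would reduce all three parts to elementary one-dimensional dynamics of the M\"obius maps $\sigma_{1,q}^{\pm 1},\sigma_{2,q}^{\pm 1}$ on $\RR\cup\{\infty\}$, using the identity $[r/s]^{\sharp}_{q}=\beta_{\mathbf a,q}(\infty)$ together with the fact that the $q$-deformed continued-fraction combinatorics is identical to the classical one. The first step is to record the relevant facts for $0<q<1$: the map $\sigma_{2,q}\colon z\mapsto z/(z+q^{-1})$ fixes $0$ attractively (and sends $\infty$ to $1$), while $\sigma_{1,q}^{-k}\colon z\mapsto q^{k}z+[k]^{\sharp}_{q}$ has attracting fixed point $1/(1-q)$ and repelling fixed point $\infty$, so that $\sigma_{1,q}^{-k}$ carries $[0,1]$ into $[\,[k]^{\sharp}_{q},\,[k]^{\sharp}_{q}+q^{k}\,]$, an interval collapsing onto $1/(1-q)$ as $k\to\infty$. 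The second step is a uniform contraction lemma: for every even continued-fraction tuple $\mathbf b=[b_{1},\dots,b_{2k}]$, the image $\beta_{\mathbf b,q}([0,\infty])$ is an interval of Euclidean diameter at most $\lambda^{\,k-1}$, where $\lambda=\lambda(q)<1$, and $\beta_{[b_{1},\dots,b_{2k+2}],q}([0,\infty])\subseteq\beta_{[b_{1},\dots,b_{2k}],q}([0,\infty])$; I would prove this by grouping $\beta_{\mathbf b,q}$ into syllables $\sigma_{2,q}^{b_{2i}}\sigma_{1,q}^{-b_{2i+1}}$, each of which contracts the compact interval $[\,1,\,1/(1-q)\,]$ by a factor $\lambda$, once the innermost $\sigma_{2,q}^{b_{2k}}$ has sent $[0,\infty]$ into $[0,1]$.

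\emph{Part (1).} Let $t$ have infinite continued fraction $[c_{1},c_{2},\dots]$. By a standard property of continued fractions, $r_{m}/s_{m}\to t$ forces the even continued fraction of $r_{m}/s_{m}$ to begin with $[c_{1},\dots,c_{2k}]$ for all sufficiently large $m$. For such $m$ I would factor $[r_{m}/s_{m}]^{\sharp}_{q}=\beta_{[c_{1},\dots,c_{2k}],q}(\gamma_{m}(\infty))$, where $\gamma_{m}$ is the image in $\PSL_{2,q}(\Z)$ of the remaining braid word and $\gamma_{m}(\infty)\in[0,\infty]$. Thus $[r_{m}/s_{m}]^{\sharp}_{q}$ lies in the interval $\beta_{[c_{1},\dots,c_{2k}],q}([0,\infty])$; since these intervals are nested with diameter tending to $0$, the sequence $([r_{m}/s_{m}]^{\sharp}_{q})_{m}$ is Cauchy, and its limit equals $\bigcap_{k}\beta_{[c_{1},\dots,c_{2k}],q}([0,\infty])$, a single point depending only on $t$.

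\emph{Parts (2) and (3).} Here I would use the classical fact that rationals approaching $r/s$ from one side have continued fractions eventually extending one distinguished continued-fraction representation of $r/s$, while approach from the other side extends the other representation. If $r_{m}/s_{m}\to r/s$ from the left, then with $\mathbf a=[a_{1},\dots,a_{2n}]$ the even expansion of $r/s$, the even expansion of $r_{m}/s_{m}$ is $[a_{1},\dots,a_{2n},N_{m},\dots]$ for $m$ large, with the new entry $N_{m}\to\infty$ sitting in a $\sigma_{1}$-slot; hence $[r_{m}/s_{m}]^{\sharp}_{q}=\beta_{\mathbf a,q}\!\big(\sigma_{1,q}^{-N_{m}}(w_{m})\big)$ for some $w_{m}\in[0,1]$, and by the dynamical fact above $\sigma_{1,q}^{-N_{m}}(w_{m})\to 1/(1-q)$, so $[r_{m}/s_{m}]^{\sharp}_{q}\to\beta_{\mathbf a,q}\!\big(1/(1-q)\big)=[r/s]^{\flat}_{q}$, which is part (3). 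If $r_{m}/s_{m}\to r/s$ from the right, then instead the expansions extend the alternative, odd-length representation $\mathbf a'$ of $r/s$, and the escaping entry $N_{m}$ now sits in a $\sigma_{2}$-slot, so $[r_{m}/s_{m}]^{\sharp}_{q}=\beta_{\mathbf a',q}\!\big(\sigma_{2,q}^{N_{m}}(w_{m})\big)\to\beta_{\mathbf a',q}(0)$; and the one-line identity $\sigma_{1,q}^{-1}(0)=1=\sigma_{2,q}(\infty)$ shows $\beta_{\mathbf a',q}(0)=\beta_{\mathbf a,q}(\infty)=[r/s]^{\sharp}_{q}$, which is part (2). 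Throughout, I would first reduce to $r/s\in(0,\infty)$ using the $q\leftrightarrow q^{-1}$ symmetry and treat $r/s\in\{0,\infty\}$ by hand.

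\emph{The main obstacle.} Part (3) is the crux. In part (2) the nested $q$-triangles $T'_{q,N}$ of \cref{fig:qfarey} shrink to the single point $[r/s]^{\sharp}_{q}$, so any argument that traps $[r_{m}/s_{m}]^{\sharp}_{q}$ inside them will do; but in part (3) the triangles $T_{q,N}$ do \emph{not} shrink to a point -- they limit onto the whole $q$-deformed rational curve, the geodesic whose endpoints are $[r/s]^{\flat}_{q}$ and $[r/s]^{\sharp}_{q}$. One therefore genuinely needs the finer combinatorial input that approaching $r/s$ from the left forces the escaping continued-fraction entry into a $\sigma_{1}$-slot (equivalently $\beta_{\mathbf a}^{-1}(r_{m}/s_{m})\to+\infty$ rather than $-\infty$), so as to pin the limit to the $1/(1-q)$-endpoint and not the $\infty$-endpoint. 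Getting this side-dichotomy exactly right, together with the parity and sign bookkeeping in the exceptional cases $a_{2n}=1$, $r/s<0$ and $r/s\in\{0,\infty\}$, is where the real care is needed; the contraction estimate and the single-map dynamics are routine.
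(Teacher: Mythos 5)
Your argument is correct in substance, but it takes a genuinely different route from the paper's. The paper first reduces an arbitrary sequence to a canonical approximating sequence (the convergents in part (1), and $[a_{1},\dots,a_{2n}-1,1,m]$ resp.\ $[a_{1},\dots,a_{2n},m]$ in parts (2)--(3)) by invoking the order-preservation of $r/s\mapsto[r/s]^{\sharp}_{q}$ and squeezing; it then controls these canonical sequences algebraically, via the matrix formula of \cref{qmatrices}: in part (1) it bounds consecutive differences using $\det\b_{\mathbf a,q}$ and the positivity of the denominators $\S^{\sharp}(q)$, and in parts (2)--(3) it computes the projective limit of the matrix products explicitly. You instead work with general sequences directly, trapping $[r_{m}/s_{m}]^{\sharp}_{q}$ in the nested images $\beta_{[c_{1},\dots,c_{2k}],q}([0,\infty])$ and using uniform Euclidean contraction of $\sigma_{1,q}^{-1}$ and $\sigma_{2,q}$ on $[0,\infty)$ (indeed each letter contracts by a factor $q$ there, so you do not even need the syllable grouping), and then identifying the limits in (2)--(3) from the attracting fixed points $1/(1-q)$ of $\sigma_{1,q}^{-1}$ and $0$ of $\sigma_{2,q}$ together with the identity $\sigma_{1,q}^{-1}(0)=\sigma_{2,q}(\infty)$. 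What your approach buys is independence from the matrix formula and from the (unproved in the paper) monotonicity of the $q$-deformation, and a cleaner uniqueness statement in part (1) as a nested-intervals intersection; what it costs is the cylinder-set combinatorics of continued fractions (eventual agreement of prefixes, and the left/right $\leftrightarrow$ even/odd dichotomy), which you have stated correctly and which the paper also uses implicitly when it selects its approximating sequences. Your identification of part (3) as the crux, and of the side-dichotomy as the essential input, matches the structure of the paper's proof exactly.

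One point to fix: the reduction to $r/s\in(0,\infty)$ cannot be done by the $q\leftrightarrow q^{-1}$ symmetry, which exchanges the regimes $q<1$ and $q>1$ rather than the positive and negative rationals at fixed $q$. On the negative side your generators become $\sigma_{1,q}$ and $\sigma_{2,q}^{-1}$, which are Euclidean-\emph{expanding} near their relevant fixed points, so the contraction estimates genuinely fail there without a conjugation. The paper's reduction is the involution $x\mapsto -q/x$ at fixed $q$ (quoted from Leclere--Morier-Genoud--Ovsienko--Veselov), which conjugates the negative picture into the positive one; using that (or equivalently working in the round metric on $\RR\cup\{\infty\}$, where $\infty$ is an attracting fixed point of the affine map $\sigma_{1,q}$) repairs this step.
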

By \cref{qlimits} Part (1), for every irrational number $t$, there is a unique $q$-deformation $[t]_{q}$, which is the limit of the $q$-deformation of any rational sequence limiting to $t$. We call the set 
$$\mathbb{I}_{q}:=\{[t]_{q} \mid t \in \RR \sm \Q\}$$ 
the \emph{$q$-deformed irrational numbers}. 
\begin{proof}
  First, we prove (1).

  As noted in \cite[Section 2]{lec.mor.ovs.ea:21}, the negative $q$-rationals are the image of the positive $q$-rationals under the continuous bijection
$$x \mapsto -\frac{q}{x},$$
with $\infty$ sent to $0$ and vice versa. Thus it is sufficient to give a proof for positive $t$.

Let $(r_{n}/s_{n})_{n \in \N}$ converge to an irrational number $t \in \RR \setminus \Q$. It is well-known that every irrational number has a unique infinite continued fraction expansion $[a_{1},a_{2},...]$. We can approximate the sequence $(r_{n}/s_{n})_{n \in \N}$ by the `sequence of convergents' $([a_{1},a_{2},...,a_{n}])_{n \in \N}$. Since $[r/s]_{q}^{\sharp}>[r'/s']_{q}^{\sharp}$ if and only if $r/s>r'/s'$, the right $q$-deformed sequence $([r_{n}/s_{n}]_{q}^{\sharp})_{n \in \N}$ is also approximated by $([a_{1},a_{2},...,a_{n}]_{q}^{\sharp})_{n \in \N}$. Thus, we can assume without loss of generality that
$r_{n}/s_{n}=[a_{1},a_{2},...,a_{n}]$ for all $n \in \N$.
\par
We split $(r_{n}/s_{n})_{n \in \N}$ into two disjoint subsequences. Define the index set $I\subset \N$ by
$$I:=\Big\{n \in \N \mid \frac{r_{n}}{s_{n}}<t\Big\}.$$
Both $\lim_{n \in I}r_{n}/s_{n}$ and $\lim_{n \notin I}r_{n}/s_{n}$ exist, as each sequence is monotone and bounded. Denote the former by $[t]_{q}^{-}$ and the latter by $[t]_{q}^{+}$. 
\par
We would like to show $[t]_{q}^{-}=[t]_{q}^{+}$. It is sufficient to show that 
$$\lim_{n \to \infty}\Big\v\Big[\frac{r_{n}}{s_{n}}\Big]_{q}^{\sharp}-\Big[\frac{r_{n-1}}{s_{n-1}}\Big]_{q}^{\sharp}\Big\v=0.$$ Let $[r_{n}/s_{n}]_{q}^{\sharp}=\R^{\sharp}_{n}(q)/\S^{\sharp}_{n}(q)$, and let $[r_{n-1}/s_{n-1}]_{q}^{\sharp}=\R^{\sharp}_{n-1}(q)/\S^{\sharp}_{n-1}(q)$, using the notation of \cref{qrat}. By the matrix formula for right $q$-deformed rationals (\cref{qmatrices}), we have
$$\Big\v\frac{\R^{\sharp}_{n}(q)}{\S^{\sharp}_{n}(q)}-\frac{\R^{\sharp}_{n-1}(q)}{\S^{\sharp}_{n-1}(q)}\Big\v=q^{a_{2}+a_{4}+...+a_{n}-1}\frac{\v\det \b_{\mathbf{a},q}\v}{\S^{\sharp}_{n}(q)\S^{\sharp}_{n-1}(q)}.$$
By construction, $\S^{\sharp}_{n}(q)$ and $\S^{\sharp}_{n-1}(q)$ both have constant term $1$ and positive coefficients, so $\S^{\sharp}_{n}(q),\S^{\sharp}_{n-1}(q) \geq 1$. And one can compute that 
$$\det \b_{\mathbf{a},q}=q^{a_{1}-a_{2}+...+a_{n-1}-a_{n}},$$
therefore,
$$\Big\v\frac{\R^{\sharp}_{n}(q)}{\S^{\sharp}_{n}(q)}-\frac{\R^{\sharp}_{n-1}(q)}{\S^{\sharp}_{n-1}(q)}\Big\v\leq q^{a_{1}+a_{3}+...+a_{n-1}-1}.$$
(Technically this assumes $n$ is even, however an all but identical argument works for $n$ odd.)
Given the assumption that $t$ (and therefore also $\frac{r_{n}}{s_{n}}$ for sufficiently large $n$) are positive, we have $a_{1},...,a_{n-1}\geq 0$, so since $q<1$ we deduce that
$$\lim_{n \to \infty}\Big\v\Big[\frac{r_{n}}{s_{n}}\Big]_{q}^{\sharp}-\Big[\frac{r_{n-1}}{s_{n-1}}\Big]_{q}^{\sharp}\Big\v=0.$$
To prove that the limit is uniquely determined by $t$, we note again that $[r/s]_{q}^{\sharp}>[r'/s']_{q}^{\sharp}$ if and only if $r/s>r'/s'$. Thus, between any two `$q$-irrationals' $[t]_{q}, [t']_{q}$ for $t \neq t'$, there is a right $q$-rational $[r/s]_{q}^{\sharp}$, so we cannot have $[t]_{q}=[t']_{q}$.
\par
Now we prove (2).  As in the irrational case, we can assume that $r/s$ is positive. Let $[a_{1},...,a_{2n}]$ be the continued fraction expansion of $r/s$. Any right limiting sequence approaching $r/s$ can be approximated by the sequence $(r_{m}/s_{m})_{m \in \N}$, where $r_{m}/s_{m}$ has continued fraction expansion $\mathbf{a}_{m}=[a_{1},...,a_{2n}-1,1,m]$ (unless $a_{2n}=1$, in which case $r_{m}/s_{m}$ has expansion $\mathbf{a}_{m}=[a_{1},...,a_{2n-1}+1,m]$). Since $[r/s]_{q}^{\sharp}>[r'/s']_{q}^{\sharp}$ if and only if $r/s>r'/s'$, the corresponding right $q$-deformed sequence is also approximated by $([r_{m}/s_{m}]_{q}^{\sharp})_{m \in \N}$.
\par
Let
$[{r}/{s}]_{q}^{\sharp}=\R^{\sharp}(q)/\S^{\sharp}(q)$, and
$[r_{m}/s_{m}]_{q}^{\sharp}=\R_{m}^{\sharp}(q)/\S_{m}^{\sharp}(q)$, as defined in \cref{qrat}
We view the right $q$-deformed rational numbers as vectors in projective space. Applying the matrix formula for right $q$-rationals (\cref{qmatrices}), we have
\begin{align*}
\lim_{m \to \infty}\begin{bmatrix} \frac{q^{1-m}}{[m]_{q^{-1}}}\R_{m}^{\sharp}(q) \\ \frac{q^{1-m}}{[m]_{q^{-1}}}\S_{m}^{\sharp}(q) \end{bmatrix}&=\lim_{m \to \infty}\frac{1}{[m]_{q^{-1}}}q^{a_{2}+a_{4}+...+a_{2n}}\b_{\mathbf{a}_{m},q}\begin{bmatrix}1 \\ 0\end{bmatrix}
\\&=q^{a_{2}+a_{4}+...+a_{2n}}\s_{1,q}^{-a_{1}}\s_{2,q}^{a_{2}}...\s_{1,q}^{-a_{2n-1}}\s_{2,q}^{a_{2n}-1}\s_{1,q}^{-1}\begin{bmatrix} 0 & 0\\ 1 & q^{-1} -1\end{bmatrix} \begin{bmatrix} 1 \\ 0 \end{bmatrix}
\\ &=\begin{bmatrix}\R^{\sharp}(q)\\ \S^{\sharp}(q)\end{bmatrix}.
\end{align*}
\par
The final claim (3) can be proved in essentially the same way as (2), replacing the sequence $[a_{1},...,a_{2n}-1,1,m]$ with the sequence $[a_{1},...,a_{2n},m]$.
\end{proof}
\begin{remark}
In the preceding proof, we found that right $q$-deformed rationals correspond to limits of sequences of the form $([a_{1},...,a_{2n+1},m]_{q}^{\sharp})_{m \in \Z}$, and left $q$-deformed rationals correspond to limits of sequences of the form $([a_{1},...,a_{2n},m]_{q}^{\sharp})_{m \in \Z}$. Note that, for ordinary continued fractions,
\begin{align*}
\lim_{m \to \infty}[a_{1},...,a_{2n+1},m]=[a_{1},...,a_{2n+1}], &&\lim_{m \to \infty}[a_{1},...,a_{2n},m]=[a_{1},...,a_{2n}].
\end{align*}
Thus, in a sense, right $q$-rationals correspond to odd continued fraction expansions, and left $q$-rationals to even continued fraction expansions.
\end{remark}
\subsection{Topology of the $q$-Farey tessellation}\label{subsec:topology-q-tessellation}
We briefly summarise here the most important topological feature of the $q$-deformed Farey tessellation: it is an open disk, and the boundary of its closure is a circle, consisting of exactly the $q$-deformed rational curves, and the $q$-deformed irrational numbers. Recall that $\Q_{q}$ denotes the union of the $q$-deformed rational curves (including their endpoints, the left and right $q$-deformed rational numbers), and that $\mathbb{I}_{q}$ denotes the $q$-deformed irrational numbers.
\par
\begin{prop}\label{qtopology}
Let $L_{q}$ denote the proper subset of $\HH$ covered by the $q$-deformed Farey tessellation.
\begin{enumerate}
\item The set $L_{q}$ is homeomorphic to an open disk.
\item The boundary $\dd \ol{L}_{q}$ is equal to $\Q_{q} \sqcup \mathbb{I}_{q}$.
\item There  is a homeomorphism $\Q_{q} \sqcup \mathbb{I}_{q} \to \RR \cup \{\infty\}$, which restricts to the identity on the left and right $q$-deformed rationals, and the $q$-deformed irrationals. So, by Part (2), $\dd \ol{L}_{q} \cong \RR\cup \{\infty\}$.
\item The set $\Q_{q}$ is dense in $\dd\ol{L}_{q}$.
\end{enumerate}
\end{prop}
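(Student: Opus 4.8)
We would prove the four parts in order, using as the common tool the combinatorial dual tree $\Gamma$ of the $q$-deformed Farey tessellation (vertices: the $q$-deformed triangles; edges: pairs sharing a geodesic side). Since the $q$-deformed Farey rule has the same combinatorics as the classical one, $\Gamma$ is isomorphic to the dual tree of the classical tessellation. We freely use the $\PSL_{2,q}(\Z)$-action on $\ol{\HH}:=\HH\cup\RR\cup\{\infty\}$, which preserves $L_q$, hence $\ol{L}_q$ and $\dd\ol{L}_q$, and the convergence statements of \cref{qlimits}. For (1): every geodesic edge of the tessellation lying in $\HH$ is shared by exactly two triangles, so $L_q$ is an open, connected $2$-manifold without boundary; any loop in $L_q$ is compact, so meets only finitely many triangles (the triangles accumulate only on $\dd\ol{L}_q\subseteq\ol{\HH}\sm\HH$ along each end of $\Gamma$, by \cref{qlimits}), and the union of these finitely many triangles has dual graph a finite subtree of $\Gamma$, hence is a disk, so the loop is null-homotopic. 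Thus $L_q$ is a simply connected non-compact surface, hence homeomorphic to an open disk; equivalently, it is a proper simply connected open subset of $\HH\cong\RR^2$ (properness from $C_\infty\subseteq\HH\sm L_q$, shown below), so the Riemann mapping theorem applies.

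For (2), the inclusion $\Q_q\sqcup\mathbb{I}_q\subseteq\dd\ol{L}_q$ is the easy half. Each right $q$-rational $[r/s]^\sharp_q$ is an ideal vertex of a $q$-triangle, so it lies in $\ol{L}_q$, and it lies on $\partial\HH$, so not in $L_q$. Each $q$-irrational $[t]_q$ is by definition a limit of right $q$-rationals, so it too lies in $\dd\ol{L}_q$. For a point on the $q$-deformed rational curve $C_{r/s}$, equivariance reduces us to $r/s=\infty$: here $C_\infty$ is the vertical geodesic $\{\Re z=1/(1-q)\}$, and a short estimate --- every positive right $q$-rational is $<1/(1-q)$, by monotonicity of $r/s\mapsto[r/s]^\sharp_q$ together with $[n]^\sharp_q=(1-q^n)/(1-q)\uparrow 1/(1-q)$ --- shows the positive half of $L_q$ lies in $\{0\le\Re z<1/(1-q)\}$, so $C_\infty$ is disjoint from $L_q$ while being the Hausdorff limit of the triangles $T_{q,n}$, hence lies in $\dd\ol{L}_q$.

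The reverse inclusion $\dd\ol{L}_q\subseteq\Q_q\sqcup\mathbb{I}_q$ is the heart of the proposition and is where I expect the real work. Given $p\in\dd\ol{L}_q$, write $p=\lim_k p_k$ with $p_k$ in a $q$-triangle $T_k$. Since $p\notin L_q$, each closed triangle is closed in $\HH$, and a closed triangle contains its ideal vertices in $\ol{\HH}$: if the $T_k$ stay within a bounded $\Gamma$-distance of $T_0$ then $p$ is an ideal vertex of one of them, hence a $q$-rational; otherwise (after passing to a subsequence) the $T_k$ follow an infinite reduced ray in $\Gamma$. Along such a ray the three vertices of $T_k$ converge to at most two distinct points of $\partial\HH$ --- consecutive $q$-rational vertices converge together, by the estimate of \cref{qmatrices} and the positivity of the coefficients of the relevant denominators, exactly as in the proof of \cref{qlimits} --- so $T_k$ converges as a set either to a single point or to a single geodesic. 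In the first case the limit is a left or right $q$-rational or a $q$-irrational, by \cref{qlimits}; in the second the ray is eventually a spiral around an ideal vertex $[r/s]^\sharp_q$ and the limit geodesic is the corresponding $q$-deformed rational curve $C_{r/s}$, as in \cref{fig:qfarey}. Hence $p\in\Q_q\sqcup\mathbb{I}_q$. Carrying out this case analysis cleanly is the main obstacle.

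For (3), note $\dd\ol{L}_q$ is compact, being closed in the compact space $\ol{\HH}$. Using the bijection $C_{r/s}\leftrightarrow r/s$, $[t]_q\leftrightarrow t$ from (2), define $h\colon\dd\ol{L}_q\to\RR\cup\{\infty\}$ to be the inclusion $\partial\HH\hookrightarrow\RR\cup\{\infty\}$ on the $q$-irrationals and on the two endpoints $[r/s]^\flat_q,[r/s]^\sharp_q$ of each $C_{r/s}$, and on the interior of $C_{r/s}$ the orientation-preserving homeomorphism onto the open interval between those endpoints (which is $\beta_{\mathbf{a},q}$ applied to the interval with endpoints $1/(1-q)$ and $\infty$). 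Well-definedness and injectivity reduce to the closed intervals between $[r/s]^\flat_q$ and $[r/s]^\sharp_q$ being pairwise disjoint and disjoint from $\mathbb{I}_q$, which follows from the nested-or-disjoint structure of the ends of $\Gamma$ together with equivariance from the single base case $r/s=\infty$ and \cref{qlimits}; surjectivity is exactly the statement that $\RR\cup\{\infty\}$ is the disjoint union of these intervals and the $q$-irrationals, which falls out of the classification in (2); and continuity of $h$ is a local check, controlled by \cref{qlimits}. A continuous bijection from a compact space to a Hausdorff space is a homeomorphism, so $h$ is a homeomorphism, and by construction it restricts to the identity on the left and right $q$-rationals and on the $q$-irrationals. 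Finally, (4) is immediate from (2) and (3): every $q$-irrational is a limit of right $q$-rationals, each of which lies in $\Q_q$, so $\ol{\Q_q}\supseteq\Q_q\sqcup\mathbb{I}_q=\dd\ol{L}_q$.
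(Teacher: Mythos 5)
Your overall architecture is reasonable, and parts (1) and (4), together with the easy inclusion $\Q_q\sqcup\mathbb{I}_q\subseteq\dd\ol{L}_q$ in (2), are handled essentially as in the paper (which disposes of (1) in one line by comparing with the classical tessellation). The interesting divergence is in (2) and (3). For (2) the paper gives no direct geometric argument at all: it states that the proof is ``virtually the same'' as that of \cref{boundary}, i.e.\ it leans on the homological classification of $\dd\ol{M}_q$ (strict continued form, degeneration of the $q$-deformed triangle inequalities, $q$-Gromov coordinates), transported back via the homeomorphism $\rho_q$. Your proposed route through the dual tree $\Gamma$ is genuinely different and, if completed, would be more elementary and self-contained. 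But it is not completed: the case analysis along an infinite ray of $\Gamma$ --- showing that the triangles converge either to a single boundary point (hence to a left/right $q$-rational or a $q$-irrational) or to a single geodesic which is exactly a $q$-deformed rational curve, and that the limit of your points $p_k\in T_k$ lands in that limit set --- is precisely the content of the reverse inclusion, and you explicitly leave it as ``the main obstacle''. Moreover the justification ``consecutive $q$-rational vertices converge together'' is not quite right in the spiral case: there the common vertex $[r/s]^\sharp_q$ stays put while the other two vertices converge to $[r/s]^\flat_q$, which is exactly why a whole geodesic appears in the limit; the estimate needed is the one in the proof of part (3) of \cref{qlimits}, applied along the spiral.

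The second gap is in (3): surjectivity of your map $h$ does not ``fall out of the classification in (2)''. Part (2) identifies $\dd\ol{L}_q$ with $\Q_q\sqcup\mathbb{I}_q$ as subsets of $\ol{\HH}$; it says nothing about whether the real intervals $\bigl[[r/s]^\flat_q,[r/s]^\sharp_q\bigr]$ together with the $q$-irrationals exhaust $\RR\cup\{\infty\}$. A continuous injection from a compact space has closed image, but that image could a priori be a proper closed subset of the circle. This covering statement is exactly what the paper proves, independently of (2), by a digit-extraction argument: given $p$ lying in no interval of $\iota(\Q_q)$, one produces $a_1$ with $[a_1]^\sharp_q<p<[a_1+1]^\sharp_q$ using the monotone convergence of $[n]^\sharp_q$ to $[\infty]^\flat_q$, then $a_2$, and so on, exhibiting $p$ as the $q$-deformation of the irrational with expansion $[a_1,a_2,\dots]$ via part (1) of \cref{qlimits}. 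You need this argument or an equivalent. With it in hand, the paper also gets well-definedness and injectivity directly from the order inequalities $[t]^\flat_q\le[t]^\sharp_q<[t']^\flat_q\le[t']^\sharp_q$ for $t<t'$, rather than from the nesting structure of ends of $\Gamma$, which is a cleaner route than the one you sketch.
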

We expect the following conjecture to hold as well.
\begin{conjecture}
The closure of the $q$-deformed Farey tessellation $\ol{L}_{q}$ is homeomorphic to a closed disk.
\end{conjecture}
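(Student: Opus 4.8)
The plan is to deduce all four parts from one structural input --- that the $q$-deformed Farey tessellation is combinatorially isomorphic, \emph{as a tessellation}, to the classical one --- together with the limiting analysis of \cref{subsec:limits-q-rationals}. Write $\mathbb{T}$ and $\mathbb{T}_q$ for the triangle sets of the two tessellations; the bijection $\phi \colon \mathbb{T}_q \to \mathbb{T}$ together with its $\psi$-equivariance (the commutative square above) identifies the dual graph of either tessellation with the same infinite trivalent tree $\T$, rooted at the common base triangle $T_A$ with vertices $0,1,\infty$ (recall $[0]_q^{\sharp}=0$, $[1]_q^{\sharp}=1$, $[\infty]_q^{\sharp}=\infty$). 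I will use $\T$ and the developing construction below to move geometric information between the two pictures.

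For \textbf{(1)} I would construct a homeomorphism $g \colon \HH \to L_q$ by developing along $\T$. Enumerate the classical triangles $T^{(0)}, T^{(1)}, \dots$ so that $T^{(0)} = T_A$ and, for $n \ge 1$, $T^{(n)}$ meets exactly one earlier triangle, along a single edge (a breadth-first traversal of $\T$), and enumerate the $q$-triangles $T_q^{(n)} = \phi^{-1}(T^{(n)})$ in parallel. Fix any homeomorphism $\overline{T^{(0)}} \cap \HH \to \overline{T_q^{(0)}} \cap \HH$ carrying open edges to the corresponding open edges, and extend one triangle at a time: if $T^{(n)}$ is attached along an edge $e$ on which $g$ is already defined, extend $g$ over $\overline{T^{(n)}} \cap \HH$ to a homeomorphism onto $\overline{T_q^{(n)}} \cap \HH$ sending edges to corresponding edges and agreeing with $g$ on $e$ --- possible by the standard extension of a boundary homeomorphism of a disk, and unobstructed because $\T$ is a tree, so no compatibility condition is imposed on the other two edges of $T^{(n)}$. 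Both tessellations are locally finite inside $\HH$ (every point of $L_q$ lies in at most two closed $q$-triangles, with a neighbourhood meeting only those), so $g = \bigcup_n g^{(n)}$ and its piecewise-defined inverse are continuous; hence $L_q \cong \HH$, an open disk.

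For \textbf{(2)} and \textbf{(3)}: a classical rational $r/s$ is the common ideal vertex of the ``fan'' of triangles $T_0, T_1, \dots$ (and $T_0', T_1', \dots$) of \cref{fig:stdfarey}; under $\phi^{-1}$ this fan becomes $T_{q,0}, T_{q,1}, \dots$, and by the limiting analysis of \cref{subsec:limits-q-rationals} (following \cite{mor.ovs:19*1}) the $T_{q,n}$ converge not to a point but to the geodesic $C_{r/s}$ joining $[r/s]_q^{\flat}$ and $[r/s]_q^{\sharp}$ --- the $q$-deformed rational curve --- while the $T_{q,n}'$ converge to an endpoint of $C_{r/s}$. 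Thus each curve, with its endpoints, lies in $\overline{L_q} \setminus L_q$, so $\Q_q \subseteq \dd\overline{L_q}$; and along the ends of $\T$ indexed by irrational $t$, \cref{qlimits}(1) gives convergence to $[t]_q$, so $\II_q \subseteq \dd\overline{L_q}$. For the reverse inclusion take $z \in \overline{L_q} \setminus L_q$, write $z = \lim z_k$ with $z_k \in L_q$, and track $g^{-1}(z_k) \in \HH$: it cannot subconverge inside $\HH$ (that would force $z \in L_q$), so after passing to a subsequence $g^{-1}(z_k)$ tends to some $x \in \RR \cup \{\infty\}$; if $x$ is irrational then $z = [x]_q \in \II_q$ by \cref{qlimits}(1), and if $x = r/s$ then $g^{-1}(z_k)$ is eventually in the fan around $r/s$, forcing $z \in \overline{C_{r/s}} \subseteq \Q_q$. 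This proves $\dd\overline{L_q} = \Q_q \cup \II_q$; the union is disjoint since curve interiors lie in $\HH$ while $\II_q \subseteq \RR \cup \{\infty\}$, and strict monotonicity of $r/s \mapsto [r/s]_q^{\sharp}$ (from \cref{qmatrices}) forces every $q$-irrational strictly between the $q$-images of the rationals below and above it, hence equal to no $q$-rational. For the homeomorphism of (3), send each $q$-irrational and each curve endpoint to itself, and map the interior of each $C_{r/s}$ by an orientation-preserving homeomorphism onto the open interval $\bigl([r/s]_q^{\flat}, [r/s]_q^{\sharp}\bigr)$; surjectivity is the dichotomy that every real number is a $q$-rational, a $q$-irrational, or lies in one such gap interval, and injectivity uses that distinct curves --- and the associated gap intervals --- are pairwise disjoint. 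Continuity both ways follows from the geometric observation that $C_{r/s}$ (a Euclidean semicircle on the diameter $[[r/s]_q^{\flat},[r/s]_q^{\sharp}]$) and its gap interval both shrink to a point as their shared endpoints coalesce, so small neighbourhoods correspond to small neighbourhoods.

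Finally \textbf{(4)} is immediate: $\Q_q$ already contains all curve endpoints and all curves, and every $q$-irrational is a limit of right $q$-rationals by \cref{qlimits}(1), so $\overline{\Q_q} = \Q_q \cup \II_q = \dd\overline{L_q}$. The main obstacle throughout is the convergence $T_{q,n} \to C_{r/s}$ underpinning (2) and (3): this is the sole step that requires the contraction estimates of \cite{mor.ovs:19*1} / \cref{qlimits} rather than soft combinatorics, and it is also what makes the neighbourhood bookkeeping in the homeomorphism of (3) work.
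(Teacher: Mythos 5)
The statement you were asked to prove is the \emph{conjecture} that $\ol{L}_{q}$ is homeomorphic to a closed disk. The paper offers no proof of this: it is explicitly flagged as an expectation, and the preceding \cref{qtopology} is the most the authors establish. Your proposal, however, is not a proof of the conjecture at all --- it is a (mostly reasonable) sketch of the four parts of \cref{qtopology}: that $L_q$ is an open disk, that $\dd\ol{L}_q = \Q_q \sqcup \II_q$, that $\dd\ol{L}_q \cong \RR\cup\{\infty\}$, and density of $\Q_q$. Nowhere do you address the actual content of the conjecture, which is strictly stronger: knowing that $\ol{L}_q$ is the union of an open disk and a boundary circle does not by itself tell you that the union is a closed disk --- one must control \emph{how} the circle is attached. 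The missing bridge is a Jordan--Schoenflies-type argument: one would need to verify that $\dd\ol{L}_q$, which sits inside $\ol{\HH}\cup\{\infty\}\subset S^2$ and is abstractly homeomorphic to $S^1$, is an embedded Jordan curve whose complement in $S^2$ has $L_q$ as exactly one of its two components, whence $\ol{L}_q$ is the closed Jordan domain and Schoenflies applies. That step is never stated in your write-up, and it is precisely the step the authors decline to claim.

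Two further cautions if you do intend to complete the argument along these lines. First, your deduction of parts (2) and (3) leans on the assertion that the triangles $T_{q,n}$ converge to the full geodesic $C_{r/s}$; the paper only \emph{partially} proves this (\cref{qlimits} controls the limits of vertices, i.e.\ of right $q$-rationals, not Hausdorff convergence of the closed triangles to the closed geodesic), and part (2) of \cref{qtopology} is itself only proved in the paper by reference to the homological result \cref{boundary}. Second, your surjectivity argument for (3) needs the explicit continued-fraction bisection the paper carries out (producing $a_1, a_2, \dots$ trapping an arbitrary boundary point $p$); ``every real number is a $q$-rational, a $q$-irrational, or lies in a gap interval'' is the conclusion of that argument, not something you may assume. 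As it stands, your proposal reproves (with a different, more geometric gluing-along-the-tree construction) a proposition the paper already proves, and leaves the conjectured statement untouched.
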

Because the $q$-deformed rational curves are dense in the boundary of $\ol{L}_{q}$, these curves are arguably the true $q$-deformed analogues of the rational numbers, hence the notation $\Q_q$ for their union.
\begin{proof}[Proof of \cref{qtopology}]
Part (1) of \cref{qtopology} is true by construction; the $q$-deformed Farey tessellation is homeomorphic to the classical Farey tessellation, which is homeomorphic to an open disk. 
\par
The homological counterpart of Part (2) is \cref{boundary} below, and the proof of Part (2) is virtually the same as the proof of \cref{boundary}. 
\par
We will prove Part (3). Part (4) follows immediately from Part (3) and \cref{qlimits}.
\par
It is not hard to check that for any two real numbers $t<t'$ (which may be rational or irrational), we have the following inequalities:
\begin{equation}\label{order}[t]_{q}^{\flat}\leq[t]_{q}^{\sharp} <[t']_{q}^{\flat}\leq[t']_{q}^{\sharp}. \end{equation}
The left (resp. right) weak inequality is strict if and only if $t$ (resp. $t'$) is rational. Define an embedding $\iota:\Q_{q} \sqcup \mathbb{I}_{q} \to \RR \cup\{\infty\}$ as follows. For $t \in \RR \cup \{\infty\}$,
$$\iota([t]_{q}^{\flat}) = [t]_{q}^{\flat}, \quad \iota([t]_{q}^{\sharp}) = [t]_{q}^{\sharp}.$$
Extend $\iota$ to $\Q_{q}$ by continuously mapping the hyperbolic geodesic between $[r/s]_{q}^{\flat}$ and $[r/s]_{q}^{\sharp}$ onto the real interval between $[r/s]_{q}^{\flat}$ and $[r/s]_{q}^{\sharp}$. By \cref{order}, $\iota$ is indeed an embedding.
\par
To complete the proof, we just need to show that $\iota$ is surjective. Consider any $p \in \RR \cup \{\infty\}$. For convenience, assume that $0 < p <\infty$. Suppose that $p \notin \iota(\Q_{q})$; we will show that $p \in \mathbb{I}_{q}$.
By assumption, $p$ does not lie in the interval in $\RR \cup \{\infty\}$ between $[\infty]_q^{\flat}$ and $[\infty]_q^{\sharp}$.
By the proof of \cref{qlimits}, the sequence $[n]^{\sharp}_{q}$ for $n \geq 0$ is a monotone increasing sequence converging to $[\infty]_{q}^{\flat}$. 
So, there exists $a_{1} \geq 0$ such that 
$$[a_{1}]_{q}^{\sharp} < p < [a_{1}+1]_{q}^{\sharp}=[a_{1},1]^{\sharp}.$$
\par
The sequence $[a_{1},n]_{q}^{\sharp}$ for $n\geq 1$ is a monotone decreasing sequence converging to $[a_{1}]_{q}^{\sharp}$, by the proof of \cref{qlimits}. Since $p >[a_{1}]_{q}^{\sharp}$, there exists $a_{2} \geq 1$ such that
$$[a_{1},a_{2}+1]_{q}^{\sharp}<p<[a_{1},a_{2}]_{q}^{\sharp}.$$
\par
Continuing in this manner, we obtain an infinite $q$-deformed continued fraction expansion $[a_{1},a_{2},...]_{q}^{\sharp}$ converging to $p$.
By the first part of~\cref{qlimits}, the number $p$ is the $q$-deformation of the irrational number with continued fraction expansion $[a_{1},a_{2},...]$, which proves that $p \in \mathbb{I}_{q}$ as desired.
This completes the proof.
\end{proof}
\section{A homological interpretation of $q$-deformed rational numbers}\label{sec:homological}
The aim of this section is to give a concrete homological interpretation of $q$-deformed rational numbers.
Our main tool will be an abstract gadget called a Harder--Narasimhan automaton, whose construction depends on the choice of a Bridgeland stability condition.
We write down a particular Harder--Narasimhan automaton for the 2-Calabi--Yau category associated to the $A_2$ quiver, which yields the desired application.
We recall details about Bridgeland stability conditions and Harder--Narasimhan automata in~\cref{app:stability-and-automata}.
\subsection{The 2-Calabi--Yau category associated to the $A_{2}$ quiver}\label{subsec:c2-automaton}
We follow the construction of the 2-Calabi--Yau category associated to the $A_2$ quiver given in \cite[\textsection 4a]{sei.tho:01} and \cite[\textsection 2.1]{bap.deo.lic:20}.
Let $P(A_2)$ denote the path algebra of the double of the $A_2$ quiver.  This quiver has two vertices, denoted $1$ and $2$, and two oriented edges, one from $1$ to $2$ and the other from $2$ to $1$.  We denote by $Z_{2}$ the quotient of this path algebra by the two-sided ideal generated by all paths of length 3. We regard $Z_2$ as a differential graded algebra with grading by path length, and zero differential. Let $\DGM_{2}$ be the category of differential graded modules over $Z_{2}$, and let $\D_{2}$ be the derived category of differential graded modules over $Z_{2}$ (obtained from $\DGM_{2}$ by inverting quasi-isomorphisms; note that this is not the same as the standard derived category of the abelian category $\DGM_{2}$). For $i=1,2$, let $P_{i}$ denote the differential graded module $Z_{2}(i)$, where $(i)$ is the trivial path at the vertex $i$. The objects 
$\{P_{1},P_{2}\}$ form an $A_{2}$ configuration of spherical objects in $\D_{2}$, in the sense of \cite[Definition 1.1]{sei.tho:01}.
Let $\C_{2}$ denote the full triangulated subcategory of $\D_{2}$ generated by $P_{1},P_{2}$ under extensions. Then $\C_{2}$ is a 2-Calabi--Yau category.
\par
Especially important for our purposes are the following structural features of $\C_{2}$.
\begin{enumerate}
\item
There is a unique morphism (up to scaling) from $P_1$ to $P_2$, and also from $P_2$ to $P_1$.
We denote the cones of these morphisms by $P_{12}$ and $P_{21}$ respectively.
They are indecomposable spherical objects of $\C_{2}$.
\item
The extension closure of $P_{1},P_{2}$ is the heart  of a bounded $t$-structure on $\C_{2}$. We call this $t$-structure the \textit{standard $t$-structure} on $\C_{2}$, and denote its heart by $\hrt_{\std}$.
Objects in the standard heart are direct sums of the indecomposables $P_{1}$, $P_{2}$, $P_{12}$ and $P_{21}$.
\item
Every spherical object $X$ in $\C_{2}$ gives rise to an autoequivalence $\s_{X}$ on $\C_{2}$ known as the spherical twist along $X$. These autoequivalences form a group, generated by $\s_{P_{1}},\s_{P_{2}}$ (henceforth simply $\s_{1},\s_{2}$). The group of spherical twists is isomorphic to the 3-strand braid group $B_{3} = B_{A_2}$, inducing a (weak) braid group action on $\C_{2}$ (see \cite{kho.sei:00} and \cite{sei.tho:01} for details). Let $\bS$ be the set of isomophism classes of spherical objects of $\C_2$.  It is known that $B_{3}$ acts transitively on this set; see e.g. \cite[Theorem 1.1]{bap.deo.lic:21}.  
\end{enumerate}
\subsection{Standard stability conditions on $\C_{2}$}
The proofs in the following section use a choice of a stability condition on $\C_2$.
Any choice will work, but for convenience we choose a degenerate standard stability condition, defined as follows.
\begin{mydef}\label{stddef}
  A stability condition $\t=(\cP, Z)$ is \emph{standard} if $\cP([\f,\f+1))=\hrt_{\std}$ for some $\f \in \RR$.
  A standard stability condition $\t$ is \emph{degenerate} the phase of $P_1$ is equal to the phase of $P_2$.
\end{mydef}
Since $P_1$ and $P_2$ are simple in $\heart_{\std}$, they are automatically stable in any standard stability condition.
In a degenerate standard stability condition, the objects $P_{12}$ and $P_{21}$ are also semistable of the same phase.
\par
We will use the following proposition.
It is a serious result, and corresponding versions of this proposition are true for any 2-Calabi--Yau category of a finite ADE type quiver.
Several proofs of versions of this proposition are known, in various levels of generality: see, e.g., \cite[Theorem 1]{ish.ued.ueh:10}, \cite[Proposition 4.13]{ike:14}, \cite[Corollary 9.5]{ada.miz.yan:19}, \cite[Theorem 1.2]{bap.deo.lic:21}.
\begin{prop}\label{tessellation}\
  \begin{enumerate}
  \item The space $\Stab(\C_2)$ is connected.
  \item For any stability condition $\t \in \Stab(\C_{2})$, there exists a  braid $\b \in B_{3}$ such that $\b \t$ is a standard stability condition.
  \end{enumerate}
\end{prop}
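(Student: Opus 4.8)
We indicate the strategy; full proofs, in greater generality, are in the references cited above. The plan is to establish Part~(2) first and then deduce Part~(1) from it. Write $U \subseteq \Stab(\C_2)$ for the set of standard stability conditions. Since $P_1$ and $P_2$ are the simple objects of $\hrt_{\std}$, and a stability condition supported on a finite-length heart is determined by the central charges of its simples, the assignment $\t \mapsto (Z(P_1),Z(P_2))$ identifies the set of stability conditions with heart exactly $\hrt_{\std}$ with the product $(\HH \cup \RR_{<0})^{2}$; the support property is automatic here because $\hrt_{\std}$ has only finitely many indecomposables. Hence $U$, being the union of the integer shifts of this set, is an open and connected subset of $\Stab(\C_2)$, and Part~(2) is precisely the assertion that $\bigcup_{\b \in B_3} \b\,U = \Stab(\C_2)$.

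To prove this, let $\t = (\cP,Z)$ be arbitrary, fix $\phi \in \RR$, and set $\A = \cP((\phi,\phi+1])$, a heart of a bounded $t$-structure on $\C_2$. The key point is that $\A$ lies in the $B_3$-orbit of $\hrt_{\std}$, up to shift: granting this, pick $\b \in B_3$ with $\b\A = \hrt_{\std}$ up to shift, so that $\b\t$ is standard and therefore lies in $U$. To prove the key point one argues that every bounded heart of $\C_2$ is obtained from $\hrt_{\std}$ by a finite sequence of simple tilts, and that each such tilt is implemented by a spherical twist: the simples encountered along the way are spherical objects (iterated twists of $P_1$ or $P_2$), and the tilt of a $2$-Calabi--Yau heart at a spherical simple $X$ is exactly the action of $\s_X^{\pm 1}$ on the heart (cf. the structural features of $\C_2$ recalled in \cref{subsec:c2-automaton} and \cite{sei.tho:01}). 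This reduces everything to the \emph{finiteness of the heart exchange graph of $\C_2$ modulo $B_3$} --- equivalently, that there is a single $B_3$-orbit of bounded hearts up to shift. This finiteness is the substantive input; it rests on the $A_2$ (ADE) combinatorics and is supplied, in various formulations, by \cite{ish.ued.ueh:10,ike:14,ada.miz.yan:19,bap.deo.lic:21}. I expect this step to be the main obstacle: the remaining steps are formal, whereas for affine or wild types the analogous statement fails and genuinely more is needed.

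Part~(1) then follows by a standard wall-crossing argument. By Part~(2), $\Stab(\C_2) = \bigcup_{\b} \b\,U$ is a union of connected open sets, so it suffices to show that the adjacency graph on $B_3$ --- with $\b$ joined to $\b'$ whenever $\ol{\b\,U} \cap \ol{\b'\,U}$ contains a stability condition lying on a codimension-one wall --- is connected. Translating by $\b^{-1}$, it is enough to connect $U$ to $\s_1^{\pm 1}U$ and to $\s_2^{\pm 1}U$; since $B_3 = \langle \s_1,\s_2 \rangle$ (and is generated by either sign choice of the two generators), adjacency for the appropriate signs already suffices. Such adjacency holds because on the wall where $Z(P_i) \in \RR_{<0}$ --- where $\t$ is still standard --- the heart $\hrt_{\std}$ is simultaneously the simple tilt of $\s_i^{\mp 1}\hrt_{\std}$ at $P_i$, so such $\t$ lie in $\ol{\s_i^{\mp 1}U}$ as well. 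Hence $U \cup \s_i^{\pm 1}U$ is connected, the adjacency graph is connected, and Part~(1) follows.
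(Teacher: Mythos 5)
The paper itself offers no proof of \cref{tessellation}: it is flagged as a ``serious result'' and delegated wholesale to \cite{ish.ued.ueh:10,ike:14,ada.miz.yan:19,bap.deo.lic:21}, so there is no in-paper argument to compare yours against. Your sketch is a correct account of how the proposition follows from those sources, and you have isolated the genuinely hard step accurately: everything reduces to the statement that the heart $\cP_\t((\f,\f+1])$ of an arbitrary stability condition on $\C_2$ is algebraic and lies in the $B_3$-orbit of $\hrt_{\std}$ up to shift, which is precisely what the cited references establish (note you only need this for hearts arising from stability conditions, which is the form in which they prove it, rather than for all abstract bounded hearts). The surrounding scaffolding is sound: identifying the stability conditions with heart exactly $\hrt_{\std}$ with $(\HH\cup\RR_{<0})^{2}$ is standard for a finite-length heart with two simples; the observation that the paper's \cref{stddef} permits an arbitrary window $[\f,\f+1)$, so that points of the wall $Z(P_i)\in\RR_{<0}$ are still standard and hence lie in $U\cap\ol{\s_i^{\mp1}U}$, is exactly what makes $U\cup\s_i^{\pm1}U$ connected; and connectedness of the Cayley graph of $B_3$ on $\{\s_1,\s_2\}$ then gives Part (1). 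The one ingredient you should quote explicitly rather than gesture at is that tilting a $2$-Calabi--Yau heart at a spherical simple is implemented by the corresponding spherical twist, since that is what converts ``the exchange graph is connected under simple tilts'' into ``there is a single $B_3$-orbit''; it is available in \cite{bap.deo.lic:21} and the sources it builds on. With that reference supplied, I see no gap.
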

\par
From now on, we fix a degenerate standard stability condition $\tau$.
The spherical objects in $\C_{2}$ admit a simple classification, based on their $\t$-HN filtration factors. This classification will be a key tool in later proofs.
 Given a set $S \subset \Ob \C_2$, let \([s \mid s \in S]\) denote the set of all objects of $\C_2$ whose $\tau$-HN filtration factors consist of shifted copies of elements of $S$. 
 For example, $[P_1,P_{12}]$ consists of all objects whose $\tau$-HN filtration factors are shifts of either $P_1$ or $P_{12}$.

 The following proposition is similar to \cite[Proposition 5.8]{bap.deo.lic:20}.
 \begin{prop}
   For  each spherical object $X \in \bS$, one of the following is true:
\begin{enumerate}
\item $X \in [P_{2}, P_{21}],$
\item $X \in [P_{21}, P_{1}],$
\item $X \in[P_{1}, P_{12}],$
\item $X \in  [P_{12}, P_{2}].$
\end{enumerate}
 \end{prop}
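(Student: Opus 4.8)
The plan is to argue by induction on the word length of a braid $\beta$ expressing $X = \beta(P_1)$ (using transitivity of the $B_3$-action on $\bS$, item (3) of \cref{subsec:c2-automaton}), tracking how the four classes $[P_2,P_{21}]$, $[P_{21},P_1]$, $[P_1,P_{12}]$, $[P_{12},P_2]$ are permuted by the generators $\s_1, \s_2$ and their inverses. First I would observe that in the fixed degenerate standard stability condition $\t$, the four indecomposable objects $P_1, P_2, P_{12}, P_{21}$ are the only $\t$-semistable objects up to shift, and they sit in a cyclic configuration: there are nonzero morphisms realising $P_{12}$ as an extension of $P_1$ by $P_2$ and $P_{21}$ as an extension of $P_2$ by $P_1$ (item (1)). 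So the four classes in the statement are exactly the objects whose HN factors are shifts of two cyclically-adjacent members of the cycle $P_1 \to P_{12} \to P_2 \to P_{21} \to P_1$; call these the four ``sectors''. The base case is $P_1 \in [P_{21},P_1] \cap [P_1,P_{12}]$, which is immediate.

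The heart of the argument is a finite computation: for each of $\s_1^{\pm 1}, \s_2^{\pm 1}$, show that the image of each sector is again contained in one of the four sectors. This is precisely the sort of statement controlled by the HN automaton of \cref{app:stability-and-automata} — the automaton records, for an object whose HN factors lie in a given sector, which sector the HN factors of its image under a generator lie in. Concretely, one computes the spherical twists on the semistable objects: e.g. $\s_1(P_2)$ fits in a triangle $P_1 \to \s_1(P_2) \to \text{(something)}$ and has HN factors among $\{P_1, P_2, P_{12}, P_{21}\}$, and similarly for the twists of $P_{12}, P_{21}$; from the long exact sequences in $\t$-cohomology and the known $\Hom$-spaces in $\C_2$ one reads off that each generator maps each sector into a sector. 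Since a general $X$ is obtained from $P_1$ by applying a sequence of generators, and applying a generator to an object whose HN factors lie in a sector produces an object whose HN factors lie in a sector (because the generator commutes with the formation of HN filtrations only up to the automaton's bookkeeping, but the sector-level data is exactly what the automaton tracks), the conclusion follows by induction.

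I expect the main obstacle to be the precise verification that applying a spherical twist $\s_i^{\pm 1}$ to an object $X$ all of whose HN factors lie in a single sector again yields an object all of whose HN factors lie in a single sector — i.e. that ``being supported in a sector'' is genuinely preserved, not merely that the individual semistable objects move between sectors predictably. This requires knowing that the twist does not mix HN factors from a sector in a way that produces a factor outside the union of two adjacent semistables; here one must use that the two semistable objects spanning a sector have no extensions ``the wrong way'' in the relevant degrees, so that the HN filtration of the twisted object can be assembled sector by sector. This is exactly the content encoded in the transition rules of the HN automaton of \cite{bap.deo.lic:20}, and once that automaton is in hand (as it is, by \cref{app:stability-and-automata}), the remaining steps are routine case analysis. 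An alternative, more hands-on route avoiding heavy automaton machinery would be to classify spherical objects directly via \cite[Theorem 1.1]{bap.deo.lic:21} and compute their HN filtrations in $\t$; I would present the automaton-based argument as the main one and remark on this alternative.
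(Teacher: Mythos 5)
Your overall strategy --- induct on a braid word for $X$ using transitivity of the $B_3$-action and track the four ``sectors'' through the HN automaton --- is the right idea, and it is essentially how the result is established in the reference the paper itself defers to (the paper gives no proof here; it simply points to \cite[Proposition 5.8]{bap.deo.lic:20}). However, there is a genuine gap in your inductive step. You assert that for each of $\s_1^{\pm1},\s_2^{\pm1}$ the image of each sector is again contained in a sector, and that this is ``exactly the content encoded in the transition rules of the HN automaton.'' It is not: the automaton of \cref{fig:b3-automaton} is \emph{partial}. At each of its four states exactly one of the four generators has no outgoing edge (e.g.\ $\s_1^{-1}$ at $[P_1,P_{12}]$, $\s_1$ at $[P_{21},P_1]$, $\s_2$ at $[P_{12},P_2]$, $\s_2^{-1}$ at $[P_2,P_{21}]$), and for that missing generator the naive sector bookkeeping genuinely fails: $\s_1^{-1}$ fixes $P_1$ up to shift but sends $P_{12}$ to $P_2$ up to shift, and $\{P_1,P_2\}$ is not one of the four admissible adjacent pairs, so the image of a general object of $[P_1,P_{12}]$ under $\s_1^{-1}$ cannot be located by pushing its HN factors through individually. (It does land in some sector, but only because of nontrivial cancellation in the HN filtration --- which is the statement being proved, so it cannot be assumed.)

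The missing ingredient is a normal form for the braid word. One must write $X=\b P_1$ with $\b$ in the (strict) continued normal form of \cref{continued} and \cref{continuedcor}, so that $\b$ read right-to-left traces an actual path in the partial automaton starting at a state containing $P_1$; then each step of the induction uses only edges that the automaton provides, and the verification of those finitely many edge transitions (your ``no extensions the wrong way'' point) is the real content. Without restricting to such words, the induction as you describe it breaks at the first occurrence of the forbidden generator for the current state. Your alternative closing suggestion --- classifying spherical objects via \cite[Theorem 1.1]{bap.deo.lic:21} and computing HN filtrations directly --- would also work in principle, but as the main argument you should make the normal form explicit rather than quantifying over all four generators at every state.
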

\subsection{An automaton for the category $\C_2$}\label{theautomaton}
In this section we write down an explicit HN automaton for $\C_2$ (see~\cref{def:hn-automaton}).
In the next section we discuss its implications on the dynamics of the braid group action on spherical objects of $\C_2$.
Once again, fix $\t$ to be a degenerate standard stability condition.
To construct a $\tau$-HN automaton for $\C_2$, we specify the following pieces of data.
\begin{enumerate}
\item \cref{fig:b3-automaton} simultaneously depicts a $B_3$-labelled graph $\Theta$ and a $\Theta$-set $\underline{S}\subset \underline{\Ob C_2}$.
\begin{figure}[h]
  \tikzcdset{every label/.append style = {font = \small}}
  \begin{tikzcd}[row sep=4em, column sep=4em]
    {[P_1,P_{12}]} \arrow[yshift=0.3em]{r}{\sigma_2} \arrow[xshift=0.3em]{d}{\sigma_2^{-1}} \arrow[loop left]{}{\sigma_1}
    & {[P_{21},P_1]} \arrow[yshift=-0.3em]{l}{\sigma_2^{-1}} \arrow[xshift=0.3em]{d}{\sigma_2} \arrow[loop right]{}{\sigma_1^{-1}} \\
    {[P_{12},P_2]} \arrow[xshift=-0.3em]{u}{\sigma_1}\arrow[yshift=0.3em]{r}{\sigma_1^{-1}} \arrow[loop left]{}{\sigma_2^{-1}}
    & {[P_2,P_{21}]} \arrow[xshift=-0.3em]{u}{\sigma_1^{-1}}\arrow[yshift=-0.3em]{l}{\sigma_1} \arrow[loop right]{}{\sigma_2}
  \end{tikzcd}
  \caption{A $B_3$-labelled graph $\Theta$, together with an assignment of a subset of $\Ob \C_2$ to each vertex.}\label{fig:b3-automaton}
\end{figure}
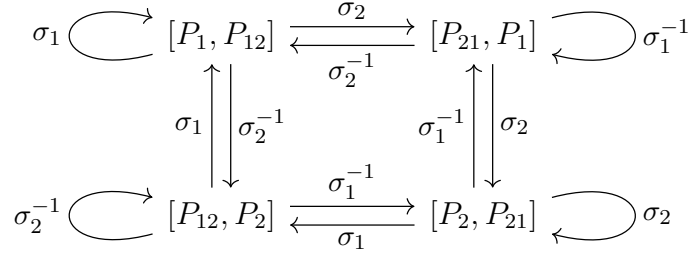
\item \cref{fig:b3-automaton-rep} depicts a $\Theta$-representation $\underline{M}$.
  \begin{figure}[h]
  \tikzcdset{every label/.append style = {font = \small}}
  \begin{tikzcd}[row sep=8em, column sep=8em, ampersand replacement=\&]
    {\mathbb{Z}[q^\pm]^2}
    \arrow[yshift=0.3em]{r}{\id}
    \arrow[xshift=0.3em]{d}{\begin{pmatrix}q^{-1}& 0\\1&1\end{pmatrix}}
    \arrow[loop left]{}{\begin{pmatrix}q^{-1}& q^{-1}\\0&1\end{pmatrix}}
    \& {\mathbb{Z}[q^\pm]^2}
    \arrow[yshift=-0.3em]{l}{\id}
    \arrow[xshift=0.3em]{d}{\begin{pmatrix}1& 1\\0&q\end{pmatrix}}
    \arrow[loop right]{}{\begin{pmatrix}1& 0\\q&q\end{pmatrix}}
    \\
    {\mathbb{Z}[q^\pm]^2}
    \arrow[xshift=-0.3em]{u}{\begin{pmatrix}1& 1\\0&q\end{pmatrix}}
    \arrow[yshift=0.3em]{r}{\id}
    \arrow[loop left]{}{\begin{pmatrix}1& 0\\q&q\end{pmatrix}}
    \& {\mathbb{Z}[q^\pm]^2}
    \arrow[xshift=-0.3em]{u}{\begin{pmatrix}q^{-1}& 0\\1&1\end{pmatrix}}
    \arrow[yshift=-0.3em]{l}{\id}
    \arrow[loop right]{}{\begin{pmatrix}q^{-1}& q^{-1}\\0&1\end{pmatrix}}
  \end{tikzcd}
  \caption{A $\Theta$-representation $\underline{M}$.}\label{fig:b3-automaton-rep}
\end{figure}
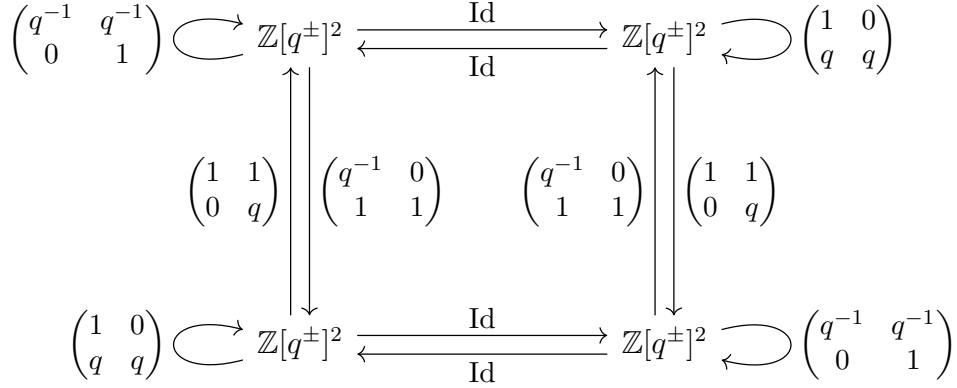
As shown in~\cref{fig:b3-automaton}, the set at every vertex has the form $[A,B]$, where $A,B\in \Sigma_\tau$.
The summands in the module $\mathbb{Z}[q^\pm]^2 = \mathbb{Z}[q^\pm] \oplus \mathbb{Z}[q^\pm]$ at that vertex should be thought of as being indexed by $A$ and $B$ respectively.

\item The morphism $i \colon \underline{S} \to \underline{M}$ is as follows.
At each vertex, the map $[A,B] \to \mathbb{Z}[q^\pm]^2$ takes an object $X \in [A,B]$ to the $A$ and $B$ coordinates of $\HN_\tau(X)$.
The proof that $i$ is $\Theta$-equivariant is a calculation that proceeds exactly as in the proof of~\cite[Proposition 5.1]{bap.deo.lic:20}.
\item Finally, $h_v \colon M_v \to \mathbb{Z}[q^{\pm}]^{\Sigma_\tau}$ is simply the inclusion map.
We need to check that for any vertex $v$ and any $X \in S_v$, we have
\[\HN_\tau(X) = h_v(i_v(X)).\]
In other words, that the $\tau$-HN multiplicity vectors transform according to matrices in~\cref{fig:b3-automaton-rep}.
This check, which is involved, is essentially identical to the one done in~\cite[Proposition 5.1]{bap.deo.lic:20}.
We omit the details here.
\end{enumerate}
\subsection{The dynamics of the braid group action on spherical objects}\label{subsec:dynamics}
We now define two kinds of functionals $\bS\times\bS \to \Z[q^{\pm}]$.  
The first of these is an extension of a functional $\{P_{1},P_{2}\}\times \bS \to \Z[q^{\pm}]$ that counts the `occurences' of the building block complexes $P_{2}$ and $P_{1}$, respectively, in a complex $X \in \bS$.
We continue to denote by $\t$ a degenerate standard stability condition on $\C_{2}$.

We know that in this case, the set of indecomposable semistable objects in the heart is just
\[\Sigma_\tau = \{P_1, P_2, P_{12}, P_{21}\}.\]
With these coordinates, for any $X \in \T$, we write $\HN_\tau(X)$ as
\[\HN_\tau(X) = (\pi_1(X), \pi_2(X), \pi_{12}(X), \pi_{21}(X)).\]
\begin{mydef}
We define functionals $\bS \times \bS \to \Z[q^{\pm}]$ as follows. 
For any $X \in \bS$,
\begin{align*}
\occ_{q}(P_{1}, X)&:=\p_{2}(X)+\p_{12}(X)+\p_{21}(X),\\
\occ_{q}(P_{2}, X)&:=\p_{1}(X)+\p_{12}(X)+\p_{21}(X).
\end{align*}
For any $X,Y \in \bS$, let $X=\b P_{1}$, then
$$\occ_{q}(X,Y)=\occ_{q}(P_{1},\b^{-1}Y).$$
\end{mydef}
\begin{remark}
 Note that $\occ_{q}(P_{1},X)$ counts the `occurrences' of $P_{2}$ in $X$ and vice versa, which is notationally confusing but somewhat unavoidable. Every occurence of $P_{2}$ in $X$ must correspond to an HN filtration factor of the form $P_{2}$, $P_{12}$ or $P_{21}$, up to shift. Thus, summing the number of these factors, with degree, gives the number of $P_{2}$'s, with degree.
\end{remark}
\begin{mydef}The functional $\occ_{q}$ has a close cousin, $\ol{\hom}_{q}$:
$$\ol{\hom}_{q}(X,Y):=\begin{cases}
q^{n}(q^{-2}-q^{-1}), & Y\cong X[n],\\
\sum_{n \in \Z}\dim_{\k}\Hom(X,Y[n])q^{-n}, & \text{otherwise.}
\end{cases}$$
\end{mydef}
The following theorems relate these functionals to the $q$-deformed rational numbers. For reasons that will soon be obvious, for any $X \in \bS$ we write $X\geq 0$ to mean $X \in [P_{2}, P_{21}]\cup[P_{21}, P_{1}],$ and $X \leq0$ to mean $X \in [P_{1}, P_{12}]\cup [P_{12}, P_{2}].$

\begin{theorem}\label{qrz1}
For every $X \in \bS$,
$$(-q)^{-\e}\frac{\occ_{q}(P_{2},X)}{\occ_{q}(P_{1},X)},$$
is a right $q$-deformed rational number, 
where
$$\e=\begin{cases} 0, & X\geq 0,\\
1, & X\leq0.\end{cases}$$
Moreover, the induced map from $\bS$ to the set of $q$-deformed rationals is $B_{3}$-equivariant, and bijective (up to shift).
\end{theorem}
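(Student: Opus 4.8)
The plan is to deduce everything from the HN automaton of \cref{theautomaton} together with the transitivity of the $B_3$-action on $\bS$ (\cite[Theorem 1.1]{bap.deo.lic:21}). Write $f(X) = (-q)^{-\e}\occ_q(P_2,X)/\occ_q(P_1,X)$. First I would check that $f$ is well defined: at a vertex $[A,B]$ of the graph $\Theta$ an object $X\in S_{[A,B]}$ has only $\pi_A(X),\pi_B(X)$ possibly nonzero, and the definition of $\occ_q$ shows that the pair $\mathbf o(X):=(\occ_q(P_2,X),\occ_q(P_1,X))^{T}$ is obtained from $(\pi_A(X),\pi_B(X))^{T}$ by a fixed matrix $C_{[A,B]}\in\GL_2(\Z)$ with $0/1$ entries — for instance $C_{[P_1,P_{12}]}=\left(\begin{smallmatrix}1&1\\0&1\end{smallmatrix}\right)$, $C_{[P_{12},P_2]}=\left(\begin{smallmatrix}1&0\\1&1\end{smallmatrix}\right)$, and similarly at the other two vertices. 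Since $X\ne 0$ its HN-multiplicity vector is nonzero, so $\mathbf o(X)\ne 0$ and $f(X)\in\RR\cup\{\infty\}$; and since $\occ_q(P_i,-)$ and the four sets $[A,B]$ (hence $\e$) are shift-invariant, $f$ descends to a map from $\bS$ modulo shift. (The four sets overlap only on the shifts of $P_1,P_2,P_{12},P_{21}$, and a short check shows the two resulting values of $f$ agree there.) It then suffices to prove: (i) $f(P_1)=\infty$; (ii) $f(\sigma_i X)=\sigma_{i,q}\cdot f(X)$ for $i=1,2$ and all $X\in\bS$; (iii) the induced map is injective modulo shift.

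For (ii): if the generator $\sigma_i$ sends the vertex $w$ of $\Theta$ to the vertex $w'$, then by the automaton $\HN_\tau(\sigma_i X)$ is obtained from $\HN_\tau(X)$ via the edge matrix $M_{w\to w'}$ of \cref{fig:b3-automaton-rep}, so $\mathbf o(\sigma_i X)=C_{w'}M_{w\to w'}C_w^{-1}\,\mathbf o(X)$. One then computes, vertex by vertex, that after scaling the first coordinate of $\mathbf o$ by $(-q)^{-\e}$ the projective transformation $C_{w'}M_{w\to w'}C_w^{-1}$ becomes exactly $\sigma_{i,q}$. The combinatorial point is that $\e=0$ on the two ``right'' vertices $[P_2,P_{21}],[P_{21},P_1]$ and $\e=1$ on the two ``left'' vertices $[P_1,P_{12}],[P_{12},P_2]$, and that $\sigma_i$ changes $\e$ precisely on the edges where $C_{w'}M_{w\to w'}C_w^{-1}$ is not already projectively a power of $\sigma_{i,q}$; there the factor $(-q)^{\pm1}$ absorbs the remaining discrepancy (including its sign), while on the $\e$-preserving edges $C_{w'}M_{w\to w'}C_w^{-1}$ agrees with $\sigma_{i,q}^{\pm1}$ up to a scalar on the nose. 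For instance, on $\sigma_2\colon[P_1,P_{12}]\to[P_{21},P_1]$ one has $M=\id$, $\e$ drops from $1$ to $0$, and writing $(x,y)$ for the two HN-multiplicities of $X$ one finds $\mathbf o(X)=(x+y,y)$ and $\mathbf o(\sigma_2X)=(x+y,x)$, so that $\sigma_{2,q}\bigl((-q)^{-1}(x+y)/y\bigr)=(x+y)/x$ as required; by the visible $\Z/4$ rotational symmetry of \cref{fig:b3-automaton,fig:b3-automaton-rep} only a handful of these checks are genuinely different.

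For (i) and (iii): $P_1$ is $\tau$-stable, so $\HN_\tau(P_1)=(1,0,0,0)$, whence $\occ_q(P_1,P_1)=0$, $\occ_q(P_2,P_1)=1$, and $f(P_1)=\infty=[1/0]_q^{\sharp}$. Combining (i), (ii) and transitivity, every $X=\beta P_1$ has $f(X)=\beta_q(\infty)$, where $\beta_q$ denotes the image of $\beta$ in $\PSL_{2,q}(\Z)$. Now $\sigma_{1,q}$ is upper triangular, hence fixes $\infty$, and a direct computation gives $(\sigma_{1,q}\sigma_{2,q})^{3}=-q^{-3}I$, so $Z(B_3)$ maps to the identity of $\PSL_{2,q}(\Z)$; thus $\langle\sigma_1,Z(B_3)\rangle$ fixes $\infty$. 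Since by \cref{continuedmatrices} the map $\mathbf a\mapsto\beta_{\mathbf a}(\infty)$ is a bijection onto $\Q\cup\{\infty\}\cong B_3/\langle\sigma_1,Z(B_3)\rangle$, the braids $\beta_{\mathbf a}$ attached to even continued fractions form a complete set of coset representatives, so $\{\beta_q(\infty):\beta\in B_3\}=\{\beta_{\mathbf a,q}(\infty)\}$ is exactly the set of right $q$-deformed rationals; this proves the first assertion and surjectivity of the induced map. For injectivity modulo shift, suppose $f(\beta P_1)=f(\beta'P_1)$, i.e.\ $\beta_q(\infty)=\beta'_q(\infty)$; writing $\beta=\beta_{\mathbf a}g$ with $\mathbf a$ the continued fraction of $\beta(\infty)$ and $g\in\langle\sigma_1,Z(B_3)\rangle$, and using that $g$ still fixes $\infty$ in $\PSL_{2,q}(\Z)$, we get $\beta_q(\infty)=\beta_{\mathbf a,q}(\infty)=[\beta(\infty)]_q^{\sharp}$ and similarly $\beta'_q(\infty)=[\beta'(\infty)]_q^{\sharp}$. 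By the order-preservation of $[-]_q^{\sharp}$ (used in the proof of \cref{qlimits}) this forces $\beta(\infty)=\beta'(\infty)$ in $\Q\cup\{\infty\}$, whence $\beta P_1\cong\beta'P_1$ up to shift by the $q=1$ case \cite{rou.zim:03}.

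The computational core — and the only genuinely laborious step — is the per-vertex verification in (ii) that the twist $(-q)^{-\e}$ exactly reconciles the asymmetric automaton matrices of \cref{fig:b3-automaton-rep} with the generators $\sigma_{i,q}$ of $\PSL_{2,q}(\Z)$; once that is checked, together with the base case $f(P_1)=\infty$ and the transitivity of the braid action, the remaining assertions (including that $f(X)$ is a right $q$-rational, and bijectivity up to shift) follow formally from the matching of orbits and stabilisers.
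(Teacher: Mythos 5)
Your overall architecture is close in spirit to the paper's: both arguments reduce the theorem to (a) the automaton computation of how HN multiplicity vectors transform, (b) the continued-fraction combinatorics identifying $\{\beta_{\mathbf a,q}(\infty)\}$ with the right $q$-rationals, and (c) transitivity of $B_3$ on $\bS$. Your per-edge checks are correct where they apply (e.g.\ the $\sigma_2\colon[P_1,P_{12}]\to[P_{21},P_1]$ computation, and the conversion matrices $C_{[A,B]}$), and your treatment of the base case, of $Z(B_3)$ mapping to the identity of $\PSL_{2,q}(\Z)$, and of injectivity via order-preservation of $[-]^{\sharp}_q$ plus the $q=1$ Rouquier--Zimmermann result are all fine.

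The genuine gap is in step (ii). You claim $f(\sigma_iX)=\sigma_{i,q}\cdot f(X)$ for $i=1,2$ and \emph{all} $X\in\bS$, verified ``vertex by vertex'' from the automaton. But the quiver $\Theta$ of \cref{fig:b3-automaton} is not complete: each vertex carries outgoing edges for only three of the four generators $\sigma_1^{\pm1},\sigma_2^{\pm1}$. For example, there is no edge labelled $\sigma_2$ out of $[P_{12},P_2]$ and no edge labelled $\sigma_1$ out of $[P_{21},P_1]$, so for an object $X$ lying only in $[P_{12},P_2]$ the automaton gives no formula for $\HN_\tau(\sigma_2X)$ --- indeed $\sigma_2$ scatters such objects among several of the four classes (e.g.\ $\sigma_2P_{12}\cong P_1$ up to shift, while $\sigma_2P_2\cong P_2[-1]$ stays put). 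Consequently your induction ``apply generators one at a time along a word for $\beta$'' breaks at the first letter that does not correspond to an automaton edge, and the key identity $f(\beta P_1)=\beta_q(\infty)$ is not established for arbitrary $\beta$. This is exactly the issue the paper's strict continued normal form (\cref{continued}, \cref{continuedcor}) is designed to handle: a braid in that form, read right to left, traces an actual path in $\Theta$, so the induction of \cref{rzprop} only ever uses edges that exist, and the leftover $\sigma_1^{M}$ factors are dealt with separately in \cref{rzmess}. The repair is available inside your own write-up --- you already invoke the decomposition $\beta=\beta_{\mathbf a}g$ with $g\in\langle\sigma_1,Z(B_3)\rangle$ in the injectivity step, and restricting the induction in (ii) to the words $\beta_{\mathbf a}\sigma_1^{M}\omega^{N}$ (noting $\sigma_1^{M}\omega^{N}P_1\cong P_1$ up to shift and $\sigma_{1,q}$ fixes $\infty$) turns your argument into the paper's --- but as written the central inductive step is not justified.
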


\begin{theorem}\label{qrz2}
For every $X \in \bS$,
$$(-1)^{\e}q^{\e-1}\frac{\ol{\hom}_{q}(X,P_{2})}{\ol{\hom}_{q}(X,P_{1})},$$
is a left $q$-deformed rational number, where
$$\e=\begin{cases} 0, & X\geq0, X \not\cong P_{2}[n],\\
1, & X\leq 0, X \not\cong P_{1}[n].\end{cases}$$
Moreover, the induced map from $\bS$ to the set of left $q$-deformed rationals is $B_{3}$-equivariant, and bijective (up to shift).
\end{theorem}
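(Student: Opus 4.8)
The plan is to mirror the argument for \cref{qrz1} and the $q=1$ case of \cite{bap.deo.lic:20}: first express $\ol{\hom}_q(X,P_1)$ and $\ol{\hom}_q(X,P_2)$ in terms of the $\tau$-Harder--Narasimhan data of $X$, then run the resulting functional through the automaton of \cref{theautomaton} to obtain $B_3$-equivariance, verify a base case, and conclude using transitivity of $B_3$ on $\bS$ together with a comparison with \cref{qrz1}.

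The first step, which I expect to be the main obstacle, is the following reduction. For a spherical $X$ lying in one of the four chambers $[A,B]$ of \cref{fig:b3-automaton}, I would show that $\bigl(\ol{\hom}_q(X,P_2),\,\ol{\hom}_q(X,P_1)\bigr)$ is a fixed pair of linear functionals of $\HN_\tau(X)=(\pi_1(X),\pi_2(X),\pi_{12}(X),\pi_{21}(X))$, equivalently of the coordinates of $M_v\cong\Z[q^{\pm}]^2$ at the corresponding automaton vertex. The ingredients are: the explicit graded $\Hom$-spaces among $P_1,P_2,P_{12},P_{21}$, obtained from the $A_2$-configuration structure and $2$-Calabi--Yau duality (for instance $\Hom^\bullet(P_i,P_i)=\k\oplus\k[-2]$ and $\Hom^\bullet(P_1,P_2)=\Hom^\bullet(P_2,P_1)=\k[-1]$); the fact that in a degenerate standard $\tau$ every HN factor of a spherical object is a shift of one of these four, with tightly controlled relative phases; and an induction on the length of the HN filtration, whose inductive step requires the relevant connecting maps in the long exact $\Hom$-sequences to vanish. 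One then checks that the exceptional value $q^{n}(q^{-2}-q^{-1})$ built into the definition of $\ol{\hom}_q$ is precisely what keeps the same linear formula valid when $X$ is itself a shift of $P_1$ or $P_2$; this is also the reason for the extra clauses in the definition of $\e$.

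Granting this, $B_3$-equivariance is a finite check. For each generator $\sigma_i^{\pm1}$ and each starting chamber, \cref{fig:b3-automaton-rep} records the $2\times2$ matrix carrying $\HN_\tau(X)$ to $\HN_\tau(\sigma_i^{\pm1}X)$; composing with the linear formulas above expresses $\bigl(\ol{\hom}_q(\sigma_i^{\pm1}X,P_2),\ol{\hom}_q(\sigma_i^{\pm1}X,P_1)\bigr)$ as an explicit matrix applied to $\bigl(\ol{\hom}_q(X,P_2),\ol{\hom}_q(X,P_1)\bigr)$, and one verifies that, after inserting the scalar $(-1)^\e q^{\e-1}$, the induced fractional linear transformation of $\RR\cup\{\infty\}$ is exactly $\sigma_{i,q}^{\pm1}$; the $\e$-dependence simply tracks the chamber transitions of \cref{fig:b3-automaton}. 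For the base case, using $\ol{\hom}_q(P_i,P_i)=q^{-2}-q^{-1}$ and $\ol{\hom}_q(P_1,P_2)=\ol{\hom}_q(P_2,P_1)=q^{-1}$, one computes directly that $P_1\mapsto\tfrac1{1-q}=[\infty]^\flat_q$ (here $\e=0$) and $P_2\mapsto 1-q^{-1}=[0]^\flat_q$ (here $\e=1$).

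Finally I would assemble the pieces. The map $\Phi^\flat\colon\bS\to\RR\cup\{\infty\}$ defined by the displayed formula is shift-invariant, since shifting $X$ scales both $\ol{\hom}_q$'s by the same power of $q$, and by the previous step it intertwines the $B_3$-action on $\bS$ with the $\PSL_{2,q}(\Z)$-action on $\RR\cup\{\infty\}$. As $B_3$ acts transitively on $\bS$, the image is $\PSL_{2,q}(\Z)\cdot\Phi^\flat(P_1)=\PSL_{2,q}(\Z)\cdot\tfrac1{1-q}$; because $\sigma_{1,q}$ fixes $\tfrac1{1-q}$ and $(\sigma_{1,q}\sigma_{2,q})^3$ is a scalar matrix, this action factors through $\PSL_2(\Z)$ and the orbit of $\tfrac1{1-q}$ is precisely the set $\{\beta_{\mathbf a,q}(\tfrac1{1-q})\}$ of left $q$-rationals (cf.\ \cref{continuedmatrices}); so $\Phi^\flat$ surjects onto the left $q$-rationals. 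For bijectivity up to shift I would compare with \cref{qrz1}: composing $\Phi^\flat$ with the identification $[r/s]^\flat_q\leftrightarrow r/s$, and the map of \cref{qrz1} with $[r/s]^\sharp_q\leftrightarrow r/s$ -- identifications which, by the same factoring, are $B_3$-equivariant for the ordinary $\PSL_2(\Z)$-action on $\Q\cup\{\infty\}$ -- yields two $B_3$-equivariant maps $\bS\to\Q\cup\{\infty\}$ that agree at $P_1$ (both send it to $\infty$), hence agree everywhere by transitivity; since the map coming from \cref{qrz1} is bijective up to shift, so is $\Phi^\flat$.
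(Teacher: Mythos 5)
Your proposal is essentially correct, but it takes the route that the paper explicitly mentions and then declines to follow: you construct the ``independent, analogous'' proof (running the $\ol{\hom}_{q}$ functional directly through the automaton of \cref{theautomaton}, with a base-case computation at $P_{1},P_{2}$ and an orbit/transitivity argument), whereas the paper bootstraps \cref{qrz2} from \cref{qrz1}. Concretely, the paper takes $X=\b P_{1}$ in strict continued form, uses \cref{rzprop} to write $\ol{\b}$ as the \emph{inverse} of the matrix of $\occ_{q}$-values of $\b^{-1}P_{1},\b^{-1}P_{2}$, applies $\ol{\b}$ to the column $(q^{-1},\,q^{-1}-1)^{T}$, and identifies the result simultaneously with $(\R^{\flat},\S^{\flat})$ via \cref{qmatrices}(1) and with $(\ol{\hom}_{q}(X,P_{2}),\,(-1)^{\e}q^{1-\e}\ol{\hom}_{q}(X,P_{1}))$ via \cref{homvalues}; equivariance then reduces, as in your last step, to $\s_{1,q}$ fixing $1/(1-q)$. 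What the paper's bootstrap buys is brevity: a single matrix inversion replaces your per-generator, per-chamber verification that the induced fractional linear transformation is $\s_{i,q}^{\pm1}$, and \cref{qmatrices}(1) delivers the identification with left $q$-rationals for free. What your route buys is independence from \cref{qmatrices}(1) and from \cref{qrz1} (except for the bijectivity comparison at the end), and a more transparent role for the basepoint $1/(1-q)$. Note that both arguments hinge on expressing $\ol{\hom}_{q}(X,P_{i})$ as chamber-dependent linear functionals of the HN data --- this is exactly \cref{homvalues}, which the paper states without proof; your sketch of how to establish it (graded $\Hom$-spaces among the four indecomposables, 2-CY duality, induction on the HN filtration with vanishing connecting maps, and the check that the exceptional value $q^{n}(q^{-2}-q^{-1})$ makes the formula uniform at shifts of $P_{1},P_{2}$) is the right set of ingredients and is, if anything, more explicit than what the paper provides.
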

These theorems were first proven in the case $q=1$, where they are identical, by Rouquier and Zimmermann \cite[Proposition 4.8]{rou.zim:03}.
The proofs of these theorems for arbitrary $q$ will occupy the rest of this section.  In \cite[Proposition 5.1]{bap.deo.lic:20}, a proof of the original Rouquier--Zimmermann bijection is established using the theory of Harder--Narasimhan automata; it is this proof technique that we adapt to prove the above theorems. 
More precisely, to prove \cref{qrz1}, we will use the automaton introduced in \cref{theautomaton} to analyse the dynamics of spherical objects in $\C_{2}$ under the braid group action.
\par
The proof of \cref{qrz1} will proceed by an induction on the length of elements of $B_{3}$. To facilitate this induction, we introduce a normal form for the three-strand braid group $B_{3}$, which is (mostly) recognised by the automaton (that is, any braid in this form that we actually need corresponds to a path along the edges of the automaton). 
Let $\w:=\s_{2}\s_{1}\s_{2}\s_{1}\s_{2}\s_{1}$ be the generator of the centre of $B_{3}$, 
\begin{lemma}[Continued normal form]\label{continued}
Consider any braid $\b \in B_{3}$. We can write $\b$ uniquely in one of four forms, determined by the image of $P_{1}$:
\begin{numcases}{\b=}
\s_{1}^{-a_{1}}\s_{2}^{a_{2}}...\s_{1}^{-a_{2n-1}}\s_{2}^{a_{2n}}\s_{1}^{M}\o^{N}, & $\b P_{1}\geq 0, \b P_{1} \neq P_{1}[m], P_{2}[m] ,$ \label{continued1} \\
\s_{1}^{a_{1}}\s_{2}^{-a_{2}}...\s_{1}^{a_{2n-1}}\s_{2}^{-a_{2n}}\s_{1}^{M}\o^{N}, & $\b P_{1} \leq 0, \b P_{1} \neq P_{1}[m], P_{2}[m],$ \label{continued2}\\
\s_{1}^{M}\o^{N}, & $\b P_{1} =P_{1}[m],$ \label{continued3} \\
\s_{1}\s_{2}\s_{1}^{M}\o^{N},& $\b P_{1}=P_{2}[m],$\label{continued4}
\end{numcases}
where $a_{1} \in \N$, $a_{2},...,a_{2n} \in \N\sm0$, $M,N \in \Z$.
\end{lemma}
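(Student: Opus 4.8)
The plan is to reduce the lemma to group-theoretic bookkeeping in $B_{3}$ via the surjection $\pi \colon B_{3} \to \PSL_{2}(\Z)$, $\s_{i} \mapsto \s_{i}$ (the $q=1$ specialisation of the homomorphism $B_{3} \to \PSL_{2,q}(\Z)$ recorded above). Two external inputs are needed: the standard facts that $\ker\pi$ is the infinite cyclic centre $Z(B_{3}) = \langle\w\rangle$ and that the stabiliser of $\infty$ under the $\PSL_{2}(\Z)$-action is the infinite cyclic group $\langle\s_{1}\rangle$; and the $q=1$ Rouquier--Zimmermann correspondence \cite{rou.zim:03}, a $B_{3}$-equivariant bijection between $\bS$ modulo shift and $\Q \cup \{\infty\}$ with $B_{3}$ acting on the target through $\pi$. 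This is not circular, since that correspondence is proved in \cite{rou.zim:03} independently.

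First I would pin down conventions. As $\s_{P_{1}}$ fixes $P_{1}$ up to shift and $\s_{1}$ fixes only $\infty$ (and likewise $\s_{P_{2}}$, $\s_{2}$, $0$), equivariance forces $P_{1} \leftrightarrow \infty$ and $P_{2} \leftrightarrow 0$; then $P_{12} = \s_{P_{1}}(P_{2}) \leftrightarrow \s_{1}\cdot 0 = -1$ and $P_{21} = \s_{P_{2}}(P_{1}) \leftrightarrow \s_{2}\cdot\infty = 1$. Combining this with the classification of spherical objects by their $\t$-HN type stated above yields, for any $\b \in B_{3}$ with $r/s := \pi(\b)\cdot\infty$ (the rational attached to $\b P_{1}$): that $\b P_{1} \geq 0$ and $\b P_{1} \neq P_{1}[m], P_{2}[m]$ iff $r/s$ is strictly positive; the analogue with ``$\leq 0$'' and ``strictly negative''; that $\b P_{1} = P_{1}[m]$ iff $r/s = \infty$; and that $\b P_{1} = P_{2}[m]$ iff $r/s = 0$. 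Hence the choice of form \eqref{continued1}--\eqref{continued4} is forced by $r/s$, which is an invariant of $\b$, accounting for the words ``determined by the image of $P_{1}$''.

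For existence, let $\mathbf{a}$ be the even continued fraction expansion of $r/s$ (empty if $r/s = \infty$, equal to $[-1,1]$ if $r/s = 0$). By \cref{continuedmatrices}, $\pi(\b_{\mathbf{a}})\cdot\infty = r/s$, so $\pi(\b_{\mathbf{a}}^{-1}\b)$ fixes $\infty$ and equals $\s_{1}^{M}$ for some $M \in \Z$; then $\b_{\mathbf{a}}^{-1}\b\,\s_{1}^{-M} \in \ker\pi = \langle\w\rangle$, so it is $\w^{N}$ for some $N \in \Z$, and $\b = \b_{\mathbf{a}}\s_{1}^{M}\w^{N}$. Expanding $\b_{\mathbf{a}} = \s_{1}^{-a_{1}}\s_{2}^{a_{2}}\cdots\s_{2}^{a_{2n}}$ and using the sign dichotomy for even continued fractions turns this into \eqref{continued1} when $r/s > 0$, into \eqref{continued2} (relabelling $a_{i} \mapsto -a_{i}$) when $r/s < 0$, into \eqref{continued3} (where $\b_{\mathbf{a}} = 1$) when $r/s = \infty$, and into \eqref{continued4} (where $\b_{[-1,1]} = \s_{1}\s_{2}$) when $r/s = 0$; the ranges $a_{1} \in \N$, $a_{2},\dots,a_{2n} \in \N\sm\{0\}$ are precisely the constraints on even continued fractions. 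For uniqueness, within a fixed form $\mathbf{a}$ is the unique even expansion of $r/s$ and $(M,N)$ is unique since $\langle\s_{1},\w\rangle \cong \Z^{2}$ (from $\s_{1}^{M}\w^{N} = 1$ one gets $\s_{1}^{M} = 1$ in $\PSL_{2}(\Z)$, so $M = 0$, then $N = 0$); the form itself being dictated by $r/s$, uniqueness holds across all four cases.

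The only non-formal content is the pair of inputs in the first paragraph, so the step I expect to need the most care is matching conventions: aligning the normalisation of the Rouquier--Zimmermann bijection, the HN-type classification, and the (slightly nonstandard) continued fraction conventions --- in particular the exceptional role of $0$ and $\infty$ --- so that the homological trichotomy ``$\b P_{1} \geq 0$ / $= P_{1}[m]$ / $= P_{2}[m]$'' matches the numerical one ``$r/s > 0$ / $= \infty$ / $= 0$''. As an alternative route to existence, one can induct on word length by reading a chosen word for $\b$ through the automaton $\Theta$ of \cref{fig:b3-automaton} starting from a vertex-set containing $P_{1}$; this is the sense in which the normal form is ``recognised by the automaton'', the only wrinkle being that $P_{1}$ lies in two of the four vertex-sets, which is the source of the word ``mostly''.
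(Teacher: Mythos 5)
Your argument is correct and is essentially the paper's own proof: both reduce to $\PSL_{2}(\Z)$ via the surjection with kernel $Z(B_{3})=\langle\w\rangle$, invoke \cref{continuedmatrices} to produce a word $\b_{\mathbf{a}}$ with the same action on $\infty$, and absorb the discrepancy into the stabiliser $\langle\ol{\s}_{1}\rangle$ of $\infty$. You additionally spell out the uniqueness of $(M,N)$ and the matching of the homological side conditions on $\b P_{1}$ with the sign of $r/s$ via the $q=1$ Rouquier--Zimmermann correspondence, details the paper leaves implicit.
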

Taking $P_{1}$ as the counterpart of $\infty$, and $P_{2}$ as the counterpart of 0, \cref{continued} corresponds exactly to the continued fraction expansion in \cref{subsec:qrationals-def}.
\begin{proof}
Consider any braid $\b \in B_{3}$. Let $\ol{\b}$ denote the corresponding element of $\PSL_{2}(\Z)$ (under the standard surjection $B_{3} \to \PSL_{2}(\Z)$.) Let $\ol{\b}\big[\begin{smallmatrix}1 \\ 0 \end{smallmatrix}\big]=\big[\begin{smallmatrix} r \\ s \end{smallmatrix}\big]$. Then by \cref{continuedmatrices}, there exists a matrix $\wh{\b'}$ of the form 
$$\ol{\b'}=\ol{\s}_{1}^{-a_{1}}\ol{\s}_{2}^{a_{2}}...\ol{\s}_{1}^{-a_{2n-1}}\ol{\s}_{2}^{a_{2n}},$$
such that $\ol{\b'}\big[\begin{smallmatrix} 1 \\ 0 \end{smallmatrix}\big]=\big[\begin{smallmatrix} r \\ s \end{smallmatrix}\big]$. 
Since $\ol{\b'}$ and $\ol{\b}$ act identically on $\big[\begin{smallmatrix} 1 \\ 0 \end{smallmatrix}\big]$, we have that $\ol{\b'}=\ol{\b}\ol{\s}_{1}^{M}$ for some $M \in \Z$ (this can be deduced from the fact that every matrix in $\PSL_{2}(\Z)$ has determinant 1).
\cref{continued} then follows from the fact that the kernel of $B_{3} \to \PSL_{2}(\Z)$ is the centre $Z(B_{3})$.
\end{proof}
If $\b$ is given by the form in \cref{continued}, we say that $\b$ is written in (even) continued form.
We say that $\b$ is written in strict continued form if any of the following hold:
\begin{enumerate}
\item $\b$ is written in form \eqref{continued1}, $M \leq 0$;
\item $\b$ is written in form \eqref{continued2}, $M \geq 0$;
\item $\b$ is written in form \eqref{continued3};
\item $\b$ is written in form \eqref{continued4}, $M\leq0$.
\end{enumerate}
The following corollary of \cref{continued} follows from the calculation:
$$
\s_{1}^{M}\o^{N}P_{1}=P_{1}[-2N-M].$$
\begin{cor}[Strict continued normal form]\label{continuedcor}
For any $X \in \bS$, there exists a braid $\b$ in strict continued form such that $\b P_{1}=X$.
\end{cor}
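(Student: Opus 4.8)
The plan is to reduce \cref{continuedcor} to \cref{continued} together with the transitivity of the $B_3$-action on $\bS$. Given $X\in\bS$, transitivity (see e.g.\ \cite[Theorem 1.1]{bap.deo.lic:21}) provides a braid $\g$ with $\g P_1=X$. Writing $\g$ in its unique continued form, it falls into exactly one of the four types \eqref{continued1}--\eqref{continued4}, the type being determined by the position of $X=\g P_1$; if $\g$ is of type \eqref{continued3} it is already in strict continued form and we are done, so assume otherwise. In all remaining cases write $\g=\rho\,\s_1^{M}\o^{N}$, where the \emph{prefix} $\rho$ is the alternating product $\s_1^{-a_1}\s_2^{a_2}\cdots\s_1^{-a_{2n-1}}\s_2^{a_{2n}}$ in type \eqref{continued1}, its sign-reversed analogue in type \eqref{continued2}, and $\s_1\s_2$ in type \eqref{continued4}; crucially $\rho$ involves no $\o$.

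The main observation is that one may freely adjust the tail. For any $k\in\Z$, since $\o$ is central, $\g\,\s_1^{-2k}\o^{k}=\rho\,\s_1^{M-2k}\o^{N+k}=:\b_k$. By the identity displayed just before \cref{continuedcor}, the braid $\s_1^{-2k}\o^{k}$ fixes $P_1$ (the shift it induces is $-2k-(-2k)=0$), so $\b_k P_1=\g P_1=X$. Moreover $\b_k$ is written in a form matching that of $\g$, with the same prefix $\rho$ --- hence the same sign and membership constraints on the $a_i$ --- and with tail exponent $M-2k$; since $\b_k P_1=X$ determines its type, the uniqueness statement in \cref{continued} identifies this expression with the continued form of $\b_k$.

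It then remains only to pick $k$ so that $M-2k$ satisfies the inequality built into the definition of strict continued form: $M-2k\le 0$ in types \eqref{continued1} and \eqref{continued4} (take $k$ sufficiently large) and $M-2k\ge 0$ in type \eqref{continued2} (take $k$ sufficiently negative), there being no condition in type \eqref{continued3}. With such a $k$, the braid $\b:=\b_k$ is in strict continued form and satisfies $\b P_1=X$, proving the corollary.

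I do not expect a genuine obstacle here: the only point requiring care is that rewriting the tail leaves the prefix --- and hence the combinatorial constraints of the normal form in \cref{continued} --- untouched, which is exactly why it matters that $\rho$ contains no $\o$ and that $\o$ is central. Everything else is a direct unwinding of the definitions of continued and strict continued form.
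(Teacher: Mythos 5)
Your proof is correct and follows essentially the same route as the paper, which justifies the corollary in one line via the identity $\s_{1}^{M}\o^{N}P_{1}=P_{1}[-2N-M]$: you have simply unwound that hint, replacing the tail $\s_1^M\o^N$ by $\s_1^{M-2k}\o^{N+k}$ (which preserves $2N+M$ and hence the action on $P_1$) and choosing $k$ to meet the strictness inequality. The care you take with uniqueness of the continued form and with the prefix being $\o$-free is exactly the content the paper leaves implicit.
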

\cref{qrz1} is an immediate corollary of the matrix formula for right $q$-rationals (\cref{qmatrices}), and the following \cref{rzprop,rzmess}:
\begin{prop}\label{rzprop}
Consider any $\b \in B_{3}$ which can be written in strict continued form, and let $\ol{\b}$ be the corresponding $q$-deformed matrix. Up to multiplication by $-1$ and by powers of $q$, we have an equality:
$$
\ol{\b}=
\begin{bmatrix}
\occ_{q}(P_{2}, \b P_{1}) & (-q)^{-\e}\occ_{q}(P_{2}, \b P_{2}) \\
(-q)^{\e}\occ_{q}(P_{1}, \b P_{1}) & \occ_{q}(P_{1}, \b P_{2})
\end{bmatrix},
$$
where
$$\e=\begin{cases} 0, & \b P_{2}\geq 0, 
\\ 1, & \b P_{2}\leq 0. \end{cases}$$
\end{prop}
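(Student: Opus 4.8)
The plan is to prove \cref{rzprop} by induction on the length of a word realising $\b$ in strict continued form, with the Harder--Narasimhan automaton of \cref{theautomaton} supplying the inductive step. The point that makes this possible is that the functionals $\occ_q(P_1,-)$ and $\occ_q(P_2,-)$ are, at each vertex of the automaton, fixed linear combinations of the same HN-multiplicity coordinates that the automaton already controls. Concretely, if a spherical object $X$ lies in a vertex $[A,B]$ of the graph $\Theta$ of \cref{fig:b3-automaton}, then $\HN_\tau(X)$ is supported on the two coordinates indexed by $A$ and $B$, so $X$ is recorded by its reduced vector $v(X)=(\pi_A(X),\pi_B(X))\in\Z[q^{\pm}]^{2}$, which is exactly $i_v(X)$ in the $\Theta$-representation $\underline M$ of \cref{fig:b3-automaton-rep}. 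Since $\occ_q(P_1,X)=\pi_2(X)+\pi_{12}(X)+\pi_{21}(X)$ and $\occ_q(P_2,X)=\pi_1(X)+\pi_{12}(X)+\pi_{21}(X)$, at each vertex there is a fixed invertible integer matrix $C_{[A,B]}$ with $\bigl(\occ_q(P_2,X),\ \occ_q(P_1,X)\bigr)=C_{[A,B]}\,v(X)$ --- for example $C_{[P_{21},P_1]}=\left(\begin{smallmatrix}1&1\\1&0\end{smallmatrix}\right)$ and $C_{[P_1,P_{12}]}=\left(\begin{smallmatrix}1&1\\0&1\end{smallmatrix}\right)$. Absorbing a sign-and-power-of-$q$ normalisation on one coordinate --- this is the source of the $(-q)^{\pm\e}$ factors, and which normalisation is used depends only on whether the vertex lies in the ``positive'' pair $\{[P_2,P_{21}],[P_{21},P_1]\}$ or the ``negative'' pair $\{[P_1,P_{12}],[P_{12},P_2]\}$ --- produces twisted matrices $\wt C_v$ and a twisted vector $\wt v(X)$ whose entries are $\occ_q(P_2,X)$ and $(-q)^{\e_v}\occ_q(P_1,X)$, up to a common sign and power of $q$.

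The inductive engine is then a finite check, one for each loop and edge $v\xrightarrow{g}w$ of $\Theta$: transporting the $\underline M$-matrix $N$ on that edge through the change of coordinates $\wt C$ produces, up to sign and a power of $q$, the $q$-deformed generator $\sigma_{g,q}^{\pm1}$. (For instance the $\sigma_1$-loop at $[P_1,P_{12}]$ and the $\sigma_2$-edge $[P_1,P_{12}]\to[P_{21},P_1]$, whose $\underline M$-matrix is the identity, work out to $\sigma_{1,q}$ and $\sigma_{2,q}$ exactly.) Granting this, let $\b$ be in strict continued form. Using \cref{continued}, strip off the shift factor $\sigma_1^{M}\omega^{N}$ --- which acts on objects as a homological shift and on matrices as a unit times the identity, since $\omega_q=-q^{-3}I$ --- leaving a word in $\sigma_1^{\pm1},\sigma_2^{\pm1}$ that, read letter by letter starting from the vertex containing $P_1$, traces a legal path in $\Theta$; here the sign constraints defining strict continued form are precisely what keep a backtracking cancellation from leaving the automaton, and the vertex visited at each stage can be matched term by term against the classical identity \cref{continuedmatrices} over $\PSL_2(\Z)$. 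The automaton identity $\HN_\tau(X)=h_v(i_v(X))$ of \cref{theautomaton} lets us propagate $v(\cdot)$ along this path; telescoping the $\wt C$'s then converts the propagation into the matrix product $\ol\b=\sigma_{g_k,q}^{\pm1}\cdots\sigma_{g_1,q}^{\pm1}$ acting on the normalised vectors $\wt v(P_1)=(1,0)$ and $\wt v(P_2)=(0,\pm q^{k})$, and reading off the two columns gives exactly the matrix of \cref{rzprop} up to the permitted overall sign and power of $q$.

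I expect the real difficulty to be concentrated in two bookkeeping points. The first concerns the exponent $\e$: in \cref{rzprop} the single $\e$ is read off from $\b P_2$, yet in the first column of the matrix it is the normalisation at the vertex of $\b P_1$ that enters, so one must check that $\b P_1$ and $\b P_2$ always lie in the same half of $\Theta$ --- except when one of them is a shift of $P_1$ or $P_2$, in which case the relevant $\occ_q$ entry is $0$ or a monomial and the $\e$-discrepancy is absorbed into the overall ambiguity --- and, relatedly, that the two columns really do acquire a single common unit. Managing this is exactly why the proof must treat the four forms of \cref{continued} separately, and the two ``boundary'' forms \eqref{continued3} and \eqref{continued4}, being the short words $\sigma_1^{M}\omega^{N}$ and $\sigma_1\sigma_2\sigma_1^{M}\omega^{N}$ rather than generic automaton paths, must be disposed of by a short direct calculation. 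The second point is simply getting all the conventions of the HN automaton mutually consistent --- the direction in which the edge matrices act, and which coordinate carries the twist --- so that the telescoped product genuinely reproduces $\ol\b$ in the correct order; this is routine once fixed but is where an error would most easily slip in.
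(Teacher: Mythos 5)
Your proposal is correct and follows essentially the same route as the paper's proof: induction on the word length of $\b$ in strict continued form, with the automaton's $\Theta$-representation propagating the HN multiplicity vectors and a fixed change of coordinates at each vertex converting these into the pair $(\occ_q(P_2,\cdot),\occ_q(P_1,\cdot))$; your "finite check per edge" is exactly the paper's eight-case verification, of which it writes out one representative instance. The subtleties you flag — that $\b_l P_1$ and $\b_l P_2$ always occupy the same vertex, that $\o$ acts as a unit times the identity, and that forms \eqref{continued3} and \eqref{continued4} are handled by a short direct computation — are precisely the points the paper also isolates.
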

\begin{prop}\label{rzmess}
Consider any $\b \in B_{3}$, which cannot be written in strict contined form, then, up to multiplication by $-1$ and by powers of $q$, we have:
$$\ol{\b}=\begin{bmatrix} 
\occ_{q}(P_{2}, \b P_{1}) & (-q)^{\e-1}\occ_{q}(P_{2}, \b P_{2}) \\
\occ_{q}(P_{1},\b P_{1} ) & (-q)^{2 \e -1}\occ_{q}(P_{1}, \b P_{2})
\end{bmatrix}
$$
where
$$\e=\begin{cases} 0, & \b P_{2}\geq 0, 
\\ 1, & \b P_{2} \leq 0. \end{cases}$$
\end{prop}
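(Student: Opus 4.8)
The plan is to \emph{deduce} \cref{rzmess} from \cref{rzprop} (which precedes it and which we take as established), by stripping off the ``excess'' power of $\s_1$ at the tail of the continued normal form. Write $A \doteq B$ to mean ``$A = \pm q^{k}B$ for some $k\in\Z$''. Since $\ol{\o}\doteq\Id$ in $\PSL_{2,q}(\Z)$, the matrix $\ol{\b}$ depends only on $\b$ modulo the centre, and all matrix equalities below are understood up to a global sign and power of $q$, exactly as in the statement.

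First, I would isolate what makes \cref{rzprop} and \cref{rzmess} together exhaustive: by \cref{continued} every $\b\in B_3$ lies in exactly one of the forms \eqref{continued1}--\eqref{continued4}, and being in \emph{strict} continued form is decided by the sign of the terminal exponent $M$ on $\s_1$. Hence a braid not in strict continued form has the shape $(\text{continued-fraction part})\cdot\s_1^{\pm M}\o^{N}$ with $M\geq 1$ (forms \eqref{continued1} and \eqref{continued4} giving $+M$, form \eqref{continued2} giving $-M$), and pushing the central factor $\o^{N}$ to the left rewrites it as $\b = \b_0\,\s_1^{\pm M}$, where $\b_0$ is its ``$M=0$ truncation''. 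From the conditions in \cref{continued} one checks that $\b_0$ \emph{is} in strict continued form in each of the three families (the truncation lands at $M=0$, which meets the relevant inequality), so \cref{rzprop} applies to $\b_0$.

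Next, since $\ol{\b}\doteq\ol{\b_0}\,\s_{1,q}^{\pm M}$ and $\ol{\b_0}$ is given by \cref{rzprop}, the claim becomes an explicit $2\times2$ matrix identity. The matrix $\s_{1,q}^{\pm M}$ is upper triangular, with the $q$-integer $[M]^{\sharp}_{q^{\mp 1}}$ (up to sign and a power of $q$) in the off-diagonal slot, so $\ol{\b}$ is obtained from the matrix of \cref{rzprop} for $\b_0$ by rescaling its first column and adding a $q$-integer multiple of that column to the second. It then remains to re-express the product entries through $\occ_q(P_i,\b P_j)$. For the \emph{first} column this is routine: $\s_1^{\pm M}P_1 = P_1[\mp M]$ (a special case of the identity $\s_1^{M}\o^{N}P_1=P_1[-2N-M]$ used before \cref{continuedcor}), so $\b P_1 = (\b_0P_1)[\text{shift}]$, and shifting an object rescales every $\tau$-HN multiplicity, hence $\occ_q(P_i,-)$, by a power of $q$; thus the first column of $\ol{\b}$ equals $\begin{bmatrix}\occ_q(P_2,\b P_1)\\\occ_q(P_1,\b P_1)\end{bmatrix}$ up to a sign and a power of $q$, matching \cref{rzmess}. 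For the \emph{second} column one has $\b P_2 = (\b_0\,\s_1^{\pm M}P_2)[\text{shift}]$, and $\s_1^{\pm M}P_2$ is a spherical object whose $\tau$-HN vector I would compute by running $P_2$ through the automaton of \cref{theautomaton} along the $\s_1^{\pm M}$-edges; its two coordinates are exactly the $q$-integers appearing in $\s_{1,q}^{\pm M}$. Feeding this vector through the (linear) automaton transition attached to $\b_0$ then realizes $\occ_q(P_i,\b P_2)$ as precisely the combination of $\occ_q(P_i,\b_0P_1)$ and $\occ_q(P_i,\b_0P_2)$ produced by the matrix product above; collecting signs and powers of $q$, and tracking which of the four regions $[P_2,P_{21}],[P_{21},P_1],[P_1,P_{12}],[P_{12},P_2]$ contains $\b P_2$ (this is exactly the bookkeeping encoded in $\e$), one reads off the second column $\begin{bmatrix}(-q)^{\e-1}\occ_q(P_2,\b P_2)\\(-q)^{2\e-1}\occ_q(P_1,\b P_2)\end{bmatrix}$ of \cref{rzmess}. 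The exceptional cases where $\b P_1$ is a shift of $P_1$ or $P_2$ (forms \eqref{continued3} and \eqref{continued4}) I would handle directly, using $\s_1^{M}\o^{N}P_1=P_1[-2N-M]$ and $\s_1\s_2P_1\doteq P_2$.

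The main obstacle is this last step. The first column is easy because only shifts of the single object $\b_0P_1$ intervene, but for the second column $\b P_2$ is a \emph{genuinely different} spherical object from $\b_0P_2$: it is $\b_0$ applied to $\s_1^{\pm M}P_2$, not to $P_2$. So one must run the automaton separately on $P_2$ to obtain $\HN_\tau(\s_1^{\pm M}P_2)$, and then reconcile the signs, powers of $q$, and value of $\e$ with those produced by \cref{rzprop} for $\b_0$, so that a \emph{single} global factor $\pm q^{k}$ works for the whole matrix. Getting this sign-and-degree bookkeeping to close — in particular, seeing why the correction factors in \cref{rzmess} are $(-q)^{\e-1}$ and $(-q)^{2\e-1}$ rather than the $(-q)^{\pm\e}$ of \cref{rzprop} — is the real content of the argument; I expect it to be an intricate but essentially mechanical case analysis over the four regions and the two signs of $M$.
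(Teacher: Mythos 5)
Your proposal is sound and lands on the same decomposition the paper uses, but it organises the computation differently. The paper's entire argument for \cref{rzmess} is the observation that a non-strict braid is a right multiple $\b=\b'\s_1^{M}$ of a strict one, followed by the instruction to re-run the induction of \cref{rzprop} \emph{with the base case changed} from $\Id$ to $\s_1^{M}$; the rzmess-form identity, with its factors $(-q)^{\e-1}$ and $(-q)^{2\e-1}$, is then propagated letter by letter through $\b'$ exactly as in the eight inductive cases of \cref{rzprop}. You instead treat \cref{rzprop} as a black box for the strict part $\b_0$ and absorb the tail into a single right multiplication by $\s_{1,q}^{\pm M}$. What your route buys is that the inductive step need not be re-verified with the new sign conventions; what it costs is concentrated in the step you yourself flag: to identify the second column of $\ol{\b_0}\,\s_{1,q}^{\pm M}$ with $\occ_q(P_i,\b P_2)$ you must compute $\HN_\tau(\s_1^{\pm M}P_2)$ and check that this object lies in the source vertex of the automaton path of $\b_0$ so that the linear transition of \cref{theautomaton} applies. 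One caution there: $P_1$ and $P_2$ do not literally share an automaton vertex (the HN factor of $P_2$ is not a shift of $P_1$ or $P_{12}$), so the phrase ``the combination of $\occ_q(P_i,\b_0P_1)$ and $\occ_q(P_i,\b_0P_2)$ produced by the matrix product'' must be extracted from the coordinates $(\pi_A,\pi_B)$ of the actual source vertex $[A,B]$ rather than read off from $P_1$ and $P_2$ directly --- this is the same bookkeeping the paper's modified base case performs, merely relocated. With that caveat the two arguments compute the same content, and yours is a legitimate (arguably tidier) packaging of it.
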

\begin{proof}[Proof of \cref{rzprop}]
Consider any $\b \in B_{3}$ which can be written in strict continued form. 
We induct on the length of $\b$.
The base case $\b=\Id$ is trivial.
Since multiplication by $\w$ corresponds to scalar multiplication by a power of $q$, we can assume without loss of generality that the $\w^{N}$-term in $\b$ is trivial (i.e. $N=0$).
\par
For the induction argument, note that $\b$, read from right to left, defines a path in $\Theta$ (\cref{fig:b3-automaton}).
We will ignore the case that $\b$ is in the forms \eqref{continued3} and \eqref{continued4} (in the notation of \cref{continued}), since this can be easily done by hand.
\par
Let $\b_{l}$ be the braid consisting of the last $l$ elementary braids in $\b$, and suppose that
$$\ol{\b_{l}}=\begin{bmatrix}
\occ_{q}(P_{2}, \b_{l} P_{1}) & (-q^{-1})^{\e}\occ_{q}(P_{2}, \b_{l} P_{2}) \\ (-q)^{\e}\occ_{q}(P_{1}, \b_{l} P_{1}) & \occ_{q}(P_{1}, \b_{l} P_{2}).
\end{bmatrix}$$
There are eight cases, depending on whether $\b$ has the form \eqref{continued1} or \eqref{continued2} in the notation of \cref{continued}, and on the value of $\b_{l+1}\b_{l}^{-1}$. Note that $\b_{l}P_{1} \in [P_{i}, P_{j}]$ if and only if  $\b_{l}P_{2} \in [P_{i}, P_{j}]$. 
For illustrative purposes, we will assume that $\b$ has the form \eqref{continued1} (and thus $\e=0$), that $\b_{l}P_{1}, \b_{l}P_{2}\in [P_{21}, P_{1}]$, and $\b_{l+1}=\s_{2}\b_{l}$.
First we calculate
\begin{align*}
\ol{\b_{l+1}}&=\ol{\s_{2}\b_{l}}
\\ &=\begin{bmatrix} 1 & 0 \\ 1 & q^{-1}\end{bmatrix}\begin{bmatrix}
\occ_{q}(P_{2}, \b_{l} P_{1}) & \occ_{q}(P_{2}, \b_{l} P_{2}) \\\occ_{q}(P_{1}, \b_{l} P_{1}) & \occ_{q}(P_{1}, \b_{l} P_{2}).
\end{bmatrix}
\\&=\begin{bmatrix}
\occ_{q}(P_{2}, \b_{l} P_{1}) & \occ_{q}(P_{2}, \b_{l} P_{2}) \\ \occ_{q}(P_{2}, \b_{l} P_{1})+q^{-1}\occ_{q}(P_{1}, \b_{l} P_{1}) & \occ_{q}(P_{2}, \b_{l} P_{2})+q^{-1}\occ_{q}(P_{1}, \b_{l} P_{2}).
\end{bmatrix}
\end{align*}

On the other hand, by definition we have equalities

\begin{align*}
\begin{bmatrix} \p_{21}(\b_{l}P_{1}) &  \p_{21}(\b_{l}P_{2}) \\
 \p_{1}(\b_{l}P_{1}) & \p_{1}(\b_{l}P_{2})
\end{bmatrix}=\begin{bmatrix}
\occ_{q}(P_{1}, \b_{l}P_{1}) & \occ_{q}(P_{1}, \b_{l}P_{2})
\\ \occ_{q}(P_{2}, \b_{l}P_{1})-\occ_{q}(P_{1}, \b_{l}P_{1}) & \occ_{q}(P_{2}, \b_{l}P_{2})-\occ_{q}(P_{1}, \b_{l}P_{2})
\end{bmatrix}
\end{align*}
and 
\begin{align}\label{eqa1}
&\begin{bmatrix} \p_{2}(\b_{l+1}P_{1}) & \p_{2}(\b_{l+1}P_{2}) \\
 \p_{21}(\b_{l+1}P_{1})&  \p_{21}(\b_{l+1}P_{2})
\end{bmatrix}
\\ &=\begin{bmatrix}
\occ_{q}(P_{1}, \b_{l+1}P_{1})-\occ_{q}(P_{2}, \b_{l+1}P_{1}) & \occ_{q}(P_{1}, \b_{l+1}P_{2})-\occ_{q}(P_{2}, \b_{l+1}P_{2})
\\ \occ_{q}(P_{2}, \b_{l+1}P_{1}) & \occ_{q}(P_{2}, \b_{l+1}P_{2})
\end{bmatrix}
\end{align}
Applying the linear representation embedded in $\Theta$,
\begin{align}\label{eqa2}
\begin{bmatrix} \p_{2}(\b_{l+1}P_{1}) &  \p_{2}(\b_{l+1}P_{2}) \\
 \p_{21}(\b_{l+1}P_{1}) &  \p_{21}(\b_{l+1}P_{2})
\end{bmatrix}
&=\begin{bmatrix} q^{-1} & 0 \\ 1 & 1 \end{bmatrix}\begin{bmatrix} \p_{21}(\b_{l}P_{1}) & \p_{21}(\b_{l}P_{2}) \\
 \p_{1}(\b_{l}P_{1}) &  \p_{1}(\b_{l}P_{2})
\end{bmatrix}
\\&=\begin{bmatrix} q^{-1}\occ_{q}(P_{1}, \b_{l} P_{1}) & q^{-1}\occ_{q}(P_{1}, \b_{l} P_{2})
\\  \occ_{q}(P_{2}, \b_{l} P_{1}) &  \occ_{q}(P_{2}, \b_{l} P_{2})
\end{bmatrix}
\end{align}
Comparing \cref{eqa1} and \cref{eqa2}, gives us that for $i \in \{1,2\}$,
\begin{align*}
\occ_{q}(P_{2}, \b_{l+1}P_{i})&=\occ_{q}(P_{2}, \b_{l}P_{i}),\\
\occ_{q}(P_{1}, \b_{l+1}P_{i})&=q^{-1}\occ_{q}(P_{1}, \b_{l}P_{i}) +\occ_{q}(P_{2}, \b_{l}P_{i}),
\end{align*}
which proves that
$$\ol{\b_{l+1}}=\begin{bmatrix}
\occ_{q}(P_{2}, \b_{l+1} P_{1}) & \occ_{q}(P_{2}, \b_{l+1} P_{2}) \\ \occ_{q}(P_{1}, \b_{l+1} P_{1}) & \occ_{q}(P_{1}, \b_{l+1} P_{2}).
\end{bmatrix}.$$
Verifying the other seven cases (using the exact same procedure) completes the induction argument.
\end{proof}
The proof of \cref{rzmess} is virtually the same as the proof of \cref{rzprop}. Any braid which cannot be written in strictly normal form is a right multiple of a braid which can be written in strictly normal form by $\s_{1}^{M}$, thus one can prove this case using the same method as in part 1, but changing the base case appropriately.
\par

This completes the proof of \cref{qrz1}.

A proof of \cref{qrz2} can be constructed independently, analogously to the above proof.  In lieu of doing that, 
we will bootstrap \cref{qrz2} to \cref{qrz1}, by way of the following relationship between $\occ_{q}$ and $\ol{\hom}_{q}$:
\begin{lemma}[$\hom$-values]\label{homvalues}
The functional $\ol{\hom}_{q}(P_{1},-)$ takes the following values:
$$\ol{\hom}_{q}(P_{1},X)=\begin{cases} 
(1-q^{-1})\occ_{q}(P_{2},X)+q^{-1}\occ_{q}(P_{1},X),
& X \geq 0\\
(q^{-2}-q^{-1})\occ_{q}(P_{2},X)+q^{-1}\occ_{q}(P_{1},X),
& X \leq 0.
\end{cases}$$
Symmetrically,
the functional $\ol{\hom}_{q}(P_{2}, -)$ takes the following values:
$$\ol{\hom}_{q}(P_{2},X)=\begin{cases} 
(q^{-2}-q^{-1})\occ_{q}(P_{1},X)+q^{-1}\occ_{q}(P_{2},X),
& X\geq 0, \\
(1-q^{-1})\occ_{q}(P_{1},X)+q^{-1}\occ_{q}(P_{2},X),
& X \leq 0.
\end{cases}$$
\end{lemma}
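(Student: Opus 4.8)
The plan is to prove \cref{homvalues} by the same scheme as \cref{qrz1}: first establish the two formulas on the generating objects $P_1,P_2,P_{12},P_{21}$, and then propagate them along the braid group action using the automaton of \cref{theautomaton}, proving the $\ol{\hom}_q(P_1,-)$ and $\ol{\hom}_q(P_2,-)$ statements simultaneously by induction. The involution of $\C_2$ exchanging the two vertices of the $A_2$ quiver sends $P_1\leftrightarrow P_2$ and $P_{12}\leftrightarrow P_{21}$, swaps the two $\occ_q$ functionals, and exchanges the classes $X\ge 0$ and $X\le 0$; so it identifies the two halves of the lemma, and it is enough to carry out the argument for $\ol{\hom}_q(P_1,-)$.

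For the base case, compute the graded vector spaces $\Hom^\bullet(P_1,P_i)$ for $i\in\{1,2,12,21\}$ directly, using the triangles that relate $P_1,P_2,P_{12},P_{21}$ (the unique morphisms $P_1\to P_2$ and $P_2\to P_1$ and their cones) together with $2$-Calabi--Yau duality $\Hom^n(P_1,-)\cong\Hom^{2-n}(-,P_1)^\vee$. The outcome is $\Hom^\bullet(P_1,P_1)=\k\oplus\k[-2]$, $\Hom^\bullet(P_1,P_2)=\k[-1]$, $\Hom^\bullet(P_1,P_{21})=\k$, and $\Hom^\bullet(P_1,P_{12})=\k[-2]$, so that $\ol{\hom}_q(P_1,-)$ takes the values $q^{-2}-q^{-1}$ (invoking the diagonal clause of its definition for $X=P_1$), $q^{-1}$, $1$, and $q^{-2}$ on these four objects. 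One then checks these agree with the asserted right-hand sides: with $\HN_\tau(P_1)=(1,0,0,0)$, $\HN_\tau(P_2)=(0,1,0,0)$, $\HN_\tau(P_{12})=(0,0,1,0)$, $\HN_\tau(P_{21})=(0,0,0,1)$, and $\occ_q(P_1,X)=\pi_2(X)+\pi_{12}(X)+\pi_{21}(X)$, $\occ_q(P_2,X)=\pi_1(X)+\pi_{12}(X)+\pi_{21}(X)$, the identities hold when $P_1$ and $P_{12}$ are read in the $X\le 0$ branch and $P_2$, $P_{21}$ in the $X\ge 0$ branch --- the boundary objects $P_1,P_2$ belong to both a $\ge0$ and a $\le0$ class, so consistency forces a shift of $P_1$ to be read in the $\le0$ branch for $\ol{\hom}_q(P_1,-)$.

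For the inductive step, write an arbitrary $X\in\bS$ as $\beta P_1$ with $\beta$ in strict continued form (\cref{continuedcor}), induct on the length of $\beta$, and follow the path $\beta$ traces through $\Theta$. Passing from $X$ to $\sigma_j^{\pm 1}X$, apply $\Hom^\bullet(P_1,-)$ to the triangle $\Hom^\bullet(P_j,X)\otimes P_j\to X\to\sigma_j^{\pm 1}X\to$ defining the spherical twist; since $\Hom^\bullet(P_1,P_1)=\k\oplus\k[-2]$ and $\Hom^\bullet(P_1,P_2)=\k[-1]$, the resulting long exact sequence expresses $\ol{\hom}_q(P_1,\sigma_j^{\pm 1}X)$ in terms of $\ol{\hom}_q(P_1,X)$, $\ol{\hom}_q(P_2,X)$, and the class of $X$ (which fixes the relevant connecting maps). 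In parallel, the matrices of \cref{fig:b3-automaton-rep} record how $\HN_\tau(X)$, hence $\occ_q(P_1,X)$ and $\occ_q(P_2,X)$, transform under $\sigma_j^{\pm 1}$, and $\Theta$ records the change of class. Comparing the two descriptions, exactly as in the proof of \cref{rzprop}, shows the asserted combination of $\occ_q$-values is preserved by each generator, with the coefficient package switching between the $X\ge0$ and $X\le0$ forms precisely when $\beta$ crosses between the two halves of $\Theta$ --- equivalently, when $\beta P_1$ passes through a shift of $P_1$ or $P_2$.

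The main obstacle, as in \cite[Proposition 5.1]{bap.deo.lic:20}, is controlling the connecting homomorphisms in these long exact sequences: one must show that each has exactly the rank forced by sphericity, with no unexpected cancellation. This is the phenomenon responsible both for the correction term $q^{-2}-q^{-1}$ in the definition of $\ol{\hom}_q$ on the diagonal and for the genuine difference between the $X\ge 0$ and $X\le 0$ formulas --- note, for instance, that on $[P_{21},P_1]$ the coefficient $1-q^{-1}$ of $\pi_1$ is neither the graded dimension of $\Hom^\bullet(P_1,P_1)$ nor its $\ol{\hom}_q$-value, so the connecting maps cannot be sidestepped there. One shortcut that I would use to reduce the casework: on the two classes $[P_2,P_{21}]$ and $[P_{12},P_2]$, which contain no shift of $P_1$, the HN-filtration triangles of a spherical object are $\Hom^\bullet(P_1,-)$-acyclic, so on those classes $\ol{\hom}_q(P_1,-)$ is just the linear extension of its values on the two relevant indecomposables and the lemma there follows immediately from the base case; only the two classes containing a shift of $P_1$ require the full analysis.
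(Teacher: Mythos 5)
The paper states \cref{homvalues} without proof (it is asserted and then immediately used to bootstrap \cref{qrz2}), so there is no argument of the authors' to compare yours against; I can only assess the proposal on its own terms. The parts you have actually done are correct and genuinely useful: the computation of $\Hom^\bullet(P_1,P_i)$ for the four indecomposables and the resulting values $q^{-2}-q^{-1},\ q^{-1},\ 1,\ q^{-2}$ are right, the reduction of the $\ol{\hom}_q(P_2,-)$ half to the $\ol{\hom}_q(P_1,-)$ half via the quiver involution is valid, and your observation that the two branches of the stated formula disagree at $X=P_1$ (the $X\ge 0$ branch gives $1-q^{-1}$ rather than the correct $q^{-2}-q^{-1}$), so that shifts of $P_1$ must be read in the $X\le 0$ branch, is a real subtlety in the statement that the paper does not flag.

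The gap is the one you name yourself and then do not close: the inductive step. The phrase ``exactly as in the proof of \cref{rzprop}'' does not carry over, because that proof manipulates only the $\occ_q$-functionals, which are linear in $\HN_\tau$ by definition; nothing in it touches long exact sequences. The entire content of \cref{homvalues} is that a merely subadditive invariant is nevertheless an exact linear function of $\HN_\tau(X)$ on each class, and your own comparison on $[P_{21},P_1]$ --- coefficient $1-q^{-1}$ of $\pi_1$, versus graded dimension $1+q^{-2}$ and diagonal value $q^{-2}-q^{-1}$ --- shows this forces nonzero connecting maps whose ranks must be computed exactly, not just bounded. Until you prove a precise rank/vanishing statement for the connecting homomorphisms in the twist (or HN-filtration) long exact sequences --- say a degree-separation argument on $[P_2,P_{21}]$ and $[P_{12},P_2]$, where $\Hom^\bullet(P_1,P_2[k])$ and $\Hom^\bullet(P_1,P_{21}[k])$ are each concentrated in one degree, together with an explicit analysis of the evaluation map $\Hom^\bullet(P_1,X)\otimes P_1\to X$ on the two classes containing shifts of $P_1$ --- the induction does not close. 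Even your proposed shortcut on the two $P_1$-free classes is asserted (``$\Hom^\bullet(P_1,-)$-acyclic'') rather than proved; what you need there is that consecutive HN factors cannot contribute to adjacent degrees in a way that permits a nonzero connecting map. As it stands the proposal is a sound architecture with a correct base case, but the step that constitutes essentially all of the work of the lemma is identified rather than executed.
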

\begin{proof}[Proof of \cref{qrz2}]
Consider any $X \in \bS$. Assume that $X \neq  P_{1},P_{2}$. 
By \cref{continuedcor}, we can write $X=\b P_{1}$ for some $\b$ in strict continued form. 
It is easy to check that $\b P_{1}\geq 0$ implies $\b^{-1} P_{1} \leq 0$ (and vice versa), and thus  by \cref{rzprop} we have the following equality (up to scaling):
\begin{align*}\ol{\b}&=\begin{bmatrix}\occ_{q}(P_{2}, \b^{-1} P_{1}) & (-q)^{\e-1}\occ_{q}(P_{2}, \b^{-1} P_{2}) \\(-q)^{1-\e}\occ_{q}(P_{1}, \b^{-1} P_{1}) & \occ_{q}(P_{1}, \b^{-1} P_{2})\end{bmatrix}^{-1}\\
&=\begin{bmatrix}\occ_{q}(P_{1}, \b^{-1} P_{2}) & (-1)^{\e}q^{\e-1}\occ_{q}(P_{2}, \b^{-1} P_{2}) \\(-1)^{\e}q^{1-\e}\occ_{q}(P_{1}, \b^{-1} P_{1}) & \occ_{q}(P_{2}, \b^{-1} P_{1})\end{bmatrix}.
\end{align*}
Let 
$\R^{\flat}(q)/\S^{\flat}(q)$ be the rational function representation of $[\b(\infty)]_{q}^{\flat}$, using the notation in \cref{qrat}.
Combining the matrix formula for right $q$-rationals (\cref{qmatrices}) and the $\hom$-values lemma (\cref{homvalues}), we have (again up to scaling):
\begin{align*}
\begin{bmatrix}
\R^{\flat}(q) \\ \S^{\flat}(q)
\end{bmatrix} &=
\ol{\b}\begin{bmatrix}q^{-1}\\ q^{-1}-1 \end{bmatrix}
\\ &=\begin{bmatrix} 
q^{-1}\occ_{q}(P_{1}, \b^{-1} P_{2}) +(-1)^{\e}(q^{\e-2}-q^{\e-1})\occ_{q}(P_{2}, \b^{-1} P_{2}) \\
(-1)^{\e}q^{-\e}\occ_{q}(P_{1}, \b^{-1} P_{1}) +(q^{-1}-1)\occ_s{q}(P_{2}, \b^{-1} P_{1})
\end{bmatrix}
\\&=\begin{bmatrix}
\ol{\hom}_{q}(P_{1}, \b^{-1}P_{2})
\\ (-1)^{\e}q^{1-\e}\ol{\hom}_{q}(P_{1}, \b^{-1}P_{1})
\end{bmatrix}
\\ &=\begin{bmatrix}
\ol{\hom}_{q}(X, P_{2})
\\  (-1)^{\e}q^{1-\e}\ol{\hom}_{q}(X,P_{1})
\end{bmatrix}
\end{align*}
This shows that
$$ (-1)^{\e}q^{\e-1}\frac{\ol{\hom}_{q}(X,P_{2})}{\ol{\hom}_{q}(X,P_{1})}$$
is indeed a left $q$-deformed rational number, and that the map sending $\b P_{1}$ to $[\b(\infty)]_{q}^{\flat}$ is a bijection.
\par
We now prove $B_{3}$-equivariance.
By construction, we have that for every $X \in \bS$, there is some $\b \in B_{3}$, such that $X=\b P_{1}$ and
$$ (-1)^{\e}q^{\e-1}\frac{\ol{\hom}_{q}(\b P_{1},P_{2})}{\ol{\hom}_{q}(\b P_{1},P_{1})}=\b\frac{\ol{\hom}_{q}(P_{1},P_{2})}{\ol{\hom}_{q}(P_{1},P_{1})}=\b [\infty]_{q}^{\flat}.$$
Consider any $\b_{1}, \b_{2} \in B_{3}$ such that $\b_{1} P_{1}=\b_{2} P_{1}$. To prove $B_{3}$-equivariance, it will be sufficient to show that $\b_{1}$ and $\b_{2}$ act identically on $[\infty]_{q}^{\flat}$. By elementary braid group arithmetic, $\b_{1}=\b_{2}\s_{1}^{n}$ for some $n \in \Z$. Since $\s_{1}$ fixes $[\infty]_{q}^{\flat}$,
we have that
$$\b_{1}[\infty]_{q}^{\flat}=\b_{2}[\infty]_{q}^{\flat}.$$
This completes the proof of $B_{3}$-equivariance.
\end{proof}

\section{A family of compactifications of the space of Bridgeland stability conditions}\label{sec:compactification}
The aim of this section is to propose a family of compactifications of the space of Bridgeland stability conditions of a triangulated category indexed by $q > 0$.
This construction generalises results of~\cite{bap.deo.lic:20}.  In particular, for each real number $q>0$, the corresponding compactification will be the closure of an embedding (via $q$-masses) of $\Stab(\mathcal T)/\mathbb{C}$ into a fixed infinite projective space.
We prove and conjecture several properties of this construction for 2-CY categories associated to connected quivers, and analyse the construction in detail for the category $\C_2$.

\subsection{The deformed mass embedding}

Suppose that $\mathcal T$ is the 2-CY category associated to a connected quiver (we will specialise to $\C_{2}$ again shortly).
Let $\bS$ be the set of spherical objects of $\mathcal T$.
Let $\RR^\bS$ be the space of real-valued functions on $\bS$.
Let $\P(\RR^\bS)$ denote the projectivisation of this space; that is, $\P(\RR^{\bS}) = (\RR^{\bS} \sm 0)/\RR^\times$.
Recall the definition of the $q$-mass of an object $X$, given a stability condition $\t$, denoted $m_{q,\t}(X)$ (\cref{massdef}).
For any $q > 0$, we have a map
\[ m_q \colon \Stab(\mathcal T)/\mathbb{C} \to \P(\RR^\bS),\]
defined as
\[m_q(\tau) = [m_{q,\tau}(X)]_{X\in \bS}.\]

The following proposition states that a stability condition can be recovered uniquely up to $\mathbb{C}$-action from its $q$-mass vector.
\begin{prop}[Injectivity]
  Let $\mathcal T$ be the 2-CY category associated to a connected quiver.
  Let $\bS$ be the set of spherical objects of $\mathcal T$.
  Then the map $m_q \colon \Stab(\mathcal T)/\mathbb{C} \to \P(\RR^\bS)$ is injective.
\end{prop}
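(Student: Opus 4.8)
The plan is to reconstruct $\tau$ from its $q$-mass vector $m_q(\tau)$, up to the $\mathbb{C}$-action, in two steps: first the central charge $Z_\tau\colon K_0(\mathcal T)\to\mathbb{C}$, and then the slicing. Throughout I use two things. First, the classes of spherical objects span $K_0(\mathcal T)$ over $\mathbb{Q}$; indeed the simple spherical objects $P_1,\dots,P_n$ form a basis, and the braid group orbit of these supplies a large further stock of spherical objects (in $\C_2$ one has the complete classification by HN type underlying \cref{theautomaton}). Second, by the definition of the $q$-mass (\cref{massdef}), the $q$-mass of a $\tau$-semistable object $X$ is $\lvert Z_\tau(X)\rvert$ times a factor recording its phase, while for a general object it is a $q$-weighted sum over the $\tau$-HN factors, the weights being determined by the phase gaps. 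Since the $\mathbb{C}$-action multiplies the entire mass vector by one positive scalar, independent of $X$, it suffices to show: if $m_{q,\tau}$ and $m_{q,\tau'}$ agree as functions on $\bS$ up to a global positive scalar, then $\tau'=g\tau$ for some $g\in\mathbb{C}$.

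\textbf{Step 1: the central charge.} I would first extract from the mass data which spherical objects are $\tau$-semistable and the values $\lvert Z_\tau(X)\rvert$ for those. The mechanism: for every exact triangle $A\to X\to B\to A[1]$ of spherical objects, comparing the HN filtration of $X$ with those of $A$ and $B$ gives an inequality $m_{q,\tau}(X)\le q^{a}\,m_{q,\tau}(A)+q^{b}\,m_{q,\tau}(B)$ for suitable integers $a,b$ (the factorwise triangle inequality for $\lvert Z_\tau\rvert$), which is an equality exactly when the triangle is compatible with the HN filtrations. Since $\mathcal T$ is generated by the $P_i$ under shifts and extensions there are enough such triangles to propagate this and reconstruct the full HN combinatorics of $\tau$, hence the set of $\tau$-semistable spherical objects together with their masses. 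Feeding in that the classes of $\tau$-semistable spherical objects, and of sufficiently many sums of these realised by further spherical objects in the braid orbit (such as the cones $P_{ij}$), span $K_0(\mathcal T)\otimes\mathbb{R}$ and include enough pairwise sums, one recovers the positive semidefinite quadratic form $v\mapsto\lvert Z_\tau(v)\rvert^2$ on $K_0(\mathcal T)\otimes\mathbb{R}$, hence $Z_\tau$ up to post-composition with an element of $\mathrm O(2)$. The rotations are realised by the $\mathbb{C}$-action, and the reflection (complex conjugation of $Z_\tau$) is incompatible with the ordering of HN phases already recovered. So $Z_\tau$ is determined up to the $\mathbb{C}$-action; likewise $Z_{\tau'}$, and the hypothesis forces $Z_{\tau'}=gZ_\tau$ for some $g\in\mathbb{C}$. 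After replacing $\tau'$ by $g^{-1}\tau'$ we may assume $Z_\tau=Z_{\tau'}=:Z$.

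\textbf{Step 2: the slicing.} With $Z$ fixed, the phase of any $\tau$-semistable object is determined by $Z$ up to one global additive constant, which is exactly the residual real freedom inside $\mathbb{C}$ fixing $Z$; normalising it away, the $\tau$-semistable spherical objects and their phases agree for $\tau$ and $\tau'$. In a $2$-CY category the bounded $t$-structures, equivalently the hearts $\mathcal{P}_\tau((\phi,\phi+1])$, are pinned down by which objects have all HN phases in a prescribed window --- data already recovered in Step 1 --- so $\mathcal{P}_\tau$ and $\mathcal{P}_{\tau'}$ agree on every heart and hence $\mathcal{P}_\tau=\mathcal{P}_{\tau'}$. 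Therefore $\tau=\tau'$, which is injectivity of $m_q$ on $\Stab(\mathcal T)/\mathbb{C}$. An alternative organisation of the same argument, available at least in the ADE cases covered by \cref{tessellation}, is to use the braid action --- which permutes $\bS$ compatibly with masses --- to move $\tau$ to a standard stability condition, reduce to showing the other stability condition is then standard with the same heart, and finish with the explicit coordinates on the standard locus.

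\textbf{Main obstacle.} The delicate step is the first half of Step 1: showing that the raw mass vector, via the (in)equalities coming from triangles of spherical objects, genuinely determines the HN combinatorics and thereby the semistable objects and their central charges. For $q\ne 1$ the $q$-weighted triangle inequalities are more subtle than the plain additivity of masses exploited at $q=1$ in \cite{bap.deo.lic:20}, and one must verify there is a rich enough supply of spherical objects and of triangles among them. This is precisely where the structural input is needed --- transitivity of the braid group action on $\bS$ and the classification of spherical objects by HN type (the automaton of \cref{theautomaton} for $\C_2$, and the analogue of \cref{tessellation} in general).
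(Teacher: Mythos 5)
Your overall strategy --- reconstruct $Z_\tau$ from the mass vector and then recover the slicing, by adapting the $q=1$ argument of \cite[Proposition 4.1]{bap.deo.lic:20} --- is exactly the route the paper takes. However, there is a genuine gap at the one point where the $q$-deformation actually bites. In Step 1 you pass from the $q$-masses of semistable spherical objects and of extensions among them to the quadratic form $v \mapsto |Z_\tau(v)|^2$, essentially by polarization. At $q=1$ this works because the mass of a semistable object is $|Z_\tau(X)|$, so the three masses attached to a triangle of semistable objects are genuine Euclidean side lengths and classical SSS congruence recovers the triangle, hence the angles and the central charge. For $q \neq 1$ the mass of a semistable object of phase $\phi$ is $q^{\phi}|Z_\tau(X)|$ (\cref{massdef}); the three quantities you can read off from the mass vector are phase-twisted lengths, and it is not formal that they determine the underlying Euclidean triangle --- that is, that they determine the $|Z_\tau|$'s and the phases separately. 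This is precisely the content of the paper's $q$-SSS congruence lemma (\cref{trig2}), which is the genuinely new ingredient of the proof and requires a non-trivial argument: one expresses the third $q$-deformed side length as an explicit function $T_{q,c}$ of the phase of the second vector via the law of cosines, and checks by differentiation that this function is strictly monotone on $[0,1]$. Without this lemma your recovery of $|Z_\tau(v)|^2$, and hence of $Z_\tau$ up to $\mathrm{O}(2)$, does not go through.

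A secondary remark: you locate the ``main obstacle'' in recovering the HN combinatorics from the equality cases of the triangle inequality. The paper treats that part as already available from Ikeda's $q$-deformed triangle inequality \cite[Lemma 3.6]{ike:20} combined with the structure of the $q=1$ argument; the step that needs new work is the congruence statement above. Your Step 2 (that once $Z$ and the phases of the semistable objects are pinned down the slicing is determined, with the residual additive freedom absorbed into the $\CC$-action) is fine in outline and matches what the cited $q=1$ proof does.
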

Injectivity in the case that $q = 1$ is proven in~\cite[Proposition 4.1]{bap.deo.lic:20}.
This proof works in general, given $q$-deformed versions of two trigonometric relations: Ikeda's `$q$-deformed triangle inequality' \cite[Lemma 3.6]{ike:20}, and a `$q$-deformed SSS-triangle congruence theorem', which we describe presently.

For any point $z \in \HH\cup \RR^{>0}$, let $\f(z)$ denote the phase of $z$.
For any pair of points $z_{1}, z_{2} \in \HH\cup \RR^{>0}$, say that the triangle associated to $(z_{1},z_{2})$ is the Euclidean planar triangle with vertices $(0, z_{1}, z_{1}+z_{2})$. 
\par Classically, the lengths of the sides of the trangle associated to $(z_{1},z_{2})$ would be $\v z_{1}\v$, $\v z_{2} \v$ and $\v z_{1}+z_{2}\v$. For our purposes however, it is natural to `$q$-deform' this measure. The $q$-deformed length of a side of this triangle will involve both the classical length, and the phase of the corresponding vector. Precisely, for $v\in \{z_{1},z_{2},z_{1}+z_{2}\}$, the $q$-deformed length of the side with classical length $\v v\v$ is $q^{\f(v)}\v v\v$. Ikeda's $q$-deformed triangle inequality (\cite[Lemma 3.6]{ike:20}) says that the $q$-deformed side-length satisfies the triangle inequality: 
$$q^{\f(z_{1}+z_{2})}\v z_{1}+z_{2}\v \leq q^{\f(z_{1})}\v z_{1}\v+q^{\f(z_{2})}\v z_{2}\v.$$
The next lemma says that the classical SSS-property is also preserved under $q$-deformation: the $q$-deformed side-lengths uniquely determine the triangle (up to congruence).
\begin{lemma}[$q$-SSS]\label{trig2}
Let $z_{1}, z_{2}, z_{1}', z_{2}' \in \HH \cup \RR^{>0}$, and suppose that
\begin{align*}
q^{\f(z_{1})}\v z_{1} \v&=q^{\f(z_{1}')}\v z_{1}' \v,\\
q^{\f(z_{2})}\v z_{2} \v&=q^{\f(z_{2}')}\v z_{2}' \v,\\
q^{\f(z_{1}+z_{2})}\v z_{1} + z_{2} \v&=q^{\f(z_{1}'+z_{2}')}\v z_{1}' + z_{2}' \v.
\end{align*}
Then the triangle associated to $(z_{1}, z_{2})$ is congruent to the triangle associated to $(z_{1}', z_{2}')$.
\end{lemma}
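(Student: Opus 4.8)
The plan is to reconstruct the three ordinary side lengths \(\v z_1\v\), \(\v z_2\v\), \(\v z_1+z_2\v\) of the triangle from the three \(q\)-deformed side lengths; once this is achieved, the classical SSS theorem produces the congruence. First I would convert the phase data into angle data. Since \(z_1,z_2\in\HH\cup\RR^{>0}\), the phase of \(z_1+z_2\) lies weakly between \(\f(z_1)\) and \(\f(z_2)\); after interchanging \(z_1\) and \(z_2\) if necessary (which only replaces the triangle by a congruent one) I may assume \(\f(z_1)\le\f(z_1+z_2)\le\f(z_2)\). Reading off the directions of the three sides of the triangle with vertices \(0,z_1,z_1+z_2\) then shows that its interior angles are \(\pi\mu\) at \(0\), \(\pi\nu\) at \(z_1\), and \(\pi\rho\) at \(z_1+z_2\), where \(\mu=\f(z_1+z_2)-\f(z_1)\), \(\nu=1-\f(z_2)+\f(z_1)\), \(\rho=\f(z_2)-\f(z_1+z_2)\), so \(\mu,\nu,\rho\ge0\) and \(\mu+\nu+\rho=1\). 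The parallel case \(\mu=\rho=0\) is degenerate and is dealt with by hand; otherwise \((\mu,\rho)\) lies in the open simplex \(\Delta=\{\mu,\rho>0,\ \mu+\rho<1\}\).

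Next I would feed this into the law of sines. Write \(L_1,L_2,L_3\) for the (common) \(q\)-deformed lengths of \(z_1,z_2,z_1+z_2\) and substitute \(\v v\v=q^{-\f(v)}\cdot(\text{its }q\text{-length})\) into \(\v z_1\v/\sin\pi\rho=\v z_2\v/\sin\pi\mu=\v z_1+z_2\v/\sin\pi\nu\). Using \(\f(z_2)-\f(z_1)=\mu+\rho\) and \(\f(z_1+z_2)-\f(z_1)=\mu\), the common factor \(q^{-\f(z_1)}\) cancels and one obtains
\[\frac{L_2}{L_1}=q^{\mu+\rho}\,\frac{\sin\pi\mu}{\sin\pi\rho},\qquad \frac{L_3}{L_1}=q^{\mu}\,\frac{\sin\pi(\mu+\rho)}{\sin\pi\rho}.\]
Thus the two ratios \(L_2/L_1\) and \(L_3/L_1\) depend only on the shape \((\mu,\rho)\), through the map \(F\colon\Delta\to\RR_{>0}^2\), \(F(\mu,\rho)=\bigl(q^{\mu+\rho}\sin\pi\mu/\sin\pi\rho,\ q^{\mu}\sin\pi(\mu+\rho)/\sin\pi\rho\bigr)\). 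If \(F\) is injective, then the shape, hence all three angles and the ratios \(\v z_1\v:\v z_2\v:\v z_1+z_2\v\), are determined by \(L_1,L_2,L_3\); matching the remaining scale via the value of \(L_1\) together with these now-constrained phases then yields the congruence.

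The heart of the matter — and the step I expect to be the main obstacle — is the injectivity of \(F\) on \(\Delta\). The natural route is to fix one of the two ratios (say \(L_3/L_1\), which cuts out a curve in \(\Delta\)) and prove that the other ratio is strictly monotone along that curve. With \(s=\mu\) and \(t=\mu+\rho\) (so \(0<s<t<1\)), the two components of \(\log F\) are \(t\log q+\log\sin\pi s-\log\sin\pi(t-s)\) and \(s\log q+\log\sin\pi t-\log\sin\pi(t-s)\), and this monotonicity statement reduces, after differentiating, to a one‑variable inequality in \(\cot\pi s\), \(\cot\pi t\), \(\cot\pi(t-s)\) and \(\log q\); equivalently one shows the Jacobian of \(\log F\) never vanishes on \(\Delta\). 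Establishing the sign here is where the genuine work lies: I would expect to use the strict log‑concavity of \(\theta\mapsto\sin\pi\theta\) on \((0,1)\) and the same elementary cotangent estimates that underlie Ikeda's proof of the \(q\)-deformed triangle inequality \cite[Lemma 3.6]{ike:20}, which is the natural template to imitate. The degenerate boundary cases of \(\Delta\), where one phase difference vanishes and the triangle collapses to a segment, are checked separately and directly.
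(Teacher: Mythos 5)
Your overall strategy is the same as the paper's: normalise away the scale, reduce the statement to the claim that the two remaining $q$-deformed lengths determine the shape of the triangle, and prove that claim by a monotonicity argument for an explicit transcendental function of the angles. However, the one step you defer --- the injectivity of $F$ on $\Delta$, equivalently the strict monotonicity of one $q$-length along a level curve of the other --- is not a technical detail to be filled in later; it is the entire mathematical content of the lemma, and your proposal does not prove it. You only indicate where you ``expect'' the estimate to come from (log-concavity of $\sin \pi\theta$, cotangent inequalities as in Ikeda's proof), which leaves a genuine gap. The paper closes exactly this gap by a direct computation: after normalising $z_1 = z_1' = 1$, the level set of the $q$-length $c=q^{\f(z_2)}\v z_2\v$ is globally parametrised by the single variable $t = \f(z_2) \in [0,1]$ (namely $w_t = q^{-t}c\,e^{i\pi t}$), the third $q$-length becomes the explicit function $T_{q,c}(t) = q^{\f(1+w_t)}\v 1+w_t\v$ via the law of cosines, and its derivative factors as $-(\text{positive})\cdot c\,q^{t}(\ln^2 q + \pi^2)\sin(\pi t)\le 0$, so no soft convexity input is needed. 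Your two-variable formulation is also strictly harder to close than the paper's: a nonvanishing Jacobian of $\log F$ on $\Delta$ would give only local injectivity, and the level-curve route additionally requires the level curves of $L_3/L_1$ to be connected arcs --- something the paper's one-variable parametrisation gives for free.

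A secondary caution: your final sentence, recovering ``the congruence'' from the shape $(\mu,\rho)$ together with $L_1$, does not work as stated, because the shape fixes only the phase \emph{differences}, and $L_1 = q^{\f(z_1)}\v z_1\v$ does not determine $\v z_1\v$ without knowing $\f(z_1)$ (a triangle rotated by $\pi\delta$ and rescaled by $q^{-\delta}$ has the same three $q$-lengths but different side lengths). What the argument actually yields is similarity, i.e.\ congruence after the normalisation $z_1=z_1'=1$; this is also what the paper's proof establishes and what the application to injectivity of the projectivised mass map requires, but you should state the conclusion in that form rather than claim congruence of the unnormalised triangles.
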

\begin{proof}
Assume that $z_{1}=z_{1}'=1$ (by rotating and scaling appropriately).
For convenience, let
\begin{align*}
\f_{2}:=\f(z_{2}), &&
 m_{2}:=\v z_{2} \v, &&
 z_{12}=z_{1}+z_{2}, && \f_{12}=\f(z_{12}), && m_{12}=\v z_{1}+z_{2}\v,&& c=q^{\f_{2}}m_{2}.
\end{align*}
To prove the proposition, it is enough to show that given the assumption, the vector $z_{2}$ 
is uniquely determined by the values $q^{\f_{2}}m_{2}$ and $q^{\f_{12}}m_{12}$.
For any $t \in [0,1]$, let $w_{t}$ be the vector with phase $t$ and length $q^{-t}c$. We explicitly construct a function $T_{q,c}:[0,1] \to \RR$, which maps $t \in [0,1]$ to $q^{\f(1+w_{t})}\v 1+w_{t} \v$.
If $T_{q,c}$ is injective, then since $T_{q,c}(\f_{2})=q^{\f_{12}}m_{12}$, we can derive $\f_{2}$ from $q^{\f_{12}}m_{12}$. Thus given $q^{\f_{12}}m_{12}$ and $q^{\f_{2}}m_{2}$, we can derive $\f_{2}$ and $m_{2}$, which determine $z_{2}$.
\par
By the law of cosines,
$$\f(1+w_{t})
=\frac{1}{\pi}\cos^{-1}\Big(\frac{1+c q^{-t} \cos (\pi t)}{\sqrt{1+(c q^{-t})^{2}+2 cq^{-t} \cos(\pi t)}}\Big),$$
and
$$\v 1+w_{t} \v
=\sqrt{1+(cq^{-t})^{2}+2cq^{-t} \cos(\pi t)}.$$
Thus,
$$T_{q,c}(t)
=q^{\frac{1}{\pi}\cos^{-1}\big(\frac{1+cq^{-t} \cos (\pi t)}{\sqrt{1+(cq^{-t})^{2}+2cq^{-t} \cos(\pi t)}}\big)}\sqrt{1+(cq^{-t})^{2}+2cq^{-t} \cos(\pi t)}.$$
The derivative of $T_{q,c}$ on the interval $[0,1]$ simplifies to
$$T_{q,c}'(t)=-\frac{q^{f(t)}}{\p \sqrt{c^{2}+q^{2t}+2c q^{t} \cos(\p t)}}(cq^{t} \ln^{2}(q)\sin(\p t)+ \p^{2}c q^{t} \sin(\p t)),$$
where
$$f(t)=-t+\frac{\cos^{-1}\Big(\frac{1+c q^{t}\cos(\p t)}{\sqrt{1+(cq^{-t})^{2}+2cq^{-t} \cos(\p t)}}\Big)}{\p}.$$
Since $T'_{q,c}$ is evidently negative, $T_{q,c}$ is injective on $[0,1]$, completing the proof.
\end{proof}
Further, we conjecture that the mass map is a homeomorphism onto its image.
\begin{conjecture}
  Let $\mathcal T$ be the 2-CY category associated to a connected quiver and let $\bS$ be the set of spherical objects of $\mathcal T$. Let $m_q$ denote the mass map for a $q\in (0,\infty)$.  Then
  \begin{enumerate}
  \item The map $m_q$ is a homeomorphism onto its image.
  \item The closure of the image of $m_q$ is a compactification of $\Stab(\mathcal{T})/\CC$ and is a real manifold with boundary.
  \item When $\mathcal{T}$ is a 2-CY category of type ADE, the closure of the image of $m_q$ is homeomorphic to a closed Euclidean ball.
  \end{enumerate}
\end{conjecture}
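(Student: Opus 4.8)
The plan is to bootstrap from the $q=1$ results of \cite{bap.deo.lic:20} and from the $A_2$ case established in this paper, feeding in the $q$-deformed trigonometry proved above. For part (1), injectivity of $m_q$ is already in hand: it follows from \cref{trig2} together with Ikeda's $q$-deformed triangle inequality \cite{ike:20}, exactly as in \cite[\S4]{bap.deo.lic:20}. So it remains to check that $m_q$ is continuous with continuous inverse. Continuity is routine: near any $\tau$, after normalising the mass vector by the $q$-mass of a fixed spherical object, each coordinate $m_{q,\tau'}(X)$ varies continuously with $\tau'$ because central charges and Harder--Narasimhan multiplicities do. For the inverse, I would argue that $m_q$ is proper onto its image --- a sequence $\tau_n$ escaping every compact subset of $\Stab(\mathcal T)/\CC$ must have either an unbounded phase among its semistable factors or a ratio of $q$-masses of two spherical objects tending to $0$ or $\infty$, so $m_q(\tau_n)$ cannot converge to a point of the image --- and then conclude as in \cite{bap.deo.lic:20}.

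For part (2) one first has to pin down the correct compact ambient space. The map $m_q$ lands in the projectivisation of the nonnegative cone $\RR_{\geq 0}^\bS$, and the $q$-deformed triangle inequalities applied to the standard triples of spherical objects in a given ADE type (the analogues of $P_1,P_2,P_{12},P_{21}$) bound the mass vector after projectivisation; one checks that $\overline{m_q(\Stab(\mathcal T)/\CC)}$ is a closed subset of the resulting compact set, which simultaneously gives compactness of the closure and openness and density of the image in it, i.e.\ a genuine compactification. To see that the closure is a manifold with boundary, the plan is to stratify the boundary by the combinatorial type of the limiting asymptotic stability data and to exhibit a half-space chart at each boundary point. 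For $\C_2$ this stratification is precisely what the Harder--Narasimhan automaton of \cref{theautomaton} produces, yielding the boundary circle of \cref{introthm1}; in general the argument is a higher-rank analogue of that theorem.

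For part (3) one uses in addition that $\Stab(\mathcal T)/\CC$ is contractible (and a manifold) for a 2-CY category of ADE type, so that, granting part (2), the compactification $\overline{m_q(\Stab(\mathcal T)/\CC)}$ is a compact contractible manifold with boundary of dimension $2n-2$, where $n$ is the number of vertices of the quiver. If one can show that its boundary is a sphere $S^{2n-3}$, then, by standard results on compact contractible manifolds with spherical boundary (the $h$-cobordism theorem in high dimensions, the resolved Poincar\'e conjecture in low dimensions), the total space is a closed Euclidean ball. Thus the whole problem reduces to the question: \emph{is the boundary $\partial\overline{m_q(\Stab(\mathcal T)/\CC)}$ a sphere of the expected dimension?} For $A_2$ this is \cref{introthm1}(2), and the proof runs through the HN automaton.

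The main obstacle is exactly this last point. Outside of rank $2$ no Harder--Narasimhan automaton is known, so there is presently no combinatorial control over the limiting behaviour of $q$-masses under the autoequivalence group, which is what one needs to identify the boundary sphere. A secondary, more technical obstacle lies in the compactness step of part (2): in contrast with the low-dimensional picture available for $\C_2$, one must establish compactness of the closure inside an infinite product, which requires extracting a uniform bound on projectivised mass vectors from the $q$-deformed triangle inequalities.
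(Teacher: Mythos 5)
This statement is a \emph{conjecture} in the paper: the authors offer no proof of it, and in the remainder of their Section 4 they establish only strictly weaker facts, and only for $\C_2$ (that $m_q$ is a homeomorphism onto its image, that $M_q$ is an open disk, and that $\dd\ol{M}_q$ is a circle); even in that case the statement that $\ol{M}_q$ is a closed disk is left as a further conjecture. So there is no proof in the paper to compare against, and the real question is whether your proposal actually closes the gap. It does not: what you have written is a research plan whose load-bearing steps are precisely the open parts of the conjecture, as you yourself acknowledge at the end.

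Concretely: in part (1), injectivity is indeed in hand (\cref{trig2} plus Ikeda's inequality, as in the paper), but the properness argument for the inverse is only asserted. The claimed dichotomy for a sequence $\tau_n$ leaving every compact set --- unbounded phases or a degenerating ratio of $q$-masses --- requires uniform control of HN filtrations of spherical objects across all chambers of $\Stab(\mathcal T)/\CC$, which outside rank $2$ is exactly the missing HN-automaton technology; note that even at $q=1$ the reference proves the homeomorphism property only for $\C_2$. In part (2), compactness of the closure in $\P(\RR^{\bS})$ with $\bS$ infinite is not automatic and the ``uniform bound from the $q$-deformed triangle inequalities'' is not extracted; more seriously, the manifold-with-boundary claim rests on a stratification of the boundary and half-space charts that are not constructed --- and no chart construction exists even for $\C_2$, which is why the closed-disk statement is conjectural there. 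In part (3), the reduction via the $h$-cobordism theorem and the Poincar\'e conjecture is a legitimate strategy, but its hypotheses (that the closure is a compact manifold with boundary and that the boundary is a simply-connected homotopy sphere of the right dimension) are exactly the content of parts (2) and (3); nothing in the proposal establishes them beyond the $A_2$ boundary-circle computation already in the paper. In short, the proposal correctly identifies the obstacles but does not overcome them, so the conjecture remains open.
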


In the remainder of this section we prove a weaker implication of the above conjecture in type $A_{2}$; we show that the map $m_q$ is a homeomorphism onto its image, that the image of $m_q$ is homeomorphic to an \emph{open} Euclidean disk, and that the boundary of the closure is homeomorphic to a circle.
\par
The analogue of this conjecture for the 2-CY category of the $A_{2}$-quiver at $q=1$ is proved in \cite[Proposition 5.6]{bap.deo.lic:20}.
The proof of the (following) $q$-deformed proposition is essentially the same (again switching the triangle inequality for the $q$-deformed analogue from \cite[Lemma 3.6]{ike:20}).
\begin{prop}[Homeomorphism]
The map $m_{q}\colon  \Stab(\C_{2})/\CC \to \P^{\bS}(\RR)$ is a homeomorphism onto its image. Moreover,  the closure of $m_{q}(\Stab(\C_{2})/\CC)$ is compact in $\P^{\bS}(\RR)$.
\end{prop}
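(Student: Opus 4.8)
The plan is to imitate the proof of~\cite[Proposition 5.6]{bap.deo.lic:20}, replacing two classical trigonometric facts used there by their $q$-deformations: the triangle inequality by Ikeda's $q$-triangle inequality~\cite[Lemma 3.6]{ike:20}, and the SSS congruence theorem by~\cref{trig2}. Injectivity of $m_q$ on $\Stab(\C_{2})/\CC$ is the Injectivity Proposition above; recall that its proof uses~\cref{trig2} to recover a stability condition, up to the $\CC$-action, from finitely many $q$-masses, since the three $q$-deformed side-lengths of the Euclidean triangle with vertices $(0,Z(P_1),Z(P_1)+Z(P_2))$ are exactly $m_{q,\t}(P_1)$, $m_{q,\t}(P_2)$ and $m_{q,\t}(P_{12})$ (with $P_{12}$ replaced by $P_{21}$ on the other side of the degenerate wall). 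Since $m_{q,\t}(X)$ depends continuously on $\t$ for each fixed $X\in\bS$ (see~\cref{app:stability-and-automata}), $m_q$ is continuous, hence a continuous bijection onto its image $M_q:=m_q(\Stab(\C_{2})/\CC)$; it remains to show that $m_q^{-1}$ is continuous and that $\ol{M_q}$ is compact.

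For continuity of $m_q^{-1}$ I would argue sequentially: suppose $p_n=m_q(\t_n)\to p=m_q(\t)$ in $M_q$, and aim to show $\t_n\to\t$. Passing to a subnet, either $\t_n$ converges to some $\t'\in\Stab(\C_{2})/\CC$ --- in which case $m_q(\t')=p=m_q(\t)$ by continuity, hence $\t'=\t$ by injectivity --- or $\t_n$ eventually leaves every compact subset of $\Stab(\C_{2})/\CC$. The heart of the matter is to exclude the second alternative, i.e. to show that no limit of $q$-mass vectors along a net of stability conditions escaping to the boundary of $\Stab(\C_{2})/\CC$ is again the $q$-mass vector of a stability condition. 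Following~\cite{bap.deo.lic:20}, this is done by analysing the degeneration of the $q$-mass vectors near that boundary: using the $q$-triangle inequality~\cite[Lemma 3.6]{ike:20}, the classification of spherical objects of $\C_{2}$, and the $q$-deformed Rouquier--Zimmermann correspondence of~\cref{qrz1,qrz2} (which is exactly what the automaton of~\cref{theautomaton} provides), one identifies every such boundary limit with a ``degenerate'' functional on $\bS$, of the $\occ_q$- or $\ol{\hom}_q$-type of~\cref{subsec:dynamics}, which cannot equal $m_q(\t')$ for any stability condition $\t'$. This yields the required contradiction, so $m_q$ is a homeomorphism onto its image; since $\Stab(\C_{2})/\CC$ is homeomorphic to an open disk, so is $M_q$.

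For compactness of $\ol{M_q}$ I would proceed as in~\cite[Proposition 5.6]{bap.deo.lic:20}. Fix a reference stability condition $\t_0$ and choose positive weights $(w_X)_{X\in\bS}$ that decay fast enough --- super-polynomially in a length function on $\bS$ and geometrically under shift --- that $\sum_{X\in\bS} w_X\, m_{q,\t}(X)<\infty$ for every $\t\in\Stab(\C_{2})/\CC$; this is possible because $\bS$ is countable and, for each fixed $\t$, the $q$-triangle inequality~\cite[Lemma 3.6]{ike:20} applied along Harder--Narasimhan filtrations --- whose factors are controlled, by the classification of spherical objects and the automaton of~\cref{theautomaton}, by finitely many combinatorial types --- bounds $m_{q,\t}(X)$ polynomially in the length of $X$. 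Normalising each $q$-mass vector by this weighted sum realises $M_q$ inside the image in $\P^{\bS}(\RR)$ of the set $\{f\in\RR_{\geq 0}^{\bS} : \sum_{X} w_X f(X)=1\}$, which is compact by Tychonoff; hence $\ol{M_q}$, being a closed subset of a compact set, is compact.

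The step I expect to be the main obstacle is the boundary degeneration analysis in the second paragraph: pinning down, via the automaton of~\cref{theautomaton}, the limit of $m_q(\t_n)$ as $\t_n$ escapes the boundary, and checking chamber by chamber that these limits are degenerate functionals rather than honest $q$-mass vectors. This requires the same bookkeeping of signs $\e$ and of $B_3$-chambers that already appears in~\cref{qrz1} and~\cref{rzprop}. The two genuinely analytic inputs, \cref{trig2} and the $q$-triangle inequality~\cite[Lemma 3.6]{ike:20}, are already established, so once that bookkeeping is in place the argument is formal and follows~\cite{bap.deo.lic:20} essentially verbatim.
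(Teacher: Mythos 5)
Your proposal matches the paper's treatment: the paper proves this proposition by declaring it ``essentially the same'' as \cite[Proposition 5.6]{bap.deo.lic:20}, with the classical triangle inequality replaced by Ikeda's $q$-deformed triangle inequality \cite[Lemma 3.6]{ike:20} and the SSS congruence replaced by the $q$-SSS lemma (\cref{trig2}), which is precisely your strategy. Your additional elaborations (excluding escape to infinity via degenerate functionals, and the normalisation/Tychonoff argument for compactness) are consistent with how that cited proof runs, so the approach is the same.
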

\subsection{Analogy with the $q$-Farey tessellation}
In the remainder of this section, we will only work with the category $\C_2$, and with $q \in (0,1)$ (although everything we state is true for $q \in (1, \infty)$ up to sign/orientation).
We will use $M_{q}$ to denote
$$M_{q}:=m_{q}(\Stab(\C_{2})/\CC) \subset \P^{\bS}(\RR).$$ 
Our goal is to classify the points in the `boundary' $\dd\ol{M}_{q}:=\ol{M}_{q}\sm M_{q}$. 
We will show that the $\occ_{q}$ and $\ol{\hom}_{q}$ functionals lie in $\dd\ol{M}_{q}$, and then use \cref{qrz1} and \cref{qrz2} to completely describe $\dd\ol{M}_{q}$.
\par
In \cref{motivation}, we described `$q$-deformed rational curves' as limits of sequences of triangles in the $q$-deformed Farey tessellation. These curves satisfy two notable properties:
\begin{enumerate}
\item they are in natural, $B_{3}$-equivariant bijection with the rational numbers;
\item their union is a dense subset of the boundary of the $q$-Farey tessellation.
\end{enumerate}
\par
The same phenomenon occurs in the embedding of $\Stab(\C_{2})/\CC$ in projective space.
Let $\ol{\hom}_{q}(X),\occ_{q}(X)$ denote the functionals $\ol{\hom}_{q}(X,\cdotp), \occ_{q}(X,\cdotp)$ respectively, viewed as ellements of $\P^{\bS}(\RR)$. That is,
\[
\ol{\hom}_{q}(X):=[\ol{\hom}_{q}(X,Y)]_{Y \in \bS}, \quad \occ_{q}(X):=[\occ_{q}(X,Y)]_{Y \in \bS}.
\]
Set
\[I_{q,X} = [\ol{\hom}_{q}(X),\occ_{q}(X)]:=\{t\occ_{q}(X)+(1-t)\ol{\hom}_{q}(X)\mid t \in [0,1]\}\]
as indicated in the introduction to the paper.
We will show shortly that the convex sets $I_{q,X}$ for $X \in \bS$ arise as limits of stability conditions. The functional $\ol{\hom}_{q}(X)$ should be thought of as the correspondent of the left $q$-deformed rational number naturally associated to $X$, and $\occ_{q}(X)$ as the correspondent of the right $q$-deformed rational; together with their convex combinations, we obtain a perfect analogue of the $q$-deformed rational curve.
\par
Recall the definition of standard stability conditions from \cref{stddef}. The subset of the space of stability conditions that corresponds to the first two Farey triangles is given by the following definition:
\begin{mydef}\cite[\textsection 5.3]{bap.deo.lic:20}
A standard stability condition $\t$ is {type-A} if $P_{21}$ is semistable with respect to $\t$, and {type-B} if $P_{12}$ is semistable with respect to $\t$. A type-A (resp, type-B) standard stability condition is {strictly type-A} (resp. {strictly type-B}) if it is not both type-A and type-B.
\end{mydef}
Let $\La$ denote the set of standard stability conditions, let $\La_{A}$ (resp. $\La_{B}$) denote the set of type-A (resp. type-B) standard stability conditions.
\par
Think of $\La_{A}$ as the Farey triangle with vertices $0,\infty,1$, and $\La_{B}$ as the Farey triangle with vertices $0,\infty,-1$. So the initial edge $E$ is the intersection $\La_{A}\cap \La_{B}$. 
Its image under the $q$-mass map is the interval between $\occ_{q}(P_{1})$ and $\occ_{q}(P_{2})$:
\begin{lemma}\label{initialedge}
We have an equality:
$$m_{q}(\La_{A}\cap \La_{B})=\{t\occ_{q}(P_{1})+(1-t)\occ_{q}(P_{2})\mid t \in (0,1)\}.$$
\end{lemma}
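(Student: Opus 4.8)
The strategy is to compute the $q$-mass vector of a standard stability condition lying on the initial edge $E = \La_A \cap \La_B$ and recognize it as a convex combination of $\occ_q(P_1)$ and $\occ_q(P_2)$. First I would identify exactly which stability conditions lie in $\La_A \cap \La_B$: by the definition of type-A and type-B, such a $\tau$ must have \emph{both} $P_{12}$ and $P_{21}$ semistable, and since $\tau$ is standard with $P_1,P_2$ simple in $\heart_{\std}$, this forces $P_{12}$ and $P_{21}$ to be semistable of the same phase, which (because $P_{12}$ is an extension of $P_1$ by $P_2$ and $P_{21}$ an extension of $P_2$ by $P_1$) forces $P_1$ and $P_2$ to have equal phase as well. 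In other words, $\La_A \cap \La_B$ consists precisely of the degenerate standard stability conditions, up to the $\mathbb{C}$-action. So after quotienting by $\mathbb{C}$ we may normalize the common phase of $P_1,P_2$ to $1$ and parametrize by the ratio $t \in (0,1)$ controlling the relative masses; concretely, writing $m_{q,\tau}(P_1)$ and $m_{q,\tau}(P_2)$ as positive reals summing to a fixed constant, set $t = m_{q,\tau}(P_1)/(m_{q,\tau}(P_1)+m_{q,\tau}(P_2))$.

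Next I would compute $m_{q,\tau}(X)$ for an arbitrary spherical object $X$ in such a $\tau$. Since $\tau$ is a degenerate standard stability condition, the HN filtration of $X$ has all factors among shifts of $\{P_1,P_2,P_{12},P_{21}\}$, all lying in a single phase (up to integer shifts), so the $q$-mass is computed additively: $m_{q,\tau}(X)$ is the evaluation at the appropriate $q$-power of $\pi_1(X) m_{q,\tau}(P_1) + \pi_2(X) m_{q,\tau}(P_2) + \pi_{12}(X) m_{q,\tau}(P_{12}) + \pi_{21}(X) m_{q,\tau}(P_{21})$, where the HN multiplicities $\pi_\bullet(X)$ are the Laurent polynomials from \cref{subsec:dynamics}. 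Because $P_{12}$ and $P_{21}$ are themselves extensions whose factors are $P_1$ and $P_2$ of equal phase, one has $m_{q,\tau}(P_{12}) = m_{q,\tau}(P_{21}) = m_{q,\tau}(P_1) + m_{q,\tau}(P_2)$ (the phases all coincide, so there is no cancellation and the $q$-mass of an extension is the sum of $q$-masses). Substituting, $m_{q,\tau}(X)$ becomes
\[
  \bigl(\pi_1(X) + \pi_{12}(X) + \pi_{21}(X)\bigr)\big|_{q}\, m_{q,\tau}(P_1)
  + \bigl(\pi_2(X) + \pi_{12}(X) + \pi_{21}(X)\bigr)\big|_{q}\, m_{q,\tau}(P_2),
\]
which by the definition of the functionals is exactly $\occ_q(P_2,X)\, m_{q,\tau}(P_1) + \occ_q(P_1,X)\, m_{q,\tau}(P_2)$, evaluated at $q$. (One must be mildly careful with the degree shifts and signs in the definition of $q$-mass versus the raw $\occ_q$ polynomials, but in the degenerate standard condition everything is in a single phase so all terms contribute with positive sign and the shift is a global factor that dies in $\P^{\bS}(\RR)$.) Dividing through by $m_{q,\tau}(P_1)+m_{q,\tau}(P_2)$ and using the parameter $t$, this reads $t\,\occ_q(P_2,X) + (1-t)\,\occ_q(P_1,X)$ up to the global scalar, i.e. $[m_{q,\tau}(X)]_{X} = t\,\occ_q(P_2) + (1-t)\,\occ_q(P_1)$ in $\P^{\bS}(\RR)$. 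As $t$ ranges over $(0,1)$ we sweep out the open interval, giving the claimed equality (after noting the harmless relabeling $\occ_q(P_1) \leftrightarrow \occ_q(P_2)$ matches the statement).

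The main obstacle I anticipate is bookkeeping rather than conceptual: keeping the phase/degree conventions straight so that the $q$-mass $m_{q,\tau}(X)$ — which a priori involves $q^{\phi}$-weighted absolute values of complex numbers (\cref{massdef}) — reduces cleanly to a non-negative $\Z[q^\pm]$-combination of $m_{q,\tau}(P_1)$ and $m_{q,\tau}(P_2)$ with coefficients literally equal to the $\occ_q$ polynomials. The key facts that make this work are that $\tau$ is \emph{degenerate} (all four indecomposables share a phase, so there is no destructive interference in the mass sum) and that the HN multiplicities $\pi_\bullet$ are precisely the quantities packaged into $\occ_q$. A secondary point to verify is that $\La_A\cap\La_B$ is nonempty and that its image is the \emph{full} open interval, not a proper subinterval — this follows because the common-phase-normalized ratio of $q$-masses of $P_1$ and $P_2$ can be made to take any value in $(0,1)$ by a suitable choice of central charge, so $t$ genuinely ranges over all of $(0,1)$.
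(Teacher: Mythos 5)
Your argument is correct, and since the paper states \cref{initialedge} without proof there is nothing to compare against; your computation is evidently the intended one, being the degenerate case ($c_{\t}=0$) of the $q$-linearity statement in \cref{qlinearity}. The two key observations — that $\La_{A}\cap\La_{B}$ consists exactly of the degenerate standard stability conditions up to the $\CC$-action, parametrised by the ratio $m_{q,\t}(P_{1}):m_{q,\t}(P_{2})$, and that equality of phases gives $m_{q,\t}(P_{12})=m_{q,\t}(P_{21})=m_{q,\t}(P_{1})+m_{q,\t}(P_{2})$ so that the $\occ_{q}$ polynomials appear as the coefficients — are both present and correctly justified.
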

We can now state how the `intervals' $I_{q,X}$
arise in $\dd\ol{M}_{q}$.
\begin{prop}\label{limits1cor}
Consider any $X \in \bS$, and suppose that $X=\b P_{1}[n]$, then
$$\lim_{m \to \infty}\ol{m_{q}(\b \s_{1}^{-m}(\La_{A}\cap \La_{B}))}=I_{q,X}.$$
\end{prop}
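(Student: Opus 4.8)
The plan is to reduce to the case $\b=\Id$, $n=0$ by $B_3$-equivariance, then identify each closure $\ol{m_q(\s_1^{-m}(\La_A\cap\La_B))}$ as a closed arc in $\P^{\bS}(\RR)$ with one endpoint held fixed, and finally show the moving endpoint converges to $\ol{\hom}_q(P_1)$ by running the HN automaton and comparing with \cref{homvalues}. All limits of sequences of closed sets are taken in the Hausdorff metric inside the compact space $\ol{M}_q$ (the Homeomorphism proposition above). For the reduction: $B_3$ acts linearly on $\RR^{\bS}$ by $(\b\cdot f)(Y)=f(\b^{-1}Y)$, hence homeomorphically on $\P^{\bS}(\RR)$, and $m_q(\b\t)=\b\cdot m_q(\t)$ because $m_{q,\b\t}(Y)=m_{q,\t}(\b^{-1}Y)$. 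Directly from the definitions, $\b\cdot\occ_q(Z)=\occ_q(\b Z)$ and $\b\cdot\ol{\hom}_q(Z)=\ol{\hom}_q(\b Z)$; moreover $\occ_q(Z)$ and $\ol{\hom}_q(Z)$ are well defined in $\P^{\bS}(\RR)$ and unchanged when $Z$ is shifted, and since $\b\cdot$ is linear it preserves the convex combinations defining $I_{q,Z}$, so $\b\cdot I_{q,Z}=I_{q,\b Z}$. As $I_{q,X}=I_{q,\b P_1}$ (shifting $X$ leaves $I_{q,X}$ unchanged), applying the homeomorphism $\b^{-1}$ reduces the claim to
\[\lim_{m\to\infty}\ol{m_q(\s_1^{-m}(\La_A\cap\La_B))}=I_{q,P_1}.\]

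Next, identify the arcs. Let $o_i\in\RR^{\bS}$ be the nonnegative representative $Y\mapsto\occ_q(P_i,Y)\big|_q$ of $\occ_q(P_i)$. By \cref{initialedge}, $m_q(\La_A\cap\La_B)=\{[t\,o_1+(1-t)o_2]:t\in(0,1)\}$, an open arc. Applying $\s_1^{-m}\cdot$ and using that $\s_1^{-m}\cdot o_i$ is the nonnegative representative $Y\mapsto\occ_q(P_i,\s_1^m Y)|_q=\occ_q(\s_1^{-m}P_i,Y)|_q$ of $\occ_q(\s_1^{-m}P_i)$, we see $m_q(\s_1^{-m}(\La_A\cap\La_B))$ is the open arc between $\occ_q(\s_1^{-m}P_1)$ and $\occ_q(\s_1^{-m}P_2)$. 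Since $\s_1=\s_{P_1}$ fixes $P_1$ up to shift, $\occ_q(\s_1^{-m}P_1)=\occ_q(P_1)$, so $\ol{m_q(\s_1^{-m}(\La_A\cap\La_B))}=\big[\,\occ_q(P_1),\ \occ_q(\s_1^{-m}P_2)\,\big]$, the closed arc. The map taking an ordered pair of distinct points of $\P^{\bS}(\RR)$ (each admitting a nonnegative representative) to the closed arc between them is continuous for the Hausdorff metric whenever the arc is non-degenerate; since $\occ_q(P_1)\neq\ol{\hom}_q(P_1)$ — these functionals differ at $Y=P_1$ by \cref{homvalues} — it remains to prove
\[\occ_q(\s_1^{-m}P_2)\xrightarrow{m\to\infty}\ol{\hom}_q(P_1)\quad\text{in }\P^{\bS}(\RR),\]
for then the closed arcs converge to $[\ol{\hom}_q(P_1),\occ_q(P_1)]=I_{q,P_1}$.

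To prove this last convergence, fix $Y\in\bS$. Since $\occ_q(\s_1^{-m}P_2,Y)=\occ_q(P_2,\s_1^m Y)$, I would study $\HN_\tau(\s_1^m Y)$ using the automaton of \cref{theautomaton}. Its only $\s_1$-fixed states are $[P_1,P_{12}]$ and $[P_{21},P_1]$, and iterating $\s_1$ carries the state of $Y$ into one of these loops after at most two steps; thereafter the relevant $2\times2$ block of the HN-multiplicity vector of $\s_1^m Y$ is transformed by the loop matrix $\bigl(\begin{smallmatrix}q^{-1}&q^{-1}\\0&1\end{smallmatrix}\bigr)$ (in state $[P_1,P_{12}]$) or its inverse (in state $[P_{21},P_1]$) from \cref{fig:b3-automaton-rep}. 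Diagonalising (eigenvalues $q^{-1}$ and $1$, and $q<1$), the $P_{12}$- or $P_{21}$-multiplicity of $\s_1^m Y$ is eventually constant while its $P_1$-multiplicity grows like $q^{-m}$ times a positive constant; hence $D(Y):=\lim_{m\to\infty}q^{m}\occ_q(P_2,\s_1^m Y)\big|_q$ exists and is positive, so the representatives $q^{m}\cdot\bigl(Y\mapsto\occ_q(\s_1^{-m}P_2,Y)|_q\bigr)$ of $\occ_q(\s_1^{-m}P_2)$ converge coordinatewise in $\RR^{\bS}$ to $D$, giving $\occ_q(\s_1^{-m}P_2)\to[D]$. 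To identify $[D]=\ol{\hom}_q(P_1)$, I would run the same recursion once more — noting $\occ_q(P_1,\s_1^m Y)|_q$ is also eventually constant — to write $D(Y)$ as a fixed linear combination of $\occ_q(P_1,Y)|_q$ and $\occ_q(P_2,Y)|_q$ (coefficients depending only on the sign of $Y$) and match it against the formulas of \cref{homvalues}; as a consistency check, by \cref{qrz1} the point $\occ_q(\s_1^{-m}P_2)$ corresponds to $\s_{1,q}^{-m}(0)=[m]^{\sharp}_q$, which tends to $\tfrac{1}{1-q}=[\infty]^{\flat}_q$, the rational attached to $\ol{\hom}_q(P_1)$ via \cref{qrz2}.

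I expect the difficulty to concentrate in this last step: proving that the limit functional is \emph{exactly} $\ol{\hom}_q(P_1)$, rather than merely some boundary point sharing its associated $q$-rational. This needs a careful run of the automaton recursion separately in the states $[P_1,P_{12}]$ and $[P_{21},P_1]$ (watching the signs in the second case, where the count stays non-negative only because of the $q^{-m}$ leading term) together with the bookkeeping match with \cref{homvalues}. By contrast, the continuity of the endpoints-to-arc map and the passage to the compact metric space $\ol{M}_q$ are routine.
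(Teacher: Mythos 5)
Your strategy coincides with the paper's: reduce to $\b=\Id$ by equivariance, use \cref{initialedge} to identify the image of the initial edge as the arc joining $\occ_q(P_1)$ and $\occ_q(P_2)$, observe that the endpoint $\occ_q(\s_1^{-m}P_1)=\occ_q(P_1)$ is fixed, and prove $\occ_q(\s_1^{-m}P_2)\to\ol{\hom}_q(P_1)$ by extracting the asymptotics of $\occ_q(P_2,\s_1^mY)$ and matching them against \cref{homvalues}. Two remarks. First, there is a genuine (though repairable) error in your automaton step: in \cref{fig:b3-automaton} the state $[P_{21},P_1]$ carries a $\s_1^{-1}$-loop and has \emph{no} outgoing $\s_1$-edge, so it is not a ``$\s_1$-fixed state'' and you are not entitled to use the inverse of its loop matrix; moreover an object of $[P_{21},P_1]$ (a rational in $(1,\infty)$) may need arbitrarily many applications of $\s_1$ before entering the chain $[P_2,P_{21}]\to[P_{12},P_2]\to[P_1,P_{12}]$, so ``after at most two steps'' is false. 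Since $m\to\infty$ for each fixed $Y$ this does not kill the argument, but it means your limit $D(Y)$ is a priori expressed through the HN data of some $\s_1^{m_0(Y)}Y$ rather than of $Y$ itself, which is precisely what makes the step you defer --- identifying $[D]$ with $\ol{\hom}_q(P_1)$ exactly --- nontrivial. Second, that deferred step is the entire computational content of the paper's proof, and the paper handles it more directly: it writes $\occ_q(P_2,\s_1^mX)$ in closed form as an explicit combination of $\occ_q(P_1,X)$ and $\occ_q(P_2,X)$ with coefficients built from $[m]_{q^{-1}}$ (separately for $X\ge 0$ and $X\le 0$, via \cref{rzprop}), normalises by $q^{-1}/[m]_{q^{-1}}$, and reads off the coefficients of \cref{homvalues} in the limit. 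Routing your argument through that bilinear expansion (as in the proof of \cref{homeomorphism}), which reduces everything to the two scalar sequences $\occ_q(P_2,\s_1^mP_1)=q^{-m}$ and $\occ_q(P_2,\s_1^mP_2)=[m]_{q^{-1}}$, would both avoid the state-by-state analysis and close the gap you flagged.
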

\begin{proof}
We first prove this statement for $\beta = \operatorname{id}$, and then deduce it for all other $\beta$ by equivariance. Thus we have to show that 
$$\lim_{m \to \infty}\ol{m_{q}(\s_{1}^{-m}(\La_{A}\cap \La_{B}))}= I_{q,P_1} = [\ol{\hom}_{q}(P_{1}),\occ_{q}(P_{1})].$$
 By \cref{initialedge},
$$\ol{m_{q}(\La_{A}\cap \La_{B})}=\{t\occ_{q}(P_{1})+(1-t)\occ_{q}(P_{2})\mid t \in [0,1]\}.$$
Since $\s_{1}^{-1}P_{1}=P_{1}[1]$, we have that
$$\lim_{m \to \infty}\s_{1}^{-m}\occ_{q}(P_{1})=\occ_{q}(P_{1}).$$
Thus, all we need to show is that
$$\lim_{m \to \infty}\s_{1}^{-m}\occ_{q}(P_{2})=\ol{\hom}_{q}(P_{1}).$$
\par
Let $X,Y \in \bS$ be such that $X \geq 0$, $Y \leq 0$. Consider the following abbreviated vector  in $\P^{\bS}(\RR)$, representing $\s_{1}^{-m}\occ_{q}(P_{1})$
\begin{align*}
\s_{1}^{-m}
\begin{bmatrix}
\occ_{q}(P_{2}, P_{1}) \\
\occ_{q}(P_{2},P_{2})\\
\occ_{q}(P_{2}, X)\\
\occ_{q}(P_{2},Y)
\end{bmatrix}
&=\begin{bmatrix}
\occ_{q}(P_{2}, \s_{1}^{m}P_{1}) \\
\occ_{q}(P_{2},\s_{1}^{m}P_{2})\\
\occ_{q}(P_{2}, \s_{1}^{m}X)\\
\occ_{q}(P_{2},\s_{1}^{m}Y)
\end{bmatrix}
\\ &=\begin{bmatrix}
q^{-m}\\
[m]_{q^{-1}}\\
[m-1]_{q^{-1}}(1-q^{-1})\occ_{q}(P_{2},X)+[m]_{q^{-1}}\occ_{q}(P_{1},X)\\
\occ_{q}(P_{2},Y)+(q^{-1}-1)[m]_{q^{-1}}\occ_{q}(P_{2},Y)+[m]_{q^{-1}}\occ_{q}(P_{1},Y)
\end{bmatrix}.
\end{align*}
We multiply all the coordinates by $\frac{q^{-1}}{[m]_{q^{-1}}}$ to obtain:
\begin{align*}
\begin{bmatrix}
\frac{q^{-m-1}}{[m]_{q^{-1}}}\\
q^{-1}\\
\frac{[m-1]_{q^{-1}}}{[m]_{q^{-1}}}(q^{-1}-q^{-2})\occ_{q}(P_{2},X)+q^{-1}\occ_{q}(P_{1},X)\\
\frac{q^{-1}}{[m]_{q^{-1}}}\occ_{q}(P_{2},Y)+(q^{-2}-q^{-1})\occ_{q}(P_{2},Y)+q^{-1}\occ_{q}(P_{1},Y)
\end{bmatrix}.
\end{align*}
When we send $m$ to infinity, we get:
\begin{align*}
\begin{bmatrix}
q^{-2}-q^{-1}\\
q^{-1}\\
(1-q^{-1})\occ_{q}(P_{2},X)+q^{-1}\occ_{q}(P_{1},X)\\
(q^{-2}-q^{-1})\occ_{q}(P_{2},Y)+q^{-1}\occ_{q}(P_{1},Y)
\end{bmatrix}.
\end{align*}
Comparing to the $\hom$-values lemma (\cref{homvalues}), we see that this is indeed equal to $\ol{\hom}_{q}(P_{1})$.
\end{proof}
We will check that the appearance of these functionals really is a non-trivial result of taking the closure.
\begin{lemma}[Disjointness 1]\label{disjoint1}
For any $X \in \bS$ and all $t \in [0,1]$, 
$$t\occ_{q}(X)+(1-t)\ol{\hom}_{q}(X) \notin M_{q}.$$
\end{lemma}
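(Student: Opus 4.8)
Here is how I would prove \cref{disjoint1}.

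The plan is to combine \cref{limits1cor} with the Homeomorphism proposition (that $m_q$ restricts to a homeomorphism $\Stab(\C_2)/\CC \xrightarrow{\sim} M_q$): the former exhibits every point of $I_{q,X}=\{t\occ_q(X)+(1-t)\ol{\hom}_q(X)\mid t\in[0,1]\}$ as a limit of $q$-mass vectors $m_q(\tau_m)$ along a sequence of stability conditions $\tau_m$ that runs off to infinity in $\Stab(\C_2)/\CC$, and the latter forbids such a limit from lying in $M_q$. First I would reduce to $X=P_1$: since $\occ_q$, $\ol{\hom}_q$ and $m_q$ are all $B_3$-equivariant and $B_3$ acts transitively on $\bS$, writing $X=\b P_1[n]$ gives $I_{q,X}=\b\cdot I_{q,P_1}$ and $\b\cdot M_q=M_q$, so it suffices to show $I_{q,P_1}\cap M_q=\emptyset$.

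Fix $t\in[0,1]$ and put $p=t\occ_q(P_1)+(1-t)\ol{\hom}_q(P_1)$. By \cref{limits1cor} with $\b=\Id$, the closed segments $\ol{m_q(\s_1^{-m}(\La_A\cap\La_B))}$ converge to $I_{q,P_1}$, so there are points $p_m$ in these closed segments with $p_m\to p$; and by \cref{initialedge} the open segments $m_q(\s_1^{-m}(\La_A\cap\La_B))\subset M_q$ are dense in their closures, so after a small perturbation I may assume $p_m=m_q(\tau_m)$ with $\tau_m\in\s_1^{-m}(\La_A\cap\La_B)$. The crux is then to check that $(\tau_m)$ leaves every compact subset of $\Stab(\C_2)/\CC$. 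The transparent way is to pass to the standard $B_3$-equivariant identification $\Stab(\C_2)/\CC\cong\HH$ of~\cite{bap.deo.lic:20} (independent of $q$, since neither $\Stab(\C_2)$ nor the braid action depends on $q$): under it $\La_A\cap\La_B$ is the geodesic from $0$ to $\infty$ and $\s_1$ is the parabolic $z\mapsto z-1$, so $\s_1^{-m}(\La_A\cap\La_B)$ is the vertical geodesic $\{\Re z=m\}$, which plainly escapes every compact subset of $\HH$. (Intrinsically: normalising so $Z_{\tau_m}(P_1)$ is fixed, the identity $[\s_1^m P_2]=[P_2]+m[P_1]$ in $K_0(\C_2)$ forces $\lvert Z_{\tau_m}(P_2)\rvert\to\infty$, hence $m_{q,\tau_m}(P_2)/m_{q,\tau_m}(P_1)\to\infty$, a ratio bounded on every compact set of stability conditions.) Granting this, suppose for contradiction $p\in M_q$, say $p=m_q(\tau_\infty)$; then $m_q(\tau_m)\to p$ inside $M_q$, and since $m_q$ is a homeomorphism onto $M_q$ we get $\tau_m\to\tau_\infty$, so $\{\tau_m\}\cup\{\tau_\infty\}$ is compact, a contradiction. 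Hence $p\notin M_q$.

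The main obstacle is exactly the middle step, confirming that the slices $\s_1^{-m}(\La_A\cap\La_B)$ genuinely march to infinity; everything else is formal once \cref{limits1cor} and the Homeomorphism proposition are in hand. If one wanted an argument not relying on \cref{limits1cor}, an alternative would be to use that a genuine $q$-mass vector is strictly positive on all of $\bS$ and that its equality cases in Ikeda's $q$-triangle inequality constrain $\tau$ so tightly that $p=m_q(\tau)$ forces $\tau\in\La_A\cap\La_B$; then \cref{initialedge} makes $p$ a positive combination of $\occ_q(P_1)$ and $\occ_q(P_2)$, whereas a short computation with \cref{homvalues} shows $t\occ_q(P_1)+(1-t)\ol{\hom}_q(P_1)$ has that form only when $t=1$, and $\occ_q(P_1)$ already vanishes at the coordinate indexed by $P_1$, which is impossible for a mass vector.
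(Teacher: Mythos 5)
Your proposal is correct in its main line, but it takes a genuinely different route from the paper. The paper argues directly and algebraically: assuming $\occ_q(X)=m_q(\tau)$ (resp.\ $\ol{\hom}_q(X)=m_q(\tau')$) for a standard $\tau$, it uses the matrix identities from \cref{rzprop} (together with \cref{homvalues}) to show that several of the $q$-deformed triangle inequalities among $m_{q,\tau}(P_1)$, $m_{q,\tau}(P_2)$, $m_{q,\tau}(P_{12})$, $m_{q,\tau}(P_{21})$ degenerate to equalities; the $q$-degeneracy lemma (\cref{qmassdegeneracy}) then forces $\tau$ to be both type-A and type-B, and combining the resulting equalities yields $m_{q,\tau}(P_i)=0$ for some $i$, a contradiction, with the same degenerations persisting for convex combinations. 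You instead run a topological argument: every point of $I_{q,X}$ is a limit of mass vectors $m_q(\tau_m)$ with $\tau_m\in\b\s_1^{-m}(\La_A\cap\La_B)$ by \cref{limits1cor} and \cref{initialedge}, the walls $\s_1^{-m}(\La_A\cap\La_B)$ escape every compact set under the $B_3$-equivariant identification $\Stab(\C_2)/\CC\cong\HH$, and the Homeomorphism proposition then rules out the limit lying in $M_q$. This is a valid and arguably cleaner derivation that treats all convex combinations uniformly, at the cost of importing the hyperbolic-plane identification from \cite{bap.deo.lic:20} as an external input; the paper's proof is longer but self-contained in the degeneracy machinery and, importantly, produces the explicit equalities \eqref{calc1}--\eqref{calc3} that are reused verbatim in the proof of \cref{disjoint2}, which your approach would not supply. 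One caveat: your parenthetical ``intrinsic'' justification of the escape to infinity is not right as stated for $q\neq 1$ --- the computation in the proof of \cref{limits1cor} shows that the relevant ratio $\occ_q(P_2,\s_1^mP_2)/\occ_q(P_2,\s_1^mP_1)=[m]_{q^{-1}}/q^{-m}$ tends to the finite limit $q/(1-q)$ when $q<1$, so $m_{q,\tau_m}(P_2)/m_{q,\tau_m}(P_1)$ need not blow up --- but since the hyperbolic-geometry justification is the one you actually rely on, this does not break the argument.
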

To prove this proposition, we need to give a more thorough account of the dynamics of the $q$-deformed triangle inequality. Consider any stability condition $\t \in \Stab(\C_{2})/\CC$. By the triangle inequality for the $q$-deformed mass \cite[Proposition 3.3]{ike:20}, we have the inequalities
\begin{align} m_{q,\t}(P_{21}) &\leq m_{q,\t}(P_{1}) +m_{q,\t}(P_{2}),\label{triangleinequality1}\\
m_{q,\t}(P_{1}) &\leq q^{-1}m_{q,\t}(P_{2})+m_{q,\t}(P_{21}),\label{triangleinequality2}\\
m_{q,\t}(P_{2}) &\leq m_{q,\t}(P_{21})+qm_{q,\t}(P_{1}). \label{triangleinequality3}
\end{align}
\par
The next lemma describes when these inequalities degenerate into equalities; it is the $q$-analogue of the phenomenon noted at \cite[Remark 5.14]{bap.deo.lic:20}.
\par
\begin{lemma}[$q$-Degeneracy]\label{qmassdegeneracy}
Consider any stability condition $\t$. If $\t$ is strictly type-A 
standard, then the triangle inequalities \eqref{triangleinequality1}, \eqref{triangleinequality2}, \eqref{triangleinequality3} 
are strict inequalities. If $\t$ is not strictly standard, then one of the triangle inequalities \eqref{triangleinequality1}, \eqref{triangleinequality2}, \eqref{triangleinequality3}
is an equality.
\end{lemma}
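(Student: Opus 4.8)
The plan is to realise the three inequalities \eqref{triangleinequality1}--\eqref{triangleinequality3} as the three rotations of a single $q$-deformed triangle inequality, and then to argue that which of them degenerate to equalities is controlled entirely by the phases of the $\t$-HN factors of $P_{1},P_{2},P_{21}$, hence is insensitive to $q$, which reduces the statement to the case $q=1$ treated in \cite[Remark 5.14]{bap.deo.lic:20}. Concretely, recall that $P_{21}$ fits into a short exact sequence $0\to P_{1}\to P_{21}\to P_{2}\to 0$ in $\hrt_{\std}$, hence into a distinguished triangle $P_{1}\to P_{21}\to P_{2}\to P_{1}[1]$. Applying Ikeda's $q$-deformed triangle inequality \cite[Lemma 3.6]{ike:20} to this triangle gives \eqref{triangleinequality1}; applying it to the rotation $P_{2}[-1]\to P_{1}\to P_{21}\to P_{2}$ gives \eqref{triangleinequality2}, the shift producing the factor $q^{-1}$; and applying it to the rotation $P_{21}\to P_{2}\to P_{1}[1]\to P_{21}[1]$ gives \eqref{triangleinequality3}, the shift producing the factor $q$. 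One must be a little careful here that these shift conventions genuinely reproduce the powers appearing in the statement.

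The key observation I would establish is that equality holds in Ikeda's inequality $q^{\f(z_{1}+z_{2})}|z_{1}+z_{2}|\le q^{\f(z_{1})}|z_{1}|+q^{\f(z_{2})}|z_{2}|$ if and only if $z_{1}$ and $z_{2}$ are positive real multiples of one another; the ``only if'' is exactly the strict monotonicity of the auxiliary function $T_{q,c}$ computed in the proof of \cref{trig2}, which is where positivity of $q$ enters, and which shows the inequality is strict as soon as the phases differ. Unwinding this through HN filtrations, \eqref{triangleinequality1} is an equality precisely when $\HN_{\t}(P_{21})$ is the concatenation of $\HN_{\t}(P_{1})$ and $\HN_{\t}(P_{2})$, that is, precisely when the smallest phase of a $\t$-HN factor of $P_{1}$ is at least the largest phase of a $\t$-HN factor of $P_{2}$; and similarly for \eqref{triangleinequality2} and \eqref{triangleinequality3} with $P_{1},P_{2},P_{21}$ replaced by the appropriate shifted objects of the rotated triangles. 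These are purely phase-theoretic conditions, independent of $q$, so the set of equalities among \eqref{triangleinequality1}, \eqref{triangleinequality2}, \eqref{triangleinequality3} is the same for every $q>0$, and in particular coincides with the corresponding set at $q=1$, where the statement is \cite[Remark 5.14]{bap.deo.lic:20}.

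It then remains to spell out the two cases actually used. If $\t$ is strictly type-A standard, then $P_{1}$ and $P_{2}$ are stable with $\f(P_{1})\neq\f(P_{2})$ --- equality of phases would make $\t$ simultaneously type-A and type-B, i.e.\ degenerate --- and $P_{21}$ is semistable, so $Z(P_{1})$, $Z(P_{2})$ and $Z(P_{21})=Z(P_{1})+Z(P_{2})$ are the edge vectors of a genuinely non-degenerate Euclidean triangle whose $q$-deformed side lengths are $m_{q,\t}(P_{1})$, $m_{q,\t}(P_{2})$, $m_{q,\t}(P_{21})$; each of the three phase conditions above then fails and all three inequalities are strict. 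If $\t$ is not strictly standard I split into subcases. When $\t$ is degenerate standard, $P_{1},P_{2},P_{21}$ are semistable of a common phase, $Z(P_{21})=Z(P_{1})+Z(P_{2})$ is a positive real multiple of $Z(P_{1})$, and \eqref{triangleinequality1} is an equality. When $\t$ is non-standard I would invoke the $q$-independence established above together with the description of the chambers of $\Stab(\C_{2})$ in \cite[\textsection 5]{bap.deo.lic:20} (equivalently \cref{tessellation} and the labelling graph $\Theta$), according to which any non-standard $\t$ lies in a chamber where exactly one of $P_{1},P_{2},P_{21}$ is unstable with HN filtration splitting along the defining triangle, forcing the corresponding one of \eqref{triangleinequality1}--\eqref{triangleinequality3} to be an equality.

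The step I expect to be the main obstacle is the non-standard case of the second assertion: making precise that ``one of $P_{1},P_{2},P_{21}$ has HN filtration splitting along the triangle'' for an arbitrary non-standard stability condition genuinely needs the chamber/tessellation picture of $\Stab(\C_{2})$ rather than a soft argument. If only the degenerate-standard case is required downstream, as in the proof of \cref{disjoint1}, this obstacle is bypassed and the lemma follows from the first two paragraphs alone.
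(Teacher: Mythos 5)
The paper does not actually supply a proof of \cref{qmassdegeneracy}: it is stated as ``the $q$-analogue of the phenomenon noted at [BDL20, Remark 5.14]'' and left there, so your proposal has to be judged on its own merits rather than against an argument in the text. Your outline is the right one, and it makes explicit what the paper leaves implicit. The identification of \eqref{triangleinequality1}--\eqref{triangleinequality3} with the three rotations of the triangle on $P_1, P_{21}, P_2$ (with the shifts producing the factors $q^{\mp 1}$) is correct, and your observation that the strict monotonicity of $T_{q,c}$ in the proof of \cref{trig2} yields strictness of the vector-level inequality whenever the two phases differ is exactly the right source for the ``strictly type-A $\Rightarrow$ all strict'' direction; that half, together with the degenerate-standard case, is essentially complete as you have written it.

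There are two genuine gaps. First, your ``key observation'' characterises equality only at the level of two vectors, but the lemma is about equality of $q$-masses of objects, and the step from ``\eqref{triangleinequality1} is an equality iff $\HN_\t(P_{21})$ is the phase-ordered concatenation of $\HN_\t(P_1)$ and $\HN_\t(P_2)$'' is asserted rather than proved. The $q$-independence you want does hold, but it comes from the \emph{proof} of Ikeda's inequality (a chain of merges of out-of-order adjacent factors, each link of which is an equality or strict uniformly in $q$ according to whether the two phases agree), not from its statement; for the objects at hand this is manageable because $P_1,P_2$ are simple in every standard heart, but it needs to be said. Second, and more seriously, the case of non-standard $\t$ in the second assertion is not an optional refinement: ``not strictly standard'' includes all non-standard stability conditions, and establishing that one of $P_1,P_2,P_{21}$ then has its HN filtration split along the defining triangle requires the wall-and-chamber description of $\Stab(\C_2)$ (equivalently, that if $P_1$, $P_2$, $P_{12}$ and $P_{21}$ are all $\t$-semistable then $\t$ is standard). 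You flag this honestly, and you are right that the applications in \cref{disjoint1} and \cref{disjoint2} only invoke the contrapositive of the first assertion for standard $\t$, so the gap is harmless downstream --- but as a proof of the lemma as stated, the non-standard case remains open in your write-up.
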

Interchanging the indices 1 and 2 gives corresponding inequalities for type-B stability conditions, for which \cref{qmassdegeneracy} holds.
\par
Now we are ready to prove the disjointness of $I_{q,X}$ and $M_{q}$.
\begin{proof}[Proof of \cref{disjoint1}]
Consider any $X \in \bS$. 
That $\occ_{q}(X) \notin M_{q}$ follows immediately from the fact that $\occ_{q}(X,X)=0$, but $m_{q,\t}(X) >0$ for all $\t \in \Stab(\C_{2})/\CC$. Nevertheless, in order to obtain disjointness for all the convex combinations,  we must give a more involved proof for both $\occ_{q}(X)$ and $\ol{\hom}_{q}(X)$.
\par
Suppose to generate a contradiction that $\occ_{q}(X), \ol{\hom}_{q}(X) \in M_{q}$. Since every stability condition is in the braid group orbit of $\La$, we can assume that 
\[\occ_{q}(X)=m_{q}(\t)\quad\text{and}\quad \ol{\hom}_{q}(X)=m_{q}(\t'),\]
for some $\t, \t' \in \La$.  
Let $X = \b P_{1}$ for some braid $\b \in B_{3}$ which can be written in strict continued form. 
Assume for convenience that $\b$ has form \eqref{continued1} in the notation of \cref{continued}, and therefore $\b^{-1}\s_{1}$ has form \eqref{continued2}, that is,
\begin{equation}\label{formbeta}
\b^{-1}\s_{1}=
\s_{1}^{a_{1}}\s_{2}^{-a_{2}}...\s_{1}^{a_{2n-1}}\s_{2}^{-a_{2n}}\s_{1}^{M}\o^{N}.\end{equation}
Thus by \cref{rzprop},
\begin{align*}
\ol{\b^{-1}\s_{1}}&=
\begin{bmatrix}
\occ_{q}(P_{2}, \b^{-1} P_{1}[-1]) &-q^{-1}\occ_{q}(P_{2}, \b^{-1}P_{12})\\
-q \occ_{q}(P_{1}, \b^{-1} P_{1}[-1]) &\occ_{q}(P_{1}, \b^{-1}P_{12})
\end{bmatrix}
\\ &= \begin{bmatrix}
\occ_{q}(P_{2}, \b^{-1} P_{1}) &-q^{-1}\occ_{q}(P_{2}, \b^{-1}P_{2})\\
-q \occ_{q}(P_{1}, \b^{-1} P_{1}) &\occ_{q}(P_{1}, \b^{-1}P_{2})
\end{bmatrix}
\begin{bmatrix}
q^{-1} & -q^{-1} \\ 0 & 1
\end{bmatrix}.
\end{align*}
Therefore, for $i \in \{1,2\}$,
\begin{equation}\label{calc1}
\occ_{q}(P_{i}, \b^{-1}P_{12})=\occ_{q}(P_{i}, \b^{-1}P_{1})+\occ_{q}(P_{i}, \b^{-1}P_{2}).
\end{equation}
Now consider the two cases $M=0$ and $M>0$, where $M$ is the exponent of $\s_{1}$ in \cref{formbeta}.
If $M>0$, then $\b \s_{1}^{-1}$ has form \eqref{continued2}, and a similar matrix calculation gives
\begin{equation}\label{calc2}
\occ_{q}(P_{i}, \b^{-1}P_{21})=\occ_{q}(P_{i}, \b^{-1}P_{2})-q\occ_{q}(P_{i}, \b^{-1}P_{1}).
\end{equation}
If $M=0$, then $\b \s_{2}$ has form \eqref{continued2}, and another similar matrix calculation gives
\begin{equation}\label{calc3}
\occ_{q}(P_{i}, \b^{-1}P_{21})=\occ_{q}(P_{i}, \b^{-1}P_{1})-q^{-1}\occ_{q}(P_{i}, \b^{-1}P_{2}).
\end{equation}
Assume $i=1$ (the statements for $i=2$ are needed for $\ol{\hom}_{q}$). Then \cref{calc1,calc2,calc3}, along with our assumption that $m_{q}(\t)=\occ_{q}(C)$, imply that:
\begin{equation}\label{homcontra0}
m_{q, \t}(P_{12})=m_{q,\t}(P_{1})+m_{q,\t}(P_{2}),
\end{equation}
and either
\begin{equation}\label{homcontra1}
m_{q,\t}(P_{2})=m_{q,\t}(P_{21})+qm_{q,\t}(P_{1}),
\end{equation}
or
\begin{equation}\label{homcontra2}
m_{q,\t}(P_{1})=q^{-1}m_{q,\t}(P_{2})+m_{q,\t}(P_{21}).
\end{equation}
Thus the degeneracy lemma (\cref{qmassdegeneracy}) implies that $\t$ is neither strictly type-A, nor strictly type-B. Since we assumed that $\t$ is standard, this implies that $\t$ is both type-A and type-B. However, any stability condition which is both type-A and type-B satisfies
\begin{equation}\label{homcontra3}
m_{q,\t}(P_{21})=m_{q,\t}(P_{1})+m_{q,\t}(P_{2}).
\end{equation}
If we were to combine \cref{homcontra3} with either \cref{homcontra1} or \cref{homcontra2}, we would get that $m_{q,\t}(P_{i})=0$ for either $i = 1$ or $i=2$, giving the desired contradiction (as the $q$-mass associated to any stability condition is strictly positive on all complexes). A contradiction can be generated in the cases that $\b$ has the form \eqref{continued2}, \eqref{continued3}, \eqref{continued4}, in the same way. This shows that $\occ_{q}(X) \notin M_{q}$.
\par
Recall our supposition that $\ol{\hom}_{q}=m_{q}(\t')$. By our computation of $\ol{\hom}_{q}$ in terms of $\occ_{q}$ (\cref{homvalues}), for $j \in \{1,2,12,21\}$ we have
\begin{equation}\label{homvalueeq}
\ol{\hom}_{q}(P_{1}, \b^{-1}P_{j})=(q^{-2}-q^{-1})\occ_{q}(P_{2}, \b^{-1}P_{j}) +q^{-1}\occ_{q}(P_{1}, \b^{-1}P_{j}).
\end{equation}
 If we combine \cref{homvalueeq} with \cref{calc1,calc2,calc3}, we re-derive \cref{homcontra0,homcontra1,homcontra2}, replacing $\t$ with $\t'$. This delivers the same contradiction as in the case of $\t$, proving that $\ol{\hom}_{q}(X) \notin M_{q}$.
\par
The proof for the rest of $I_{q,X}$ follows from the argument above. The contradiction we obtained from the degenerating inequalities carries through when we pass to convex combinations.
\end{proof}
\subsection{Embedding the boundary of $\ol{M_q}$ into $\Q_{q} \cup \mathbb{I}_{q}$}
We will now use \cref{qrz1,qrz2} (the Rouquier--Zimmermann Theorems) to define a homeomorphism (essentially a projection map) from the sets $I_{q,X} = [\ol{\hom}_{q}(X),\occ_{q}(X)]$, lying in $\P^{\bS}(\RR)$, onto the corresponding $q$-deformed curves from~\cref{subsec:topology-q-tessellation}.
\par
Let $\OO_{q}$ denote the set 
$$\OO_{q}:=\bigsqcup_{X \in \bS}I_{q,X} \subset \P^{\bS}(\RR).$$

Recall that $\Q_{q}$ denotes the union of all the $q$-deformed rational curves, together with their endpoints.
For $X = P_1$, fix a homeomorphism from the interval $I_{q,P_{1}} = [\ol{\hom}_q(P_1), \occ_q(P_1)]$ to the complete geodesic between $1/(1-q)$ and $\infty$, together with its endpoints.
Extend this by the $B_3$ action to a $B_{3}$-equivariant $\rho_{q}:\OO_{q} \to \Q_{q}$, so that $\rho_{q}$ maps $\ol{\hom}_{q}(X)$ and $\occ_{q}(X)$ onto $[r/s]_{q}^{\flat}$ and $[r/s]_{q}^{\sharp}$ respectively, where $r/s$ is the rational number corresponding to $X$. In particular, this implies that $\rho_{q}$ is a bijection.
\par
The closure of $\OO_q$ is contained in $\ol{M}_{q}$, so we need to incorporate limits of points in $\OO_{q}$ into our classification of $\ol{M}_{q}$. Let $\ol{\OO}_{q}$ denote the union
$$\ol{\OO}_{q}:=\OO_{q} \cup \left\{\lim_{n \to \infty}\occ_{q}(X_{n})\mid (X_{n})_{n \in \N} \in \bS^{\N}\right\}.$$
As the notation suggests, $\ol{\OO}_{q}$ is in fact the closure of $\OO_{q}$ (see \cref{closure} below).
We now extend $\rho_{q}$ to a $B_{3}$-equivariant homeomorphism between $\ol{\OO}_{q}$ and $\Q_{q} \cup \mathbb{I}_{q}$, using the following lemma. 
\begin{lemma}\label{homeomorphism}
A sequence $(\occ_{q}(X_{n}))_{n \in \N}$ converges in $\P^{\bS}(\RR)$ if and only if $(\rho_{q}(\occ_{q}(X_{n})))_{n \in \N}$ converges in $\Q_{q} \cup \mathbb{I}_{q}$. Two sequences $(\occ_{q}(X_{n}))_{n \in \N}$ and $(\occ_{q}(Y_{n}))_{n \in \N}$ converge to the same limit if and only if $(\rho_{q}(\occ_{q}(X_{n})))_{n \in \N}$ and $(\rho_{q}(\occ_{q}(Y_{n})))_{n \in \N}$ converge to the same limit.
\end{lemma}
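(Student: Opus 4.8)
The plan is to transport the whole statement to $\RR \cup \{\infty\}$ via the homeomorphism $\iota \colon \Q_q \cup \mathbb{I}_q \to \RR \cup \{\infty\}$ of \cref{qtopology}(3). Since $\iota$ is the identity on the right $q$-rationals and $\rho_q(\occ_q(X)) = [r/s]_q^\sharp$ when $X$ corresponds to $r/s$ under the Rouquier--Zimmermann bijection, the lemma becomes: $(\occ_q(X_n))_n$ converges in $\P^\bS(\RR)$ if and only if $([r_n/s_n]_q^\sharp)_n$ converges in $\RR \cup \{\infty\}$, and two such sequences have equal limits in $\P^\bS(\RR)$ exactly when the associated sequences of right $q$-rationals have equal limits in $\RR \cup \{\infty\}$. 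Both $\ol{M_q}$ and $\RR \cup \{\infty\}$ are compact, so ``convergent'' may be read as ``every subsequential limit coincides'', and one may pass to subsequences throughout; one may also normalise by a braid, because $\occ_q \colon \bS \to \P^\bS(\RR)$ is $B_3$-equivariant and, by \cref{qrz1}, intertwines this action with the $\PSL_{2,q}(\Z)$-action on the right $q$-rationals.

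The core is to show that both sides are controlled, coordinate by coordinate, by the numerator $\R^\sharp_n$ and denominator $\S^\sharp_n$ of $[r_n/s_n]_q^\sharp$. Fix $Y = \delta_Y P_1 \in \bS$ and write $X_n = \b_n P_1$ with $\b_n$ in strict continued form, so $\occ_q(X_n, Y) = \occ_q(P_1, \b_n^{-1}\delta_Y P_1)$. Using \cref{rzprop} and \cref{rzmess} to read the $q$-Burau matrices as matrices of $\occ_q$-values, \cref{qmatrices} to identify the first column of $\ol{\b_n}$ with $(\R^\sharp_n, \S^\sharp_n)$ (the auxiliary scalars there, namely $\det \ol{\b_n}$ and the relevant power of $q$, depending on $n$ but not on $Y$, exactly as in the proof of \cref{qlimits}), and \cref{homvalues} for the coordinates whose limits are $\ol{\hom}_q$-values, one writes $\occ_q(X_n, Y)$ as a fixed $\RR$-linear function of $(\R^\sharp_n, \S^\sharp_n)$ with $Y$-dependent coefficients, times a normalising scalar $c_n$ independent of $Y$, times an $(n,Y)$-dependent power of $q$. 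Tracking the exponents of $\s_1$ and of the central element $\o$ in the strict continued forms involved, and using that $\occ_q$ is homogeneous under shifts, one checks that these stray powers are absorbed projectively, so that the point $\occ_q(X_n) \in \P^\bS(\RR)$ depends only on $[\R^\sharp_n : \S^\sharp_n] = [r_n/s_n]_q^\sharp$. Running through the cases of \cref{qlimits}: if $[r_n/s_n]_q^\sharp$ converges (equivalently, $r_n/s_n$ converges to an irrational or one-sidedly to a rational), every coordinate converges and $\lim \occ_q(X_n)$ is a function of $\lim [r_n/s_n]_q^\sharp$ alone; if $[r_n/s_n]_q^\sharp$ has two distinct subsequential limits, the corresponding subsequences of $\occ_q(X_n)$ converge to distinct points of $\P^\bS(\RR)$ --- distinct because the $P_1$- and $P_2$-coordinates recover $[r_n/s_n]_q^\sharp$ in the limit by \cref{qrz1} and the two limiting $q$-rationals differ. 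This yields both equivalences together with the matching of limits.

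The main obstacle is precisely the coordinate computation just sketched: one must control \emph{every} coordinate $\occ_q(X_n, Y)$ uniformly --- with a single projective scalar --- not merely the four at $P_1, P_2, P_{12}, P_{21}$, and one must show that the powers of $q$ produced by the matrix formulas do not disrupt the coordinatewise limits. This is the $q$-deformation of the argument behind \cite[Proposition 5.6]{bap.deo.lic:20}, and the bookkeeping of the $\s_1$- and $\o$-exponents is where the real work lies. A more geometric alternative is available: identify the ``Farey-type'' tessellation $\{\ol{m_q(\b(\La_A \cap \La_B))}\}$ of $\ol{M_q}$ --- whose edges and triangles are described by \cref{initialedge} and \cref{limits1cor} --- with the $q$-Farey tessellation of $\ol{L}_q$, and transport the topology along the $B_3$-equivariant combinatorial equivalence; this route avoids the coordinate bookkeeping at the price of importing a ``shrinking triangles'' estimate.
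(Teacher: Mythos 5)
Your argument is essentially the paper's: the entire content is that every coordinate $\occ_q(X_n,Y)$ is a fixed linear function (with $Y$-dependent coefficients) of the two coordinates $\occ_q(X_n,P_1)$ and $\occ_q(X_n,P_2)$, so the projective point is controlled by the single ratio that \cref{qrz1} identifies with the right $q$-rational. The paper compresses all of the bookkeeping you describe into the single identity $\occ_{q}(X,Y)=\occ_{q}(X,P_{1})\occ_{q}(P_{2},Y)+\occ_{q}(X,P_{2})\occ_{q}(P_{1},Y)$ (a consequence of \cref{rzprop}), which in particular absorbs the stray powers of $q$ that concern you.
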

\begin{proof}
For each statement, the forwards direction is trivial, by \cref{qrz1}. The backwards direction follows from the fact that, by \cref{rzprop}, for any $X, Y \in \bS$, we have
$$\occ_{q}(X,Y)=\occ_{q}(X,P_{1})\occ_{q}(P_{2}, Y)+\occ_{q}(X, P_{2})\occ_{q}(P_{1},Y).$$
So for any $(X_{n})_{n \in \N}$ and any $Y \in \bS$, if the sequence of fractions $\big(\frac{\occ_{q}(X,P_{2})}{\occ_{q}(X,P_{1})}\big)_{n \in \N}$ converges, then so does the sequence of points in projective space
$$\begin{bmatrix} \occ_{q}(X_{n}, P_{2}) \\ \occ_{q}(X_{n}, P_{1}) \\ \occ_{q}(X_{n}, Y) \end{bmatrix}=\begin{bmatrix} \occ_{q}(X_{n}, P_{2}) \\ \occ_{q}(X_{n}, P_{1}) \\ \occ_{q}(X_{n}, P_{1})\occ_{q}(P_{2},Y)+ \occ_{q}(X_{n}, P_{2})\occ_{q}(P_{1},Y) \end{bmatrix}.$$
\end{proof}
\begin{lemma}\label{closure}
The set $\ol{\OO}_{q}$, as defined above, is the closure of $\OO_{q}$.
\end{lemma}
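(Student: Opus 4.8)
The plan is to show two inclusions: $\ol{\OO}_q$ is closed, and every closed set containing $\OO_q$ contains $\ol{\OO}_q$. Since we have an explicit description of $\ol{\OO}_q$ as $\OO_q$ together with all limit points of sequences of the form $(\occ_q(X_n))_{n\in\N}$, the real content is (a) that such limit points are genuinely the only points one can add, i.e.\ limits of arbitrary sequences from $\OO_q$ reduce to limits of these special sequences, and (b) that $\ol{\OO}_q$ is itself closed, i.e.\ taking limits twice does not escape the set.

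First I would observe that a point of $\OO_q$ is, by definition, a convex combination $t\occ_q(X)+(1-t)\ol{\hom}_q(X)$ for some $X\in\bS$ and $t\in[0,1]$; by \cref{homvalues}, $\ol{\hom}_q(X)$ is itself a (fixed, $X$-independent in form) linear combination of $\occ_q(P_1,\cdot)$-type and $\occ_q(P_2,\cdot)$-type data pulled back along the braid taking $P_1$ to $X$, so in fact every point of $I_{q,X}$ lies on the line segment spanned by $\occ_q(\b P_1)$ and $\occ_q(\b P_2)$ for the appropriate braid $\b$ (using that $P_1,P_2$ sit at the two ends of an edge, and $\ol{\hom}_q(P_1)$ arises as $\lim \s_1^{-m}\occ_q(P_2)$ by \cref{limits1cor}'s computation). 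Consequently a convergent sequence of points drawn from $\bigsqcup_X I_{q,X}$ can be rewritten, after passing to a subsequence, as a convergent sequence of convex combinations $t_n \occ_q(X_n)+(1-t_n)\occ_q(X_n')$ where $X_n' $ is the "partner'' spherical object and $t_n\to t$; using the bilinear identity $\occ_q(X,Y)=\occ_q(X,P_1)\occ_q(P_2,Y)+\occ_q(X,P_2)\occ_q(P_1,Y)$ from the proof of \cref{homeomorphism}, convergence in $\P^\bS(\RR)$ of such a sequence is controlled entirely by the scalar ratios $\occ_q(X_n,P_2)/\occ_q(X_n,P_1)$, which by \cref{qrz1} are the right $q$-rationals $[r_n/s_n]^\sharp_q$. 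So everything is pushed through $\rho_q$ onto $\Q_q\cup\II_q$, where \cref{qlimits} together with \cref{qtopology} tells us exactly what the limits are: a limit of right $q$-rationals is either another right $q$-rational, a left $q$-rational, or a $q$-deformed irrational, and in every case the limiting point of projective space is again of the form $\occ_q(Y)$ or $\ol{\hom}_q(Y)$ (hence in $\OO_q$), or else is $\lim_n \occ_q(Y_n)$ for some sequence with $\rho_q(\occ_q(Y_n))\to$ the corresponding irrational — hence in $\ol{\OO}_q$ by definition. This simultaneously shows that the closure of $\OO_q$ is contained in $\ol{\OO}_q$ (so $\ol{\OO}_q$ is closed, since taking limits of the added points lands back in $\ol{\OO}_q$ by the idempotency of "limit of right $q$-rationals'') and that $\ol{\OO}_q\subseteq\overline{\OO_q}$ (since each added point is by construction a genuine limit).

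I expect the main obstacle to be the bookkeeping around the partner objects and the reduction of an arbitrary convergent sequence in $\bigsqcup_X I_{q,X}$ to a sequence of the controlled two-term form: one must argue that the convergence of a sequence of projective points forces convergence (after passing to a subsequence) of the underlying scalar ratios, and handle the degenerate possibilities where $X_n$ and its partner drift to opposite ends. This is exactly the kind of compactness-plus-projective-coordinates argument that \cref{homeomorphism} is set up to supply, so the cleanest route is to invoke \cref{homeomorphism} to transport the whole question to $\Q_q\cup\II_q$ and then cite \cref{qtopology}(2),(3) (which identifies $\dd\ol{L}_q=\Q_q\sqcup\II_q$ and describes it as $\RR\cup\{\infty\}$) to conclude that no further points appear; the residual work is checking that $\rho_q$, extended as in the paragraph before the lemma, really does match limits on the nose, which is precisely the content of \cref{homeomorphism} and requires no new idea.
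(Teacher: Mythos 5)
Your overall strategy---push everything through $\rho_q$ to $\Q_q\cup\II_q$ via \cref{homeomorphism} and then invoke \cref{qlimits}---is the same as the paper's, but the reduction step you use to get there contains a genuine error. You claim that every point of $I_{q,X}$ lies on the projective segment spanned by $\occ_q(\b P_1)$ and $\occ_q(\b P_2)$, on the grounds that \cref{homvalues} expresses $\ol{\hom}_q(X)$ as a fixed linear combination of these two functionals. It does not: the coefficients in \cref{homvalues} are $(1-q^{-1})$ on the locus of arguments $Y\geq 0$ and $(q^{-2}-q^{-1})$ on the locus $Y\leq 0$, so for $q\neq 1$ the functional $\ol{\hom}_q(P_1)$ is \emph{not} in the linear span of $\occ_q(P_1)$ and $\occ_q(P_2)$ inside $\P^{\bS}(\RR)$; it is only a \emph{limit} of the points $\occ_q(\s_1^{-m}P_2)$, which is a much weaker statement. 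Indeed, if your containment held, the interior points of $I_{q,X}$ would land in $m_q(\La_{A}\cap\La_{B})\subset M_q$ by \cref{initialedge}, contradicting \cref{disjoint1}. Since this containment is the mechanism by which you rewrite an arbitrary convergent sequence in $\OO_q$ as two-term combinations $t_n\occ_q(X_n)+(1-t_n)\occ_q(X_n')$, the reduction---which is exactly where the content of the lemma lives---does not go through as stated.

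The repair, which is what the paper's proof does, is as follows. For a sequence $t_n\occ_q(X_n)+(1-t_n)\ol{\hom}_q(X_n)$ in which the $X_n$ take infinitely many distinct values, one first shows that $\rho_q(\occ_q(X_n))$ and $\rho_q(\ol{\hom}_q(X_n))$ have the same limit in $\RR\cup\{\infty\}$ (the left and right $q$-deformations of a sequence of rationals with infinitely many distinct terms converge together, by the elementary estimates behind \cref{qlimits}); one then uses \cref{limits1cor} to produce a sequence $(Y_m)$ with $\lim_m\occ_q(Y_m)=\lim_n\ol{\hom}_q(X_n)$, so that \cref{homeomorphism} upgrades the coincidence of limits under $\rho_q$ to a coincidence of limits in $\P^{\bS}(\RR)$; the convex combinations are then squeezed onto $\lim_n\occ_q(X_n)\in\ol{\OO}_q$. (The case of finitely many distinct $X_n$ is immediate since each $I_{q,X}$ is closed.) Your appeal to the bilinear identity and to \cref{homeomorphism} is the right instinct, but it must be deployed after this repair rather than in place of it.
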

\begin{proof}
Consider any sequence in $\OO_{q}$, with the form 
$$(t_{n}\occ_{q}(X_{n})+(1-t_{n})\ol{\hom}_{q}(X_{n}))_{n \in \N}.$$
Assuming that the sequence $(X_{n})_{n \in \N}$ contains infinitely many distinct elements of $\bS$, using elementary properties of limits of right $q$-deformed rational numbers, it can be shown that
$$\lim_{n \to \infty}\rho_{q}(\occ_{q}(X_{n}))=\lim_{n \to \infty}\rho_{q}(\ol{\hom}_{q}(X_{n})).$$
\par
On the other hand, by the proof of \cref{limits1cor}, there exists a sequence $(Y_{n})_{n \in \N} \in \bS^{\N}$ such that
$$\lim_{n \to \infty}\occ_{q}(Y_{n})=\lim_{n \to \infty}\ol{\hom}_{q}(X_{n}).$$
Thus,
$$\lim_{n \to \infty}\rho_{q}(\occ_{q}(X_{n}))=\lim_{n \to \infty}\rho_{q}(\occ_{q}(Y_{n})),$$
and so by \cref{homeomorphism},
$$\lim_{n \to \infty}\occ_{q}(X_{n})=\lim_{n \to \infty}\occ_{q}(Y_{n})=\lim_{n \to \infty}\ol{\hom}_{q}(X_{n}).$$
Thus, under the assumption that the sequence $(X_{n})_{n \in \N}$ contains infinitely many distinct elements of $\bS^{\N}$, we have
$$\lim_{n\to \infty}(t_{n}\occ_{q}(X_{n})+(1-t_{n})\ol{\hom}_{q}(X_{n}))_{n \in \N}=\lim_{n \to \infty}\occ_{q}(X_{n}).$$
\end{proof}
The new points we have picked up by taking limits of $\occ_{q}$ functionals are also non-trivial members of the closure $\ol{M}_{q}$.
\begin{lemma}[Disjointness 2]\label{disjoint2}
For any $p \in \ol{\OO}_{q}$, $p \notin M_{q}$.
\end{lemma}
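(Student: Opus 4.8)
The plan is to reduce to the ``new'' boundary points of $\ol{\OO}_q$ and then run an argument parallel to the proof of \cref{disjoint1}. By \cref{closure}, every point of $\ol{\OO}_q$ either lies in $\OO_q$ or has the form $p = \lim_{n\to\infty}\occ_q(X_n)$ for a sequence $(X_n)_{n\in\N}$ in $\bS$. Points of $\OO_q$ are excluded from $M_q$ by \cref{disjoint1}, so it suffices to treat such a limit point $p$; moreover we may assume $(X_n)_{n\in\N}$ takes infinitely many distinct values, since otherwise $p\in I_{q,X}\subseteq\OO_q$ for some $X$ and we are done. Suppose for contradiction that $p = m_q(\t')$ for some $\t'\in\Stab(\C_2)/\CC$. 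By \cref{tessellation}\,(2) there is a braid $\b$ with $\b\t'$ standard; since $m_q$, the set $\ol{\OO}_q$, and the functionals $\occ_q(X,\cdotp)$ are all $B_3$-equivariant, we may pass to $\b\t'$ and $(\b X_n)$ and thereby assume $\t'$ itself is standard.

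The key step is to extract relations among $m_{q,\t'}(P_1)$, $m_{q,\t'}(P_2)$, $m_{q,\t'}(P_{12})$, $m_{q,\t'}(P_{21})$ from the limiting behaviour of the vectors $\occ_q(X_n)$. For this I would use the matrix identities of \cref{rzprop}, together with the elementary values $\occ_q(P_i,P_{12}) = \occ_q(P_i,P_{21}) = 1$ for $i\in\{1,2\}$ (immediate from the definition of $\occ_q$ and from $\HN_\t(P_{12}) = (0,0,1,0)$, $\HN_\t(P_{21}) = (0,0,0,1)$): these express $\occ_q(X_n,P_{12})$ and $\occ_q(X_n,P_{21})$ in terms of $\occ_q(X_n,P_1)$ and $\occ_q(X_n,P_2)$, up to explicit signs and powers of $q$. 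Passing to the limit in $\P^{\bS}(\RR)$ turns these into equalities among the four $q$-masses, so that the $q$-deformed triangle inequalities \eqref{triangleinequality1}, \eqref{triangleinequality2}, \eqref{triangleinequality3} and their type-B counterparts all degenerate to equalities.

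By the $q$-degeneracy lemma (\cref{qmassdegeneracy}), $\t'$ is then neither strictly type-A nor strictly type-B, and since $\t'$ is standard this means $\t'\in\La_A\cap\La_B$. Now two facts collide. First, such a $\t'$ satisfies the additional equality \eqref{homcontra3}, and combining \eqref{homcontra3} with the equalities just extracted forces $m_{q,\t'}(P_i) = 0$ for some $i\in\{1,2\}$ --- impossible, since the $q$-mass of any spherical object is strictly positive. (Equivalently, one can argue via \cref{initialedge}: $p\in m_q(\La_A\cap\La_B)$ would put $p$ on the projective segment spanned by $\occ_q(P_1)$ and $\occ_q(P_2)$, whereas the $-q$-twisted form of the identities of \cref{rzprop} shows that $\occ_q(X_n)$, and hence $p$ computed from its coordinates at a suitable third spherical object, cannot lie on that segment.) This contradiction gives $p\notin M_q$, which completes the proof.

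The delicate part --- and the step I expect to demand real care --- is the bookkeeping in the second and third paragraphs: because $P_1$ and $P_{12}$ sit on the ``$\le 0$'' side while $P_2$ and $P_{21}$ sit on the ``$\ge 0$'' side, the identities of \cref{rzprop} carry nontrivial $(-q)^{\pm1}$ twists, and one needs their exact form (not merely the projective-up-to-$\pm q^k$ version used in the proof of \cref{homeomorphism}) to determine precisely which triangle equalities occur and which combination of them collapses a $q$-mass to zero. Everything else is a routine passage to the limit together with appeals to \cref{disjoint1,closure,tessellation,qmassdegeneracy,initialedge}.
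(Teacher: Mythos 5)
Your proposal is correct and follows essentially the same route as the paper: reduce via \cref{closure} and \cref{disjoint1} to a limit point $p=\lim_n\occ_q(X_n)$, use the matrix identities of \cref{rzprop} to derive linear relations among the coordinates of $\occ_q(X_n)$ at $P_1,P_2,P_{12},P_{21}$, pass to the limit to force degenerate triangle equalities, and then combine \cref{qmassdegeneracy} with the type-A-and-type-B relation to collapse some $m_{q,\tau}(P_i)$ to zero. The only imprecision is your claim that \emph{all} three triangle inequalities and their type-B counterparts degenerate --- in fact only one type-A and one type-B relation degenerate (which is all the degeneracy lemma needs), but this does not affect the argument.
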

\begin{proof}
Consider any $p \in \ol{\OO}_{q} \sm \OO_{q}$, and suppose to generate a contradiction that $p=m_{q}(\t)$ for some $\t \in \La$.
There exists a sequence $(X_{k})_{k \in \N} \in \bS^{\N}$ such that
$$\lim_{k \to \infty}\occ_{q}(X_{k})=p.$$
Let $X_{k}=\b_{k}P_{1}$, where $\b_{k}$ is in strict continued form. Passing to a subsequence if necessary, we can assume for convenience that $\b_{k}$ either has form \eqref{continued1} for all $k \in \N$ or form \eqref{continued2} for all $k \in \N$, in the notation of \cref{continued} (we can rule out the cases \eqref{continued3} and \eqref{continued4}, since in these cases the limit is trivial).  Let's assume that $\b_{k}$ has form \eqref{continued1} for all $k \in \N$. Then by the proof of the first disjointness lemma (\cref{disjoint1}) we have that for all $k \in \N$, $i \in \{1,2\}$,
\begin{equation}\label{calc12}
\occ_{q}(P_{1}, \b^{-1}_{k}P_{12})=\occ_{q}(P_{1}, \b^{-1}_{k}P_{1})+\occ_{q}(P_{1}, \b^{-1}_{k}P_{2}).
\end{equation}
And either
\begin{equation}\label{calc22}
\occ_{q}(P_{1}, \b^{-1}_{k}P_{21})=\occ_{q}(P_{1}, \b^{-1}_{k}P_{2})-q\occ_{q}(P_{1}, \b^{-1}_{k}P_{1}),
\end{equation}
or
\begin{equation}\label{calc32}
\occ_{q}(P_{1}, \b^{-1}_{k}P_{21})=\occ_{q}(P_{1}, \b^{-1}_{k}P_{1})-q^{-1}\occ_{q}(P_{1}, \b^{-1}_{k}P_{2}).
\end{equation}
Pass to a subsequence so that we have \cref{calc22} for all $k \in \N$, or \cref{calc32} for all $k \in \N$.
Taking the limit, we get that
\begin{equation*}
m_{q,\t}(P_{12})=m_{q,\t}(P_{1})+m_{q,\t}(P_{2}),
\end{equation*}
and either
\begin{equation*}
m_{q,\t}(P_{2})=m_{q,\t}(P_{21})+qm_{q,\t}(P_{1}),
\end{equation*}
or
\begin{equation*}
m_{q,\t}(P_{1})=q^{-1}m_{q,\t}(P_{2})+m_{q,\t}(P_{21}),
\end{equation*}
As in the proof of the disjointness lemma part 1, this implies a contradiction.
\end{proof}
\subsection{Classification of the compactification of $\C_{2}$}
The main theorem of this section is that we have now given a full description of the closure.
\begin{theorem}\label{boundary}
We have an equality 
$$\ol{M}_{q} =M_{q} \sqcup \ol{\OO}_{q}$$
in $\P^{\bS}(\RR)$.
\end{theorem}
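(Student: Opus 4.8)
The plan is to prove the two inclusions $\ol{M}_q \supseteq M_q \sqcup \ol{\OO}_q$ and $\ol{M}_q \subseteq M_q \sqcup \ol{\OO}_q$ separately, with the first being essentially a bookkeeping exercise collecting results already established, and the second being the substantive content.

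First I would dispose of the inclusion $M_q \sqcup \ol{\OO}_q \subseteq \ol{M}_q$. That $M_q \subseteq \ol{M}_q$ is tautological. That $\ol{\OO}_q \subseteq \ol{M}_q$ follows from \cref{limits1cor}, which exhibits each interval $I_{q,X}$ as a limit $\lim_m \ol{m_q(\b\s_1^{-m}(\La_A \cap \La_B))}$ of images of genuine stability conditions, hence $I_{q,X} \subseteq \ol{M}_q$; taking the union over $X \in \bS$ gives $\OO_q \subseteq \ol{M}_q$, and since $\ol{M}_q$ is closed, $\ol{\OO}_q \subseteq \ol{M}_q$ as well. Also the disjointness claim in the statement (the $\sqcup$) is precisely \cref{disjoint1} and \cref{disjoint2}: every point of $\ol{\OO}_q$ lies outside $M_q$.

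The heart of the matter is the reverse inclusion $\ol{M}_q \subseteq M_q \sqcup \ol{\OO}_q$, equivalently $\partial\ol{M}_q = \ol{M}_q \sm M_q \subseteq \ol{\OO}_q$. So take a point $p \in \partial\ol{M}_q$, realised as $p = \lim_n m_q(\t_n)$ for stability conditions $\t_n$ with no convergent subsequence in $\Stab(\C_2)/\CC$ (if a subsequence converged to some $\t$, then $p = m_q(\t) \in M_q$ by the Homeomorphism proposition). By \cref{tessellation}, each $\t_n$ lies in the $B_3$-orbit of the standard region $\La$; write $\t_n = \b_n \t_n'$ with $\t_n' \in \La$. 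Since $\La$ (a finite union of Farey-triangle-like cells, up to the $\CC$-action and the compact parameter space of a standard stability condition) is relatively compact, after passing to a subsequence we may assume $\t_n' \to \t'$ for some $\t'$ in the closure $\ol\La$. The braids $\b_n$ must then be ``escaping to infinity'' in $B_3$ — otherwise finitely many braids occur infinitely often and $p \in M_q$. Here I would invoke the continued normal form (\cref{continued}): write $\b_n$ in strict continued form and track, using the HN automaton of \cref{theautomaton} together with the matrix formulas of \cref{rzprop,rzmess}, that $m_q(\b_n\t_n')$ is computed by applying the $q$-deformed matrix $\ol{\b_n}$ to the mass vector of $\t_n'$. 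As the word length of $\b_n$ grows, the columns of $\ol{\b_n}$ (up to rescaling) converge to a common projective ray — this is exactly the mechanism in the proof of \cref{limits1cor} and in \cref{qlimits} — so $\lim_n m_q(\b_n\t_n')$ depends only on the ``end'' of the sequence of braids and equals either some $t\occ_q(X) + (1-t)\ol{\hom}_q(X)$ (when the braids converge to a rational end $X$ together with a boundary point $\t'$ of $\La$) or $\lim_k \occ_q(X_k)$ for a sequence $X_k \in \bS$ (when the braids run off to an irrational end). Either way $p \in \OO_q$ or $p \in \ol{\OO}_q \sm \OO_q$, so $p \in \ol{\OO}_q$.

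The main obstacle I anticipate is making the ``braids escape to infinity forces convergence along the automaton'' argument fully rigorous: one must handle the two-parameter nature of a standard stability condition $\t_n'$ (the phases and masses of $P_1, P_2$), show that the limit is insensitive to the bounded data $\t_n'$ in the limit, and organise the case analysis on which of the four forms \eqref{continued1}--\eqref{continued4} the $\b_n$ take and whether the continued-fraction data stabilises (rational end) or not (irrational end). I expect this can be packaged cleanly by appealing to the contraction estimates already used in \cref{qlimits} — the key inequality being that the relevant denominators $\S^\sharp_n(q)$ are bounded below by $1$ while the determinant factors $\det\b_{\mathbf a,q}$ are controlled powers of $q<1$ — combined with \cref{homeomorphism}, which lets us transfer convergence questions in $\P^\bS(\RR)$ to convergence questions in $\Q_q \cup \mathbb{I}_q$, where the classical Farey picture makes the limiting behaviour transparent.
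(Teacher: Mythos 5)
Your overall architecture coincides with the paper's: decompose $\t_n = \b_n\t_n^{\std}$ with $\t_n^{\std}$ standard, split into the cases where $\{\b_n\}$ is finite or infinite, and in the infinite case split further according to whether the limit is rational or irrational, transferring convergence questions to $\Q_q\cup\II_q$ via \cref{homeomorphism} and \cref{qlimits}. The first inclusion and the disjointness are handled exactly as in the paper, via \cref{limits1cor}, \cref{disjoint1}, and \cref{disjoint2}.

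There is, however, one genuine gap: the step where you say that $m_q(\b_n\t_n')$ is ``computed by applying the $q$-deformed matrix $\ol{\b_n}$ to the mass vector of $\t_n'$''. The mass vector lives in $\P^{\bS}(\RR)$ while $\ol{\b_n}$ is a $2\times 2$ matrix; there is no such action, and the columns of $\ol{\b_n}$ control only the two coordinates $\occ_q(P_1,\cdot)$ and $\occ_q(P_2,\cdot)$ of the relevant functionals, not the full mass vector of a stability condition. The ingredient that makes the reduction rigorous --- and which your proposal never identifies --- is the $q$-linearity proposition (\cref{qlinearity}): via $q$-Gromov coordinates, $m_{q,\t}=a_\t\occ_{q}(P_1)+b_\t\occ_{q}(P_2)+c_\t\occ_{q}(P_{21})$ for every standard $\t$, with $a_\t,b_\t,c_\t\geq 0$. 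By equivariance this expresses $m_q(\b_n\t_n^{\std})$ as a nonnegative combination of $\occ_q(\b_nP_1)$, $\occ_q(\b_nP_2)$, $\occ_q(\b_nP_{21})$, and only then can the limit theorems for $q$-rationals be applied term by term. Relatedly, your proposed mechanism that ``the columns of $\ol{\b_n}$ converge to a common projective ray'' and that the limit is ``insensitive to the bounded data $\t_n'$'' holds only in the irrational case. In the rational case the three sequences $\occ_q(\b_nP_1)$, $\occ_q(\b_nP_2)$, $\occ_q(\b_nP_{21})$ may converge to \emph{different} endpoints of the interval $I_{q,X}$ (some to $\occ_q(X)$, others to $\ol{\hom}_q(X)$), and the limiting values of the normalised coefficients $a_n+b_n+c_n=1$ determine which convex combination $t\ol{\hom}_q(X)+(1-t)\occ_q(X)$ one lands on; this dependence on $\t_n^{\std}$ is precisely why the boundary contains whole intervals rather than points, and it must be tracked rather than shown to vanish.
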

We have a homeomorphism $\rho_{q}:\ol{\OO}_{q} \to \Q_{q} \cup \mathbb{I}_{q}$. So, this theorem implies that $\dd\ol{M}_{q}=\ol{\OO}_{q} \cong \RR\cup\{\infty\}$, that is, the boundary of the $q$-deformed Thurston compactification of $\Stab(\C_{2})$ is homeomorphic to a circle (see Part 3 of~\cref{qtopology}).
\begin{remark}
We stated without proof in \cref{qtopology} that $\Q_{q} \cup \mathbb{I}_{q}$ is equal to the boundary of the $q$-deformed Farey tesselation, so in fact $\rho_{q}$ maps $\ol{\mathbb{O}}_{q}$ onto $\dd \ol{L}_{q}$. Thus, $\rho_{q}$ is a $B_{3}$-equivariant homeomorphism from the boundary of the $q$-deformed compactification of $\Stab(\C_{2})/\CC$ onto the boundary of the $q$-deformed Farey tessellation.
\end{remark}
\cref{boundary} can be bootstrapped onto the proof of the corresponding \cite[Proposition 5.17]{bap.deo.lic:20} (which covers the case $q=1$), using `$q$-deformed Gromov coordinates'. Consider any standard stability condition $\t \in \Stab(\C_{2})/\CC$. By the triangle inequality for the $q$-deformed mass function \cite[Proposition 3.3]{ike:20}, there exist $a_{\t},b_{\t},c_{\t} \in \RR^{\geq 0}$ such that
$$m_{q,\t}(P_{1})=b_{\t}+c_{\t}, \; \;m_{q,\t}(P_{2})=a_{\t}+qc_{\t}, \;\; m_{q,\t}(P_{21})=a_{\t}+b_{\t}.$$
We call $(a_{\t}, b_{\t}, c_{\t})$ the $q$-Gromov coordinates associated to $\t$.
The following is the $q$-deformation of \cite[Proposition 5.14]{bap.deo.lic:20}. The proof is essentially identical.
\begin{prop}[$q$-Linearity]\label{qlinearity}
Consider any standard stability condition $\t \in \Stab(\C_{2})/\CC$, then
$$m_{q,\t}=a_{\t}\occ_{q}(P_{1})+b_{\t}\occ_{q}(P_{2})+c_{\t}\occ_{q}(P_{21}).$$
\end{prop}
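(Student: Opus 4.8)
The plan is to express the $q$-mass vector $m_{q,\t}$ as an element of $\P^{\bS}(\RR)$ by evaluating it against each spherical object $Y \in \bS$, and to show that this matches the claimed linear combination of the functionals $\occ_q(P_1), \occ_q(P_2), \occ_q(P_{21})$ coordinate by coordinate. By $B_3$-equivariance of both $m_q$ and the $\occ_q$ functionals, and since $B_3$ acts transitively on $\bS$ (see \cref{subsec:c2-automaton}), it suffices to check the identity on a spanning set of coordinates; more precisely, since every $Y\in\bS$ is a shift of some $\b P_1$, it is enough to verify that $m_{q,\t}(\b P_1) = a_\t \occ_q(P_1,\b P_1) + b_\t \occ_q(P_2,\b P_1) + c_\t \occ_q(P_{21},\b P_1)$ for every braid $\b$, where equality is up to the overall scaling implicit in projective space. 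First I would fix the normalisation: evaluating against $Y = P_1, P_2, P_{21}$ directly, the definitions give $\occ_q(P_1,P_1)=0$, $\occ_q(P_2,P_1)=1$ (phase zero), $\occ_q(P_{21},P_1) = \occ_q(P_1, \s^{-1}_{21}P_1)$, and similarly for the others; combined with the defining relations $m_{q,\t}(P_1)=b_\t+c_\t$, $m_{q,\t}(P_2)=a_\t+qc_\t$, $m_{q,\t}(P_{21})=a_\t+b_\t$ for the $q$-Gromov coordinates, the identity holds on these three coordinates by construction of the coordinates themselves. This pins down the scalar, and the content of the proposition is that the identity then propagates to all of $\bS$.

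The key step is an induction on spherical objects using the HN automaton of \cref{theautomaton}. Write $Y = \b P_1$ with $\b$ in strict continued form (\cref{continuedcor}), and induct on the length of $\b$. For the inductive step, one applies a single generator $\s_i^{\pm 1}$; on the one side, $m_{q,\t}(\s_i^{\pm 1} X)$ transforms according to how the $q$-mass of the HN factors of $X$ recombine (governed by the triangle (in)equalities \eqref{triangleinequality1}--\eqref{triangleinequality3}, which for a standard $\t$ in the appropriate Farey region degenerate to equalities by \cref{qmassdegeneracy}); on the other side, the functionals $\occ_q(P_1,\cdot), \occ_q(P_2,\cdot), \occ_q(P_{21},\cdot)$ transform by the matrices in \cref{fig:b3-automaton-rep} (this is exactly the content of \cref{rzprop}, which expresses $\ol\b$ in terms of these $\occ_q$-values). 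The crucial observation is that the matrix describing the $q$-mass recombination of HN factors and the matrix describing the automaton action on $\occ_q$-functionals are the same matrix — because both are computed from the standard heart $\hrt_{\std}$ and its images under braids, and the $q$-mass is by definition the sum over HN factors of $q^{\text{phase}}|Z|$, i.e. a $\Z[q^{\pm}]$-linear functional on HN multiplicity vectors with the automaton's transition matrices acting compatibly. So the linear relation is preserved under each elementary braid move, and the base case is the $\b = \Id$ computation above.

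The main obstacle I anticipate is \emph{not} the combinatorial bookkeeping — which is genuinely routine given the automaton — but rather checking that the $q$-Gromov coordinates $(a_\t,b_\t,c_\t)$ remain well-defined and non-negative after each braid move, and more subtly, that the degenerate inequalities persist in the right way so that the recombination formulas are exact equalities rather than inequalities. Concretely: when $\b$ sits in, say, form \eqref{continued1}, one needs that the relevant triangle inequality among \eqref{triangleinequality1}--\eqref{triangleinequality3} is actually an equality at $\b^{-1}\t$ (or the appropriate twisted stability condition), which is where \cref{qmassdegeneracy} and its type-A/type-B variants are used; one must track which of the three inequalities is the active equality as the braid word is read off, matching it to the form \eqref{continued1}--\eqref{continued4}. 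Since the analogous statement at $q=1$ is \cite[Proposition 5.14]{bap.deo.lic:20} and the excerpt explicitly asserts the proof here is ``essentially identical,'' the write-up should mirror that argument, substituting Ikeda's $q$-deformed triangle inequality \cite[Proposition 3.3]{ike:20} for the Euclidean one and citing \cref{rzprop,qmassdegeneracy} at the points where the $q=1$ proof cites their undeformed counterparts.
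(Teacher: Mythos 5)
Your proposal assembles the right ingredients (the four classes $[P_i,P_j]$, the automaton, the degeneracy lemma, the defining relations of the $q$-Gromov coordinates), but the architecture --- an induction on the length of $\b$ with the scalar identity at $\b_l P_1$ as inductive hypothesis --- does not close. The problem is on the left-hand side: $m_{q,\t}(\s_i^{\pm 1}\b_l P_1)$ is not a function of $m_{q,\t}(\b_l P_1)$, and there is no ``recombination matrix'' acting on masses. The matrices of \cref{fig:b3-automaton-rep} act on HN multiplicity vectors, not on $q$-masses; for a fixed standard $\t$ the $q$-mass is a single linear functional of that vector on each class. So to run your inductive step you would have to carry the full vector $(\pi_1,\pi_2,\pi_{12},\pi_{21})(\b_l P_1)$ as inductive data, and at that point the induction is vacuous: the claim reduces to checking, separately on each of the four classes, that two linear functionals of $(\pi_1,\pi_2,\pi_{12},\pi_{21})$ coincide, and the transition matrices never enter. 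Relatedly, your anticipated ``main obstacle'' is a misdiagnosis: $\t$, and hence $(a_\t,b_\t,c_\t)$, is fixed once and for all; nothing about it changes under braid moves, and \cref{qmassdegeneracy} is invoked exactly once, for $\t$ itself, not along a sequence of twisted stability conditions $\b^{-1}\t$ (which would not even be standard, so the Gromov coordinates would not be defined for them).

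The argument the paper points to (the $q$-deformation of \cite[Proposition 5.14]{bap.deo.lic:20}) is the direct, non-inductive verification. Suppose $\t$ is type-A. For $Y\in[P_2,P_{21}]\cup[P_{21},P_1]$ one shows that the $q$-mass is additive over the HN factors computed by the fixed degenerate standard stability condition, so that $m_{q,\t}(Y)=\pi_1(Y)m_{q,\t}(P_1)+\pi_2(Y)m_{q,\t}(P_2)+\pi_{21}(Y)m_{q,\t}(P_{21})$; for $Y$ in the other two classes one replaces the last term by $\pi_{12}(Y)m_{q,\t}(P_{12})$ and uses \cref{qmassdegeneracy} (with the indices $1$ and $2$ interchanged) to express $m_{q,\t}(P_{12})$ in terms of $m_{q,\t}(P_1)$ and $m_{q,\t}(P_2)$. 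On the other side, $\occ_q(P_1,\cdot)$ and $\occ_q(P_2,\cdot)$ are by definition linear in $(\pi_1,\pi_2,\pi_{12},\pi_{21})$, and $\occ_q(P_{21},Y)=\occ_q(P_1,\s_2^{-1}Y)$ is evaluated on each class by a single application of the automaton (for instance $\occ_q(P_{21},Y)=\pi_1(Y)$ on $[P_{21},P_1]$). Substituting $m_{q,\t}(P_1)=b_\t+c_\t$, $m_{q,\t}(P_2)=a_\t+qc_\t$, $m_{q,\t}(P_{21})=a_\t+b_\t$ then matches the two sides coefficient by coefficient; the type-B case is symmetric. Your base-case computation at $P_1,P_2,P_{21}$ is correct and is in fact most of this argument; what is missing is the recognition that the general case is the same finite check on each class rather than an induction on braid words.
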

\begin{proof}[Proof of \cref{boundary}]
We just need to show that 
$$M_{q} \cup \ol{\OO}_{q}$$
is closed in $\P^{\bS}(\RR)$ (since we have already shown that $\ol{\OO}_{q} \subset \dd \ol{M}_{q}$). Consider any sequence $(m_{q}(\t_{n}))_{n \in \N}$ which converges to a limit in $\P^{\bS}(\RR)$. Let $\t_{n}=\b_{n}\t_{n}^{\std}$, where each $\t_{n}^{\std}$ is a standard stability condition.
\par
Suppose $\{\b_{n}\mid n \in \N\}$ is a finite set. The proof that $\lim_{n  \to \infty}m_{q}(\t_{n})$ lies in $M_{q} \cup \ol{\OO}_{q}$ is easy, and identical to that in the proof of \cite[Proposition 5.17]{bap.deo.lic:20}.
Briefly, $\lim_{n  \to \infty}m_{q}(\t_{n})$ lies in the union of $\b_{n}(\ol{\La})$ for finitely many $\b_{n}$, and each of these sets lies in $M_{q} \cup \ol{\OO}_{q}$ by \cite[Proposition 5.15]{bap.deo.lic:20}.
\par
Now suppose that $\{\b_{n}\mid n \in \N\}$ is an infinite set.
Consider the two corresponding sets of sequences 
$$O_{1}=\big\{(\occ_{1}(\b_{n}P_{1}))_{n \in \N}, (\occ_{1}(\b_{n}P_{2}))_{n \in \N}, (\occ_{1}(\b_{n}P_{21}))_{n \in \N}\big\},$$
$$O_{q}=\big\{(\occ_{q}(\b_{n}P_{1}))_{n \in \N}, (\occ_{q}(\b_{n}P_{2}))_{n \in \N}, (\occ_{q}(\b_{n}P_{21}))_{n \in \N}\big\}.$$
Since $\ol{\OO}_{q}$ is sequentially compact, we can assume (simultaneously passing to subsequences if necessary) that all these sequences converge. Let us denote the limits $p_{1},p_{2},p_{21}$ and $p_{1,q}, p_{2,q}, p_{21,q}$ respectively. It is shown that $p_{1}=p_{2}=p_{21}$ in the proof of \cite[Proposition 5.17]{bap.deo.lic:20}. The limit $p_{1}=p_{2}=p_{21}$ corresponds to a real number, namely $\rho_{q=1}(p_{1})=\rho_{q=1}(p_{2})=\rho_{q=1}(p_{21})$. 
We split the remainder of the proof into two cases, depending on whether $\rho_{q=1}(p_{1})=\rho_{q=1}(p_{2})=\rho_{q=1}(p_{21})$ is rational or irrational.
\par
Suppose that $\rho_{q=1}(p_{1})=\rho_{q=1}(p_{2})=\rho_{q=1}(p_{21})$ is irrational. Then by the characterisation of limits of right $q$-rationals above (\cref{qlimits}),
we have that $\rho_{q}(p_{1,q})=\rho_{q}(p_{2,q})=\rho_{q}(p_{21,q})$ is the unique `$q$-irrational' corresponding to $\rho_{q}(p_{1})=\rho_{q}(p_{2})=\rho_{q}(p_{21})$. Since $\rho_{q}$ is a homeomorphism, this implies that $p_{1,q}=p_{2,q}=p_{21,q}$.

 By the $q$-linearity proposition (\cref{qlinearity}), we have sequences $(a_{n})_{n \in \N}$, $(b_{n})_{n \in \N}$, and $(c_{n})_{n \in \N}$ in $(\RR^{\geq 0})^{\N}$ such that
\begin{align*}
\lim_{n \to \infty} m_{q}(\t_{n})&=\lim_{n \to \infty} m_{q}(\b_{n}\t_{n}^{\std})
\\ &= \lim_{n \to \infty}\big(a_{n}\occ_{q}(\b_{n}P_{1})+b_{n}\occ_{q}(\b_{n}P_{2})+c_{n}\occ_{q}(\b_{n}P_{21})\big)
\\&= p_{1,q}=p_{2,q}=p_{21,q}.
\end{align*}
Since $p_{1,q}=p_{2,q}=p_{21,q} \in \ol{\OO}_{q}$, we have shown that the limit lies in $\ol{\OO}_{q}$.
\par
Suppose that $\rho_{q=1}(p_{1})=\rho_{q=1}(p_{2})=\rho_{q=1}(p_{21})$ is rational. By simultaneously passing to subsequences, we can assume that the image under $\rho_{q}$ of each of the sequences in $O_{1}$ is either an increasing sequence or a decreasing sequence. Let $X \in \bS$ correspond to the rational number $\rho_{q}(p_{1})=\rho_{q}(p_{2})=\rho_{q}(p_{21})$, so by \cref{qlimits}, each of $\rho_{q}(p_{1,q}), \rho_{q}(p_{2,q}), \rho_{q}(p_{21,q})$ is either the left or right $q$-deformation of $\rho_{q}(p_{1})$. Thus, since $\rho_{q}$ is a homeomorphism, $p_{1,q}, p_{2,q}, p_{21,q} \in \{ \occ_{q}(X), \ol{\hom}_{q}(X)\}$.
Again, by \cref{qlinearity}, we have sequences $(a_{n})_{n \in \N}, (b_{n})_{n \in \N}, (c_{n})_{n \in \N} \in (\RR^{\geq 0})^{\N}$ such that
$$\lim_{n \to \infty}m_{q}(\t_{n})=\lim_{n \to \infty}\big(a_{n}\occ_{q}(\b_{n}P_{1})+b_{n}\occ_{q}(\b_{n}P_{2})+c_{n}\occ_{q}(\b_{n}P_{21})\big).$$
Since we are in projective space, we can normalise so that for all $n \in \N$, $a_{n}+b_{n}+c_{n}=1$. Passing to subsequences if necessary, we can assume that each of $(a_{n})_{n \in \N}, (b_{n})_{n \in \N}, (c_{n})_{n\in \N}$ converges.
 Then since $\occ_{q}(\b_{n}P_{1}), \occ_{q}(\b_{n}P_{2}), \occ_{q}(\b_{n}P_{21})$ converge to $p_{1,q}, p_{2,q}, p_{21,q}$ respectively, we must have that
$$a_{n}\occ_{q}(\b_{n}P_{1})+b_{n}\occ_{q}(\b_{n}P_{2})+c_{n}\occ_{q}(\b_{n}P_{21})$$
converges to a convex combination of $p_{1,q}, p_{2,q}, p_{21,q}$. 
That is, there exists $t \in [0,1]$ such that
$$\lim_{n \to \infty}\big(a_{n}\occ_{q}(\b_{n}P_{1})+b_{n}\occ_{q}(\b_{n}P_{2})+c_{n}\occ_{q}(\b_{n}P_{21})\big)=t\ol{\hom}_{q}(X)+(1-t)\occ_{q}(X).$$
Thus we have shown that $\lim_{n \to \infty}m_{q}(\t_{n})$ lies in $[\ol{\hom}_{q}(X), \occ_{q}(X)]$.
This completes the proof.
\end{proof}

\appendix
\section{Combinatorial properties of left $q$-deformed rational numbers}\label{appendixa}

\subsection{Counting closures in quivers}
We return to thinking about right $q$-deformed rationals and left $q$-deformed rationals as formal rational functions in $q$. The following theorem of Morier-Genoud and Ovsienko shows that the coefficients of $\R^{\sharp}(q)$ and $\S^{\sharp}(q)$ have combinatorial significance.
For any oriented quiver $\G$, a \emph{closure} in $\G$ is a subquiver $\H$ such that there do not exist edges in $\G$ from vertices in $\H$ to vertices outside of $\H$. For $n \in \N$, an \emph{$n$-closure} is a closure with $n$ vertices. For any quiver $\G$, let $C_{n}(\G)$ denote the number of $n$-closures in $\G$. 
\par
Consider any rational number $r/s \in (1, \infty)$ with continued fraction expansion $[a_{1},...,a_{2n}]$. Let $\G_{r/s}^{\sharp}$ denote the following quiver.
\begin{center}
  \begin{tikzpicture}[node distance=2.2em, font=\scriptsize]
    \tikzset {
      brace/.style={thick, decoration={brace}, decorate},
      rarrow/.style={-,
        thick,
        shorten <= -0.3em, shorten >= -0.3em,
        postaction=decorate,
        decoration={markings, mark= at position 0.7 with {\arrow{>}}}},
      larrow/.style={-,
        thick,
        shorten <= -0.3em, shorten >= -0.3em,
        postaction=decorate,
        decoration={markings, mark= at position 0.7 with {\arrow{<}}}}      
    }
    \node(a0) at (0,0) {$\bullet$};
    \node(m01) [right of=a0] {$\bullet$};
    \node(m02) [right of=m01] {$\bullet$};    
    \node(a1) [right of=m02] {$\bullet$};

    \node(m11) [right of=a1] {$\bullet$};
    \node(m12) [right of=m11] {$\bullet$};
    \node(a2) [right of=m12] {$\bullet$};

    \node(m21) [right of=a2] {$\bullet$};
    \node(m22) [right of=m21] {$\bullet$};
    \node(a3) [right of=m22] {$\bullet$};    

    \node(ax) [right of=a3] {$\bullet$};
    \node(mx1) [right of=ax] {$\bullet$};
    \node(mx2) [right of=mx1] {$\bullet$};    
    \node(ay) [right of=mx2] {$\bullet$};

    \node(my1) [right of=ay] {$\bullet$};
    \node(my2) [right of=my1] {$\bullet$};    
    \node(az) [right of=my2] {$\bullet$};    
    
    \draw[rarrow] (a0) -- (m01);
    \draw[dotted] (m01) -- (m02);
    \draw[rarrow] (m02) -- (a1);

    \draw[larrow] (a1) -- (m11);
    \draw[dotted] (m11) -- (m12);
    \draw[larrow] (m12) -- (a2);

    \draw[rarrow] (a2) -- (m21);
    \draw[dotted] (m21) -- (m22);
    \draw[rarrow] (m22) -- (a3);

    \draw[dotted] (a3) -- (ax);

    \draw[rarrow] (ax) -- (mx1);
    \draw[dotted] (mx1) -- (mx2);
    \draw[rarrow] (mx2) -- (ay);    

    \draw[larrow] (ay) -- (my1);
    \draw[dotted] (my1) -- (my2);
    \draw[larrow] (my2) -- (az);

    \draw[brace] (a0.north) -- node[above]{$a_1-1$} (a1.north);
    \draw[brace, decoration={raise=0.5ex}] (a1.north) -- node[above=1ex]{$a_2$} (a2.north);
    \draw[brace] (a2.north) -- node[above]{$a_3$} (a3.north);
    \draw[brace] (ax.north) -- node[above]{$a_{(2n-1)}$} (ay.north);
    \draw[brace, decoration={raise=0.5ex}] (ay.north) -- node[above=1ex]{$a_{2n}-1$} (az.north);    
  \end{tikzpicture}
\end{center}
Let $\wh{\G}_{r/s}^{\sharp}$ denote the following truncated copy of $\G_{r/s}$.
\begin{center}
  \begin{tikzpicture}[node distance=2.2em, font=\scriptsize]
    \tikzset {
      brace/.style={thick, decoration={brace}, decorate},
      rarrow/.style={-,
        thick,
        shorten <= -0.3em, shorten >= -0.3em,
        postaction=decorate,
        decoration={markings, mark= at position 0.7 with {\arrow{>}}}},
      larrow/.style={-,
        thick,
        shorten <= -0.3em, shorten >= -0.3em,
        postaction=decorate,
        decoration={markings, mark= at position 0.7 with {\arrow{<}}}}      
    }
    \node(a1) at (0,0) {$\bullet$};

    \node(m11) [right of=a1] {$\bullet$};
    \node(m12) [right of=m11] {$\bullet$};
    \node(a2) [right of=m12] {$\bullet$};

    \node(m21) [right of=a2] {$\bullet$};
    \node(m22) [right of=m21] {$\bullet$};
    \node(a3) [right of=m22] {$\bullet$};    

    \node(ax) [right of=a3] {$\bullet$};
    \node(mx1) [right of=ax] {$\bullet$};
    \node(mx2) [right of=mx1] {$\bullet$};    
    \node(ay) [right of=mx2] {$\bullet$};

    \node(my1) [right of=ay] {$\bullet$};
    \node(my2) [right of=my1] {$\bullet$};    
    \node(az) [right of=my2] {$\bullet$};    
    
    \draw[larrow] (a1) -- (m11);
    \draw[dotted] (m11) -- (m12);
    \draw[larrow] (m12) -- (a2);

    \draw[rarrow] (a2) -- (m21);
    \draw[dotted] (m21) -- (m22);
    \draw[rarrow] (m22) -- (a3);

    \draw[dotted] (a3) -- (ax);

    \draw[rarrow] (ax) -- (mx1);
    \draw[dotted] (mx1) -- (mx2);
    \draw[rarrow] (mx2) -- (ay);    

    \draw[larrow] (ay) -- (my1);
    \draw[dotted] (my1) -- (my2);
    \draw[larrow] (my2) -- (az);

    \draw[brace, decoration={raise=0.5ex}] (a1.north) -- node[above=1ex]{$a_2-1$} (a2.north);
    \draw[brace] (a2.north) -- node[above]{$a_3$} (a3.north);
    \draw[brace] (ax.north) -- node[above]{$a_{(2n-1)}$} (ay.north);
    \draw[brace, decoration={raise=0.5ex}] (ay.north) -- node[above=1ex]{$a_{2n}-1$} (az.north);    
  \end{tikzpicture}
\end{center}

\begin{theorem}[{\cite[Theorem 4]{mor.ovs:20}}]\label{mgothm}
Consider any rational number $r/s \in (1, \infty)$, with continued fraction expansion $[a_{1},...,a_{n}]$ and $q$-deformation
$$\Big[\frac{r}{s}\Big]_{q}^{\sharp}=\frac{\a^{\sharp}_{0}+\a_{1}^{\sharp}q+...+\a_{N-1}^{\sharp}q^{N-1}+\a_{N}^{\sharp}q^{N}}{\b_{0}^{\sharp}+\b_{1}^{\sharp}q+...+\b_{M-1}^{\sharp}q^{M-1}+\b_{M}^{\sharp}q^{M}}.$$ For all $i \in \N$,
\[
\a_{i}^{\sharp}=C_{i}\big(\G_{r/s}\big), \quad \b_{i}^{\sharp}=C_{i}\big(\wh{\G}_{r/s}\big).
\]
\end{theorem}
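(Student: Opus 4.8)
The plan is to deduce \cref{mgothm} from the matrix formula for right $q$-rationals (\cref{qmatrices}) by giving a transfer-matrix interpretation of both sides. By \cref{qmatrices}, up to the global monomial $q^{a_2 + a_4 + \cdots + a_{2n}}$ the first column of $\b_{\mathbf a, q}$ is $(q\R^{\sharp}(q),\, q\S^{\sharp}(q))^{T}$, so it is enough to realise $\b_{\mathbf a, q}$, up to a monomial, as a product of transfer matrices whose first column enumerates closures of $\G_{r/s}$ in its top entry and closures of $\wh{\G}_{r/s}$ in its bottom entry.

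To set up the transfer matrix, read $\G_{r/s}$ as an oriented path $v_0 - v_1 - \cdots - v_{\ell}$. A subset of vertices is (the vertex set of) a closure exactly when it never contains the source of an edge while omitting the target, so if we scan the path left to right, record the state $\mathrm{in}$ or $\mathrm{out}$ of each vertex, and weight each $\mathrm{in}$ vertex by $q$, then every edge contributes a $2\times 2$ matrix in the ordered basis $(\mathrm{out},\mathrm{in})$, of the shape $\left(\begin{smallmatrix}1 & 1\\ 0 & q\end{smallmatrix}\right)$ for a rightward edge and $\left(\begin{smallmatrix}1 & 0\\ q & q\end{smallmatrix}\right)$ for a leftward edge (the precise placement of the weight being a normalisation choice). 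These are precisely two of the matrices occurring in the $\Theta$-representation of \cref{fig:b3-automaton-rep}, and an $a$-fold power of either coincides, up to a monomial, with $\s_{1,q}^{\mp a}$ or $\s_{2,q}^{\pm a}$. Hence the product of these matrices over the $2n$ orientation-blocks of $\G_{r/s}$, of sizes $a_1 - 1, a_2, \ldots, a_{2n-1}, a_{2n}-1$, equals $q^{k}\b_{\mathbf a, q}$ for an explicit $k$; contracting against suitable boundary states at $v_0$ and $v_{\ell}$ turns its top entry into $\sum_i C_i(\G_{r/s})q^i$ and its bottom entry into $\sum_i C_i(\wh{\G}_{r/s})q^i$, the quiver $\wh{\G}_{r/s}$ being $\G_{r/s}$ with its first $a_1$ vertices deleted, which corresponds on the matrix side to stripping the leading $\s_{1,q}$-factor off $\b_{\mathbf a, q}$. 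Comparing with the first column of $\b_{\mathbf a, q}$ supplied by \cref{qmatrices} then gives $\a_i^{\sharp} = C_i(\G_{r/s})$ and $\b_i^{\sharp} = C_i(\wh{\G}_{r/s})$. A variant that avoids transfer matrices is to induct on $n$, matching the continued-fraction recursion underlying \cref{qratdef} against the effect on closure counts of appending one orientation-block to the zigzag, with integer $r/s$ (and small $n$) serving as base cases.

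The conceptual input is thus the coincidence of the closure transfer matrices with the $q$-deformed braid generators $\s_{1,q}, \s_{2,q}$; I expect the actual obstacle to be entirely bookkeeping. One must pin down the boundary states so that the top entry counts closures of $\G_{r/s}$ and the bottom entry counts those of $\wh{\G}_{r/s}$; track the monomial normalisations (and off-by-one shifts between a block of $a$ edges and $\s_{\cdot,q}^{\pm a}$) relating the transfer-matrix product to $\b_{\mathbf a, q}$, and reconcile them with the normalisation convention of \cref{qrat} and the $q^{a_2 + a_4 + \cdots}$ prefactor of \cref{qmatrices}; and verify that the short degenerate quivers (when $a_1 = 1$, when $a_{2n} = 1$, or when $r/s$ is an integer) behave as the formula predicts. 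None of this is deep, but it must be carried out carefully for the few boundary cases.
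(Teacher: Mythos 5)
You should first be aware that the paper does not prove \cref{mgothm} at all: it is imported verbatim as \cite[Theorem 4]{mor.ovs:20} and used as a black box (its only role is to feed into \cref{leftcounting} and \cref{jones}). So there is no internal proof to compare against; what you have written is a proposed proof of the cited external result. That said, your strategy --- realise closure-counting as a transfer-matrix product over the zigzag, observe that the two transfer matrices $\left(\begin{smallmatrix}1&1\\0&q\end{smallmatrix}\right)$ and $\left(\begin{smallmatrix}1&0\\q&q\end{smallmatrix}\right)$ are the matrices of the $\Theta$-representation, and compare with the first column of $\b_{\mathbf a,q}$ via \cref{qmatrices} --- is sound in outline and close in spirit to Morier-Genoud--Ovsienko's own argument, which likewise interprets the entries of the continued-fraction matrices combinatorially.

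However, the one identification you defer entirely to ``bookkeeping'' is exactly where the argument can silently produce the wrong answer, so it deserves to be flagged. The $a$-fold powers of the transfer matrices are \emph{not} scalar multiples of the braid generators: for instance
\[
\begin{pmatrix}1&1\\0&q\end{pmatrix}^{a}=\begin{pmatrix}1&[a]^{\sharp}_{q}\\0&q^{a}\end{pmatrix},
\qquad
\s_{1,q}^{-a}=\begin{pmatrix}q^{a}&[a]^{\sharp}_{q}\\0&1\end{pmatrix},
\]
and these agree only after the antitranspose, i.e.\ after transposing \emph{and} reversing the order of the product. Reversing the product order corresponds to scanning the path in the opposite direction, equivalently to the reversed continued fraction $[a_{2n},\ldots,a_{1}]$, whose $q$-numerator is the coefficient-reversal of $\R^{\sharp}$ rather than $\R^{\sharp}$ itself (this is precisely the palindromy exploited in the proof of \cref{jones}). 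The discrepancy is visible already for $r/s=5/2=[2,2]$: one has $\R^{\sharp}(q)=1+2q+q^{2}+q^{3}$, whereas counting closures of the three-vertex zigzag with the first block oriented rightward gives $1+q+2q^{2}+q^{3}$; only one of the two orientation/scanning conventions lands on $\R^{\sharp}$. So the boundary vectors, the placement of the vertex weight $q$, and the scanning direction must be fixed jointly and verified on such an example before the comparison with \cref{qmatrices} is legitimate; together with the degenerate cases ($a_{1}=1$, $a_{2n}=1$, $n=1$, integer $r/s$) this is the substantive remaining work. With that carried out, your argument goes through.
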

The following corollary (which may have independent interest) will be used later on. The left $q$-deformed rationals, which we motivated in \cref{sec:deformation} on geometric grounds, and in  \cref{sec:homological} on homological grounds, also admit a combinatorial interpretation directly analogous to that given for the right $q$-deformed rationals by \cref{mgothm}.  The left $q$-rational counts closures in a slightly augmented version of $\G_{r/s}^{\sharp}$. Let $\G^{\flat}_{r/s}$ denote the following quiver.

\begin{center}
  \begin{tikzpicture}[node distance=2em, font=\scriptsize]
    \tikzset {
      brace/.style={thick, decoration={brace}, decorate},
      rarrow/.style={-,
        thick,
        shorten <= -0.3em, shorten >= -0.3em,
        postaction=decorate,
        decoration={markings, mark= at position 0.7 with {\arrow{>}}}},
      larrow/.style={-,
        thick,
        shorten <= -0.3em, shorten >= -0.3em,
        postaction=decorate,
        decoration={markings, mark= at position 0.7 with {\arrow{<}}}}      
    }
    \node(a0) at (0,0) {$\bullet$};
    \node(m01) [right of=a0] {$\bullet$};
    \node(m02) [right of=m01] {$\bullet$};    
    \node(a1) [right of=m02] {$\bullet$};

    \node(m11) [right of=a1] {$\bullet$};
    \node(m12) [right of=m11] {$\bullet$};
    \node(a2) [right of=m12] {$\bullet$};

    \node(m21) [right of=a2] {$\bullet$};
    \node(m22) [right of=m21] {$\bullet$};
    \node(a3) [right of=m22] {$\bullet$};    

    \node(ax) [right of=a3] {$\bullet$};
    \node(mx1) [right of=ax] {$\bullet$};
    \node(mx2) [right of=mx1] {$\bullet$};    
    \node(ay) [right of=mx2] {$\bullet$};

    \node(my1) [right of=ay] {$\bullet$};
    \node(my2) [right of=my1] {$\bullet$};    
    \node(az) [right of=my2] {$\bullet$};

    \node(l) [right of=az] {$\bullet$};
    
    \draw[rarrow] (a0) -- (m01);
    \draw[dotted] (m01) -- (m02);
    \draw[rarrow] (m02) -- (a1);

    \draw[larrow] (a1) -- (m11);
    \draw[dotted] (m11) -- (m12);
    \draw[larrow] (m12) -- (a2);

    \draw[rarrow] (a2) -- (m21);
    \draw[dotted] (m21) -- (m22);
    \draw[rarrow] (m22) -- (a3);

    \draw[dotted] (a3) -- (ax);

    \draw[rarrow] (ax) -- (mx1);
    \draw[dotted] (mx1) -- (mx2);
    \draw[rarrow] (mx2) -- (ay);    

    \draw[larrow] (ay) -- (my1);
    \draw[dotted] (my1) -- (my2);
    \draw[larrow] (my2) -- (az);

    \path (az) edge[rarrow, bend left] (l)
    (l) edge [rarrow, bend left] (az);
    
    \draw[brace] (a0.north) -- node[above]{$a_1-1$} (a1.north);
    \draw[brace, decoration={raise=0.5ex}] (a1.north) -- node[above=1ex]{$a_2$} (a2.north);
    \draw[brace] (a2.north) -- node[above]{$a_3$} (a3.north);
    \draw[brace] (ax.north) -- node[above]{$a_{(2n-1)}$} (ay.north);
    \draw[brace, decoration={raise=0.5ex}] (ay.north) -- node[above=1ex]{$a_{2n}-1$} (az.north);    
  \end{tikzpicture}
\end{center}

And let $\wh{\G}^{\flat}_{r/s}$ denote the following truncation.

\begin{center}
  \begin{tikzpicture}[node distance=2em, font=\scriptsize]
    \tikzset {
      brace/.style={thick, decoration={brace}, decorate},
      rarrow/.style={-,
        thick,
        shorten <= -0.3em, shorten >= -0.3em,
        postaction=decorate,
        decoration={markings, mark= at position 0.7 with {\arrow{>}}}},
      larrow/.style={-,
        thick,
        shorten <= -0.3em, shorten >= -0.3em,
        postaction=decorate,
        decoration={markings, mark= at position 0.7 with {\arrow{<}}}}      
    }
    \node(a1) at (0,0) {$\bullet$};

    \node(m11) [right of=a1] {$\bullet$};
    \node(m12) [right of=m11] {$\bullet$};
    \node(a2) [right of=m12] {$\bullet$};

    \node(m21) [right of=a2] {$\bullet$};
    \node(m22) [right of=m21] {$\bullet$};
    \node(a3) [right of=m22] {$\bullet$};    

    \node(ax) [right of=a3] {$\bullet$};
    \node(mx1) [right of=ax] {$\bullet$};
    \node(mx2) [right of=mx1] {$\bullet$};    
    \node(ay) [right of=mx2] {$\bullet$};

    \node(my1) [right of=ay] {$\bullet$};
    \node(my2) [right of=my1] {$\bullet$};    
    \node(az) [right of=my2] {$\bullet$};

    \node(l) [right of=az] {$\bullet$};
    
    \draw[larrow] (a1) -- (m11);
    \draw[dotted] (m11) -- (m12);
    \draw[larrow] (m12) -- (a2);

    \draw[rarrow] (a2) -- (m21);
    \draw[dotted] (m21) -- (m22);
    \draw[rarrow] (m22) -- (a3);

    \draw[dotted] (a3) -- (ax);

    \draw[rarrow] (ax) -- (mx1);
    \draw[dotted] (mx1) -- (mx2);
    \draw[rarrow] (mx2) -- (ay);    

    \draw[larrow] (ay) -- (my1);
    \draw[dotted] (my1) -- (my2);
    \draw[larrow] (my2) -- (az);

    \path (az) edge[rarrow, bend left] (l)
    (l) edge[rarrow, bend left] (az);

    \draw[brace, decoration={raise=0.5ex}] (a1.north) -- node[above=1ex]{$a_2-1$} (a2.north);
    \draw[brace] (a2.north) -- node[above]{$a_3$} (a3.north);
    \draw[brace] (ax.north) -- node[above]{$a_{(2n-1)}$} (ay.north);
    \draw[brace, decoration={raise=0.5ex}] (ay.north) -- node[above=1ex]{$a_{2n}-1$} (az.north);    
  \end{tikzpicture}
\end{center}

\begin{cor}\label{leftcounting}

Consider any rational number $r/s \in (1, \infty)$, with left $q$-deformation
$$\Big[\frac{r}{s}\Big]_{q}^{\flat}=\frac{\a^{\flat}_{0}+\a^{\flat}_{1}q+...+\a^{\flat}_{N-1}q^{N-1}+\a^{\flat}_{N}q^{N}}{\b^{\flat}_{0}+\b^{\flat}_{1}q+...+\b^{\flat}_{M-1}q^{M-1}+\b^{\flat}_{M}q^{M}}.$$
Then for all $i \in \N$,
\begin{align*}
\a_{i}^{\flat}=C_{i}\big(\G_{r/s}^{\flat}\big), && \b_{i}^{\flat}=C_{i}\big(\wh{\G}_{r/s}^{\flat}\big).
\end{align*}
\end{cor}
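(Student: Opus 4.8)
The plan is to deduce \cref{leftcounting} from \cref{mgothm} by exhibiting a bijection between closures of the augmented quivers $\G^{\flat}_{r/s}, \wh{\G}^{\flat}_{r/s}$ and closures of the unaugmented quivers $\G^{\sharp}_{r/s}, \wh{\G}^{\sharp}_{r/s}$, while simultaneously tracking how the numerator and denominator polynomials change under the passage from $[r/s]^{\sharp}_q$ to $[r/s]^{\flat}_q$. The starting point is \cref{qratdef}: the two continued fraction formulas differ only in the innermost term, $[a_{2n}]^{\sharp}_{q^{-1}}$ versus $[a_{2n}]^{\flat}_{q^{-1}}$. Using the elementary identities $[n]^{\sharp}_q = 1 + q + \cdots + q^{n-1}$ and $[n]^{\flat}_q = 1 + q + \cdots + q^{n-2} + q^n$, so that $[n]^{\flat}_q = [n]^{\sharp}_q + q^n - q^{n-1} = [n-1]^{\sharp}_q + q^n$, I would first derive a clean relation between the pairs $(\R^{\flat}, \S^{\flat})$ and $(\R^{\sharp}, \S^{\sharp})$ — concretely, I expect the matrix formula of \cref{qmatrices} to give $\b_{\mathbf{a},q}$ acting on the column $(1,0)^{t}$ to produce the $\sharp$-data and on an appropriate fixed column (built from $1/(1-q)$) to produce the $\flat$-data, so that $\R^{\flat}/\S^{\flat}$ and $\R^{\sharp}/\S^{\sharp}$ differ by a controlled, $q$-power-weighted combination of the numerator/denominator of $r/s$ and of $r'/s'$ (the truncated fraction $[a_1,\dots,a_{2n-1}]$).

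Next I would translate that algebraic relation into the combinatorics of closures. The quiver $\G^{\flat}_{r/s}$ is obtained from $\G^{\sharp}_{r/s}$ by appending one extra vertex $\ell$ joined to the last vertex $az$ by a two-cycle (arrows in both directions). The key structural observation is that in a quiver with a bidirectional edge between $az$ and $\ell$, a closure $\H$ either contains both $az$ and $\ell$ or contains neither; hence $n$-closures of $\G^{\flat}_{r/s}$ split into those not meeting $\{az,\ell\}$ (which are exactly the $n$-closures of $\G^{\sharp}_{r/s}$ avoiding $az$) and those containing both (which correspond to $(n-2)$-closures of $\G^{\sharp}_{r/s}$ that, together with $az$, form an $(n-1)$-closure — i.e., $(n-1)$-closures of $\G^{\sharp}_{r/s}$ containing $az$, with one extra vertex $\ell$ free). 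Summing, $C_n(\G^{\flat}_{r/s})$ decomposes as a sum of closure counts of $\G^{\sharp}_{r/s}$ and of its further truncation; comparing with the polynomial identity from the previous paragraph and with \cref{mgothm} should pin down $\a^{\flat}_i = C_i(\G^{\flat}_{r/s})$. The identical argument applied to $\wh{\G}^{\flat}_{r/s}$ versus $\wh{\G}^{\sharp}_{r/s}$ gives $\b^{\flat}_i = C_i(\wh{\G}^{\flat}_{r/s})$.

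I would organise the write-up as: (i) a lemma recording $[n]^{\flat}_q$ in terms of $[n]^{\sharp}$'s and the resulting relation between $(\R^{\flat},\S^{\flat})$ and the $\sharp$-polynomials of $r/s$ and $r'/s'$, proved via \cref{qmatrices}; (ii) a combinatorial lemma on how adding a pendant two-cycle transforms the closure-generating polynomial $\sum_n C_n(\G) q^n$ — namely it multiplies by... well, it adds the generating polynomial of $\G$ restricted to closures containing the attachment vertex, shifted by $q^2$, which matches the algebraic relation; (iii) assembling (i), (ii) and \cref{mgothm}. The main obstacle I anticipate is bookkeeping: getting the normalisations exactly right (the degree-shift and leading/trailing-coefficient-one conventions of \cref{qrat}, and the extra $q^{\pm}$ factors in the $r/s<0$ versus $0\le r/s<\infty$ cases of \cref{qmatrices}), and making sure the "closures containing the last vertex $az$" count matches the $\sharp$-polynomial of the truncated quiver after the pendant vertex is removed — this is where an off-by-one in the number of vertices, or a sign, could creep in. Since the statement is restricted to $r/s \in (1,\infty)$, all continued-fraction entries are non-negative and the degenerate edge cases of \cref{qrat} do not arise, which keeps the bookkeeping manageable.
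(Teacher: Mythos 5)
Your plan is essentially the paper's own (omitted) proof: the paper reduces \cref{leftcounting} to \cref{mgothm} via the identity $[r/s]_q^{\flat} = \bigl(q\R^{\sharp}(q)+(1-q)\R^{\sharp\prime}(q)\bigr)/\bigl(q\S^{\sharp}(q)+(1-q)\S^{\sharp\prime}(q)\bigr)$, which is exactly what you extract from \cref{qmatrices} by applying $\b_{\mathbf{a},q}$ to the column $(1,1-q)^{t}$. Your additional combinatorial step (a closure of the augmented quiver contains both or neither of the two vertices of the pendant two-cycle, so $C_n(\G^{\flat})$ splits into closures of $\G^{\sharp}$ avoiding the last vertex and $(n-1)$-closures containing it) is a sound way to carry out the ``straightforward application'' the paper alludes to, with the remaining bookkeeping being exactly the identification of the avoiding-closures count with $\R^{\sharp\prime}$, which you correctly flag.
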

We omit the proof, which is a straightforward application of \cref{mgothm} by way of the following formula for the left $q$-deformed rational (using the notation of \cref{qmatrices}):
$$\Big[\frac{r}{s}\Big]_{q}^{\flat} = \frac{q\R^{\sharp}(q)+\R^{\sharp\prime}(q)-q\R^{\sharp\prime}(q)}{q\S^{\sharp}(q)+\S^{\sharp\prime}(q)-q\S^{\sharp\prime}(q)}.$$
\subsection{Left $q$-deformed rational numbers and the Jones polynomial}
The next theorem links left $q$-deformed rationals to the Jones polynomial of rational (two-bridge) knots. The proof of the theorem uses \cref{qrz2}, although we suspect there exists a purely combinatorial proof.
\par
Due to work by Lee and Schiffler \cite[Theorem 1.2]{lee.sch:19} and by Morier-Genoud and Ovsienko \cite[Proposition A.3]{mor.ovs:20}, the proof of this theorem is not difficult, and does not involve any knot theory. For any rational number $r/s \in (1,\infty)$, let $V_{r/s}(q)$ denote the Jones polynomial associated with the class of rational knots parametrised by $r/s$, and let $\v V_{r/s}(q) \v$ denote the polynomial obtained by making each coefficient positive.
\begin{theorem}\label{jones}
Consider any rational number $r/s \in (1, \infty)$ with left $q$-deformation
$$\Big[\frac{r}{s}\Big]_{q}^{\flat}=\frac{\R^{\flat}(q)}{\S^{\flat}(q)}.$$
Then $\R^{\flat}(q)=\v V_{r/s}(q)\v$.
\end{theorem}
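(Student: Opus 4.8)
The plan is to derive \cref{jones} by combining three already-available ingredients: the combinatorial description of $\R^{\flat}(q)$ in \cref{leftcounting} (closures of the augmented quiver $\G^{\flat}_{r/s}$), the Morier-Genoud--Ovsienko normalised-Jones-polynomial formula from \cite[Appendix A]{mor.ovs:20} which states that the normalised Jones polynomial of the rational knot of $r/s$ equals $q\R^{\sharp}(q)+(1-q)\S^{\sharp}(q)$ (up to an overall power of $q$), and the explicit relation between the left and right $q$-deformations recorded just above, namely
\[
\Big[\frac{r}{s}\Big]_{q}^{\flat} = \frac{q\R^{\sharp}(q)+\R^{\sharp\prime}(q)-q\R^{\sharp\prime}(q)}{q\S^{\sharp}(q)+\S^{\sharp\prime}(q)-q\S^{\sharp\prime}(q)}.
\]
So the first step is to identify, after normalisation, the numerator $q\R^{\sharp}(q)+(1-q)\R^{\sharp\prime}(q)$ of $[r/s]^{\flat}_q$ with the expression $q\R^{\sharp}(q)+(1-q)\S^{\sharp}(q)$ appearing in the MGO Jones formula; this requires knowing $\R^{\sharp\prime}(q)=\S^{\sharp}(q)$, i.e. that the numerator of the truncated continued fraction $[a_1,\dots,a_{2n-1}]$ equals the denominator of $[a_1,\dots,a_{2n}]$. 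For $r/s\in(1,\infty)$ with even continued fraction expansion this is a standard continued-fraction fact and should be checked from \cref{qmatrices} by reading off matrix entries (the columns of $\b_{\mathbf a,q}$).

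Granting $\R^{\sharp\prime}(q)=\S^{\sharp}(q)$, the numerator of $[r/s]^{\flat}_q$ is exactly $q\R^{\sharp}(q)+(1-q)\S^{\sharp}(q)$, which by \cite[Appendix A]{mor.ovs:20} (together with the Lee--Schiffler result \cite[Theorem 1.2]{lee.sch:19}) is the (normalised) Jones polynomial $V_{r/s}(q)$ up to sign of coefficients and an overall monomial. The second step is therefore to match normalisations: \cref{qrat} fixes $\S^{\flat}(q)$ to have constant term $1$ (for $r/s>0$), so $\R^{\flat}(q)$ is pinned down, and one must check this agrees with the standard normalisation of the Jones polynomial of the two-bridge knot. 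The third step is the sign bookkeeping: the MGO formula and \cref{leftcounting} produce polynomials with all-positive coefficients (closures are counted), whereas the genuine Jones polynomial $V_{r/s}(q)$ has alternating-type signs; taking $|V_{r/s}(q)|$ (all coefficients made positive) is precisely what reconciles the two. Here I would invoke the fact, implicit in \cite[Proposition A.3]{mor.ovs:20} and \cite[Theorem 1.2]{lee.sch:19}, that the Jones polynomial of a rational knot is (up to a global sign and monomial) $\pm$-alternating, so that $|V_{r/s}(q)|$ is obtained from $V_{r/s}(q)$ by flipping signs in a predictable pattern, matching the positive polynomial $\R^{\flat}(q)$ term by term.

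The cleanest route, and the one I would actually write, is: (i) state and prove the continued-fraction identity $\R^{\sharp\prime}=\S^{\sharp}$ from \cref{qmatrices}; (ii) substitute into the left/right relation to get $\R^{\flat}(q)=q\R^{\sharp}(q)+(1-q)\S^{\sharp}(q)$ (after the appropriate power-of-$q$ normalisation dictated by \cref{qrat}); (iii) quote \cite[Appendix A]{mor.ovs:20}, i.e. that $q\R^{\sharp}(q)+(1-q)\S^{\sharp}(q)$ is the normalised Jones polynomial up to signs; (iv) reconcile signs via $|{-}|$. Alternatively one can bypass (iii) and argue directly via \cref{qrz2}: the left $q$-rational's numerator is $\ol{\hom}_q(X,P_2)$ for the spherical object $X$ corresponding to $r/s$, and one relates dimensions of $\Hom$-spaces in $\C_2$ to the Kauffman-bracket state sum of the rational tangle; but this is more work and I would only sketch it as a remark. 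The main obstacle is step (iv), the precise sign/normalisation matching — getting the overall monomial and the sign pattern exactly right so that ``numerator of $[r/s]^{\flat}_q$'' literally equals $|V_{r/s}(q)|$ rather than merely agreeing up to $\pm q^k$ — and this is exactly why the theorem is phrased with $|\,\cdot\,|$ and why the excerpt says the proof ``does not involve any knot theory'': all the knot-theoretic content is outsourced to \cite{lee.sch:19} and \cite[Appendix A]{mor.ovs:20}, leaving only the continued-fraction identity and the sign reconciliation to verify.
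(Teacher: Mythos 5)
Your reduction collapses at its first step: the identity $\R^{\sharp\prime}(q)=\S^{\sharp}(q)$ is false, and it is not a standard continued-fraction fact. Already at $q=1$ it would assert that the numerator $r'$ of $[a_1,\ldots,a_{2n-1}]$ equals the denominator $s$ of $[a_1,\ldots,a_{2n}]$; for $7/3=[2,3]$ the truncation is $[2]=2/1$, so $r'=2\neq 3=s$. (The true elementary fact is that the denominator of $[a_1,\ldots,a_n]$ is the numerator of the \emph{tail} $[a_2,\ldots,a_n]$, or equivalently, by the palindrome identity, the denominator of the \emph{reversed} expansion $[a_{2n},\ldots,a_1]$ equals $\R^{\sharp\prime}$ of the original.) Consequently the numerator $q\R^{\sharp}(q)+(1-q)\R^{\sharp\prime}(q)$ of $[r/s]^{\flat}_q$ is \emph{not} the Morier-Genoud--Ovsienko expression $q\R^{\sharp}(q)+(1-q)\S^{\sharp}(q)$, and the intended one-line identification with the normalised Jones polynomial does not go through. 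What is actually true, and what the theorem amounts to, is that the coefficient sequence of $q\R^{\sharp}+(1-q)\R^{\sharp\prime}$ is the \emph{reverse} of that of $|q\R^{\sharp}+(1-q)\S^{\sharp}|$; this reversal is precisely the nontrivial content that your argument skips.

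The paper's proof supplies exactly this missing reversal step, and it is worth seeing how. One applies \cref{leftcounting} to the reversed expansion $[a_{2n},\ldots,a_1]$, whose left deformation $\ol{\R}^{\flat}/\ol{\S}^{\flat}$ counts closures of $\H_{r/s}$ with reversed orientation; combined with \cref{jonescounting} (which repackages \cite[Proposition A.3]{mor.ovs:20}) and the observation that $k$-closures of a quiver biject with $(n-k)$-closures of its reversal, this shows the coefficients of $\ol{\R}^{\flat}$ are the reverse of those of $|V_{r/s}|$. The remaining step --- that $\R^{\flat}$ and $\ol{\R}^{\flat}$ have mutually reversed coefficient sequences --- is where the homological input enters: by \cref{qrz2} one has $\R^{\flat}(q)=\ol{\hom}_q(\beta_{\mathbf a}P_1,P_2)$ while $\ol{\R}^{\flat}(q)=\ol{\hom}_q(P_2,\beta_{\mathbf a}P_1)$, and the hom-values lemma (\cref{homvalues}) shows these two are related by $q\mapsto q^{-1}$. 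If you want to salvage your outline, replace your step (i) by the palindrome identity for the reversed expansion and then prove this coefficient-reversal statement; as written, the proposal proves nothing because its pivotal identity fails.
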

To prove this theorem, we will need the following lemma. For any rational number $r/s$, let $\H_{r/s}$ denote the following quiver (note that the orientation of the edges is opposite to that of $\G_{r/s}^{\flat}$).

\begin{center}
  \begin{tikzpicture}[node distance=2em, font=\scriptsize]
    \tikzset {
      brace/.style={thick, decoration={brace}, decorate},
      rarrow/.style={-,
        thick,
        shorten <= -0.3em, shorten >= -0.3em,
        postaction=decorate,
        decoration={markings, mark= at position 0.7 with {\arrow{>}}}},
      larrow/.style={-,
        thick,
        shorten <= -0.3em, shorten >= -0.3em,
        postaction=decorate,
        decoration={markings, mark= at position 0.7 with {\arrow{<}}}}      
    }
    \node(a0) at (0,0) {$\bullet$};
    \node(m01) [right of=a0] {$\bullet$};
    \node(m02) [right of=m01] {$\bullet$};    
    \node(a1) [right of=m02] {$\bullet$};

    \node(m11) [right of=a1] {$\bullet$};
    \node(m12) [right of=m11] {$\bullet$};
    \node(a2) [right of=m12] {$\bullet$};

    \node(m21) [right of=a2] {$\bullet$};
    \node(m22) [right of=m21] {$\bullet$};
    \node(a3) [right of=m22] {$\bullet$};    

    \node(ax) [right of=a3] {$\bullet$};
    \node(mx1) [right of=ax] {$\bullet$};
    \node(mx2) [right of=mx1] {$\bullet$};    
    \node(ay) [right of=mx2] {$\bullet$};

    \node(my1) [right of=ay] {$\bullet$};
    \node(my2) [right of=my1] {$\bullet$};    
    \node(az) [right of=my2] {$\bullet$};

    \node(l) [left of=a0] {$\bullet$};
    
    \draw[larrow] (a0) -- (m01);
    \draw[dotted] (m01) -- (m02);
    \draw[larrow] (m02) -- (a1);

    \draw[rarrow] (a1) -- (m11);
    \draw[dotted] (m11) -- (m12);
    \draw[rarrow] (m12) -- (a2);

    \draw[larrow] (a2) -- (m21);
    \draw[dotted] (m21) -- (m22);
    \draw[larrow] (m22) -- (a3);

    \draw[dotted] (a3) -- (ax);

    \draw[larrow] (ax) -- (mx1);
    \draw[dotted] (mx1) -- (mx2);
    \draw[larrow] (mx2) -- (ay);    

    \draw[rarrow] (ay) -- (my1);
    \draw[dotted] (my1) -- (my2);
    \draw[rarrow] (my2) -- (az);

    \path (a0) edge[rarrow, bend left] (l)
    (l) edge [rarrow, bend left] (a0);
    
    \draw[brace] (a0.north) -- node[above]{$a_1-1$} (a1.north);
    \draw[brace, decoration={raise=0.5ex}] (a1.north) -- node[above=1ex]{$a_2$} (a2.north);
    \draw[brace] (a2.north) -- node[above]{$a_3$} (a3.north);
    \draw[brace] (ax.north) -- node[above]{$a_{(2n-1)}$} (ay.north);
    \draw[brace, decoration={raise=0.5ex}] (ay.north) -- node[above=1ex]{$a_{2n}-1$} (az.north);    
  \end{tikzpicture}
\end{center}
\begin{lemma}\label{jonescounting}
Consider any rational number $r/s \in (1, \infty)$. Let $\v V_{r/s}\v=\g_{0}+\g_{1}q+...+\g_{N-1}q^{N-1}+\g_{N}q^{N}$. Then for all $i \in \N$, $\g_{i}=C_{i}(\H_{r/s}).$
\end{lemma}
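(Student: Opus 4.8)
The plan is to deduce this lemma from the combinatorial description of the Jones polynomial coming from the work of Lee--Schiffler, together with the counting interpretation of the right $q$-deformed rationals in \cref{mgothm}. First I would recall precisely which polynomial the Lee--Schiffler / Morier-Genoud--Ovsienko machinery identifies with $V_{r/s}(q)$: by \cite[Theorem 1.2]{lee.sch:19} and \cite[Proposition A.3]{mor.ovs:20}, the (suitably normalised) Jones polynomial of the rational knot parametrised by $r/s$ is, up to signs of coefficients, the numerator $\R^{\sharp}(q)$ of the right $q$-deformation $[r/s]_q^{\sharp}$; more precisely the normalised Jones polynomial equals $q\R^{\sharp}(q) + (1-q)\S^{\sharp}(q)$ in the notation of \cref{qrat}. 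So $|V_{r/s}(q)|$ is the coefficient-wise absolute value of this combination, and the task is to reinterpret its coefficients as closure counts in $\H_{r/s}$.

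The key step is a quiver-theoretic identity: I would show that the $n$-closures of $\H_{r/s}$ are in bijection with the disjoint union of the $n$-closures of $\G_{r/s}^{\sharp}$ and the $(n-1)$-closures of $\wh{\G}_{r/s}^{\sharp}$ (or the appropriate shift thereof dictated by the $q\R^{\sharp}+(1-q)\S^{\sharp}$ formula). The quiver $\H_{r/s}$ is, by construction, $\G_{r/s}^{\sharp}$ with its edge orientations reversed and an extra vertex $\bullet$ attached by a two-cycle to the leftmost vertex $a_0$. Reversing all orientations turns closures (no edges leaving the subquiver) into ``co-closures'' (no edges entering), equivalently closures of the opposite quiver; and attaching the extra bidirected vertex to $a_0$ means a closure of $\H_{r/s}$ either omits that vertex and the whole left end past $a_0$, or contains it and hence must contain $a_0$, splitting the count into two pieces that match $C_i(\G_{r/s}^{\sharp})$ and $C_{i-1}(\wh\G_{r/s}^{\sharp})$ after the orientation-reversal bijection. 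Combining this with \cref{mgothm}, which gives $\a_i^{\sharp} = C_i(\G_{r/s})$ and $\b_i^{\sharp} = C_i(\wh\G_{r/s})$, and with the formula $q\R^{\sharp}(q) + (1-q)\S^{\sharp}(q)$ for the normalised Jones polynomial, yields $\g_i = C_i(\H_{r/s})$; the absolute-value normalisation on the left matches the fact that closure counts are manifestly non-negative, which is exactly why one passes to $|V_{r/s}|$.

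I expect the main obstacle to be bookkeeping: getting the index shifts and the left-end truncation conventions exactly right so that ``omit the extra vertex'' genuinely reproduces $\G_{r/s}^{\sharp}$ (with its leading $a_1 - 1$ block) rather than an off-by-one variant, and verifying that the $q$-shift in $q\R^{\sharp} + (1-q)\S^{\sharp}$ is absorbed correctly by the extra vertex. A secondary subtlety is that \cref{mgothm} is stated for $\G_{r/s}, \wh\G_{r/s}$ (the $\sharp$-quivers), so I must be careful that the orientation-reversal really is a closure-preserving bijection $C_i(\H_{r/s}) \leftrightarrow C_i(\text{opposite quiver})$ and that the opposite of the relevant pieces of $\H_{r/s}$ are genuinely $\G_{r/s}^{\sharp}$ and $\wh\G_{r/s}^{\sharp}$ and not some reflection of them; since closure-counting is invariant under reversing the linear order of a type-$A$ quiver, this should go through, but it is the point that needs the most care. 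Once this identity is in hand, the lemma follows immediately, and it then feeds directly into the proof of \cref{jones} via \cref{qrz2} and \cref{leftcounting}.
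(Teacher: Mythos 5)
Your plan diverges from the paper's argument, and its central combinatorial step is false as stated. You propose a bijection between the $n$-closures of $\H_{r/s}$ and the disjoint union of the $n$-closures of $\G^{\sharp}_{r/s}$ and the $(n-1)$-closures of $\wh{\G}^{\sharp}_{r/s}$ (up to orientation reversal). Any such disjoint-union decomposition would force the generating function $\sum_i C_i(\H_{r/s})q^i$ to equal $\R^{\sharp}(q)+q\,\S^{\sharp}(q)$, a sum of two manifestly nonnegative polynomials; but the target identity involves $q\R^{\sharp}(q)+(1-q)\S^{\sharp}(q)$, which contains a genuine subtraction, so no disjoint union of closure sets can realise it. Concretely, for $r/s=5/2=[2,2]$ one has $\R^{\sharp}=1+2q+q^2+q^3$ and $\S^{\sharp}=1+q$, so your decomposition would give $1+3q+2q^2+q^3$, whereas a direct count gives $\sum_i C_i(\H_{5/2})q^i=1+q+q^2+q^3+q^4$ (which is indeed $\v V_{5/2}\v$). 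The two pieces of the natural decomposition of closures of $\H_{r/s}$ by whether they contain the extra bivalent vertex have size profiles $(1,1,1)$ and $(1,1)$ in this example, and neither matches $\R^{\sharp}$, $\S^{\sharp}$, or their reversals; the identity really does require cancellation, not a bijection. A second, independent gap is that you never account for the difference between the normalised Jones polynomial $J_{r/s}$ (the object computed by $q\R^{\sharp}+(1-q)\S^{\sharp}$) and the Jones polynomial $V_{r/s}$ appearing in the lemma: their coefficient sequences are reverses of one another, and this reversal is half of the content.

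For comparison, the paper's proof is a two-line transport of structure: Morier-Genoud and Ovsienko's Proposition A.3 already expresses the (absolute values of the) coefficients of $J_{r/s}$ as closure counts of a quiver identifiable with $\H_{r/s}$ with reversed orientation; passing from $J_{r/s}$ to $V_{r/s}$ reverses the coefficient sequence, and complementation of vertex subsets identifies $i$-closures of a quiver with $(N-i)$-closures of its opposite, so the two reversals cancel and yield $\g_i=C_i(\H_{r/s})$. If you want to avoid citing their Proposition A.3, you would need to prove a signed or cancellation-based combinatorial identity, not the disjoint-union bijection you describe.
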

\cref{jonescounting} is an easy corollary of \cite[Proposition A.3]{mor.ovs:20}, since the sequence of coefficients of the normalised Jones polynomial $J_{r/s}(q)$ in their paper is just the reverse of the sequence of coefficients of the Jones polynomial $V_{r/s}$, and the number of $i$-closures of $\H_{r/s}$ is equal to the number of $(N-i)$-closures of $\H_{r/s}$ with opposite orientation.
\begin{proof}[Proof of \cref{jones}]
Let $[a_{1},...,a_{2n}]$ be the continued fraction expansion of $r/s$. Let $\ol{\R}^{\flat}(q)/\ol{\S}^{\flat}(q)$ be the left $q$-deformed rational associated to $[a_{2n},...,a_{1}]$. By \cref{leftcounting}, $\ol{\R}^{\flat}(q)$ counts the number of closures in $\H_{r/s}$ with its orientation reversed. It is easy to check that the number of $k$-closures in an $n$-vertex quiver $\G$ equals the number of $(n-k)$-closures in $\G$ with its orientation reversed. Thus, by \cref{jonescounting}, the sequence of coefficients of $\ol{\R}^{\flat}(q)$ is equal to the reverese of the sequence of coefficients of $\v V_{r/s}(q)\v$. So it is sufficient to show that the sequence of coefficients of $\R^{\flat}(q)$ is the reverse of the sequence of coefficients of $\ol{\R}^{\flat}(q)$.
\par 
By \cref{qrz2}, we have that 
$$\R^{\flat}(q)=\ol{\hom}_{q}(\s_{1}^{-a_{1}}\s_{2}^{a_{2}}...\s_{1}^{-a_{2n-1}}\s_{2}^{a_{2n}}P_{1}, P_{2}),$$
and that
\begin{align*}
\ol{\R}^{\flat}(q)&=\ol{\hom}_{q}(\s_{1}^{-a_{2n}}\s_{2}^{a_{2n-1}}...\s_{1}^{-a_{2}}\s_{2}^{a_{1}}P_{1}, P_{2})
\\&=\ol{\hom}_{q}(P_{1}, \s_{2}^{-a_{1}}\s_{1}^{a_{2}}...\s_{2}^{-a_{2n-1}}\s_{1}^{a_{2n}}P_{2})
\\ &=\ol{\hom}_{q}(P_{2}, \s_{1}^{-a_{1}}\s_{2}^{a_{2}}...\s_{1}^{-a_{2n-1}}\s_{2}^{a_{2n}}P_{1}).
\end{align*}
Let 
$$X=\s_{1}^{-a_{1}}\s_{2}^{a_{2}}...\s_{1}^{-a_{2n-1}}\s_{2}^{a_{2n}}P_{1}.$$
By the $\hom$-values lemma (\cref{homvalues}), we know that
$$\hom_{q}(P_{2},X)= (q^{-2}-q^{-1})\occ_{q}(P_{1},X)+q^{-1}\occ_{q}(P_{2},X),$$
and an analogous result for $\ol{\hom}_{q}(\cdotp,P_{2})$ gives that
$$\hom_{q}(X,P_{2})=(1-q^{-1})\occ_{q^{-1}}(P_{1},X)+q^{-1}\occ_{q^{-1}}(P_{2},X).$$
Thus, the sequence of coefficients of $\ol{\hom}_{q}(X, P_{2})$
is the reverse of the sequence of coefficients of 
$\ol{\hom}_{q}(P_{2}, X),$
completing the proof.
\end{proof}

\section{Bridgeland stability conditions and Harder--Narasimhan automata}\label{app:stability-and-automata}
\subsection{Background on Bridgeland stability conditions}
We assume the reader is familiar with the theory of Bridgeland stability conditions \cite{bri:07}.
To briefly recall, a stability condition on a triangulated category is specified as $\tau = (\cP_\tau, \cZ_\tau)$, where $\cP_\tau$ is a slicing and $\cZ_\tau$ is a compatible central charge.
An object of the category is called \emph{$\tau$-semistable} if it belongs to one of the full (abelian) subcategories $\cP_\tau(\phi)$ for some $\phi \in \mathbb{R}$.
In this case, the number $\phi$ is called the \emph{phase} of that semistable object.
An object is called \emph{$\tau$-stable} if it is semistable (of some phase $\phi$), and moreover if it is simple in the abelian category $\cP_\tau(\phi)$.

Recall also that every object of the category has a unique \emph{Harder--Narasimhan} filtration with respect to $\tau$.
\begin{definition}
  Fix a stability condition $\tau$ on a triangulated category.
  Let $X$ be any object of the category.
  The $\tau$-Harder--Narasimhan filtration of $X$ is the unique filtration 
  \[
    \begin{tikzcd}[column sep = 1em]
      0 \arrow{rr} && X_1 \arrow{rr} \arrow{dl} && X_2 \arrow{rr}\arrow{dl} && \cdots \arrow{rr} \arrow{dl}&&  X_n = X \arrow{dl}\\
      & A_1 \arrow[dashed]{ul} && A_2 \arrow[dashed]{ul} && A_3 \arrow[dashed]{ul} & \cdots & A_n \arrow[dashed]{ul}
    \end{tikzcd}
  \]
  with the following properties.
  \begin{enumerate}
  \item Each filtration factor $A_i$ is semistable of some phase $\phi_i$, and
  \item The numbers $\phi_i$ appear in decreasing order: $\phi_1 > \phi_2 \cdots > \phi_n$.
  \end{enumerate}
\end{definition}
Let $\mathcal{T}$ denote an arbitrary triangulated category. Recall that for any subset $I \subset \mathbb{R}$, we define $\cP_\tau(I)$ to be the full subcategory of $\mathcal{T}$ consisting of objects whose Harder--Narasimhan filtration factors have phases in $I$.
The subcategory $\cP_\tau([0,1))$ corresponding to the half-open interval $[0,1) \subset \mathbb{R}$ is a full abelian subcategory of $\mathcal T$, which is the heart of a bounded $t$-structure on $\mathcal T$ (see \cite[Section 3]{bri:07}).

Let $\Sigma_\tau$ denote the set of indecomposable semistable objects of $\mathcal T$ that lie in $\cP_\tau([0,1))$.
Then every indecomposable semistable object lies in $\Sigma_\tau$ up to an integer shift.
We can count the (graded) multiplicity of any object of $\mathcal T$ in a given object of $\Sigma_\tau$, as follows.
\begin{definition}\label{def:hn-multiplicity-vector}
  Fix some $q > 0$.
  The $\tau$-Harder--Narasimhan multiplicity vector (or simply the HN multiplicity vector) of a semistable object $A \in \mathcal T$ is an element of $\mathbb{Z}[q^{\pm}]^{\Sigma_\tau}$, defined as follows.
  Let $k$ be such that $A \in \cP_\tau([k,k+1))$.
  For any $B \in \Sigma_\tau$, let $n_{B}$ be the number of indecomposable summands of $A$ that are isomorphic to $B$.
  Then the coordinate of the HN multiplicity vector of $A$ at $B \in \Sigma_\tau$ is $n_{B} q^{k}$.
 
  The $\tau$-Harder--Narasimhan multiplicity vector of a general object of $\mathcal T$ is defined as the sum of the $\tau$-Harder--Narasimhan multiplicity vectors of its $\tau$-Harder--Narasimhan filtration factors.
\end{definition}
Next we introduce a graded version of Harder--Narasimhan mass (see \cite[Section 4.5]{dim.hai.kat.ea:14}, \cite[Section 3]{ike:20}).
\begin{definition}\label{massdef}
  Let $\tau$ be a stability condition on a triangulated category $\mathcal T$.
  Let $q$ be a fixed positive real number.
  Let $X$ be an object of $\mathcal T$ and consider its Harder--Narasimhan filtration:
  \[
    \begin{tikzcd}[column sep = 1em]
      0 \arrow{rr} && X_1 \arrow{rr} \arrow{dl} && X_2 \arrow{rr}\arrow{dl} && \cdots \arrow{rr} \arrow{dl}&&  X_n = X. \arrow{dl}\\
      & A_1 \arrow[dashed]{ul} && A_2 \arrow[dashed]{ul} && A_3 \arrow[dashed]{ul} & \cdots & A_n \arrow[dashed]{ul}
    \end{tikzcd}
  \]
  Let $\phi_i$ denote the phase of the semistable object $A_i$.
  The \emph{$q$-Harder--Narasimhan mass}, or \emph{$q$-mass} of the object $X$ is defined to be
  \[m_{q,\tau}(X) = \sum_{i = 1}^k q^{\phi_i}|Z_\tau(A_i)|.\]
\end{definition}
Note that for $q = 1$, the $q$-Harder--Narasimhan mass is simply the classical Harder--Narasimhan mass as defined in~\cite[Section 5]{bri:07}. The $q$-mass function is due to~\cite[Section 4.4]{dim.hai.kat.ea:14}.

Finally, we denote the space of stability conditions on a triangulated category $\mathcal T$ by $\Stab(\mathcal T)$.
There is a natural $\CC$-action on $\Stab(\mathcal T)$ defined as follows.

For $z = a + i\pi b \in \CC$ and a stability condition $\t=(\cP_{\t},\cZ_{\t})$, define $z\t=(\cP_{z\t}, \cZ_{z\t})$ by
\[\cP_{z\t}(\f) =\cP_{\t}(\f- b), \quad \cZ_{z\t}(X):=e^{z}\cZ_{\t}(X).\]
So for any $X \in \T$, the action of $z = a + i \pi b$ rotates the vector $Z(X)$ by the angle $\pi b$, scales it by the quantity $e^a$, and simultaneously shifts the slicing by the quantity $b$.
We will always consider the space $\Stab(\mathcal T)/\mathbb{C}$, which is the space of stability conditions up to this natural $\mathbb{C}$ action.

\subsection{Harder--Narasimhan and mass automata}\label{sec:automata}
Fix throughout this section a stability condition $\tau$ on $\mathcal T$.
Recall that $\Sigma_\tau$ is the set of indecomposable $\tau$-semistable objects of $\mathcal T$ that lie in $\cP_\tau([0,1))$.
For any $X \in \Ob \mathcal T$, set $\HN_\tau(X) \in \mathbb{Z}[q^{\pm}]^{\Sigma_\tau}$ to be the $\tau$-HN multiplicity vector of $X$.
Suppose also that a group $G$ acts on $\mathcal T$.

In this section we define Harder--Narasimhan automata on $\mathcal T$.
Our definition is just a graded version of the one from~\cite[Section 3]{bap.deo.lic:20}, but we recall the key elements here.
We begin with some notation.

Let $\Theta$ be an arbitrary quiver.
Let $\Theta_0$ and $\Theta_1$ denote the sets of vertices and edges of $\Theta$ respectively.
Given $\alpha \in \Theta_1$, we say that $s(\alpha)\in \Theta_0$ is its source, and $t(\alpha) \in \Theta_0$ is its target.

A $G$-labelling of $\Theta$ is a function $\Theta_1 \to G$.
Then every path (finite sequence of composable edges) in $\Theta$ automatically acquires a label, namely the product of the corresponding group elements in reverse order.

A $\Theta$-set $\underline{S}$ is a representation of $\Theta$ in the category of sets.
More precisely, it is specified by a collection of sets $\{S_v \mid v \in \Theta_0\}$, together with edge functions $\{f_\alpha \colon S_{s(\alpha)} \to S_{t(\alpha)} \mid \alpha \in \Theta_1\}$.

Let $A$ be any ring.
A $\Theta$-representation $\underline{M}$ of $A$-modules consists (as usual) of $A$-modules $\{M_v \mid v \in \Theta_0\}$, and $A$-linear maps $\{f_\alpha \colon M_{s(\alpha)} \to M_{t(\alpha)}\mid \alpha \in \Theta_1\}$.
Any $\Theta$-representation is also a $\Theta$-set.

A morphism of $\Theta$-sets (resp. $\Theta$-representations) consists of maps between the sets (resp. module maps between the modules) at each vertex, so that the square determined by every edge commutes.

A $G$-set $S$, together with a $G$-labelling on $\ell \colon \Theta_1 \to G$, naturally produces a $\Theta$-set, as follows.
We declare each $S_v$ to be $S$ for every $v \in \Theta_0$, and each $f_\alpha \colon S \to S$ to be the action of $\ell(\alpha)$ on $S$.
In particular, $\Ob \mathcal T$ is a $\Theta$-set, and we denote this as $\underline{\Ob \mathcal T}$.
Similarly, any $G$-representation, together with a $G$-labelling of $\Theta$, naturally produces a $\Theta$-representation.

\begin{definition}\label{def:hn-automaton}
  Fix some $q > 0$.
  A $\tau$-HN automaton for the $G$-action on $\mathcal T$ consists of the following data.
  \begin{enumerate}
  \item A $G$-labelled quiver $\Theta$ (and hence the $\Theta$-set $\underline{\Ob \mathcal T}$).
  \item A $\Theta$-subset $\underline{S} \subset \underline{\Ob \mathcal T}$.
  \item A $\Theta$-representation $\underline{M}$ of $\mathbb{Z}[q^\pm]$-modules.
  \item A morphism $i \colon \underline{S} \to \underline{M}$.
  \item A $\mathbb{Z}[q^\pm]$-linear map $h_v \colon M_v \to \mathbb{Z}[q^\pm]^{\Sigma_\tau}$ for every $v \in \Theta_0$.
  \end{enumerate}
  This data is required to satisfy
  \[\HN_\tau(s) = h_v(i_v(s))\]  
  for every $v \in \Theta_0$ and every $s \in S = S_v$.
\end{definition}

\begin{example}
  We illustrate the above definition with the following (somewhat trivial) example.
  Let $\mathcal T$ be any triangulated category, and fix a stability condition $\tau$ on it.
  Let $G$ be the group $\mathbb{Z}$, generated by the triangulated shift $(-)[1]$.
  Then an example of an HN automaton for this setup is given by the following data.
  \begin{enumerate}
  \item An infinite $\mathbb{Z}$-labelled quiver $\Theta$.
    \[
      \begin{tikzcd}
        \cdots\arrow[bend left]{r}{[1]}
        & \fbox{-1} \arrow[bend left]{r}{[1]}\arrow[bend left]{l}{[-1]}
        & \fbox{0} \arrow[bend left]{r}{[1]}\arrow[bend left]{l}{[-1]}
        & \fbox{1}  \arrow[bend left]{r}{[1]}\arrow[bend left]{l}{[-1]}
        & \cdots \arrow[bend left]{l}{[-1]}
      \end{tikzcd}
    \]
  \item The following $\Theta$-subset $\underline{S} \subset \underline{\Ob \mathcal T}$.
    \[
      \begin{tikzcd}
        \cdots\arrow[bend left]{r}{[1]}
        & \cP_\tau([-1,0)) \arrow[bend left]{r}{[1]}\arrow[bend left]{l}{[-1]}
        & \cP_\tau([0,1)) \arrow[bend left]{r}{[1]}\arrow[bend left]{l}{[-1]}
        & \cP_\tau([1,2))  \arrow[bend left]{r}{[1]}\arrow[bend left]{l}{[-1]}
        & \cdots \arrow[bend left]{l}{[-1]}
      \end{tikzcd}
    \]    
  \item The following $\Theta$-representation $\underline{M}$.
    \[
      \begin{tikzcd}
        \cdots\arrow[bend left]{r}{q}
        & \mathbb{Z}[q^\pm]^{\Sigma_\tau} \arrow[bend left]{r}{q}\arrow[bend left]{l}{q^{-1}}
        & \mathbb{Z}[q^\pm]^{\Sigma_\tau} \arrow[bend left]{r}{q}\arrow[bend left]{l}{q^{-1}}
        & \mathbb{Z}[q^\pm]^{\Sigma_\tau} \arrow[bend left]{r}{q}\arrow[bend left]{l}{q^{-1}}
        & \cdots \arrow[bend left]{l}{q^{-1}}
      \end{tikzcd}
    \]
  \item The morphism $i \colon \underline{S} \to \underline{M}$ is as follows.
    For any $j \in \mathbb{Z}$, the map $i_j \colon S_j \to M_j$ is defined as
    \[i_j(X) = \HN_\tau(X).\]
    With this definition, $i$ is $\Theta$-equivariant (i.e., a morphism) because
    \[\HN_\tau(X[1]) = q\cdot \HN_\tau(X).\]
    
  \item Finally, for each $j \in \mathbb{Z}$, the map $h_j \colon M_j \to \mathbb{Z}[q^\pm]^{\Sigma_\tau}$ is just the identity map.
    The compatibility condition
    \[\HN_\tau(X) = h_j(i_j(X))\]
    is obvious.
  \end{enumerate}
\end{example}

\begin{remark}
  The example above merely serves to illustrate the definition and does not compute anything interesting.
  However, appropriately engineered Harder--Narasimhan automata can be extremely useful for calculations, particularly for results about dynamics of autoequivalences in the category.
  An example of a computationally useful HN automaton is described in~\cref{subsec:c2-automaton} of this paper.
  Other examples can be found in~\cite{bap.deo.lic:20}.
\end{remark}

We also have the analogue of a \emph{mass automaton}, as described in~\cite[Section 3]{bap.deo.lic:20}.
\begin{definition}
Let $q$ be a fixed positive real number.
A $\tau$-mass automaton is analogous to a $\tau$-HN automaton, with the definition modified as follows.
The maps $h_v \colon M_v \to \mathbb{Z}[q^{\pm}]^{\Sigma_\tau}$ are replaced by linear maps $m_v \colon M_v \to \mathbb{R}$, satisfying
\[m_{q,\tau}(s) = m_v(i_v(s)).\]
\end{definition}
In particular, given an HN automaton for a stability condition $\tau$ and a fixed $q > 0$, we can produce a $\tau$-mass automaton by composing the HN vector with the $q$-mass function.

\bibliographystyle{alpha}
\end{document}